\newenvironment{matrixA}
	{\left[\begin{array}{
		>{\centering\arraybackslash$} p{2.5ex} <{$}  
		>{\centering\arraybackslash$} p{\widthof{$0$}} <{$}  
		>{\centering\arraybackslash$} p{\widthof{$-q^{-1}-q$}} <{$}  
		>{\centering\arraybackslash$} p{\widthof{$0$}} <{$}  }
	}{\end{array}\right]}
\newenvironment{matrixB}
	{\left[\begin{array}{
		>{\centering\arraybackslash$} p{2ex} <{$}  
		>{\centering\arraybackslash$} p{\widthof{$1+q^{2}$}} <{$}  
		>{\centering\arraybackslash$} p{\widthof{$q^2$}} <{$}  
		>{\centering\arraybackslash$} p{\widthof{$1.$}} <{$}  }
	}{\end{array}\right]}
\newenvironment{matrixC}
	{\left[\begin{array}{
		>{\centering\arraybackslash$} p{2.5ex} <{$}  
		>{\centering\arraybackslash$} p{6.5ex} <{$}  
		>{\centering\arraybackslash$} p{3ex} <{$}  
		>{\centering\arraybackslash$} p{3ex} <{$}  }
	}{\end{array}\right]}
\newenvironment{matrixD}
	{\left[\begin{array}{
		>{\centering\arraybackslash$} p{4.5ex} <{$}  
		>{\centering\arraybackslash$} p{6ex} <{$}  
		>{\centering\arraybackslash$} p{2.5ex} <{$}  
		>{\centering\arraybackslash$} p{3ex} <{$}  }
	}{\end{array}\right]}
\newenvironment{matrixE}
	{\left[\begin{array}{
		>{\centering\arraybackslash$} p{2.5ex} <{$}  
		>{\centering\arraybackslash$} p{\widthof{$1-q^{2}$}} <{$}  
		>{\centering\arraybackslash$} p{\widthof{$q-q^{2}$}} <{$}  }
	}{\end{array}\right]}
\newenvironment{matrixF}
	{\left[\begin{array}{
		>{\centering\arraybackslash$} p{\widthof{$q+q^{1}$}} <{$}  
		>{\centering\arraybackslash$} p{2ex} <{$}  
		>{\centering\arraybackslash$} p{2.5ex} <{$}  }
	}{\end{array}\right]}
\newenvironment{matrixG}
	{\left[\begin{array}{
		>{\centering\arraybackslash$} p{2.5ex} <{$}  
		>{\centering\arraybackslash$} p{2ex} <{$}  
		>{\centering\arraybackslash$} p{5.5ex} <{$}  }
	}{\end{array}\right]}
\newenvironment{matrixH}
	{\left[\begin{array}{
		>{\centering\arraybackslash$} p{6.5ex} <{$}  
		>{\centering\arraybackslash$} p{5ex} <{$}  
		>{\centering\arraybackslash$} p{2.5ex} <{$}  }
	}{\end{array}\right]}
\numberwithin{equation}{section}
\newtheorem{theorem}{Theorem}[section]
\newtheorem{proposition}[theorem]{Proposition}
\newtheorem{conjecture}[theorem]{Conjecture}
\newtheorem{corollary}[theorem]{Corollary}
\newtheorem{lemma}[theorem]{Lemma}
\newtheorem{maintheorem}[theorem]{Main Theorem}
\newtheorem{mainconjecture}[theorem]{Main Conjecture}
\theoremstyle{definition}
\newtheorem{remark}[theorem]{Remark}
\newtheorem{example}[theorem]{Example}
\newtheorem{definition}[theorem]{Definition}
\newtheorem{problem}[theorem]{Problem}
\newcommand{\qbinom}[3][q]{\genfrac[]{0pt}0{#2}{#3}_{#1}}
\newcommand{\Mat}{\mathrm{Mat}}
\newcommand{\op}{\mathrm{op}}
\newcommand{\one}{\mathbf 1}
\newcommand{\counit}{\varepsilon}
\renewcommand{\eqref}[1]{{\rm (\ref{#1})}}
\def\ZZ{\mathbb{Z}}
\def\CC{\mathbb{C}}
\def\Hom{\operatorname{Hom}}
\def\End{\operatorname{End}}
\def\Ext{\operatorname{Ext}}
\def\id{\operatorname{id}}
\newcommand{\lr}[1]{{\langle #1\rangle}}
\newcommand{\bfv}{\mathbf{v}}
\newcommand{\cA}{\mathcal{A}}
\newcommand{\cC}{\mathcal{C}}
\newcommand{\cH}{\mathcal{H}}
\newcommand{\cR}{\mathcal{R}}
\newcommand{\rD}{D}
\newcommand{\rU}{U}
\newcommand{\rSL}{\mathrm{SL}}
\newcommand{\rGL}{\mathrm{GL}}
\newcommand{\fg}{\mathfrak{g}}
\newcommand{\ft}{\mathfrak{t}}
\newcommand{\fsl}{\mathfrak{sl}}
\newcommand{\fgl}{\mathfrak{gl}}
\newcommand{\actleft}{\vartriangleright}
\newcommand{\actright}{\vartriangleleft}
\newcommand{\blactleft}{\blacktriangleright}
\newcommand{\blactright}{\blacktriangleleft}
\newcommand{\dualdouble}{ \rU_q(\fsl_2) \otimes \CC_q[\rSL_2] }
\newcommand{\HCpp}{ {^+}(H\otimes C){^+} }
\newcommand{\Hpp}{ {^+}\mathcal{H}^+ }
\newcommand{\vd}{\dot v}
\newcommand{\vdd}{\ddot v}
\newcommand{\Cplus}{ {^+C} }
\DeclareMathOperator{\Span}{span}
\begin{document}

\title{Semisimplicity of certain representation categories}

\author{John E.~Foster}
\address{\noindent Department of Mathematics, Walla Walla University, College Place, WA 99324, USA} \email{john.foster@wallawalla.edu}

\maketitle
\tableofcontents
\section{Introduction}

Often in studying representation theory we find that certain representation categories are semisimple.  This is an illuminating property, since in such categories it suffices to study the simple objects.  The research presented in this paper is motivated by the desire to expand our available tools for proving semisimplicity, with a special focus on representations of the quantum double.

A familiar result in this direction is that representations of semisimple Lie algebras are semisimple \cite{MR0323842}*{28}.  Pivotal to the proof of this theorem is the existence of a \emph{Casimir} element with certain properties.  The proof generalizes nicely to quantized enveloping algebras \cite{MR953821}*{587--589}.  In Section \ref{section:casimir} we further generalize the proof to a Hopf algebra $H$, which covers both cases.

\begin{theorem}\label{theorem:weyl_for_Hopf_algebras}
Let $H$ be a Hopf algebra and let $\cC$ be an Abelian category of finite-dimensional $H$-modules which is closed under extension.  Suppose that $H$ has a Casimir element which acts by 0 on a simple module $V$ if and only if $V$ is the trivial module.  If the extension of the trivial module by itself is 0, then $\cC$ is semisimple.
\end{theorem}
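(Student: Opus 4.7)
The strategy is to adapt the classical proof of Weyl's complete reducibility theorem to this Hopf algebraic setting, with the Casimir playing its usual role. Since $\cC$ is closed under extensions, semisimplicity is equivalent to showing that every short exact sequence
\[
0 \longrightarrow V \longrightarrow W \longrightarrow V' \longrightarrow 0
\]
in $\cC$ splits. The plan is to perform two reductions and then handle a single key case with the Casimir.

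First, I would reduce to the case $V' = \CC$, the trivial module. Using the comultiplication and antipode of $H$, the internal hom $\Hom_{\kk}(V', -)$ carries a natural $H$-module structure, and applying it to the sequence produces an exact sequence $0 \to \Hom(V',V) \to \Hom(V',W) \to \Hom(V',V') \to 0$ of $H$-modules. Pulling back the trivial line $\kk\cdot \id_{V'} \subseteq \Hom(V',V')$ yields a new short exact sequence $0 \to \Hom(V',V) \to W'' \to \CC \to 0$, and an $H$-invariant splitting of this sequence is exactly an $H$-equivariant section of the original. Thus it suffices to split extensions with trivial quotient.

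Second, with $V'=\CC$ fixed, I would reduce to $V$ simple by induction on $\dim V$: given a proper nonzero submodule $V_1 \subseteq V$, the shorter sequence $0 \to V/V_1 \to W/V_1 \to \CC \to 0$ splits by the inductive hypothesis, producing an intermediate $N \supseteq V_1$ with $N/V_1 \cong \CC$; splitting $0 \to V_1 \to N \to \CC \to 0$ (again by induction on $\dim V_1 < \dim V$) then yields a line $M \subseteq N$ complementary to $V_1$, and a dimension/intersection check gives $W = V \oplus M$.

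For the final case $0 \to V \to W \to \CC \to 0$ with $V$ simple, there are two subcases. If $V \cong \CC$, the vanishing of extensions of the trivial module by itself gives the splitting immediately. If $V$ is nontrivial, apply the Casimir $c$: by hypothesis $c$ acts on $V$ by a nonzero scalar $\lambda$ and by $0$ on $\CC$, so $c(W) \subseteq V$ and $c$ satisfies $c(c-\lambda) = 0$ on $W$. Since $c$ commutes with the $H$-action, both $\ker c$ and $\ker(c-\lambda)$ are $H$-submodules, and coprimality of $x$ and $x-\lambda$ gives $W = \ker c \oplus \ker(c-\lambda)$. Checking that $\ker(c-\lambda) = V$ produces the desired $H$-stable complement.

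The main obstacle is the first reduction: one must verify that the internal hom construction stays inside $\cC$ (or can be replaced by a suitable device that does), and that the Casimir is central enough — in particular, that $c$ acts by an $H$-module endomorphism on every object of $\cC$ so that $\ker c$ is a submodule. The Casimir case itself is short once these structural points are in place.
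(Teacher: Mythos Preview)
Your proposal is correct and follows essentially the same three-step architecture as the paper: a Hom construction using the Hopf structure to reduce to trivial quotient, an induction on dimension to reduce to simple subobject, and the Casimir argument for the base case. The only cosmetic differences are that the paper presents the steps in the opposite order (Casimir case first, induction second, Hom reduction last) and that the paper builds $\Hom_k(V,W)$ with the action $(h\actleft\phi)(v)=h_{(1)}\actleft\phi(Sh_{(2)}\actleft v)$ and extracts an $H$-equivariant \emph{projection} onto the submodule, whereas you apply $\Hom_k(V',-)$ and extract an $H$-equivariant \emph{section} of the quotient; these are dual formulations of the same trick. Your concern about whether the internal Hom stays in $\cC$ is legitimate and is equally present (and equally unaddressed) in the paper's own argument.
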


In cases where the center of $H$ is not well understood and no such Casimir element is known, we choose to pursue a different approach.  Matrix coefficients of representations of Lie groups were first described by \'Elie Cartan, and they were used by Fritz Peter and Hermann Weyl in the 1920's to decompose representations of compact topological groups in their famous Peter-Weyl theorem.  Israel Gelfand continued using matrix coefficients of representations to bring new insight to several classical problems.  Their work is the inspiration for our approach.

In Section \ref{section:correspondence} we describe a correspondence between algebra representation categories and sub-bimodules of the dual of the algebra.  We show that if $A$ is an algebra and $V$ is a finite-dimensional left $A$-module, then there is a bimodule morphism
\begin{equation}\label{eqn:matrix_coefficients}
	\beta_V\colon V \otimes V^* \to A^*  
	\qquad \text{given by} \qquad 
	\beta_V(v \otimes \zeta)(a) = \zeta(a \actleft v).
\end{equation}
We thus view $A^*$ as the best place to look for $A$-modules, and we examine some other properties of this correspondence.

In Section \ref{section:PW_theorem} we establish a Peter-Weyl-type theorem that makes use of this correspondence to prove semisimplicity of a category.

\begin{theorem}\label{thm:PWdual}
Let $B$ be a bialgebra and $\cC$ be an Abelian category of finite-dimensional $B$-modules.  Then $\cC$ is semisimple if and only if the image $B_\cC^*$ of $\cC$ under the correspondence \eqref{eqn:matrix_coefficients} has Peter-Weyl decomposition
\begin{equation*}
	B_\mathcal{C}^* = \bigoplus_{V} \beta_V(V\otimes V^*)
\end{equation*}
as an internal direct sum over all isomophism classes in $\cC$.
\end{theorem}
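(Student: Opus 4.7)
The plan is to establish the two implications separately, with the linear independence of matrix coefficients of non-isomorphic simple modules serving as the central tool.

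For the forward direction, I will assume $\cC$ is semisimple and decompose each $V \in \cC$ into isotypic components $V \cong \bigoplus_\alpha V_\alpha$ with $V_\alpha \cong S_\alpha^{n_\alpha}$ for pairwise non-isomorphic simples $S_\alpha \in \cC$. A block-matrix computation shows that $\beta_V(V \otimes V^*) = \sum_\alpha \beta_{S_\alpha}(S_\alpha \otimes S_\alpha^*)$: matrix coefficients pairing vectors in different isotypic components vanish since the $B$-action preserves each $V_\alpha$, while the coefficients within a single component $S_\alpha^{n_\alpha}$ span only $\beta_{S_\alpha}(S_\alpha \otimes S_\alpha^*)$. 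Summing over $V$ thus gives $B_\cC^* = \sum_S \beta_S(S \otimes S^*)$, taken over simple isomorphism classes in $\cC$. That the sum is direct will follow from the Jacobson density theorem: for any finite family $S_1, \ldots, S_m$ of pairwise non-isomorphic finite-dimensional simple $B$-modules, the algebra map $B \to \bigoplus_i \End(S_i)$ is surjective, and dualizing yields injectivity of $\bigoplus_i \beta_{S_i}$.

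For the converse, I will assume $B_\cC^* = \bigoplus_S \beta_S(S \otimes S^*)$ and split an arbitrary short exact sequence $0 \to K \to V \to Q \to 0$ in $\cC$. The key observation is that each $\beta_S(S \otimes S^*)$ is a simple matrix subcoalgebra of $B^*$, so the hypothesis says $B_\cC^*$ is cosemisimple as a coalgebra. Via the correspondence of Section \ref{section:correspondence} between sub-bimodules of $B^*$ and representation categories, cosemisimplicity of $B_\cC^*$ should translate into semisimplicity of $\cC$. Concretely, I will extract from the direct-sum decomposition a family of central idempotents in the finite-dimensional algebra dual to $B_\cC^*$ and show their action on $V$ through $\rho_V\colon B \to \End(V)$ yields $B$-module projections onto $S$-isotypic submodules of $V$, decomposing $V$ into isotypic pieces which in turn split into copies of $S$ by Jacobson density applied within each piece.

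The main obstacle is this converse: the PW identity is a statement about a subspace of $B^*$, and extracting a module-level splitting requires carefully transporting the coalgebra-level decomposition back to $\cC$. A fallback argument, should the correspondence of Section \ref{section:correspondence} not deliver this transport cleanly, is to argue by contradiction: a non-split extension $0 \to K \to V \to Q \to 0$ forces $\beta_V(V \otimes V^*)$ to contain matrix coefficients encoding the nontrivial extension class, and one would need to show these coefficients cannot be reassembled from the summands $\beta_S(S \otimes S^*)$ without violating directness of the PW decomposition.
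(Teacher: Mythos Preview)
Your forward direction (semisimple $\Rightarrow$ Peter--Weyl) is correct and in fact more careful than the paper's. The paper decomposes $V$ into simples, invokes the additivity of $\beta$ (Lemma~\ref{lemma:the_map_beta} and Corollary~\ref{cor:beta_kills_multiplicity}) to get $B_\cC^* = \sum_S \beta_S(S\otimes S^*)$, and then cites only the injectivity of $\beta_S$ on each simple $S$ (Lemma~\ref{lemma:beta_injective_on_simples}) for directness. Injectivity of each $\beta_S$ alone does not give directness of the sum over non-isomorphic $S$; your appeal to Jacobson density (surjectivity of $B \to \bigoplus_i \End(S_i)$, hence injectivity of the dual map) is exactly the missing step.

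Your converse (Peter--Weyl $\Rightarrow$ semisimple), however, is substantially more involved than necessary, and the plan as stated has soft spots: $B_\cC^*$ is typically infinite-dimensional, so ``the finite-dimensional algebra dual to $B_\cC^*$'' is not well-defined without further work, and the fallback contradiction is only a sketch. The paper's argument is much shorter and avoids all coalgebra machinery. Given a non-split length-two extension $0 \to V_1 \to V \to V_2 \to 0$ with $V_1,V_2$ simple, the paper observes that $V^*$ is cyclic (any $f \in V^*$ not vanishing on $V_1$ generates $V^*$), picks such a cyclic $f$, and checks that the left $B$-module map
\[
V \longrightarrow B_\cC^*,\qquad v \longmapsto \beta_V(v\otimes f),
\]
is injective: if $\beta_V(v\otimes f)=0$ then $(f\actright b)(v)=0$ for all $b\in B$, and since $f$ is cyclic this forces $v=0$. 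But the Peter--Weyl hypothesis makes $B_\cC^*$ a semisimple left $B$-module (each $\beta_S(S\otimes S^*)$ is a sum of copies of $S$), so $V$ embeds in a semisimple module and is therefore semisimple itself, contradicting indecomposability. Lemma~\ref{lem:KrullSchmidt_and_Ext} finishes. This single embedding replaces your entire idempotent/cosemisimplicity program.
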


Our goal is to make use of these ideas to establish semisimplicity in a new situation.  It is well-known that if $H$ is a finite-dimensional Hopf algebra and if representations of $H$ and $H^*$ are semisimple, then so are representations of the quantum double $\rD(H)$ \cite{MR1243637}*{193}.  This is not necessarily true when $H$ is infinite-dimensional; for example, not all finite-dimensional representations of $\rD(\rU(\fsl_2))$ are semisimple.  Neither are all finite-dimensional representations of $\rU_q(\fsl_2)$, nor of its double $\rD(\rU_q(\fsl_2))$, when $q$ is specialized to a root of unity.  However, in the author's conversations with Victor Ostrik, the following conjecture was made for generic $q$.  It appears to be an open problem, and very difficult, even when $\fg=\fsl_2$.

\begin{mainconjecture}\label{conjecture:victor}
Let $\fg$ be a semisimple Lie algebra.  Then every finite-dimensional representation of $\rD(\rU_q(\fg))$ is semisimple.
\end{mainconjecture}

In Section \ref{section:Vplusminus} we further conjecture what the simple $\rD(\rU_q(\fg))$-modules are, as we now describe.  Suppose that $H$ is a Hopf algebra with invertible quasi-triangular structure $R$.  Given a left $H$-module $V$, we can construct two left $\rD(H)$-modules $V^+$ and $V^-$ using $R$ and $R^{-1}$, respectively.

\begin{lemma}\label{lemma:DH_simples}
Suppose that all left $H$-modules are semisimple, that $V\otimes V^*$ is semisimple for any simple $H$-module $V$, and that $V^+\ncong V^-$ if $V$ is non-trivial.  If $U$ and $V$ are simple $H$-modules, then the $\rD(H)$-module $U^+\otimes V^-$ is simple.
\end{lemma}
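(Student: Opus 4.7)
The plan is to prove simplicity by showing that $U^+\otimes V^-$ has no proper nonzero $\rD(H)$-submodule, and that this follows once one knows the subalgebra of $\End(U\otimes V)$ generated by the $\rD(H)$-action exhausts the whole endomorphism algebra.

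First I would make the $\rD(H)$-structure explicit. Both $V^+$ and $V^-$ agree with $V$ as $H$-modules, while the $H^*$-actions factor through algebra maps $H^*\to H$ built from $R$ (for $+$) and from its inverse (for $-$). Since $\rD(H)$'s coproduct restricts on $H^*$ to the coopposite coproduct $\Delta^{\op}_{H^*}$, the $H^*$-action on $U^+\otimes V^-$ takes the form
\[ f\cdot(u\otimes v) = \sum f(\bar R^{(1)}R^{(1)})\,(R^{(2)}\actleft u)\otimes(\bar R^{(2)}\actleft v), \]
where $\bar R$ is the $R$-matrix used for the $-$-structure, while the $H$-action on $U^+\otimes V^-$ is the diagonal one through $\Delta$. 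Since $U$ and $V$ are absolutely simple, Burnside's theorem applied to each gives that $H\otimes H$ surjects onto $\End(U\otimes V)$, so the question reduces to whether the image of $\rD(H)$ inside $\End(U\otimes V)$ --- which contains $\Delta(H)$ and the ``monodromy'' image of $H^*$ displayed above --- already exhausts $\End(U\otimes V)$.

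Given any nonzero $\rD(H)$-submodule $W\subseteq U^+\otimes V^-$, I would use $H$-semisimplicity to decompose $U\otimes V=\bigoplus_i W_i^{\oplus n_i}$ into $H$-isotypic components, so that $W$ is an $H$-sum of copies of the $W_i$'s. The target is to show $W=U\otimes V$, which would follow from showing that $\Delta(H)$ together with the $H^*$-operators above moves vectors from any chosen isotypic component into every other one with full multiplicity.

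The main obstacle is this last step, where the hypotheses enter essentially. The assumption $V^+\not\cong V^-$ for nontrivial simple $V$ is a \emph{factorizability/genericity} condition: it should force the monodromy $R_{21}R$ to act as distinct scalars on distinct nontrivial $H$-simples, so that the $H^*$-action genuinely shifts between different isotypic components of $U\otimes V$ rather than merely scaling each one. The semisimplicity hypothesis on $V\otimes V^*$ then supplies the $H$-equivariant morphisms that witness this mixing explicitly. Combining these two inputs should show that the subalgebra generated by $\Delta(H)$ and the $H^*$-operators fills out all of $\End(U\otimes V)$, hence $W=U\otimes V$. The hard part will be the precise translation of the qualitative hypothesis $V^+\not\cong V^-$ into the quantitative claim about the image of the monodromy in $H\otimes H$, particularly because the relevant Hopf algebras (e.g.\ $\rU_q(\fg)$) are infinite-dimensional and one cannot invoke factorizability off the shelf.
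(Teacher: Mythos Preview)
Your plan takes a genuinely different route from the paper, and the step you yourself flag as hard is a real gap that you do not close. Translating the qualitative hypothesis $V^+\not\cong V^-$ into the assertion that the monodromy $R_{21}R$ separates the $H$-isotypic components of $U\otimes V$ is not justified, and it is not clear it is even true at this generality: non-isomorphism of two $\rD(H)$-modules does not control the spectrum of $R_{21}R$ on individual summands. Your invocation of the semisimplicity of $V\otimes V^*$ is also vague; ``supplying $H$-equivariant morphisms that witness the mixing'' does not obviously produce operators in the image of $\rD(H)$, which is what you need.

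The paper avoids this difficulty entirely by a short categorical computation. Rather than analyzing the image of $\rD(H)$ in $\End(U\otimes V)$, it computes the $\rD(H)$-endomorphism ring via rigidity and the fact that the functors $V\mapsto V^\pm$ are monoidal (hence commute with duals and tensor products):
\[
\End_{\rD(H)}(U^+\otimes V^-)\;\cong\;\Hom_{\rD(H)}\big((U\otimes U^*)^+,\,(V\otimes V^*)^-\big).
\]
Here the hypothesis on $V\otimes V^*$ is used exactly once, and decisively: each argument is $(\cdot)^\pm$ applied to a semisimple $H$-module, hence a direct sum of $W^+$'s (respectively $W^-$'s) with $W$ simple. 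By Schur and the hypothesis $W^+\not\cong W^-$ for nontrivial $W$, the only nonzero contribution is from the unique trivial summand on each side, so the endomorphism ring is one-dimensional and $U^+\otimes V^-$ is simple. Your Burnside-and-monodromy route, even if it could be completed, would need structural input about $R$ well beyond the stated hypotheses.
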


It is unclear whether we have accounted for all simple $\rD(H)$-modules: Although the additive span of such $U^+\otimes V^-$ is closed under tensor multiplication, we do not know whether it is closed under extension.  In the author's conversations with Victor Ostrik, the following conjecture was made.

\begin{conjecture}\label{conjecture:victor2}
Let $\fg$ be a semisimple Lie algebra.  The simple $\rD(\rU_q(\fg))$-modules are, up to isomorphism, the modules $V_\lambda^+\otimes V_\mu^-\otimes U_0$ where $\lambda$ and $\mu$ are dominant integral $\fg$-weights and
$U_0$ belongs to the (finite) set of one-dimensional $\rD(\rU_q(\fg))$-modules.
\end{conjecture}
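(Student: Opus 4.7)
The plan has two directions. The first---that each $V_\lambda^+\otimes V_\mu^-\otimes U_0$ is a simple $\rD(\rU_q(\fg))$-module---follows almost immediately. For dominant integral weights $\lambda,\mu$ the modules $V_\lambda,V_\mu$ are simple $\rU_q(\fg)$-modules, the hypotheses of Lemma \ref{lemma:DH_simples} hold for $H=\rU_q(\fg)$ at generic $q$ (finite-dimensional modules are semisimple, tensor products $V\otimes V^*$ are semisimple, and $V^+\ncong V^-$ for non-trivial $V$ because the universal $R$-matrix is genuinely non-symmetric), and hence $V_\lambda^+\otimes V_\mu^-$ is simple. Tensoring with an invertible one-dimensional $U_0$ preserves simplicity.

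For the converse, suppose $W$ is a finite-dimensional simple $\rD(\rU_q(\fg))$-module. I would exploit two algebra maps $\rU_q(\fg)\to\rD(\rU_q(\fg))$: the canonical inclusion $H\hookrightarrow\rD(H)$, and a second embedding constructed from the universal $R$-matrix (the same $R$ underlying the constructions $V^+$ and $V^-$). Pulling back $W$ along each embedding produces two $\rU_q(\fg)$-actions on the same underlying space, and by quasi-triangularity these actions commute modulo conjugation by elements of the Cartan subalgebra. Because finite-dimensional $\rU_q(\fg)$-modules are semisimple for generic $q$, the resulting $\rU_q(\fg)\otimes\rU_q(\fg)$-like structure on $W$ decomposes into isotypic pieces $V_\lambda\boxtimes V_\mu$, and simplicity of $W$ as a $\rD(\rU_q(\fg))$-module forces only one such piece to appear. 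The residual discrepancy between this piece and the model $V_\lambda^+\otimes V_\mu^-$, caused by central characters not captured by $\lambda,\mu$, is then absorbed into a one-dimensional twist $U_0$. Confirming that the resulting catalogue is complete can be cross-checked via Theorem \ref{thm:PWdual}, which would let one verify that the matrix coefficients of the proposed simples exhaust the matrix coefficients of the full category.

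The main obstacle is the structural statement that the two embedded copies of $\rU_q(\fg)$, together with the finite group of central group-like elements, actually generate all of $\rD(\rU_q(\fg))$. Equivalently, one needs $\rD(\rU_q(\fg))$ to admit a presentation as a Drinfeld twist of $\rU_q(\fg)\otimes\rU_q(\fg)$ along a Cartan identification, extended by a finite group of group-likes. In the finite-dimensional Hopf algebra setting such a presentation is routine, but in the infinite-dimensional quantum group situation it requires genuinely new input and is, I suspect, the technical heart of the conjecture; without it, the classification reduces to but is not quite equivalent to the representation theory of $\rU_q(\fg)^{\otimes 2}$. A secondary task is to identify the set of one-dimensional $\rD(\rU_q(\fg))$-modules explicitly as characters of the group of grouplike central elements, which is a concrete but non-trivial calculation that would also pin down when two triples $(\lambda,\mu,U_0)$ yield isomorphic modules.
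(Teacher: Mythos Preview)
The statement is labeled a \emph{conjecture} in the paper, not a theorem; the paper offers no proof and explicitly calls it ``extremely difficult even for $\fg=\fsl_2$.'' Your first direction---that each $V_\lambda^+\otimes V_\mu^-\otimes U_0$ is simple---is correct and is precisely what the paper establishes in Lemma~\ref{lem:two_simples_4} (the same content as Lemma~\ref{lemma:DH_simples}). The finiteness of the set of one-dimensional modules is also handled in the paper, at least for $\fsl_n$, in Proposition~\ref{prop:n_simple_1-dimensional_modules}.

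Your converse direction is, as you yourself acknowledge, only a strategy with an identified gap, and that gap is genuine. Two specific problems beyond the one you flag: first, your ``second embedding constructed from the universal $R$-matrix'' is not obviously an algebra homomorphism $\rU_q(\fg)\to\rD(\rU_q(\fg))$, since $R$ lives only in a completion and the standard $R$-matrix maps run $H^{*\mathrm{cop}}\to H$ rather than producing a second copy of $H$ inside the double; second, the assertion that the two resulting $\rU_q(\fg)$-actions ``commute modulo conjugation by Cartan elements'' is itself essentially equivalent to the structural presentation of $\rD(\rU_q(\fg))$ as a twist of $\rU_q(\fg)^{\otimes 2}$ that you already name as the main obstacle, so it cannot be used as an intermediate step. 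The paper does not attempt this route at all. Its partial progress toward the conjecture is entirely on the dual side: via Theorem~\ref{thm:PWdual} it exhibits a Peter--Weyl decomposition of the locally-finite part of $\rU_q(\fsl_2)\otimes\CC_q[\rSL_2]\subset\rD(\rU_q(\fsl_2))^*$, which classifies the simples and proves semisimplicity only for the subcategory corresponding to that sub-bimodule---consistent with, but far from establishing, the full conjecture.
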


Because little is known about the center of $\rD(\rU_q(\fg))$, we have little hope of applying Theorem \ref{theorem:weyl_for_Hopf_algebras}.  Our main conjecture appears to be very difficult, so even a little progress would be quite helpful.  In this paper we consider the semisimple Lie algebra $\fg=\fsl_2$ and attempt to apply Theorem \ref{thm:PWdual}.

\begin{remark}\label{remark:advantage_of_bimodule}
One advantage of this strategy is that $B^*$ is not just a left $B$-module, but also a right $B$-module.  In the case of $\rD(H)$, the right action binds together the multiplicities of each left $\rD(H)$-module into a single $\rD(H)$-bimodule without multiplicity.
\end{remark}

Now when $H$ is infinite-dimensional, the structure of $\rD(H)^*$ is very complicated.  If $C$ is the finite dual of $H$, then $\rD(H)$ is the coalgebra $C\otimes H$ with the necessary algebra structure to make it into a Hopf algebra, as we explain in Section \ref{section:the_quantum_double}.  It follows that the Hopf algebra $\rD(H)^*$ contains $H\otimes C$ as a subalgebra with the tensor algebra structure.  The action of $\rD(H)$ on $H\otimes C$ is determined completely by the Hopf pairing of $H$ and $C$.  Now $H\otimes C=\rD(H)^*$ when $H$ is finite-dimensional, but not when $H$ is infinite-dimensional.  However, we show in Section \ref{section:double_actions} that $H\otimes C$ is still a surprisingly interesting object in the infinite-dimensional case.

\begin{theorem}
The subalgebra $H\otimes C\subset \rD(H)^*$ is a sub-bimodule.
\end{theorem}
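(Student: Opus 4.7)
The plan is to verify directly, by reduction to generators, that $H \otimes C$ is closed under the left and right regular actions of $\rD(H)$ on $\rD(H)^*$. Recall that $\rD(H) = C \otimes H$ as coalgebras, so $\rD(H)^*$ contains $H \otimes C$ via the pairing $\langle h \otimes c,\, c' \otimes h' \rangle = c'(h)\, c(h')$; the bimodule structure dualizes multiplication in the form $(x \cdot f \cdot x')(y) = f(x' y x)$. Since $C \otimes 1$ and $1 \otimes H$ together generate $\rD(H)$ as an algebra, it suffices to check closure of $H \otimes C$ under left and right multiplication by elements of each of these two subalgebras---four cases in total.

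Two of the four cases are straightforward and require no cross-relation. For example, for $f = h_0 \otimes c_0 \in H \otimes C$ and $c \in C$,
\begin{equation*}
  (f \cdot c)(c' \otimes h') = f(cc' \otimes h') = \langle cc',\, h_0 \rangle \langle c_0,\, h' \rangle,
\end{equation*}
and expanding $\langle cc', h_0 \rangle$ via the coproduct of $H$ yields $f \cdot c = \sum \langle c, h_{0,(1)} \rangle\, h_{0,(2)} \otimes c_0 \in H \otimes C$. The left action of $h \in H$ on $f$ is handled symmetrically using the coproduct of $C$.

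The remaining two cases---left action of $C$ and right action of $H$---invoke the Drinfeld cross-relation, which expresses $h' \cdot c \in \rD(H)$ as a finite sum in $C \otimes H$ whose coefficients are built from the Hopf pairing, the coproducts of $c$ and $h'$, and the antipode of $H$. Substituting this expansion into $\bigl((c \otimes 1) \cdot f\bigr)(c' \otimes h') = f\bigl((c' \otimes h')(c \otimes 1)\bigr)$ and isolating the factors that depend on $h'$, one finds that the $h'$-dependence is given by the evaluation at $h'$ of a convolution in $H^*$ built from $c_0$, pieces of the coproduct of $c$, and the dual antipode. The main technical obstacle---and the only essential use of the structure of $C$ as the \emph{finite dual}---is verifying that this convolution actually lies in $C$ rather than merely in $H^*$; this follows from $C$ being a Hopf subalgebra of $H^*$, hence closed under convolution and under the antipode. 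Granted this, the remaining pairings depend only on $c'$ and $h_0$, so $(c \otimes 1) \cdot f$ reassembles as an element of $H \otimes C$; the right action of $H$ is handled by the mirror calculation.
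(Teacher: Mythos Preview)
Your proposal is correct and follows essentially the same route as the paper: both compute the actions by pairing against a test element $c'\cdot h'\in\rD(H)$, use the cross-relation to straighten $h'\cdot c$ (resp.\ $c'\cdot h$), and then observe that the $C$-component of the result is built from products and antipodes of elements of $C$, hence stays in $C$. The paper carries the computation one step further to record the explicit closed formulas (e.g.\ $c\blactleft(\bar h\cdot\bar c)=(\bar h\actright c_{(2)})\cdot Sc_{(1)}\bar c\,c_{(3)}$), but the underlying argument is the same as yours; one minor remark is that the relevant hypothesis is simply that $C$ is a Hopf algebra Hopf-paired with $H$, not specifically that it is the finite dual.
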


In Section \ref{section:correspondence} we define $(H\otimes C)_f$ to be the sub-bimodule of elements of $H\otimes C$ which generate finite-dimensional $\rD(H)$-bimodules, and we note that this space is also an algebra.
We remark here that $(H\otimes C)_f$ is not a new object; in fact, for any Hopf algebra $H$, the subalgebra $(H\otimes C)_f \subset \rD(H)^*$ 
is the intersection of $H\otimes C$ with the Sweedler dual $\rD(H)^\circ$.

In Section \ref{section:main_results} we focus our attention on the Hopf algebra $\rU_q(\fsl_2)$.  Now for finite-dimensional $H$, an element of $H\otimes C$ is locally-finite under the actions of $\rD(H)$ if and only if it is locally-finite under the actions of $H$.  It is very helpful that the same holds for $H=\rU_q(\fsl_2)$.

\begin{proposition}
An element of $\dualdouble \subset \rD(\rU_q(\fsl_2))^*$ is locally-finite under the actions of $\rD(\rU_q(\fsl_2))$ if and only if it is locally-finite under the actions of $\rU_q(\fsl_2)$.
\end{proposition}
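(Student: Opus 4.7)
The direction $\rD(\rU_q(\fsl_2))$-locally-finite $\Rightarrow$ $\rU_q(\fsl_2)$-locally-finite is immediate, since the $\rU_q(\fsl_2)$-bimodule generated by $x$ is contained in the $\rD(\rU_q(\fsl_2))$-bimodule generated by $x$, so finite-dimensionality of the latter forces that of the former.

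For the converse, I would first derive explicit formulas for the four bimodule actions of $\rD(H) = C \otimes H$ on $H \otimes C \subseteq \rD(H)^*$ (where $H = \rU_q(\fsl_2)$ and $C = \CC_q[\rSL_2]$), using the cross-relations of the quantum double and the Hopf pairing $\langle \cdot, \cdot \rangle \colon C \otimes H \to \CC$.  The actions of $H$ on the left and $C$ on the right come out in the especially simple form
\begin{align*}
h \cdot (a \otimes b) &= \textstyle\sum a \otimes b_{(1)}\,\langle b_{(2)}, h\rangle, \\
(a \otimes b) \cdot c &= \textstyle\sum \langle a_{(1)}, c\rangle\, a_{(2)} \otimes b,
\end{align*}
whose orbit on any element has dimension at most the number of terms in $\Delta(b)$ or $\Delta(a)$ respectively; these actions are therefore automatically locally finite on every element of $H\otimes C$.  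The remaining two actions, $(a \otimes b) \cdot h$ and $c \cdot (a \otimes b)$, are \emph{adjoint-like}: they involve the full coproducts of both tensor factors together with the antipode, and they are where the nontrivial local-finiteness question lives.

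Suppose now that $x \in H \otimes C$ generates a finite-dimensional $H$-bimodule $M$.  My plan is to show that $N := C \cdot M \cdot C$ is a finite-dimensional $\rD(H)$-sub-bimodule containing $x$.  Closure of $N$ under the $H$-actions follows from the straightening formula of the Drinfeld double: rewriting any product $hc \in \rD(H)$ as $\sum (\text{scalar})\, c_{(i)} h_{(j)}$ yields $h \cdot (c \cdot y) \in C \cdot (H \cdot y) \subseteq C \cdot M \subseteq N$ for $y \in M$, and analogously on the right.  So everything reduces to showing that each left $C$-orbit $C \cdot y$ is finite-dimensional; the right $C$-action on $M$ is already tame from the second formula above.

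The main obstacle is precisely this reduction.  The hypothesis that $M$ is $H$-bimodule locally finite, applied through the adjoint-like right $H$-action formula specialized to elements of the form $a \otimes 1$, forces the $H$-tensorands appearing in $M$ to be locally finite under the adjoint action of $\rU_q(\fsl_2)$ on itself, hence to lie in the well-understood adjoint-locally-finite subalgebra identified by Joseph and Letzter.  I then plan to exploit the Hopf-pairing duality between this subalgebra and the coadjoint-locally-finite part of $\CC_q[\rSL_2]$: the scalar contraction $\langle a_{(2)}, c_{(2)}\rangle$ appearing in the left $C$-action on $a \otimes b$ restricts $c$ to a finite-dimensional effective range, so that the coadjoint-like factor on $b$ spans only a finite-dimensional subspace of $\CC_q[\rSL_2]$.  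Converting local finiteness of an adjoint $H$-action on $H$ into local finiteness of a coadjoint $C$-action on $C$, using the explicit structure theory of $\rU_q(\fsl_2)$ and $\CC_q[\rSL_2]$, is the technical crux of the proof.
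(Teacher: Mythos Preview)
Your initial observations are correct and match the paper: the left $H$-action and the right $C$-action on $H\otimes C$ are of ``coproduct plus pairing'' type and hence automatically locally finite, so the content lies entirely in controlling the remaining two actions.  However, from that point on your proposal diverges sharply from the paper's argument, and the step you flag as the ``technical crux'' is a genuine gap rather than a routine verification.

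The paper does \emph{not} attempt to bound $C\blactleft M$ directly.  Instead it proceeds through highest-weight bivectors: it proves (Theorem~\ref{thm:v1v2v3v4generate_hw}) by an explicit and lengthy computation --- localizing at $\bfv_3=EK^{-1}$ and at $c$, finding all highest-weight bivectors in the localized algebra, and then carefully undoing the localizations --- that the subalgebra $\HCpp$ of highest-weight bivectors in $H\otimes C$ is generated by four specific elements $\bfv_1,\bfv_2,\bfv_3,\bfv_4$ of the $16$-dimensional simple $\rD(H)$-bimodule $\cH_{1,1}$.  Since products of locally finite elements are locally finite, every highest-weight bivector is $\rD(H)$-locally finite (Corollary~\ref{cor:HCpp_is_locally_finite}).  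The proposition then follows: if $x$ is $H$-locally finite, the finite-dimensional $H$-bimodule $M$ it generates is, by Lemma~\ref{lem:hw_generate_locally_finite_modules} applied on both sides, generated as an $H$-bimodule by ${^+}M{^+}\subset\HCpp$; each such generator is $\rD(H)$-locally finite, hence so is every $H$-translate of it, hence so is $x$.

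Your proposed route --- extracting adjoint-local-finiteness of the $H$-tensorands via Joseph--Letzter and then dualizing to control the left $C$-orbit --- is not obviously wrong, but the sketch does not close.  In particular, the claim that the pairing $\phi(c_{(2)},\bar h_{(1)})$ ``restricts $c$ to a finite-dimensional effective range'' is false as stated: for any fixed $\bar h\in H$ infinitely many $c\in C$ pair nontrivially, and the relevant map $c\mapsto Sc_{(1)}\bar c\,c_{(3)}$ on the $C$-factor is \emph{not} the coadjoint action (there is a missing middle leg), so standard coadjoint-finiteness results for $\CC_q[\rSL_2]$ do not apply directly.  Making this work, if it can be done, would require an argument of comparable difficulty to the paper's classification of $\HCpp$; it is not a shortcut.
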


Just as highest-weight vectors are key to the study of $\rU_q(\fsl_2)$-modules, so too are highest-weight bivectors (those which are highest-weight on both the left and the right) key to achieving these results.  We use Remark \ref{remark:advantage_of_bimodule} extensively.  We show that one particular $\rD(\rU_q(\fsl_2))$-bimodule, denoted $\cH_{1,1}$, has a set $\{\bfv_1,\bfv_2,\bfv_3,\bfv_4\}$ of four canonical highest-weight bivectors which play a very important role in the representation theory of $\rD(\rU_q(\fsl_2))$.

\begin{proposition}
The subalgebra of highest-weight bivectors in $\dualdouble$ is locally-finite and is generated as an algebra by $\{\bfv_1,\bfv_2,\bfv_3,\bfv_4\}$.
\end{proposition}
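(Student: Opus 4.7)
The plan is to use the bi-weight grading on $\dualdouble$ induced by the Cartan elements of the two copies of $\rU_q(\fsl_2)$ sitting inside $\rD(\rU_q(\fsl_2))$ acting on the left and on the right, and to characterize highest-weight bivectors as the joint kernel of four explicit raising operators.

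First I would pin down these four raising operators. Since $\rD(\rU_q(\fsl_2))$ contains both $\rU_q(\fsl_2)$ and $\CC_q[\rSL_2]$ as Hopf subalgebras, each of the left and right $\rD$-actions on $\dualdouble$ provides two raising operators: an $E$-raising coming from $\rU_q(\fsl_2)$ and a raising coming from the appropriate upper-triangular matrix coefficient in $\CC_q[\rSL_2]$. Each of these four operators is a twisted derivation for the algebra structure on $\dualdouble$, so their joint kernel is automatically a subalgebra. A direct check using the definition of $\bfv_1,\ldots,\bfv_4$ in $\cH_{1,1}$ (as matrix coefficients of fundamental modules under the $V^+/V^-$ construction of Lemma \ref{lemma:DH_simples}) then confirms that each $\bfv_i$ lies in this joint kernel with its own distinct fundamental bi-weight in $\ZZ^4$.

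The generation claim would then be proved by induction on the total bi-weight. The base case (bi-weight zero) consists of the scalars. For the induction step I would set up a PBW-style basis of $\dualdouble$, convert the four raising-operator equations into an explicit linear system on each bi-weight space, and solve it using the matrix environments prepared at the beginning of the paper. I expect the $\bfv_i$ to satisfy only quadratic $q$-commutation relations (and perhaps a $q$-determinant-type identity), so that the number of linearly independent monomials of a given bi-weight has a transparent polynomial-ring-in-four-$q$-commuting-variables count. The key step is to match this count against the dimension of the solved linear system at each bi-weight, and then to exhibit, by dividing by a suitable monomial in the $\bfv_i$, a bijection between nonzero highest-weight bivectors at a positive bi-weight and bivectors at a strictly smaller bi-weight, closing the induction.

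Local finiteness follows once generation is in hand: each $\bfv_i$ sits in the finite-dimensional sub-bimodule $\cH_{1,1}$, and because the multiplication map on $\dualdouble$ is a bimodule morphism, a product of $k$ generators lies in the image of $\cH_{1,1}^{\otimes k}$ under multiplication, which is finite-dimensional. The main obstacle will be the explicit calculation in the induction step: writing down the four raising operators on a PBW basis of $\rU_q(\fsl_2)\otimes\CC_q[\rSL_2]$ and showing that the dimension of the joint kernel at each bi-weight exactly equals the number of monomials in the $\bfv_i$ of that bi-weight. This bookkeeping is the technical heart of the proposition and is where the matrix templates defined in the preamble come into play.
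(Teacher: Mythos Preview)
Your proposal has a genuine structural gap: the expectation that the $\bfv_i$ satisfy only quadratic $q$-commutation relations is false, and your dimension-counting argument collapses without it. As the paper records just after Corollary~\ref{cor:bimod_gen_by_hw}, $\bfv_1$ and $\bfv_4$ are central in $\Hpp$, but $\bfv_2$ and $\bfv_3$ satisfy \emph{quartic} Serre relations together with the Verma relation $\bfv_3\bfv_2^{\,2}\bfv_3=\bfv_2\bfv_3^{\,2}\bfv_2$; in particular they do not $q$-commute. Consequently the Hilbert series of $\Hpp$ is $\frac{2}{(1-t)^4}-\frac{2}{(1-t)^3}+\frac{1}{(1-t)^2}$ (Corollary~\ref{cor:hilbert_series}), not the $\frac{1}{(1-t)^4}$ of a $q$-polynomial ring in four variables. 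So the ``monomial count matches solved linear system'' induction, and the ``divide by a suitable monomial'' bijection, cannot work as stated. There is also a definitional mismatch: in this paper a highest-weight bivector is one killed by the \emph{two} operators $E\blactleft(-)$ and $(-)\blactright E$, not by four operators including raising elements from $\CC_q[\rSL_2]$; your four-operator joint kernel would be a strictly smaller space than $\HCpp$.

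The paper's route is quite different and worth knowing. One first observes (Lemma~\ref{lem:cplus}) that $E\blactleft v=0$ already forces $v\in H\otimes{}^+C$ with ${}^+C=\langle a^ic^j\rangle$, eliminating $b$ and $d$. One then localizes at $\bfv_3=EK^{-1}$ so that $F$ can be rewritten in terms of $\bfv_3^{-1},\bfv_4,K$ (Lemma~\ref{lem:noF}), and further at $c$; in the doubly localized algebra $\cA[\bfv_3^{-1},c^{-1}]$ the remaining generators \emph{do} quasi-commute, and the highest-weight bivectors are easily seen to be $\langle \bfv_1,\bfv_3^{\pm1},\bfv_4,\bfv_5^{\pm1}\rangle$ (Proposition~\ref{prop:hw_bivectors_in_fully_localized_algebra}). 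The hard work is then to determine which of these survive after removing $c^{-1}$ (easy: Proposition~\ref{thm:hw_localized_v3}) and then $\bfv_3^{-1}$ (hard: this is where the projection $\lambda$ killing $E$ and the linear-independence argument of Proposition~\ref{prop:v_injective} are used). Local finiteness then follows exactly as you say, from $\cH_{1,1}$ being finite-dimensional. The moral is that the quasi-commuting structure you are hoping for only appears \emph{after} localization, and undoing that localization is the real content of the theorem. (The matrix environments in the preamble are typesetting aids for the explicit $4\times4$ and $3\times3$ action matrices in the examples, not part of any proof.)
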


This result helps us to prove our main theorem.  We write $\cH_{\lambda,\mu}$ to denote the image of $\beta_{V_\lambda^+\otimes V_\mu^-}$ from \eqref{eqn:matrix_coefficients}.

\begin{maintheorem}
As a $\rD(\rU_q(\fsl_2))$-bimodule,
\begin{equation*}
	\big(\rU_q(\fsl_2) \otimes \CC_q[\rSL_2]\big)_f
	=
	\bigoplus_{\genfrac{}{}{0pt}{}{\lambda,\mu\geq 0}{\lambda-\mu \in 2\ZZ}} \cH_{\lambda,\mu}
\end{equation*}
and this is a Peter-Weyl decomposition.
\end{maintheorem}

By Theorem \ref{thm:PWdual} this proves semisimplicity of a substantial subcategory of finite-dimensional $\rD(\rU_q(\fsl_2))$-modules.

At the end of Section \ref{section:main_results} we give a presentation and basis for the algebra of highest-weight bivectors (This algebra was quite surprising to the author, suggesting that the algebra of all locally-finite bivectors merits further investigation).  We also generalize our main results to conjectures for other semisimple Lie algebras.\bigskip

\noindent\textbf{Acknowledgements.} The author is tremendously indebted to his advisor, Arkady Berenstein, for his guidance, encouragement, and enthusiasm.  The author also thanks Victor Ostrik for enlightening discussions.

\section{Background}

\subsection{Hopf algebras, modules, and comodules}

The material in this section is well known and can be found in many sources, such as Chapter 1 of \cite{MR1381692}.

\begin{definition}
An \emph{algebra} $A$ over a field $k$ is a $k$-vector space with a linear \emph{multiplication} map $\mu\colon A\otimes A\to A$ satisfying
$ \mu(\mu(a, b), c) = \mu(a, \mu(b, c)) $
and a linear \emph{unit} map $\eta\colon k\to A$ satisfying $\mu(\eta(1), a)=\mu(a, \eta(1)) = a$, for all $a,b,c\in A$.
\end{definition}

In more familiar notation, these conditions are written $(ab)c=a(bc)$, which we thus write $abc$, and $1a=a1=a$.  We will usually use this more familiar notation, but the advantage of the given definition is that its conditions can be expressed using commutative diagrams (See Figure \ref{fig:multiplication_and_unit}).
\begin{figure}[ht]
\begin{center}
\begin{tikzpicture}[description/.style={fill=white,inner sep=2pt}]
	\matrix (m) [matrix of math nodes, row sep=3em, column sep=2.5em, text height=1.5ex, text depth=0.25ex]
	{ A \otimes A \otimes A & A \otimes A \\
	  A \otimes A & A \\ };
		\path[->,font=\scriptsize]
	(m-1-1)	edge node[auto] {$ 1 \otimes \mu $} (m-1-2)
	(m-1-2)	edge node[auto] {$ \mu $} (m-2-2)
	(m-2-1)	edge node[auto] {$ \mu $} (m-2-2);
		\path[<-,font=\scriptsize]
	(m-2-1)	edge node[auto] {$ \mu \otimes 1 $} (m-1-1);
\end{tikzpicture}
\qquad
\begin{tikzpicture}[description/.style={fill=white,inner sep=2pt}]
	\matrix (m) [matrix of math nodes, row sep=3em, column sep=2.5em, text height=1.5ex, text depth=0.25ex]
	{ A \otimes A & k \otimes A\\
	  A \otimes k & A \\ };
		\path[<-,font=\scriptsize]
	(m-1-1)	edge node[auto] {$ \eta \otimes 1 $} (m-1-2);
		\path[->,font=\scriptsize]
	(m-2-1)	edge node[auto] {$ 1 \otimes \eta $} (m-1-1)
	(m-1-1)	edge node[auto] {$ \mu $} (m-2-2)
	(m-1-2)	edge node[auto] {$ = $} (m-2-2)
	(m-2-1)	edge node[auto] {$ = $} (m-2-2);
\end{tikzpicture}
\end{center}
\caption{Conditions on the multiplication and unit of an algebra}
\label{fig:multiplication_and_unit}
\end{figure}
This makes it easy to define a dual notion, that of a \emph{coalgebra} over $k$.

\begin{definition}
A \emph{coalgebra} $C$ over a field $k$ is a $k$-vector space with a linear \emph{comultiplication} map $\Delta\colon C\to C\otimes C$ satisfying
$ (\Delta\otimes 1)\circ\Delta=(1\otimes\Delta)\circ\Delta $
and a linear \emph{counit} map $\counit\colon C\to k$ satisfying $(1\otimes\counit)(\Delta c)=(\counit\otimes 1)(\Delta c)=c$ for all $c\in C$.
\end{definition}

\begin{figure}[ht]
\begin{center}
\begin{tikzpicture}[description/.style={fill=white,inner sep=2pt}]
	\matrix (m) [matrix of math nodes, row sep=3em, column sep=2.5em, text height=1.5ex, text depth=0.25ex]
	{ C \otimes C \otimes C & C \otimes C \\
	  C \otimes C & C \\ };
		\path[<-,font=\scriptsize]
	(m-1-1)	edge node[auto] {$ 1 \otimes \Delta $} (m-1-2)
	(m-1-2)	edge node[auto] {$ \Delta $} (m-2-2)
	(m-2-1)	edge node[auto] {$ \Delta $} (m-2-2);
		\path[->,font=\scriptsize]
	(m-2-1)	edge node[auto] {$ \Delta \otimes 1 $} (m-1-1);
\end{tikzpicture}
\qquad
\begin{tikzpicture}[description/.style={fill=white,inner sep=2pt}]
	\matrix (m) [matrix of math nodes, row sep=3em, column sep=2.5em, text height=1.5ex, text depth=0.25ex]
	{ C \otimes C & k \otimes C\\
	  C \otimes k & C \\ };
		\path[->,font=\scriptsize]
	(m-1-1)	edge node[auto] {$ \counit \otimes 1 $} (m-1-2);
		\path[<-,font=\scriptsize]
	(m-2-1)	edge node[auto] {$ 1 \otimes \counit $} (m-1-1)
	(m-1-1)	edge node[auto] {$ \Delta $} (m-2-2)
	(m-1-2)	edge node[auto] {$ = $} (m-2-2)
	(m-2-1)	edge node[auto] {$ = $} (m-2-2);
\end{tikzpicture}
\end{center}
\caption{Conditions on the comultiplication and counit of a coalgebra}
\label{fig:comultiplication_and_counit}
\end{figure}

We use Sweedler's notation $\Delta(c)=c_{(1)}\otimes c_{(2)}$, where it is understood that this is often a \emph{sum} of elementary tensors.  In this notation, the counit condition is $c_{(1)}\counit(c_{(2)})=\counit(c_{(1)})c_{(2)}=c$, and the coassociativity condition is
\[ (c_{(1)})_{(1)} \otimes (c_{(1)})_{(2)} \otimes c_{(2)} = c_{(1)} \otimes (c_{(2)})_{(1)} \otimes (c_{(2)})_{(2)}. \]
This coproduct may thus be written $\Delta^2 c = c_{(1)} \otimes c_{(2)} \otimes c_{(3)}$.

\begin{definition}
An element $c$ of a coalgebra is called \emph{group-like} if $\Delta c = c\otimes c$.
\end{definition}

\begin{definition}
A \emph{bialgebra} $B$ over a field $k$ is a $k$-vector space that is both an algebra and a coalgebra over $k$ such that $\Delta$ and $\counit$ are algebra homomorphisms.  Equivalently, $\mu$ and $\eta$ are coalgebra homomorphisms.
\end{definition}

\begin{definition}
A \emph{Hopf algebra} $H$ is a bialgebra with a linear \emph{antipode} map $S\colon H\to H$ such that
$ S(h_{(1)})h_{(2)}=h_{(1)}S(h_{(2)})=\counit(h). $
(See Figure \ref{fig:antipode}.)
\end{definition}

\begin{figure}[ht]
\begin{center}
\begin{tikzpicture}[description/.style={fill=white,inner sep=2pt}]
	\matrix (m) [matrix of math nodes, row sep=3em, column sep=2.5em, text height=1.5ex, text depth=0.25ex]
	{ H & k & H \\
	  H \otimes H & & H \otimes H \\ };
		\path[<-,font=\scriptsize]
	(m-2-1)	edge node[auto] {$ \Delta $} (m-1-1)
	(m-1-3)	edge node[auto] {$ \mu $} (m-2-3);
		\path[->,font=\scriptsize]
	(m-1-1)	edge node[auto] {$ \counit $} (m-1-2)
	(m-1-2)	edge node[auto] {$ \eta $} (m-1-3)
	(m-2-1)	edge node[auto] {$ S \otimes 1 , 1 \otimes S $} (m-2-3);
\end{tikzpicture}
\end{center}
\caption{Condition on the antipode of a Hopf algebra}
\label{fig:antipode}
\end{figure}

\begin{definition}
Let $A$ be an algebra over $k$.  A \emph{left $A$-module} $V$ is a $k$-vector space with a linear \emph{action} $\actleft\colon A\otimes V\to V$ satisfying $1\actleft v=v$ and $ab\actleft v=a\actleft(b\actleft v)$ for all $a,b\in A$ and $v\in V$. Right actions and modules are defined similarly. An \emph{$A$-bimodule} has left and right actions satisfying $(a\actleft v)\actright b=a\actleft(v\actright b)$ for all $a,b\in A$ and $v\in V$.
\end{definition}

\begin{definition}
Let $C$ be a coalgebra over $k$.  A \emph{left $C$-comodule} $V$ is a $k$-vector space with a linear \emph{coaction} $\delta\colon V\to C\otimes V$ satisfying $(\counit\otimes 1)(\delta v) = v$ and $(1\otimes\delta)\circ\delta = (\Delta\otimes 1)\circ\delta$ for all $v\in V$. Right coactions and comodules are defined similarly.
\end{definition}

We write $\delta(v)=v^{(-1)}\otimes v^{(0)}$ (which may in fact be a sum of elementary tensors).  Then the first condition on $\delta$ can be written $\counit\big(v^{(-1)}\big)v^{(0)}=v$, and the second condition allows us to write $\delta^2 v=v^{(-2)}\otimes v^{(-1)}\otimes v^{(0)}$.

\begin{lemma}\label{lem:dual_bialgebra}
Let $A$ be an algebra.  Then $A^*$ is an $A$-bimodule where the left action is $(a\actleft f)(a')=f(aa')$ and the right action is $(f\actright a)(a')=f(a'a)$.
\end{lemma}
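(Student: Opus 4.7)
The plan is to check the three defining conditions of an $A$-bimodule structure on $A^*$: that the formula $(a \actleft f)(a') = f(aa')$ gives a left $A$-action, that $(f \actright a)(a') = f(a'a)$ gives a right $A$-action, and that the two actions commute with each other.

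For the left-action verification, unitality $1 \actleft f = f$ follows immediately from $(1 \actleft f)(a') = f(1 \cdot a') = f(a')$ for every $a' \in A$. The associativity axiom $(ab) \actleft f = a \actleft (b \actleft f)$ is checked by evaluating both sides at an arbitrary $a' \in A$ and invoking the associativity of multiplication in $A$. The verification of the right-action axioms from the formula $(f \actright a)(a') = f(a'a)$ is entirely parallel, again using associativity of $A$ and the fact that $a' \cdot 1 = a'$.

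For the bimodule compatibility $(a \actleft f) \actright b = a \actleft (f \actright b)$, I would evaluate both sides at an arbitrary $a' \in A$ and unwind the definitions of the two actions; each side reduces to $f$ applied to a product of $a$, $a'$, and $b$ in $A$, and the two products agree by associativity of $A$.

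The whole argument is a mechanical verification with no genuine obstacle—just short symbolic manipulations using the associativity of the algebra product and $k$-linearity of elements of $A^*$. The only care required is to keep the definitions of the two actions straight and to track the order of multiplication in $A$ when unwinding evaluations at the test element $a'$.
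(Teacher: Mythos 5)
Your outline is the right one, but the step you describe as mechanical --- checking $(ab)\actleft f=a\actleft(b\actleft f)$ ``by invoking the associativity of multiplication in $A$'' --- does not go through for the formulas as printed in the statement, and actually carrying out the computation would have revealed this. With $(a\actleft f)(a')=f(aa')$ one gets
\[
\big((ab)\actleft f\big)(a')=f\big((ab)a'\big)=f(aba'),
\qquad
\big(a\actleft(b\actleft f)\big)(a')=(b\actleft f)(aa')=f\big(b(aa')\big)=f(baa'),
\]
and these agree for all $f\in A^*$ and all $a'$ only when $ab=ba$. The map $a\mapsto\big(f\mapsto f(a\,\cdot\,)\big)$ is an algebra anti-homomorphism $A\to\End(A^*)$, so the first printed formula is really a \emph{right} action, and symmetrically $(f\actright a)(a')=f(a'a)$ is really a \emph{left} action: the two formulas in the statement are interchanged. (The mixed compatibility $(a\actleft f)\actright b=a\actleft(f\actright b)$ happens to hold with either convention, which may be why the swap is easy to overlook.) The convention the paper actually uses downstream --- see the computation in Lemma \ref{lemma:the_map_beta}, which ends with $\beta_V(v\otimes f)(bb')=\big(b'\actleft\beta_V(v\otimes f)\big)(b)$ and $\beta_V(v\otimes f)(b'b)=\big(\beta_V(v\otimes f)\actright b'\big)(b)$ --- is $(a\actleft f)(a')=f(a'a)$ and $(f\actright a)(a')=f(aa')$; the acting element is inserted on the side of the argument \emph{opposite} to the side on which it acts. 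With that correction your verification is exactly as routine as you claim: unitality is immediate, both action axioms follow from associativity of $A$, and both sides of the compatibility evaluate to $f(ba'a)$. So the gap is not in your strategy but in the assertion that a computation succeeds when, for the formulas as stated, it fails; a blind proof of this lemma must either detect and repair the swapped conventions or conclude that the statement as written is false for noncommutative $A$.
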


\begin{lemma}\label{lem:tensor_algebra}
The category of algebras over $k$ is monoidal, where $A\otimes B$ has multiplication
\[ (a\otimes b)(a'\otimes b') = aa'\otimes bb' \]
and unit $\eta_A(1)\otimes \eta_B(1)$.
\end{lemma}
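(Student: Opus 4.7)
The plan is to verify that the given multiplication and unit endow $A\otimes B$ with a $k$-algebra structure, then check that this construction extends to a bifunctor on the category of $k$-algebras satisfying the monoidal axioms, using the monoidal structure on $\mathrm{Vect}_k$ as scaffolding.

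First, I would express the proposed multiplication on $A\otimes B$ as the composite
\[
\mu_{A\otimes B} = (\mu_A\otimes\mu_B)\circ(1_A\otimes\tau_{B,A}\otimes 1_B),
\]
where $\tau_{B,A}\colon B\otimes A\to A\otimes B$ is the canonical swap of $k$-vector spaces. Associativity of $\mu_{A\otimes B}$ then reduces, by chasing around the associativity square for $A\otimes B$, to the associativity of $\mu_A$ and of $\mu_B$, together with the fact that $\tau$ is natural and satisfies the coherence conditions for the symmetric monoidal structure on $\mathrm{Vect}_k$. That $\eta_A(1)\otimes\eta_B(1)$ is a two-sided unit is immediate from the unit conditions on $\eta_A$ and $\eta_B$.

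Next, to promote $(-)\otimes(-)$ to a monoidal structure on the category of algebras, I would take $k$ itself as the unit object, and inherit the associator and unitors from $\mathrm{Vect}_k$: the canonical linear isomorphisms $(A\otimes B)\otimes C\to A\otimes(B\otimes C)$ and $k\otimes A\to A\to A\otimes k$ are easily seen to be algebra homomorphisms when the tensor products carry the algebra structure just defined, so they serve as the required natural isomorphisms in the category of algebras. Bifunctoriality is similarly routine: if $f\colon A\to A'$ and $g\colon B\to B'$ are algebra homomorphisms, then $f\otimes g$ commutes with each factor in the composite expression for the multiplication, hence is itself an algebra homomorphism. Finally, the pentagon and triangle identities hold in the category of algebras because they hold as identities of underlying linear maps in $\mathrm{Vect}_k$.

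The main obstacle is purely notational: carefully tracking the placement of the swap $\tau$ when expanding the associativity square. Once the symmetric monoidal structure on $\mathrm{Vect}_k$ is invoked, every remaining verification reduces to coherence in $\mathrm{Vect}_k$ together with the algebra axioms for $A$ and $B$ individually, so no serious obstacle is expected.
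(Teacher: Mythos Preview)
Your proposal is correct and follows the standard approach. The paper does not actually supply a proof of this lemma: it is stated as background material in Section~2.1 with the remark that ``the material in this section is well known and can be found in many sources,'' and no argument is given. Your outline---building $\mu_{A\otimes B}$ from $\mu_A$, $\mu_B$, and the symmetry $\tau$ of $\mathrm{Vect}_k$, then inheriting the associator, unitors, and coherence from $\mathrm{Vect}_k$---is exactly the expected verification and would serve as a complete proof were one required.
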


\begin{lemma}\label{lem:tensor_coalgebra}
The category of coalgebras over $k$ is monoidal, where $A\otimes B$ has comultiplication
\[ \Delta(a\otimes b) = (a_{(1)}\otimes b_{(1)})\otimes (a_{(2)}\otimes b_{(2)}) \]
and counit $\counit(a\otimes b)=\counit_A(a)\counit_B(b)$.
\end{lemma}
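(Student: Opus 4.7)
The plan is to dualize the proof sketched for Lemma \ref{lem:tensor_algebra}. Observe first that the proposed $\Delta_{A\otimes B}$ is exactly the composition
\[
	(1_A \otimes \tau_{B,A} \otimes 1_B) \circ (\Delta_A \otimes \Delta_B) \colon A\otimes B \to A\otimes B \otimes A \otimes B,
\]
where $\tau_{B,A}\colon B\otimes A \to A\otimes B$ swaps the two tensor factors. Coassociativity of $\Delta_{A\otimes B}$ then reduces to coassociativity of $\Delta_A$ and $\Delta_B$ separately, together with the obvious naturality of $\tau$: expanding both $(\Delta_{A\otimes B}\otimes 1)\circ\Delta_{A\otimes B}$ and $(1\otimes\Delta_{A\otimes B})\circ\Delta_{A\otimes B}$ applied to $a\otimes b$, each yields the single Sweedler expression $a_{(1)}\otimes b_{(1)}\otimes a_{(2)}\otimes b_{(2)}\otimes a_{(3)}\otimes b_{(3)}$. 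The counit condition follows from a one-line computation using $\counit_A(a_{(1)})a_{(2)}=a$ and $\counit_B(b_{(1)})b_{(2)}=b$ (and the symmetric identities). This confirms that $A\otimes B$ is a coalgebra.

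Next I would check functoriality: if $f\colon A\to A'$ and $g\colon B\to B'$ are coalgebra maps, then $f\otimes g$ is a coalgebra map. This is a direct Sweedler-notation verification, using that $\Delta_{A'}\circ f = (f\otimes f)\circ \Delta_A$ and similarly for $g$, and the analogous identities for counits.

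For the monoidal structure, take the unit object to be $k$ equipped with $\Delta(\lambda)=\lambda\otimes 1 = 1\otimes\lambda$ (viewing $k\otimes k \cong k$) and $\counit=\id_k$, and take the associator $\alpha$ and unitors $\rho,\lambda$ from the underlying monoidal category $\mathbf{Vect}_k$. I would then verify that $\alpha\colon (A\otimes B)\otimes C \to A\otimes(B\otimes C)$ intertwines the two comultiplications; this amounts to observing that the two six-fold rearrangements of $a_{(1)}\otimes b_{(1)}\otimes c_{(1)}\otimes a_{(2)}\otimes b_{(2)}\otimes c_{(2)}$ coincide under the associator, which is again immediate from coassociativity in each factor. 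The unitor checks are trivial. The pentagon and triangle axioms then transport directly from $\mathbf{Vect}_k$, since the associator and unitors are the vector-space ones, and the coherence diagrams they must satisfy are statements about the underlying vector spaces.

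The main (very mild) obstacle is bookkeeping: keeping the Sweedler indices on $a$ and $b$ organized so that, on both sides of each coherence diagram, the order of factors matches. Once one writes the composition using $\tau$ as above, all checks become formal and reduce to coassociativity of $\Delta_A$ and $\Delta_B$ plus symmetric-monoidal coherence in $\mathbf{Vect}_k$.
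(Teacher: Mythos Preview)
Your proof is correct and complete. The paper itself does not prove this lemma: it is stated as background material at the beginning of Section~2, with the remark that ``the material in this section is well known and can be found in many sources,'' and a reference is given in lieu of a proof. So there is nothing to compare against; your write-up simply supplies what the paper omits. The only minor comment is that the level of detail (functoriality, explicit coherence checks) goes well beyond what one would typically include for a lemma flagged as standard, but nothing you wrote is wrong.
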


\begin{lemma}\label{lem:tensor_mod}
Let $B$ be a bialgebra.  The category of left $B$-modules is monoidal with 
\[ b \actleft (u\otimes v) = (b_{(1)} \actleft u) \otimes (b_{(2)} \actleft v). \]
\end{lemma}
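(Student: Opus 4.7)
The plan is to verify that the proposed formula endows each tensor product $U\otimes V$ of left $B$-modules with a well-defined left action, then exhibit the monoidal unit and associativity/unit constraints, and finally verify that these constraints are $B$-linear. Nothing here is deep; the key observation is that every axiom corresponds to exactly one of the bialgebra compatibilities, so the task is mostly bookkeeping via Sweedler notation.

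First I would check that $b\actleft(u\otimes v)=(b_{(1)}\actleft u)\otimes(b_{(2)}\actleft v)$ satisfies the two module axioms. Since $\Delta$ is an algebra homomorphism (Definition of bialgebra), $\Delta(1)=1\otimes 1$, so $1\actleft(u\otimes v)=u\otimes v$. For associativity, $\Delta(ab)=a_{(1)}b_{(1)}\otimes a_{(2)}b_{(2)}$ gives
\[
(ab)\actleft(u\otimes v)=(a_{(1)}b_{(1)}\actleft u)\otimes(a_{(2)}b_{(2)}\actleft v)=a\actleft\bigl(b\actleft(u\otimes v)\bigr).
\]
For the unit object I would take the ground field $k$ with action $b\actleft c=\counit(b)c$; this is a left $B$-module because $\counit$ is an algebra homomorphism, so $(bb')\actleft c=\counit(bb')c=\counit(b)\counit(b')c=b\actleft(b'\actleft c)$ and $1\actleft c=\counit(1)c=c$.

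Next I would use the underlying vector-space associator $\alpha\colon(U\otimes V)\otimes W\to U\otimes(V\otimes W)$ and unit isomorphisms $\lambda\colon k\otimes V\to V$, $\rho\colon V\otimes k\to V$ as the monoidal constraints. The coassociativity axiom $(\Delta\otimes 1)\Delta=(1\otimes\Delta)\Delta$ (equivalently $(b_{(1)})_{(1)}\otimes(b_{(1)})_{(2)}\otimes b_{(2)}=b_{(1)}\otimes(b_{(2)})_{(1)}\otimes(b_{(2)})_{(2)}=b_{(1)}\otimes b_{(2)}\otimes b_{(3)}$) tells us that applying $b$ on either side of $\alpha$ produces $(b_{(1)}\actleft u)\otimes(b_{(2)}\actleft v)\otimes(b_{(3)}\actleft w)$, so $\alpha$ is $B$-linear. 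For the left unit, the counit axiom gives
\[
\lambda\bigl(b\actleft(c\otimes v)\bigr)=\counit(b_{(1)})c\cdot(b_{(2)}\actleft v)=c\cdot\bigl((\counit(b_{(1)})b_{(2)})\actleft v\bigr)=c\cdot(b\actleft v)=b\actleft\lambda(c\otimes v),
\]
and the right unit works symmetrically.

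Finally I would note that the pentagon and triangle identities are inherited from the underlying category of $k$-vector spaces, since both $\alpha$ and $\lambda,\rho$ are the standard vector-space isomorphisms and we have only restricted them to the subcategory of $B$-modules. I do not anticipate a real obstacle here; the only subtlety worth flagging is making sure each of the bialgebra axioms (algebra-homomorphism property of $\Delta$, algebra-homomorphism property of $\counit$, coassociativity, and the counit axiom) is invoked at the correct step, which the Sweedler computations above isolate cleanly.
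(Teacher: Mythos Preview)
Your proof is correct and complete. The paper does not actually prove this lemma; it is stated as well-known background material (the surrounding section cites Chapter~1 of \cite{MR1381692}), so there is nothing to compare against beyond noting that your verification is the standard one.
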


\begin{lemma}\label{lem:tensor_comod}
Let $B$ be a bialgebra.  The category of left $B$-comodules is monoidal with
\[ \delta (u\otimes v) = u^{(-1)}v^{(-1)}\otimes u \otimes v. \]
\end{lemma}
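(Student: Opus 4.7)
The plan is straightforward: verify the two comodule axioms for the map $\delta$ on $U\otimes V$, establish functoriality of $\otimes$, and inherit the associator and unitors from the underlying category of $k$-vector spaces. Throughout, I read the Sweedler notation in the statement as $\delta(u\otimes v) = u^{(-1)}v^{(-1)} \otimes u^{(0)} \otimes v^{(0)}$, where the $(0)$-superscripts on the final two tensor factors are suppressed.

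First I would verify the counit axiom. Applying $\counit \otimes 1$ to $\delta(u\otimes v)$ yields $\counit(u^{(-1)}v^{(-1)})\, u^{(0)}\otimes v^{(0)}$. Because $\counit$ is an algebra homomorphism (a bialgebra axiom), this factors as $\counit(u^{(-1)})\counit(v^{(-1)})\, u^{(0)}\otimes v^{(0)}$, and applying the counit condition on each of $U$ and $V$ individually recovers $u\otimes v$. Next, I would verify coassociativity. Applying $1\otimes\delta$ to $\delta(u\otimes v)$ and using coassociativity on $U$ and $V$ gives $u^{(-2)}v^{(-2)}\otimes u^{(-1)}v^{(-1)} \otimes u^{(0)}\otimes v^{(0)}$. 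On the other hand, applying $\Delta\otimes 1$ and using that $\Delta$ is an algebra homomorphism converts $\Delta(u^{(-1)}v^{(-1)})$ into $\Delta(u^{(-1)})\Delta(v^{(-1)})$, which in $B\otimes B$ rewrites as $u^{(-2)}v^{(-2)} \otimes u^{(-1)}v^{(-1)}$ by Lemma \ref{lem:tensor_algebra}, matching the first expression.

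Finally I would verify that the monoidal structure on vector spaces descends. If $f\colon U\to U'$ and $g\colon V\to V'$ are $B$-comodule maps, then applying $\delta_{U'\otimes V'}\circ (f\otimes g)$ and $(1\otimes f\otimes g)\circ\delta_{U\otimes V}$ to $u\otimes v$ both yield $u^{(-1)}v^{(-1)}\otimes f(u^{(0)})\otimes g(v^{(0)})$, using that $f$ and $g$ intertwine the coactions. The ground field $k$ with coaction $\lambda\mapsto 1\otimes\lambda$ serves as the unit object, and the standard associator and unitors of vector spaces are then immediately $B$-comodule maps by direct inspection of $\delta$ on the relevant triple and simple tensors (noting that $\delta(1\otimes v) = 1\cdot v^{(-1)}\otimes 1\otimes v^{(0)}$ corresponds to $\delta(v)$ under the left unitor).

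No serious obstacle is expected; the entire argument rests on the two facts that $\Delta$ and $\counit$ are algebra homomorphisms, which is precisely what distinguishes a bialgebra from an algebra together with an unrelated coalgebra structure. Any difficulty will be merely bookkeeping — keeping Sweedler indices and tensor factors properly aligned, and observing that the product $u^{(-1)}v^{(-1)}$ in $B$ interacts with $\Delta$ via the tensor-coalgebra structure of Lemma \ref{lem:tensor_coalgebra} applied to $B\otimes B$.
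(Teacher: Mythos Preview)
Your verification is correct and is the standard argument; the paper states this lemma as well-known background without proof, so there is nothing to compare against. One minor note: your closing remark invoking Lemma~\ref{lem:tensor_coalgebra} for $B\otimes B$ is slightly misplaced---the relevant fact, which you already used correctly earlier, is that $\Delta$ is an algebra homomorphism into $B\otimes B$ equipped with the tensor \emph{algebra} structure of Lemma~\ref{lem:tensor_algebra}.
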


\subsection{Examples of bialgebras and Hopf algebras}

The construction we use here for $\rU_q(\fg)$ is detailed in \cite{MR1381692}*{95--97}.
Let $\fg$ be a complex simple Lie algebra, $\ft$ be a Cartan subalgebra, and $\ft^*$ be its dual linear space.  Let $\alpha_i\in\ft^*$ be a system of positive simple roots.  If $(\;,\,)$ is the symmetric bilinear form on $\ft^*$ derrived from the inverse of the Killing form, and $\check{\alpha}_i=2\alpha_i/(\alpha_i,\alpha_i)$ are the coroots, then $a_{ij}=(\check{\alpha}_i,\alpha_j)$ is the Cartan matrix.  Define $d_i=(\alpha_i,\alpha_i)/2$, which is always an integer.  Then $\rU_q(\fg)$ can be defined over the field $\CC(q)$ with generators $\{K_i^{\pm 1},E_i,F_i\}$ and with $q_i=q^{d_i}$.

\begin{definition}\label{def:quantum_enveloping_algebra}
Let $\fg$ be a complex simple Lie algebra with the structure described above.  Using the notation $[n]_q=\frac{q^n-q^{-n}}{q-q^{-1}}$ and $[n]_q!=[n]_q[n-1]_q\dotsb[1]_q$ with $\textstyle\big[\genfrac{}{}{0pt}{}{m}{n}\big]_q=\frac{[m]_q!}{[n]_q![m-n]_q!}$, we define $\rU_q(\fg)$ to be the Hopf algebra generated by $\{K_i^{\pm 1},E_i,F_i\}$ with relations $[K_i,K_j]=0$,
\begin{gather*}
	K_iE_jK_i^{-1}=q^{a_{ij}}E_j,\quad K_iF_jK_i^{-1}=q^{-a_{ij}}F_j,\quad[E_i,F_j]=\delta_{ij}\frac{K_i-K_i^{-1}}{q_i-q_i^{-1}},\\
	\sum_{k=0}^{1-a_{ij}}(-1)^k\qbinom[q_i]{1-a_{ij}}{k}E_i^{1-a_{ij}-k}E_jE_i^k=0\quad \text{for all $i\neq j$,}\\
	\sum_{k=0}^{1-a_{ij}}(-1)^k\qbinom[q_i]{1-a_{ij}}{k}F_i^{1-a_{ij}-k}F_jF_i^k=0\quad \text{for all $i\neq j$,}
\end{gather*}
and comultiplication, counit, and antipode maps
\begin{gather*}
	\Delta K_i=K_i\otimes K_i,\quad \Delta E_i=E_i\otimes K_i+1\otimes E_i,\quad \Delta F_i=F_i\otimes 1+K_i^{-1}\otimes F_i,\\
	\counit(K_i)=1,\quad \counit(E_i)=\counit(F_i)=0,\\
	SK_i=K_i^{-1},\quad SE_i=-E_iK_i^{-1},\quad SF_i=-K_iF_i.
\end{gather*}
\end{definition}

\begin{remark}
For $\rU_q(\fsl_2)$ there is a Casimir element
\[ \Delta = EF + \frac{q^{-1}K+qK^{-1}}{(q-q^{-1})^2} = FE + \frac{qK+q^{-1}K^{-1}}{(q-q^{-1})^2} \]
which belongs to the center of $\rU_q(\fsl_2)$.
\end{remark}

If $V$ is a left $\rU_q(\fg)$-module, a nonzero vector $v\in V$ is said to be \emph{highest-weight} if $E_i\actleft v=0$ for all $i$, and the subspace of highest-weight vectors is denoted $^+V$.  We define $V^+$ similarly for right $\rU_q(\fg)$-modules.  The following result is a consequence of the comultiplication of $E_i$ and of Lemma \ref{lem:tensor_mod}.

\begin{lemma}\label{lem:hw_subalgebra}
If $V$ is a $\rU_q(\fg)$-module, then ${^+}V$ (resp.\ $V^+$) is a subalgebra.
\end{lemma}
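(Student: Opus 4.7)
For the statement to make sense we must read $V$ as a $\rU_q(\fg)$-module algebra, i.e.\ an associative algebra whose multiplication $\mu_V\colon V\otimes V\to V$ and unit $\eta_V\colon k\to V$ are $\rU_q(\fg)$-module morphisms, where $V\otimes V$ carries the monoidal structure of Lemma \ref{lem:tensor_mod} and $k$ carries the trivial action via $\counit$. The hint in the statement that the proof uses the coproduct of $E_i$ together with Lemma \ref{lem:tensor_mod} confirms this reading. Under this convention, my plan is to verify directly that ${}^+V$ is closed under the multiplication of $V$ and contains $1_V$.

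The key computation is as follows. Pick $u,v\in{}^+V$, so $E_i\actleft u=E_i\actleft v=0$ for all $i$. Because $\mu_V$ is a module map, for any $a\in\rU_q(\fg)$ we have $a\actleft(uv)=(a_{(1)}\actleft u)(a_{(2)}\actleft v)$ by Lemma \ref{lem:tensor_mod}. Plugging in $\Delta E_i=E_i\otimes K_i+1\otimes E_i$ from Definition \ref{def:quantum_enveloping_algebra} gives
\begin{equation*}
    E_i\actleft(uv)=(E_i\actleft u)(K_i\actleft v)+u\,(E_i\actleft v)=0,
\end{equation*}
so $uv\in{}^+V$. Closure under scalar multiplication and addition is automatic since ${}^+V$ is by definition a subspace. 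To see that $1_V\in{}^+V$, note that $\eta_V$ being a module map forces $a\actleft 1_V=\counit(a)\,1_V$ for all $a$, and $\counit(E_i)=0$. Hence ${}^+V$ is a subalgebra of $V$.

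For the right-handed version $V^+$, the argument is symmetric: if $u\actright E_i=v\actright E_i=0$, then using $(uv)\actright a=(u\actright a_{(1)})(v\actright a_{(2)})$ with $a=E_i$ and the same coproduct gives
\begin{equation*}
    (uv)\actright E_i=(u\actright E_i)(v\actright K_i)+u\,(v\actright E_i)=0,
\end{equation*}
and $1_V\actright E_i=\counit(E_i)1_V=0$.

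There is essentially no obstacle here; the only subtlety worth flagging is the implicit hypothesis that $V$ be a module algebra, which is needed for the statement to be meaningful and which allows Lemma \ref{lem:tensor_mod} to apply to the product. The crucial feature being exploited is that the primitive-like coproduct $\Delta E_i=E_i\otimes K_i+1\otimes E_i$ has both of its tensor factors containing $E_i$ in at least one slot, so every summand of $E_i\actleft(uv)$ picks up an annihilating $E_i\actleft u$ or $E_i\actleft v$.
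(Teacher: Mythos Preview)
Your proof is correct and matches the paper's approach exactly: the paper does not give a formal proof but simply states that the lemma is a consequence of the comultiplication of $E_i$ together with Lemma~\ref{lem:tensor_mod}, which is precisely the computation you carried out. Your explicit remark that $V$ must be a module algebra for the statement to make sense is a useful clarification that the paper leaves implicit.
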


We say that a module $V$ is \emph{locally finite} if every $v\in V$ generates a finite-dimensional submodule.

\begin{lemma}\label{lem:hw_generate_locally_finite_modules}
If a $\rU_q(\fg)$-module $V$ is locally finite, then $^+V$ (resp.\ $V^+$) generates $V$.
\end{lemma}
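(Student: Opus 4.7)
The plan is to reduce to the finite-dimensional case and then appeal to the semisimplicity of finite-dimensional $\rU_q(\fg)$-modules at generic $q$, which is classical (see e.g.\ \cite{MR1381692}).  Since $V$ is locally finite, $V$ is the union of its finite-dimensional submodules, so it suffices to prove that every finite-dimensional submodule $W\subseteq V$ is contained in $\rU_q(\fg)\cdot{^+V}$.  Because ${^+W}\subseteq{^+V}$, it is enough to establish the equality $W=\rU_q(\fg)\cdot{^+W}$ for every finite-dimensional $\rU_q(\fg)$-module $W$.

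For such a $W$, the cited semisimplicity result decomposes $W=\bigoplus_j W_j$ into simple $\rU_q(\fg)$-submodules.  Each simple finite-dimensional $\rU_q(\fg)$-module is a highest-weight module (possibly of a non-standard type $\sigma\in(\ZZ/2\ZZ)^{\mathrm{rk}\,\fg}$), so each $W_j$ is generated by a single highest-weight vector $v_j\in{^+W_j}\subseteq{^+W}$.  Consequently
\[ W=\sum_j \rU_q(\fg)\cdot v_j \subseteq \rU_q(\fg)\cdot{^+W}, \]
and taking unions over all finite-dimensional $W\subseteq V$ yields $V=\rU_q(\fg)\cdot{^+V}$.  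The corresponding statement for $V^+$ in the right-module case is proved by the symmetric argument with left and right actions swapped.

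The main obstacle is packaged entirely into the semisimplicity input, which is deep and relies in an essential way on the genericity of $q$.  A more self-contained route would be to show directly that each $E_i$ acts locally nilpotently on any finite-dimensional $\rU_q(\fg)$-module, whence every such module contains a nonzero highest-weight vector, and then induct on dimension by considering the quotient by the submodule generated by a highest-weight vector; but lifting a highest-weight vector of a quotient back to an honest highest-weight vector in $W$ already requires a weight-space decomposition and a version of complete reducibility, which is essentially the semisimplicity theorem itself.  For this reason the cleanest proof simply invokes that theorem and is otherwise a one-line reduction.
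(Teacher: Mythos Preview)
The paper states this lemma in its background section without proof, treating it as a standard fact about $\rU_q(\fg)$-modules at generic $q$.  Your argument is correct and is exactly the standard one: reduce to finite-dimensional submodules by local finiteness, invoke semisimplicity of finite-dimensional $\rU_q(\fg)$-modules (as in \cite{MR1381692}), and use that each simple summand is highest-weight.  Since the paper offers no proof to compare against, there is nothing further to say beyond noting that your write-up is sound and that your closing paragraph correctly identifies where the real content lies.
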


Quantum matrices are another class of bialgebras. The construction we use is detailed in \cite{MR1016381}*{5--6}.  Let the coordinate functions on the space of $m\times n$ matrices be denoted $X_{ij}$, $1\leq i\leq m$, $1\leq j\leq n$.  The tensor algebra $T(\Mat_{m\times n})$ generated by $\{X_{ij}\}$ is a bialgebra with comultiplication and counit given by
\begin{equation}\label{eqn:matrix_commult_and_counit}
	\Delta(X_{ij})=\textstyle\sum\nolimits_k X_{ik}\otimes X_{kj}\quad\text{and}\quad\counit(X_{ij})=\delta_{i,j}.
\end{equation}

Consider the free algebra $V$ on $n$ generators $e_1,\dotsc,e_n$.  This algebra is a comodule over the bialgebra $T(\Mat_{n\times n})$ with coaction
\begin{equation}\label{eq:coaction}
\delta(e_i) = \sum_{j=1}^n X_{ij} \otimes e_j.
\end{equation}

The bialgebra of quantum matrices $\CC_q[\Mat_{n\times n}]$ is a quotient of the tensor bialgebra $T(\Mat_{n\times n})$ such that both the quantum symmetric algebra and the quantum exterior algebra are comodules.  More precisely, we define the symmetric algebra $S_q(V)$ and the exterior algebra $\Lambda_q(V)$ by
\begin{align*}
	S_q(V) &= T(V)/(e_je_i-qe_ie_j\mid i<j)\\\text{and}\quad
	\Lambda_q(V) &= T(V)/(e_j\wedge e_i+q^{-1} e_i\wedge e_j\mid i\leq j).
\end{align*}

The following result is well-known (See \cite{MR1016381}, for example).
\begin{theorem}
There exists a quadratic bi-ideal $I$ in the bialgebra $T(\Mat_{n\times n})$ such that \eqref{eq:coaction} extends to algebra homomorphisms 
\begin{subequations}\label{eq:coactions_for_quantum_matrices}
\begin{align}
	S_q(V)&\to \big(T(\Mat_{n\times n})/I\big)\otimes S_q(V)\\\text{and}\quad
	\Lambda_q(V)&\to \big(T(\Mat_{n\times n})/I\big)\otimes \Lambda_q(V).
\end{align}
\end{subequations}
\end{theorem}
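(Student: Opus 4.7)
The plan is to identify explicit quadratic generators for $I$ by computing $\delta$ on the defining relations of $S_q(V)$ and $\Lambda_q(V)$, and then verify that the two-sided ideal they generate is also a coideal, so the quotient becomes a bialgebra.

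First, extend $\delta$ as an algebra homomorphism $\delta\colon T(V) \to T(\Mat_{n\times n}) \otimes T(V)$ and compute, for $i<j$,
\[
\delta(e_j e_i - q e_i e_j) = \sum_{k,l}(X_{jk}X_{il} - q X_{ik}X_{jl}) \otimes e_k e_l.
\]
Using $e_l e_k \equiv q\, e_k e_l$ modulo the defining ideal of $S_q(V)$ for $k<l$ and collecting coefficients in the basis $\{e_k e_l : k \le l\}$ of $S_q(V)$, one sees that $I$ must contain the column relation $X_{jk}X_{ik} - qX_{ik}X_{jk}$ for $i<j$ (from the diagonal $k=l$ terms) together with a mixed quadratic expression for $i<j$, $k<l$. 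Repeating the analogous computation on $\Lambda_q(V)$ using $e_l e_k \equiv -q^{-1}e_k e_l$ and $e_ke_k\equiv 0$ yields the row relation $X_{il}X_{ik} - qX_{ik}X_{il}$ for $k<l$ and a second mixed expression. Elementary linear combinations of the two mixed expressions then recover the remaining standard Manin relations $X_{jk}X_{il} - X_{il}X_{jk}$ and $X_{jl}X_{ik} - X_{ik}X_{jl} - (q-q^{-1})X_{il}X_{jk}$ for $i<j$, $k<l$.

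I then define $I$ to be the two-sided ideal generated by these four families of quadratic elements; this is manifestly a quadratic ideal. By construction, the derivations above show that applying $\delta$ and projecting to the second tensor factor annihilates the defining relations of $S_q(V)$ and $\Lambda_q(V)$, so $\delta$ factors through the claimed algebra homomorphisms \eqref{eq:coactions_for_quantum_matrices}.

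The main (and most laborious) step is showing that $I$ is also a coideal, so that $T(\Mat_{n\times n})/I$ inherits a bialgebra structure. The counit condition is immediate from $\counit(X_{ij})=\delta_{i,j}$: each generator is a linear combination of quadratic monomials whose scalar counits sum to zero. For the coproduct, I compute $\Delta$ on each generator via $\Delta(X_{ij}X_{kl}) = \sum_{a,b}X_{ia}X_{kb} \otimes X_{aj}X_{bl}$, split the sum by the relative order of $a$ and $b$, and reduce modulo $T(\Mat_{n\times n})\otimes I$ in the right tensor slot using the generators of $I$ themselves. For example, applying $\Delta$ to $X_{jk}X_{ik} - qX_{ik}X_{jk}$ (with $i<j$), the diagonal $a=b$ part immediately produces column generators of $I$ in the left slot, and pairing the $a<b$ with $a>b$ terms (the latter rewritten via the column relation, introducing a factor of $q$) leaves
\[
\sum_{a<b}(X_{ja}X_{ib} - qX_{ia}X_{jb} + qX_{jb}X_{ia} - q^2X_{ib}X_{ja}) \otimes X_{ak}X_{bk},
\]
whose left coefficient is precisely one of the mixed quadratic generators of $I$. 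The same pattern handles the remaining three families, giving the required containment $\Delta(I) \subseteq I\otimes T(\Mat_{n\times n}) + T(\Mat_{n\times n})\otimes I$ and completing the verification that $I$ is a quadratic bi-ideal.
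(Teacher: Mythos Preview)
Your argument is correct and follows the standard Manin approach: derive the quadratic relations by requiring $\delta$ to descend to $S_q(V)$ and $\Lambda_q(V)$, then verify directly that the resulting ideal is a bi-ideal. One small imprecision: the row relation $X_{il}X_{ik}-qX_{ik}X_{il}$ does not arise from the off-diagonal part of $\delta(e_je_i+q^{-1}e_ie_j)$ for $i<j$, but from the relation $e_i\wedge e_i=0$ (the case $i=j$ in the paper's presentation of $\Lambda_q(V)$); you should make that explicit. Otherwise the computations are fine, and your observation that the left tensor factor appearing in the coideal check for the column relation is exactly the mixed $S_q$-generator is correct.

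As for comparison with the paper: the paper does not supply a proof of this theorem at all. It is stated as well known, with a citation to Manin's \emph{Quantum groups and non-commutative geometry}, and the quotient $T(\Mat_{n\times n})/I$ is simply taken as the definition of $\CC_q[\Mat_{n\times n}]$. So your write-up is more detailed than what the paper provides, and is in line with the cited source.
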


We define $ \CC_q[\Mat_{n\times n}] $ to be the quotient of $T(\Mat_{n\times n}) $ by the minimal such $I$.  Since the coactions \eqref{eq:coactions_for_quantum_matrices} are homogeneous, we may restrict the latter coaction to the top power $\big(\Lambda_q(V)\big)^n$, which is 1-dimensional.  The image of this restriction is spanned by a central, group-like element of $\CC_q[\Mat_{n\times n}]$ which we call the quantum determinant $\det_q(\Mat_{n\times n})$ of $\CC_q[\Mat_{n\times n}]$.  If the quantum determinant is made to be invertible, this will produce an antipode.

\begin{definition}\label{def:quantum_matrices_gln}
The Hopf algebra $\CC_q[\rGL_n]$ is the localization of the quantum matrix bialgebra $\CC_q[\Mat_{n\times n}]$ at the quantum determinant $\det_q(\Mat_{n\times n})$.
\end{definition}

\begin{definition}
The Hopf algebra $\CC_q[\rSL_n]$ is the quotient of the quantum matrix bialgebra $\CC_q[\Mat_{n\times n}]$ by imposing $\det_q(\Mat_{n\times n}) =1$.
\end{definition}

\begin{example}\label{ex:coordinate_algebra}
We give a presentation of $\CC_q[\rSL_2]$ with the four generators $a=X_{11}$, $b=X_{12}$, $c=X_{21}$, and $d=X_{22}$.  The algebra relations are
\begin{gather*}
	ba = qab,\quad ca = qac,\quad db = qbd,\quad dc = qcd,\\
	cb = bc,\quad ad-q^{-1}bc = da-qbc = 1.
\end{gather*}
The comultiplication and counit are given in \eqref{eqn:matrix_commult_and_counit}, and the antipode $S$ is
\begin{equation*}
	S\begin{pmatrix}a&b\\c&d\end{pmatrix}
	=\begin{pmatrix}d&-qb\\-q^{-1}c&a\end{pmatrix}
\end{equation*}
which is read entry-wise, so $S(b)=-qb$, for example.
\end{example}

\subsection{Hopf pairings and actions}\label{section:hopf_pairing}

\begin{definition}
Let $H$ and $C$ be Hopf algebras over a field $k$.  A \emph{Hopf pairing} of $H$ and $C$ is a map $\phi\colon C\otimes H\to k$ such that
\begin{align*}
		\phi(c,hh')&=\phi(c_{(1)},h)\phi(c_{(2)},h'),\quad
		\phi(cc',h)=\phi(c,h_{(1)})\phi(c',h_{(2)}),
\end{align*}
and $\phi(1,h)=\counit(h)$, $\phi(c,1)=\counit(c)$, and $\phi(Sc,h)=\phi(c,Sh)$.
\end{definition}

We can now define actions of dually paired Hopf algebras on each other.
\begin{proposition}\label{prop:pairing_action}
Let $H$ and $C$ be Hopf algebras over a field $k$, and let $\phi\colon C\otimes H\to k$ be a Hopf pairing.  Then
\begin{equation*}
	h\actleft c=c_{(1)}\phi(c_{(2)},h)
	\qquad\text{and}\qquad
	c\actright h=c_{(2)}\phi(c_{(1)},h)
\end{equation*}
are left and right actions of $H$ on $C$, respectively, and
\begin{equation*}
	c\actleft h=h_{(1)}\phi(c,h_{(2)})
	\qquad\text{and}\qquad
	h\actright c=h_{(2)}\phi(c,h_{(1)})
\end{equation*}
are left and right actions of $C$ on $H$, respectively.  Furthermore, these actions and the pairing $\phi$ satisfy the relations
\begin{align*}
	\phi(c,hh')&=\phi(c\actright h,h')\quad\text{and}\quad
	\phi(cc',h)=\phi(c,c'\actleft h).
\end{align*}
\end{proposition}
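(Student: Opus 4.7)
The plan is to verify each of the six claims by direct Sweedler-notation computation, using only the axioms of a Hopf pairing together with coassociativity, counits, and linearity. No clever trick is required; the task is essentially bookkeeping.

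First, to check that $h \actleft c := c_{(1)}\phi(c_{(2)},h)$ is a left action of $H$ on $C$, I would verify unitality via
\[
1_H \actleft c \;=\; c_{(1)}\phi(c_{(2)},1_H) \;=\; c_{(1)}\counit(c_{(2)}) \;=\; c,
\]
using the Hopf-pairing axiom $\phi(c,1)=\counit(c)$ and the counit axiom of $C$. Associativity is then
\[
(hh') \actleft c \;=\; c_{(1)}\phi(c_{(2)},hh') \;=\; c_{(1)}\phi(c_{(2)},h)\phi(c_{(3)},h'),
\]
obtained from multiplicativity of $\phi$ in the $H$-slot, while
\[
h \actleft (h' \actleft c) \;=\; h \actleft \bigl(c_{(1)}\phi(c_{(2)},h')\bigr) \;=\; (c_{(1)})_{(1)}\phi((c_{(1)})_{(2)},h)\phi(c_{(2)},h'),
\]
which matches after one application of coassociativity of $C$.

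The three remaining action statements — right action of $H$ on $C$, and left and right actions of $C$ on $H$ — follow by the same two-step pattern (unitality from the counit axiom, associativity from multiplicativity of the pairing plus coassociativity). They differ from the above only in which Sweedler leg is paired off and whether one uses $\phi(c, hh')=\phi(c_{(1)},h)\phi(c_{(2)},h')$ or the dual axiom $\phi(cc',h)=\phi(c,h_{(1)})\phi(c',h_{(2)})$.

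Finally, the two compatibility identities are one-line computations:
\[
\phi(c \actright h, h') \;=\; \phi\bigl(c_{(2)}\phi(c_{(1)},h),\,h'\bigr) \;=\; \phi(c_{(1)},h)\phi(c_{(2)},h') \;=\; \phi(c,hh'),
\]
using linearity in the first slot followed by the pairing axiom, and symmetrically for $\phi(cc',h) = \phi(c,c'\actleft h)$. The only obstacle is purely notational: one must track which $c_{(i)}$ sits inside the pairing and which remains outside, and apply coassociativity at exactly the right step. Since the Hopf-pairing axioms were designed so that these manipulations close up, there is no real mathematical content beyond unwinding definitions.
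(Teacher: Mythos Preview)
Your proposal is correct. The paper states this proposition without proof (it is treated as a standard background result in the section on Hopf pairings), so there is no argument to compare against; your direct Sweedler-notation verification is exactly the expected routine check and all of the computations you outline go through as written.
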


If we know the actions, then we can reconstruct the pairing, as demonstrated in the following example.

\begin{example}\label{ex:pairing}
There is a left action of $\rU_q(\fgl_n)$ on $\CC_q[\rGL_n]$ given by
\begin{align*}
	E_i\actleft X_{k\ell} &= \delta_{i+1,\ell}X_{k,\ell-1}\\
	F_i\actleft X_{k\ell} &= \delta_{i,\ell}X_{k,\ell+1}\\
	K_i\actleft X_{k\ell} &= (q\delta_{i,\ell}+q^{-1}\delta_{i+1,\ell})X_{k\ell}
\end{align*}
We reconstruct the Hopf pairing
\begin{align*}
	\phi( E_i,X_{k\ell} ) 
		&= \phi( 1,E_i\actleft X_{k\ell} )
		=\counit(\delta_{i+1,\ell}X_{ki})=\delta_{i,k,\ell-1}\\
	\phi( F_i,X_{k\ell} )
		&= \phi( 1,F_i\actleft X_{k\ell} )
		=\counit(\delta_{i,\ell}X_{k,\ell+1})=\delta_{i,k-1,\ell}\\
	\phi( K_i,X_{k\ell} )
		&= \counit( q\delta_{i,\ell}X_{k\ell}+q^{-1}\delta_{i+1,\ell}X_{k\ell} )
		=q\delta_{i,k,\ell}+q^{-1}\delta_{i+1,k,\ell}
\end{align*}
where $\delta_{i,j,k}=1$ if $i=j=k$, and $\delta_{i,j,k}=0$ otherwise.  
The right action is then
\begin{align*}
	\delta_{i,k}X_{k+1,\ell} &= X_{k\ell}\actright E_i\\
	\delta_{i,k-1}X_{k-1,\ell} &= X_{k\ell}\actright F_i\\
	q^{-1}\delta_{i+1,k}X_{k\ell}+q\delta_{i,k}X_{k\ell} &= X_{k\ell}\actright K_i
\end{align*}
We can also use the pairing to compute that
\begin{align*}
	X_{k\ell}\actleft E_i &= (q\delta_{i,k,\ell}+q^{-1}\delta_{i-1,k,\ell})E_i+\delta_{i,k,\ell-1}\\
	X_{k\ell}\actleft F_i &= \delta_{k,\ell}F_i+\delta_{i,k-1,\ell}K_i^{-1}\\
	X_{k\ell}\actleft K_i &= (q\delta_{i,k,\ell}+q^{-1}\delta_{i+1,k,\ell})K_i
\end{align*}
and
\begin{align*}
	\delta_{i,k,\ell-1}K_i+\delta_{k,\ell}E_i &= E_i\actright X_{k\ell}\\
	\delta_{i,k-1,\ell}+(q^{-1}\delta_{i,k,\ell}+q\delta_{i+1,k,\ell})F_i &= F_i\actright X_{k\ell}\\
	(q\delta_{i,k,\ell}+q^{-1}\delta_{i+1,k,\ell})K_i &= K_i\actright X_{k\ell}
\end{align*}
\end{example}
In Example \ref{ex:duqgln_cross_relations} and Section \ref{section:duqsln_actions} we will use these actions to construct $\rD(\rU_q(\fsl_n))$ and to determine the action of $\rD(\rU_q(\fsl_n))$ on part of its dual.

\subsection{Quasi-triangular structures and braidings}

As a reference for the material in this section, see Chapter 2 of \cite{MR1381692}.

\begin{definition}
Let $B$ be a bialgebra.  A quasi-triangular structure on $B$ is an element $R\in B\otimes B$, written $R^{(1)}\otimes R^{(2)}$ though it may be a sum, which is invertible and satisfies
\begin{align*}
	(\Delta\otimes 1)R&=R_{13}R_{23},\quad
	(1\otimes\Delta)R=R_{13}R_{12},\quad\text{and}\quad
	\tau(\Delta b)=R(\Delta b)R^{-1}
\end{align*}
for all $b\in B$, where $\tau(\Delta b)=b_{(2)}\otimes b_{(1)}$ and where $R_{12}=R^{(1)}\otimes R^{(2)}\otimes 1$, $R_{13}=R^{(1)}\otimes 1\otimes R^{(2)}$, and $R_{23}=1\otimes R^{(1)}\otimes R^{(2)}$.
\end{definition}

\begin{example}\label{ex:r_for_uqsl2}
The Hopf algebra $\rU_q(\fsl_2)$ has quasi-triangular structure
\[ R = q^{\frac{H \otimes H}{2}}\bigg(\sum_{n=0}^\infty \frac{(1-q^{-2})^n}{[n]!}(qE\otimes F)^n\bigg) \]
where 
\[ [n]=\frac{1-q^{-2n}}{1-q^{-2}} \qquad \text{and} \qquad [n]! = [n][n-1]\dotsb[1] \]
and $q^{\frac{1}{2}H\otimes H}(v \otimes v')=q^{\frac{1}{2}\lr{ v,v' }}$, 
as shown in \cite{MR1381692}*{86}.
\end{example}

\begin{proposition}\label{prop:quasi-triangular_comodule}
Let $B$ be a bialgebra with quasi-triangular structure $R$, and let $V$ be a left $B$-module.  Then $V$ is a left $B$-comodule with either of the coactions
\[ \delta_+(v) = R^{(2)}\otimes (R^{(1)} \actleft v),\quad \delta_-(v) = (R^{-1})^{(1)}\otimes \big((R^{-1})^{(2)} \actleft v\big). \]
\end{proposition}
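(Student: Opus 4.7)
The plan is to verify directly that each of the proposed maps $\delta_\pm$ satisfies the left comodule axioms: the counit identity $(\counit \otimes 1)\delta = \id_V$ and coassociativity $(1 \otimes \delta) \circ \delta = (\Delta \otimes 1) \circ \delta$. The argument for $\delta_+$ uses the axiom $(1 \otimes \Delta)R = R_{13} R_{12}$; the one for $\delta_-$ is parallel and rests on the corresponding identity for $R^{-1}$.

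First I would establish the auxiliary identities $(\counit \otimes 1)R = 1 = (1 \otimes \counit)R$, and analogously for $R^{-1}$. To obtain $(\counit \otimes 1)R = 1$, apply $1 \otimes \counit \otimes 1$ to both sides of $(\Delta \otimes 1)R = R_{13} R_{23}$: the left side collapses to $R$ via the counit identity for $\Delta$, while the right side becomes $R \cdot \bigl(1 \otimes (\counit \otimes 1)R\bigr)$. Invertibility of $R$ in $B \otimes B$ then forces $(\counit \otimes 1)R = 1$, and $(1 \otimes \counit)R = 1$ follows symmetrically. Because $\counit \otimes 1$ is an algebra map $B \otimes B \to B$, applying it to $R R^{-1} = 1 \otimes 1$ yields the same identities for $R^{-1}$. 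The counit axioms for $\delta_\pm$ are then immediate: for instance, $(\counit \otimes 1)\delta_+(v) = \counit(R^{(2)}) R^{(1)} \actleft v = \bigl((1 \otimes \counit) R\bigr) \actleft v = v$.

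For coassociativity of $\delta_+$, denote a second copy of $R$ by $R' = (R')^{(1)} \otimes (R')^{(2)}$. Using associativity of the $B$-action I expand
\begin{equation*}
	(1 \otimes \delta_+) \delta_+(v) = R^{(2)} \otimes (R')^{(2)} \otimes \bigl((R')^{(1)} R^{(1)}\bigr) \actleft v,
\end{equation*}
\begin{equation*}
	(\Delta \otimes 1) \delta_+(v) = (R^{(2)})_{(1)} \otimes (R^{(2)})_{(2)} \otimes R^{(1)} \actleft v.
\end{equation*}
The axiom $(1 \otimes \Delta)R = R_{13} R_{12}$ reads
\begin{equation*}
	R^{(1)} \otimes (R^{(2)})_{(1)} \otimes (R^{(2)})_{(2)} = R^{(1)} (R')^{(1)} \otimes (R')^{(2)} \otimes R^{(2)};
\end{equation*}
letting the first tensor factor act on $v$ and move to the third slot converts the left side into $(\Delta \otimes 1)\delta_+(v)$ and, after relabeling the dummy symbols $R \leftrightarrow R'$ on the right, converts the right side into $(1 \otimes \delta_+)\delta_+(v)$.

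The verification for $\delta_-$ is formally identical, using the companion identity
\begin{equation*}
	(1 \otimes \Delta) R^{-1} = R_{12}^{-1} R_{13}^{-1},
\end{equation*}
obtained by inverting both sides of $(1 \otimes \Delta) R = R_{13} R_{12}$ inside $B \otimes B \otimes B$. The chief obstacle is purely notational—keeping two copies of $R$ (or $R^{-1}$) and the Sweedler indices straight—and no substantive difficulty arises. Note that the third quasi-triangular axiom $\tau(\Delta b) = R(\Delta b)R^{-1}$ plays no role here; it concerns the interaction between $\delta_+$ and $\delta_-$, and will presumably be invoked later when these two coactions are compared.
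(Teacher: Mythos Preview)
Your proof is correct, and the paper itself offers none: this proposition is stated as a standard background fact without proof, so there is nothing to compare against. Your direct verification of the comodule axioms from the quasi-triangular identities is exactly the standard argument.

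One small slip: for $\delta_-$ the comodule factor is the \emph{first} leg of $R^{-1}$, so the coassociativity check requires $(\Delta\otimes 1)R^{-1}=R_{23}^{-1}R_{13}^{-1}$ (obtained by inverting $(\Delta\otimes 1)R=R_{13}R_{23}$), not the identity $(1\otimes\Delta)R^{-1}=R_{12}^{-1}R_{13}^{-1}$ that you cite. The computation goes through in the same way once you use the correct axiom, so this is only a citation error, not a gap in the argument.
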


\begin{definition}
Let $B$ be a bialgebra.  A dual quasi-triangular structure on $B$ is a convolution-invertible map $\cR\colon B\otimes B\to k$ such that
\begin{align*}
	\cR(ab\otimes c)&=\cR(a\otimes c_{(1)})\cR(b\otimes c_{(2)})\\
	\cR(a\otimes bc)&=\cR(a_{(1)}\otimes c)\cR(a_{(2)}\otimes b)\\
	b_{(1)}a_{(1)}\cR(a_{(2)}\otimes b_{(2)})&=\cR(a_{(1)}\otimes b_{(1)})a_{(2)}b_{(2)}
\end{align*}
for all $a,b,c\in B$.
\end{definition}

\begin{example}
The bialgebra $\CC_q[\rSL_n]$ has dual quasi-triangular structure given by
\begin{equation*}
	\cR(X_{ij}\otimes X_{k\ell}) = 
	\begin{cases}
		q				&\text{if $i=j = k=\ell$,}\\
		1				&\text{if $i=j \neq k=\ell$,}\\
		(q-q^{-1})		&\text{if $i=\ell < j=k$,}\\
		0				&\text{otherwise,}
	\end{cases}
\end{equation*}
as shown in \cite{MR1381692}*{133}.
\end{example}

\begin{definition}
Let $\cC$ be a category with an associative tensor product.  We say that $\cC$ is \emph{braided} if it is provided with functorial isomorphisms
\begin{equation*}
	\psi_{U,V}\colon U\otimes V\to V\otimes U
\end{equation*}
that satisfy $\psi_{U\otimes V,W}=\psi_{U,W}\circ\psi_{V,W}$ and $\psi_{U,V\otimes W}=\psi_{U,W}\circ\psi_{U,V}$ for all objects $U,V,W$.
\end{definition}

\begin{proposition}\label{prop:dual_quasi-triangular_module}
Let $B$ be a bialgebra with dual quasi-triangular structure $\cR$.  Then every left $B$-comodule is also a left $B$-module with action
\begin{equation*}
	b \actleft v = \cR\big(b \otimes v^{(-1)}\big)v^{(0)}.
\end{equation*}
Furthermore, the category of left $B$-comodules is braided with
\begin{equation*}
	\psi(u \otimes v) = \big(u^{(-1)}\actleft v\big) \otimes u^{(0)}.
\end{equation*}
\end{proposition}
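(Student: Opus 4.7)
The plan is to verify both assertions by direct computation, using the three axioms for a dual quasi-triangular structure together with the comodule axioms. I will use repeatedly the standard identity $\cR(1\otimes b)=\counit(b)=\cR(b\otimes 1)$, which is forced by the multiplicativity axioms for $\cR$ together with convolution-invertibility.

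For the module structure, the unit axiom reduces immediately via the comodule counit: $1\actleft v=\cR(1\otimes v^{(-1)})v^{(0)}=\counit(v^{(-1)})v^{(0)}=v$. For associativity, I expand $(ab)\actleft v$ using the axiom $\cR(ab\otimes c)=\cR(a\otimes c_{(1)})\cR(b\otimes c_{(2)})$ and then rewrite via comodule coassociativity $(v^{(-1)})_{(1)}\otimes(v^{(-1)})_{(2)}\otimes v^{(0)}=v^{(-1)}\otimes(v^{(0)})^{(-1)}\otimes(v^{(0)})^{(0)}$, obtaining exactly $\cR(a\otimes v^{(-1)})\cR(b\otimes(v^{(0)})^{(-1)})(v^{(0)})^{(0)}=a\actleft(b\actleft v)$.

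For the braiding I need three things. First, that $\psi_{U,V}$ is a morphism of $B$-comodules: using the tensor comodule structure $\delta(u\otimes v)=u^{(-1)}v^{(-1)}\otimes u^{(0)}\otimes v^{(0)}$ of Lemma \ref{lem:tensor_comod}, the requirement that $\psi$ intertwine the coactions on $U\otimes V$ and $V\otimes U$ unwinds precisely into the third axiom
\[ b_{(1)}a_{(1)}\cR(a_{(2)}\otimes b_{(2)})=\cR(a_{(1)}\otimes b_{(1)})a_{(2)}b_{(2)} \]
applied with $a=u^{(-1)}$ and $b=v^{(-1)}$. Second, invertibility: the candidate inverse is $\psi^{-1}(y\otimes x)=\cR^{-1}(x^{(-1)}\otimes y^{(-1)})x^{(0)}\otimes y^{(0)}$, and composing with $\psi$ produces $(\cR*\cR^{-1})(u^{(-1)}\otimes v^{(-1)})u^{(0)}\otimes v^{(0)}=\counit(u^{(-1)})\counit(v^{(-1)})u^{(0)}\otimes v^{(0)}=u\otimes v$ after invoking coassociativity and the comodule counit (the other direction is identical). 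Third, the hexagons: for $\psi_{U\otimes V,W}$ one has
\[ \cR(u^{(-1)}v^{(-1)}\otimes w^{(-1)}) = \cR(u^{(-1)}\otimes (w^{(-1)})_{(1)})\cR(v^{(-1)}\otimes (w^{(-1)})_{(2)}) \]
by the first multiplicativity axiom, which after applying coassociativity to $w$ is exactly the expansion of $(\psi_{U,W}\circ\psi_{V,W})(u\otimes v\otimes w)$; the second hexagon is symmetric using the second multiplicativity axiom.

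The main obstacle is step (i), verifying that $\psi_{U,V}$ is a comodule morphism, since this is the one place the subtle third axiom enters, and one must carefully track Sweedler indices coming both from $\delta$ on the tensor product and $\Delta$ on $B$. The remaining steps — module axioms, invertibility, and the two hexagons — all reduce to clean applications of the two multiplicativity axioms for $\cR$, convolution-invertibility, and coassociativity of $\delta$, and amount to routine symbol-pushing.
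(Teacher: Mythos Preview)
Your argument is correct and is exactly the standard verification of this well-known fact. The paper itself does not supply a proof of this proposition: it is stated in the background section (with a reference to Majid's book) and used without further justification, so there is no alternative argument to compare against. One small remark: when you say the comodule-morphism condition ``unwinds precisely into the third axiom'', be aware that with the paper's particular sign/order conventions the identity you actually need is $\cR(a_{(1)}\otimes b_{(1)})\,b_{(2)}a_{(2)}=a_{(1)}b_{(1)}\,\cR(a_{(2)}\otimes b_{(2)})$, which is the stated axiom read with the roles of $a$ and $b$ interchanged; this is harmless but worth noting so that the Sweedler bookkeeping is seen to line up.
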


\subsection{Yetter-Drinfeld modules and the quantum double}\label{section:the_quantum_double}

\begin{definition}
Let $H$ be a bialgebra.  Then $V$ is a \emph{left Yetter-Drinfeld $H$-module} if $V$ is both a left $H$-module and a left $H$-comodule and if the action and coaction satisfy the relation 
\begin{equation*}
	h_{(1)}v^{(-1)}\otimes (h_{(2)}\actleft v^{(0)}) = (h_{(1)}\actleft v)^{(-1)}h_{(2)}\otimes(h_{(1)}\actleft v)^{(0)}.
\end{equation*}
If $H$ is a Hopf algebra, then this can be written
\begin{equation*}
	\delta(h\actleft v) = h_{(1)}v^{(-1)}S(h_{(3)})\otimes (h_{(2)}\actleft v^{(0)}).
\end{equation*}
\end{definition}

\begin{proposition}
Let $H$ be a Hopf algebra.
\begin{enumerate}
\item If $H$ is quasi-triangular, then the coaction of Proposition \ref{prop:quasi-triangular_comodule} makes every $H$-module into a Yetter-Drinfeld module.
\item If $H$ is dual-quasi-triangular, then the action of Proposition \ref{prop:dual_quasi-triangular_module} makes every $H$-comodule into a Yetter-Drinfeld module.
\end{enumerate}
\end{proposition}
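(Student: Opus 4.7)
The plan is to verify the Yetter-Drinfeld compatibility directly in each case, by substituting the new structure into both sides of the YD axiom and reducing to the defining identity of the (dual) quasi-triangular structure.

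For Part (1) with $\delta_+(v) = R^{(2)} \otimes (R^{(1)} \actleft v)$, the left-hand side of the YD condition simplifies to $h_{(1)} R^{(2)} \otimes (h_{(2)} R^{(1)} \actleft v)$. For the right-hand side, I would first compute $\delta_+(h_{(1)} \actleft v) = R^{(2)} \otimes (R^{(1)} h_{(1)} \actleft v)$, whence the right-hand side becomes $R^{(2)} h_{(2)} \otimes (R^{(1)} h_{(1)} \actleft v)$. Matching factors, the YD condition reduces to the identity $(\Delta h)\, R_{21} = R_{21}\, \Delta^{\op}(h)$ in $H \otimes H$, where $R_{21} := R^{(2)} \otimes R^{(1)}$. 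This is obtained from the quasi-triangular axiom $\Delta^{\op}(h)\, R = R\, \Delta(h)$ by applying the flip $\tau$, which is an algebra homomorphism of $H \otimes H$ (as is easily checked on elementary tensors). The case of $\delta_-$ is entirely parallel, using that $R_{21}^{-1}$ is also a quasi-triangular structure.

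For Part (2) with $b \actleft v = \cR(b \otimes v^{(-1)})\, v^{(0)}$, the argument is formally dual. I would substitute this into both sides of the YD condition. Expanding the left-hand side produces the iterated coaction $v^{(-1)} \otimes (v^{(0)})^{(-1)} \otimes (v^{(0)})^{(0)}$ inside the pairing, which by coassociativity I would rewrite as $(v^{(-1)})_{(1)} \otimes (v^{(-1)})_{(2)} \otimes v^{(0)}$. Expanding the right-hand side requires applying $\delta$ to $h_{(1)} \actleft v$; since $\cR$ is scalar-valued, bilinearity lets me pull the scalar through $\delta$, and the same coassociativity rewrite applies. After this rewriting, the YD condition becomes, with $a = h$ and $b = v^{(-1)}$, a statement in $H \otimes V$ whose $H$-component is a direct consequence of the third dual quasi-triangular axiom
\[ b_{(1)} a_{(1)}\, \cR(a_{(2)} \otimes b_{(2)}) = \cR(a_{(1)} \otimes b_{(1)})\, a_{(2)} b_{(2)}. \]

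I expect the main obstacle to be the Sweedler-index bookkeeping in Part (2): the action contains $\cR$, which consumes one factor of the coaction, and then $\delta$ is applied again to the resulting element, so the indices must be threaded through with care and coassociativity invoked at the right moment. Part (1) is essentially immediate once one observes that $\tau$ is an algebra homomorphism of $H \otimes H$, which converts the QT axiom directly into the form the YD compatibility requires.
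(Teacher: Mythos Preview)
The paper states this proposition without proof, as part of the background material, so there is nothing to compare against; I will simply assess your sketch on its merits.

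Your argument for Part~(1) is correct. Substituting $\delta_+$ into both sides of the Yetter--Drinfeld axiom and cancelling the module action does indeed reduce the condition to $(\Delta h)R_{21}=R_{21}\,\Delta^{\op}(h)$, which is the image of the quasi-triangular axiom under the flip $\tau$ (an algebra automorphism of $H\otimes H$). The $\delta_-$ case follows as you say, either by the same manipulation directly (it reduces to the quasi-triangular axiom itself, without flipping) or via the fact that $R_{21}^{-1}$ is again quasi-triangular.

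For Part~(2) your strategy is the right one, but be careful when you actually carry out the Sweedler bookkeeping. With the paper's action $b\actleft v=\cR(b\otimes v^{(-1)})v^{(0)}$ and the paper's YD axiom, the two sides reduce (after applying coassociativity of the coaction as you describe) to the identity
\[
a_{(1)}b_{(1)}\,\cR(a_{(2)}\otimes b_{(2)})=\cR(a_{(1)}\otimes b_{(1)})\,b_{(2)}a_{(2)}
\]
with $a=h$ and $b=v^{(-1)}$. This is \emph{not} the third dual quasi-triangular axiom as written in the paper, which has $b_{(1)}a_{(1)}$ on the left and $a_{(2)}b_{(2)}$ on the right; the two identities are not equivalent in general. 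This mismatch is a minor convention inconsistency in the background definitions rather than an error in your plan: if one instead takes the action $b\actleft v=\cR(v^{(-1)}\otimes b)\,v^{(0)}$ (or, equivalently, uses $\cR_{21}$), the reduction lands exactly on the stated axiom. So your outline is sound, but the claim that the $H$-component is a ``direct consequence'' of the axiom as quoted will not survive the computation without noting and resolving this convention issue.
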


\begin{proposition}
Let $H$ be a bialgebra with a braided category $\cC$ as in Proposition \ref{prop:dual_quasi-triangular_module}.  If the objects of $\cC$ are Yetter-Drinfeld modules, then $(1 \otimes \psi) \circ \delta = \delta \circ \psi$.
\end{proposition}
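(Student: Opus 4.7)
Both $(1\otimes\psi)\circ\delta$ and $\delta\circ\psi$ are maps $U\otimes V \to H\otimes V\otimes U$, so the plan is to evaluate each on an elementary tensor $u\otimes v$ and check that the results agree. The argument is a short Sweedler-style calculation whose only nontrivial input is the Yetter-Drinfeld compatibility assumed on $V$; the rest is coassociativity bookkeeping for the coaction on $U$.

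First I would unfold the left-hand side. By Lemma~\ref{lem:tensor_comod},
\begin{equation*}
\delta(u\otimes v) = u^{(-1)}v^{(-1)}\otimes u^{(0)}\otimes v^{(0)},
\end{equation*}
so applying $1\otimes\psi$ gives $u^{(-1)}v^{(-1)}\otimes \bigl((u^{(0)})^{(-1)}\actleft v^{(0)}\bigr)\otimes (u^{(0)})^{(0)}$. Collapsing the two coactions on $U$ via coassociativity into the triple-coaction normal form $\delta^2 u = u^{(-2)}\otimes u^{(-1)}\otimes u^{(0)}$ produces
\begin{equation*}
(1\otimes\psi)\circ\delta(u\otimes v) = u^{(-2)}v^{(-1)}\otimes \bigl(u^{(-1)}\actleft v^{(0)}\bigr)\otimes u^{(0)}.
\end{equation*}

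Next I would unfold the right-hand side. Since $\psi(u\otimes v) = (u^{(-1)}\actleft v)\otimes u^{(0)}$,
\begin{equation*}
\delta\circ\psi(u\otimes v) = (u^{(-1)}\actleft v)^{(-1)}(u^{(0)})^{(-1)}\otimes (u^{(-1)}\actleft v)^{(0)}\otimes (u^{(0)})^{(0)},
\end{equation*}
and the same coassociativity relabeling of $U$ rewrites this as
\begin{equation*}
\bigl(u^{(-2)}\actleft v\bigr)^{(-1)} u^{(-1)}\otimes \bigl(u^{(-2)}\actleft v\bigr)^{(0)}\otimes u^{(0)}.
\end{equation*}

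The final tensor factor $u^{(0)}$ agrees on both sides, so the desired equality reduces to
\begin{equation*}
u^{(-2)}v^{(-1)}\otimes \bigl(u^{(-1)}\actleft v^{(0)}\bigr) = \bigl(u^{(-2)}\actleft v\bigr)^{(-1)} u^{(-1)}\otimes \bigl(u^{(-2)}\actleft v\bigr)^{(0)}
\end{equation*}
in $H\otimes V$. This is exactly the Yetter-Drinfeld axiom on $V$ applied with $h = u^{(-1)}$ so that $h_{(1)}\otimes h_{(2)} = u^{(-2)}\otimes u^{(-1)}$, and it holds by hypothesis. The only step that demands care is the Sweedler bookkeeping when flattening the nested coactions of $U$ into the normal form; once that is done, the identity collapses to a single invocation of the YD compatibility, so there is no substantive obstacle.
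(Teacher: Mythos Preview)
Your proof is correct. The paper states this proposition without proof (it sits in the background section on Yetter--Drinfeld modules), so there is no argument to compare against; your Sweedler computation is the standard verification and the key step---reducing to the YD compatibility on $V$ with $h_{(1)}\otimes h_{(2)}$ replaced by $u^{(-2)}\otimes u^{(-1)}$ via comodule coassociativity---is exactly right.
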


\begin{definition}
Let $\cC$ be a monoidal category.  The \emph{Drinfeld center} of $\cC$ is the monoidal category whose objects are objects $X$ of $\cC$ together with a natural isomorphism $\psi_X\colon X\otimes Y\to Y\otimes X$, for any other object $Y$, such that $\psi_{X\otimes Y}=(\id\otimes\psi_Y)\circ(\psi_X\otimes\id)$ for all $X,Y$.
\end{definition}

\begin{proposition}
Let $H$ be a bialgebra and let $\cC$ be the category of left $H$-modules.  Then an object of $\cC$ is a Yetter-Drinfeld module if and only if it belongs to the Drinfeld center of $\cC$.
\end{proposition}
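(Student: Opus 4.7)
The plan is to construct mutually inverse bijections between Yetter--Drinfeld structures on a given left $H$-module $V$ and half-braidings $\psi_{V,-}\colon V\otimes(-)\Rightarrow (-)\otimes V$ exhibiting $V$ as an object of the Drinfeld center of $\cC$.

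For the forward direction, given a Yetter--Drinfeld coaction $\delta(v)=v^{(-1)}\otimes v^{(0)}$, I would set
\[
\psi_{V,W}(v\otimes w)=(v^{(-1)}\actleft w)\otimes v^{(0)}
\]
for every left $H$-module $W$. Naturality in $W$ is immediate. To verify $H$-linearity, I would apply $\psi_{V,W}$ to $h\actleft(v\otimes w)=(h_{(1)}\actleft v)\otimes(h_{(2)}\actleft w)$ and use the Yetter--Drinfeld axiom to rewrite $(h_{(1)}\actleft v)^{(-1)}h_{(2)}\otimes(h_{(1)}\actleft v)^{(0)}$ as $h_{(1)}v^{(-1)}\otimes(h_{(2)}\actleft v^{(0)})$, which is exactly what turns the result into $h\actleft\psi_{V,W}(v\otimes w)$. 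The hexagon identity $\psi_{V,W_1\otimes W_2}=(\id\otimes\psi_{V,W_2})\circ(\psi_{V,W_1}\otimes\id)$ then reduces to coassociativity of $\delta$: both sides evaluate to $(v^{(-2)}\actleft w_1)\otimes(v^{(-1)}\actleft w_2)\otimes v^{(0)}$.

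For the reverse direction, given $(V,\psi_{V,-})$ in the Drinfeld center, I would define
\[
\delta(v) = \psi_{V,H}(v\otimes 1_H),
\]
where $H$ acts on itself by left multiplication. The counit condition follows from the unit coherence $\psi_{V,k}=\id_V$ together with naturality applied to $\counit\colon H\to k$; coassociativity of $\delta$ follows from the hexagon at $W_1=W_2=H$ combined with naturality applied to the left $H$-linear map $\Delta\colon H\to H\otimes H$. The crucial observation is that right multiplication $R_h\colon H\to H$, $k\mapsto kh$, is \emph{left} $H$-linear, so naturality of $\psi_{V,-}$ along $R_h$ yields the formula
\[
\psi_{V,H}(v\otimes h) = v^{(-1)}h\otimes v^{(0)}.
\]
Invoking $H$-linearity of $\psi_{V,H}$ on $h\actleft(v\otimes 1)=(h_{(1)}\actleft v)\otimes h_{(2)}$ and comparing both sides then reproduces the Yetter--Drinfeld compatibility. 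That the two constructions are inverse follows once I observe that for any $w\in W$ the evaluation map $H\to W$, $k\mapsto k\actleft w$, is left $H$-linear, so naturality recovers $\psi_{V,W}(v\otimes w)=(v^{(-1)}\actleft w)\otimes v^{(0)}$ from $\delta$.

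The main obstacle is the Yetter--Drinfeld axiom in the reverse direction, because it is not transparent how to extract compatibility between the action and the coaction from the purely categorical data of a half-braiding. The decisive move is the naturality argument with $R_h$, which promotes $\psi_{V,H}$ from a map known only on $V\otimes\{1\}$ to one whose value on every $v\otimes h$ is forced by $\delta$; once this formula is in hand, the $H$-linearity of $\psi_{V,H}$ delivers the Yetter--Drinfeld relation automatically.
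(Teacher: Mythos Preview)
Your proposal is correct and follows exactly the approach the paper sketches: from a Yetter--Drinfeld coaction you build the half-braiding $\psi_{V,W}(v\otimes w)=(v^{(-1)}\actleft w)\otimes v^{(0)}$, and from a half-braiding you recover the coaction as $\delta(v)=\psi_{V,H}(v\otimes 1)$ with $H$ acting on itself by left multiplication. Your write-up in fact supplies the verifications (naturality along $R_h$, the hexagon, the Yetter--Drinfeld compatibility) that the paper's two-sentence sketch leaves to the reader; the only point neither you nor the paper addresses explicitly is the invertibility of $\psi_{V,W}$, which for a mere bialgebra is not automatic and strictly speaking requires either an antipode or a weakening of the center to half-braidings that are not assumed to be isomorphisms.
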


To prove this, we already saw in Proposition \ref{prop:dual_quasi-triangular_module} how the coaction can be used to produce a twisting.  On the other hand, since $H$ is a left $H$-module where the action is left multiplication, we can define $\delta(v)=\psi_V(v\otimes 1)$ where $1\in H$.

If $H$ is a Hopf algebra and $\cC$ is the category of left $H$-modules, then the Drinfeld center of $\cC$ is also the category of left modules over a Hopf algebra related to $H$, called the Drinfeld double, or quantum double, of $H$, which we now define.

\begin{definition}
Let $H$ and $C$ be Hopf algebras over a field $k$, and let $\phi\colon C\otimes H\to k$ be a Hopf pairing.  We define a Hopf algebra called the \emph{quantum double} $\rD(H)$ as follows.  As a coalgebra, $\rD(H)=C\otimes H$ with the tensor coalgebra structure of Lemma \ref{lem:tensor_coalgebra}, and thus both $C$ and $H$ are sub-coalgebras of $\rD(H)$.  As an algebra, $C^\op$ and $H$ are subalgebras.  Specifically, the multiplication $\cdot$ of $\rD(H)$ is given by $c\cdot c'=c'c$ for all $c,c'\in C$, by $h\cdot h'=hh'$ for all $h,h'\in H$, and by the cross-relation
\begin{equation*}
	h\cdot c=c_{(2)}\cdot h_{(2)}\phi(c_{(1)},Sh_{(1)})\phi(c_{(3)},h_{(3)}).
\end{equation*}
\end{definition}
We note that according to Proposition \ref{prop:pairing_action}, the above is equivalent to
\begin{equation*}
	h\cdot c=(h_{(3)}\actleft c \actright Sh_{(1)})\cdot h_{(2)}.
\end{equation*}

\begin{example}\label{ex:duqgln_cross_relations}
The Hopf algebras $\rU_q(\fgl_n)$ and $\CC_q[\rGL_n]$ are dually paired as shown in Example \ref{ex:pairing}.  Then as a coalgebra, $\rD(\rU_q(\fgl_n))=\CC_q[\rGL_n] \otimes \rU_q(\fgl_n)$ as defined in Lemma \ref{lem:tensor_coalgebra}.  Also, $\rD(\rU_q(\fgl_n))$ has subalgebras $\rU_q(\fgl_n)$, as presented in Definition \ref{def:quantum_enveloping_algebra}, and $\CC_q[\rGL_n]^\op$, as given in Definition \ref{def:quantum_matrices_gln} but with opposite multiplication.  The cross-relations are
\begin{align*}
	E_iX_{k\ell}
	&= (q\delta_{i,\ell}+q^{-1}\delta_{i+1,\ell})X_{k\ell}E_i-(q^2\delta_{i,\ell}+\delta_{i+1,\ell})\delta_{i,k}X_{k+1,\ell}K_i+\delta_{i+1,\ell}X_{k,\ell-1}\\
	F_iX_{k\ell}
	&= (q^{-1}\delta_{i+1,k}+q\delta_{i,k})X_{k\ell}F_i +(q^{-1}\delta_{i+1,k}+q\delta_{i,k})\delta_{i,\ell}X_{k,\ell+1}K_i^{-1}-q^{-1}\delta_{i+1,k}X_{k-1,\ell}\\
	K_iX_{k\ell}
	&= (q^2\delta_{i+1,k,\ell+1}+\delta_{i,k,\ell}+\delta_{i+1,k,\ell}+q^{-2}\delta_{i+1,k+1,\ell})X_{k\ell}K_i
\end{align*}
Here $\delta_{i,j,k}=1$ if $i=k=j$, and $\delta_{i,j,k}=0$ otherwise.
\end{example}

We note that the quantum determinant of $\CC_q[\rGL_n]$ is central and group-like here, and the action of $\rU_q(\fgl_n)$ on it is by the counit.

\begin{example}\label{ex:sl2_double_relations}
In particular, the algebra $\rD(\rU_q(\fsl_2))$ has cross-relations
\begin{align*}
	Ea&=qaE-q^2cK		&	Ec&=qcE\\
	Eb&=q^{-1}bE-dK+a	&	Ed&=q^{-1}dE+c\\
	Fa&=qaF+qbK^{-1}	&	Fc&=q^{-1}cF+q^{-1}dK^{-1}-q^{-1}a\\
	Fb&=qbF				&	Fd&=q^{-1}dF-q^{-1}b\\
	Ka&=aK				&	Kc&=q^2cK\\
	Kb&=q^{-2}bK		&	Kd&=dK
\end{align*}
It will be helpful in our investigation of highest-weight vectors that $c$ quasi-commutes with $E$ (See Lemma \ref{lem:c_action_preserves_hw}).
\end{example}

We will also be interested in the dual $\rD(H)^*$ of the quantum double.  If $H$ is infinite-dimensional, then this Hopf algebra is very complicated.  As we will see, even knowing the finite dual $\rD(H)^\circ$ is as complicated as knowing the entire category of finite-dimensional $\rD(H)$-modules.  However, there is a Hopf subalgebra of $\rD(H)^*$ which is equal to $\rD(H)^*$ if $H$ is finite-dimensional but is much easier to describe when $H$ is infinite-dimensional.

\begin{proposition}\label{prop:h_otimes_c}
Let $H$ and $C$ be Hopf algebras over a field $k$, and let $\phi\colon C\otimes H\to k$ be a Hopf pairing.  There is a subalgebra $H\otimes C\subset \rD(H)^*$ which has the tensor algebra structure of Lemma \ref{lem:tensor_algebra}. (See \cite{MR1381692}*{334 and 362}.)
\end{proposition}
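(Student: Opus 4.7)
The plan is to exhibit an explicit linear map $\iota \colon H \otimes C \to \rD(H)^*$, verify that its image is closed under the convolution product, and check that the resulting multiplication on $H \otimes C$ is precisely the tensor algebra product from Lemma \ref{lem:tensor_algebra}.

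For the definition, recalling that $\rD(H) = C \otimes H$ as a vector space, I would set
\begin{equation*}
    \iota(h \otimes c)(c' \otimes h') = \phi(c', h)\,\phi(c, h')
\end{equation*}
for $h, h' \in H$ and $c, c' \in C$, and extend linearly. Equivalently, $\iota$ is the tensor product of the two maps $H \to C^*$, $h \mapsto \phi(-,h)$, and $C \to H^*$, $c \mapsto \phi(c,-)$, composed with the canonical injection $C^* \otimes H^* \hookrightarrow (C \otimes H)^* = \rD(H)^*$.

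The key computation is that $\iota$ carries the tensor algebra product to the convolution product in $\rD(H)^*$. The convolution on $\rD(H)^*$ is dual to the coalgebra structure on $\rD(H)$, which by construction is the tensor coalgebra of $C$ and $H$ (Lemma \ref{lem:tensor_coalgebra}); hence $\Delta(c'' \otimes h'') = (c''_{(1)} \otimes h''_{(1)}) \otimes (c''_{(2)} \otimes h''_{(2)})$. Therefore
\begin{align*}
    \bigl(\iota(h \otimes c) \cdot \iota(h' \otimes c')\bigr)(c'' \otimes h'')
    &= \iota(h \otimes c)(c''_{(1)} \otimes h''_{(1)}) \cdot \iota(h' \otimes c')(c''_{(2)} \otimes h''_{(2)}) \\
    &= \phi(c''_{(1)}, h)\,\phi(c, h''_{(1)})\,\phi(c''_{(2)}, h')\,\phi(c', h''_{(2)}) \\
    &= \bigl[\phi(c''_{(1)}, h)\,\phi(c''_{(2)}, h')\bigr]\cdot\bigl[\phi(c, h''_{(1)})\,\phi(c', h''_{(2)})\bigr] \\
    &= \phi(c'', hh')\,\phi(cc', h'') \\
    &= \iota(hh' \otimes cc')(c'' \otimes h''),
\end{align*}
where the penultimate equality invokes the two multiplicativity axioms of a Hopf pairing. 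This simultaneously shows that the image of $\iota$ is a subalgebra of $\rD(H)^*$ and that the induced multiplication is $(h \otimes c)(h' \otimes c') = hh' \otimes cc'$, matching Lemma \ref{lem:tensor_algebra}.

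There is no substantial obstacle here; the statement is essentially a bookkeeping check reducing the convolution on $\rD(H)^*$ to the two Hopf-pairing identities. The only point worth flagging is the interpretation of the inclusion ``$H \otimes C \subset \rD(H)^*$'': strictly speaking, $\iota$ need not be injective if $\phi$ is degenerate, in which case ``$H \otimes C$'' should be read as the image of $\iota$ (equivalently, the quotient of $H \otimes C$ by the kernel of $\iota$). In the cases of interest in this paper, $\phi$ is non-degenerate, so $\iota$ is an honest embedding of algebras.
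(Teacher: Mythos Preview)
Your proposal is correct and matches the paper's approach: the paper does not supply its own proof of this proposition, deferring to the cited reference, but in the proof of Theorem~\ref{thm:dual_is_a_bimodule} it writes down exactly the pairing you define (equation~\eqref{eqn:double_pairing}), $\lr{c\cdot h,\bar h\cdot\bar c}=\phi(c,\bar h)\phi(\bar c,h)$, which is your $\iota$. Your convolution computation and your caveat about injectivity when $\phi$ is degenerate are both on the mark.
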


We will suppress the tensor symbol when writing elements of $H\otimes C$.

\begin{proposition}\label{prop:H-mod_to_DH-mod}
Let $H$ be a Hopf algebra with quasi-triangular structure $R$.  There is an embedding $\Phi_R\colon \text{$H$-mod} \hookrightarrow \text{$\rD(H)$-mod}$ which gives each $H$-module the structure of a $\rD(H)$-module.
\end{proposition}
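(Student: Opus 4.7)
The plan is to construct $\Phi_R$ without changing the underlying vector space, by using the quasi-triangular structure $R$ and the Hopf pairing $\phi \colon C \otimes H \to k$ to turn the given $H$-action on $V$ into a compatible action of $C^{\op}$. Since $\rD(H) = C \otimes H$ as a coalgebra and contains $H$ and $C^{\op}$ as subalgebras, specifying a left $\rD(H)$-module structure on $V$ amounts to giving compatible $H$- and $C^{\op}$-actions that satisfy the cross-relation
\[
h \cdot c = c_{(2)} \cdot h_{(2)} \, \phi(c_{(1)}, S h_{(1)}) \, \phi(c_{(3)}, h_{(3)}).
\]

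First I would keep the given $H$-action and invoke Proposition \ref{prop:quasi-triangular_comodule} to endow $V$ with the left $H$-comodule structure $\delta_+(v) = R^{(2)} \otimes R^{(1)} \actleft v$, then convert this coaction into a $C$-action using the Hopf pairing:
\[
c \actleft v \;:=\; \phi(c, R^{(2)}) \, R^{(1)} \actleft v.
\]
A short computation using $\phi(cc',h) = \phi(c, h_{(1)}) \phi(c', h_{(2)})$ together with the quasi-triangularity axiom $(1 \otimes \Delta) R = R_{13} R_{12}$ shows that this is a left $C^{\op}$-action on $V$. The heart of the proof is then verifying the cross-relation: expanding both $(h \cdot c) \actleft v$ and $h \actleft (c \actleft v)$ using the definitions, the equality reduces to a Sweedler-index identity in $H \otimes H$ that follows from $\tau(\Delta h) R = R \Delta h$ combined with the antipode identity $S(h_{(1)}) h_{(2)} = \counit(h)$. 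Equivalently, this identity is precisely the Yetter-Drinfeld compatibility, which the paper has already asserted for the coaction $\delta_+$ on any $H$-module.

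On morphisms, $\Phi_R$ acts as the identity map. Any $H$-linear $f \colon V \to W$ automatically intertwines the induced $C$-actions, since those actions are defined purely in terms of $R$ and the $H$-action, so $f$ is $\rD(H)$-linear; conversely every $\rD(H)$-linear map is in particular $H$-linear, so $\Phi_R$ is fully faithful, and faithfulness in particular is immediate since the underlying vector-space morphism is preserved. The main obstacle is the cross-relation verification; while it reduces conceptually to the Yetter-Drinfeld compatibility already in hand, matching the Sweedler indices and the two antipode-weighted factors on both sides of the cross-relation takes some careful bookkeeping.
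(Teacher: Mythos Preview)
Your proposal is correct and follows essentially the same route as the paper: endow $V$ with the coaction $\delta_+(v)=R^{(2)}\otimes(R^{(1)}\actleft v)$ from Proposition~\ref{prop:quasi-triangular_comodule}, then define $c\actleft v=\phi(c,v^{(-1)})v^{(0)}$ via the Hopf pairing. The paper's proof is only a two-line sketch of this construction, so your additional remarks on verifying the $C^{\op}$-action, the cross-relation via Yetter--Drinfeld compatibility, and full faithfulness simply flesh out what the paper leaves implicit.
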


To prove the above proposition we use the coaction $\delta(v) = R^{(2)} \otimes (R^{(1)} \actleft v)$ as in Proposition \ref{prop:quasi-triangular_comodule} and then define the action of elements of $c$ by 
\begin{equation*}
	c \actleft v = \phi\big(c,v^{(-1)}\big)v^{(0)}
\end{equation*}
where $\phi$ is the pairing between $C$ and $H$.
In the same way we could show using Proposition \ref{prop:dual_quasi-triangular_module} that if $H$ has a dual quasi-triangular structure then there is an embedding of $H$-comod into $\rD(H)$-mod.

Because the Hopf algebra $\rU_q(\fsl_2)$ is quasi-triangular, every $\rU_q(\fsl_2)$-module is a $\rD(\rU_q(\fsl_2))$-module.  Because the quasi-triangular structure $R$ is invertible, we can use either $R$ or $R^{-1}$ to construct a $\rD(\rU_q(\fsl_2))$-module from an $H$-module.

\begin{proposition}\label{prop:n_simple_1-dimensional_modules}
There are exactly $n$ one-dimensional $\rD(\rU_q(\fsl_n))$-modules.
\end{proposition}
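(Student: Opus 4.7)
The plan is to identify one-dimensional $\rD(\rU_q(\fsl_n))$-modules with algebra characters $\chi\colon\rD(\rU_q(\fsl_n))\to\CC$. Since $\rD(H)$ contains $H=\rU_q(\fsl_n)$ and $C^{\op}=\CC_q[\rSL_n]^{\op}$ as subalgebras and $\CC$ is commutative, such a $\chi$ is determined by its restrictions $\chi_H\colon H\to\CC$ and $\chi_C\colon C\to\CC$, each of which is itself a character, subject to compatibility with the cross-relations of $\rD(H)$.

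First I would classify $\chi_H$. The relation $K_jE_iK_j^{-1}=q^{a_{ij}}E_i$ with $q$ generic and $a_{ii}=2$ forces $\chi_H(E_i)=0$, and symmetrically $\chi_H(F_i)=0$; then $[E_i,F_i]=(K_i-K_i^{-1})/(q_i-q_i^{-1})$ gives $\chi_H(K_i)^2=1$. In the type-1 setting implicit throughout (cf.\ Conjecture \ref{conjecture:victor2}, whose classification is framed via dominant integral weights) one further restricts to $\chi_H(K_i)=1$, so $\chi_H$ is trivial.

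Next I would classify $\chi_C$. The quantum-matrix relation $X_{ij}X_{k\ell}-X_{k\ell}X_{ij}=(q-q^{-1})X_{i\ell}X_{kj}$ for $i<k$, $j<\ell$, together with the $q$-commutations within rows and columns of $\CC_q[\Mat_n]$, interpreted in the commutative target $\CC$, force $\chi_C$ to vanish on every off-diagonal entry. Writing $z_i=\chi_C(X_{ii})$, the determinant-one relation $\det_q=1$ becomes $\prod_iz_i=1$.

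Finally I would impose the cross-relations generalizing Example \ref{ex:duqgln_cross_relations} to $\rD(\rU_q(\fsl_n))$. Most of these become tautological under $\chi$ because both sides vanish on off-diagonal $X$'s, but the ``source-term'' cases $(k,\ell)=(i,i+1)$ in $E_i\cdot X_{k\ell}$ and $(k,\ell)=(i+1,i)$ in $F_i\cdot X_{k\ell}$ leave a diagonal entry, yielding the recursion $z_i=z_{i+1}$ (using $\chi_H(K_i)=1$). Combined with $\prod_iz_i=1$, this forces $z_1=\cdots=z_n=\zeta$ with $\zeta^n=1$, producing exactly $n$ characters indexed by the $n$-th roots of unity, i.e.\ by the center of $\rSL_n$. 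The main obstacle is the careful bookkeeping needed to verify that no further cross-relations, in particular those involving the $K_i$ or the higher off-diagonal $X_{k\ell}$, contribute additional constraints; once this is checked the claimed count of $n$ falls out.
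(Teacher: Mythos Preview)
Your proposal is correct and follows essentially the same route as the paper: identify one-dimensional modules with algebra characters, force $\chi|_H$ to be trivial, and then use the cross-relations together with the quantum determinant to pin down $\chi(X_{ii})$ as a common $n$th root of unity. The only minor variation is that you kill the off-diagonal $\chi(X_{k\ell})$ using the internal $q$-commutation relations of $\CC_q[\rSL_n]$, whereas the paper extracts both the off-diagonal vanishing and the recursion $\chi(X_{i,i})=\chi(X_{i+1,i+1})$ in one stroke from the cross-relations $E_i\cdot X_{k,i+1}$ and $F_i\cdot X_{i+1,\ell}$; your explicit acknowledgment of the type-1 restriction on $\chi_H(K_i)$ is also a bit more careful than the paper, which simply asserts that the trivial module is the unique one-dimensional $\rU_q(\fsl_n)$-module.
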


\begin{proof}
A one-dimensional left $\rD(\rU_q(\fsl_n))$-module is equivalent to an algebra homomorphism $\phi\colon \rD(\rU_q(\fsl_n))\to k$.  We know that the only one-dimensional $\rU_q(\fsl_n)$-module is the trivial module, where
\begin{equation*}
	\phi(K_i)=\phi(K_i^{-1})=1\quad\text{and}\quad \phi(E_i)=\phi(F_i)=0.
\end{equation*}
We refer now to the cross-relations in Example \ref{ex:duqgln_cross_relations}.  By commuting $E_i$ past $X_{k,i+1}$, we find that $\phi(X_{k,i})=0$ if $i\neq k$ and $\phi(X_{i+1,i+1})=\phi(X_{i,i})$.  By commuting $F_i$ past $X_{i+1,n}$ we find that $\phi(X_{i,n})=0$.  Thus
\begin{equation*}
	\phi(X_{k,k})=\phi(X_{\ell,\ell})\quad\text{and}\quad \phi(X_{k,\ell})=0\quad\text{for all $k\neq \ell$.}
\end{equation*}
However, the quantum determinant implies that $\prod_{k=1}^n \phi(X_{k,k})=1$.  Thus we can choose $\phi(X_{1,1})$ to be any $n$th root of unity.
\end{proof}

\section{Some semisimplicity results}\label{C:Preliminary_Results}

\subsection{A correspondence of subcategories and sub-bimodules}\label{section:correspondence}

If $B$ is an algebra and $V$ is a left $B$-module, then $V^*$ is a right $B$-module, and we can define a map $\beta_V\colon V\otimes V^*\to B^*$ so that $\beta_V(v\otimes f)$ is a linear function on $B$ given by
\begin{equation}\label{eqn:matrix_coefficients_2}
	\beta_V(v\otimes f)(b)=f(b\actleft v)=(f\actright b)(v)
\end{equation}
for all $v\in V$, $f\in V^*$.  We refer to $\beta_V(v\otimes f)$ as a \emph{matrix coefficient}.

\begin{lemma}\label{lemma:isom_classes_collapse}
If $U$ and $V$ are isomorphic $B$-modules, then $\beta_U(U\otimes U^*)=\beta_V(V\otimes V^*)$.
\end{lemma}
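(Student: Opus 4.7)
The plan is to use the module isomorphism $\varphi\colon U\to V$ together with its transpose to exhibit an explicit bijection between the spanning matrix coefficients of $U$ and those of $V$.

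First, I would define the transpose $\varphi^*\colon V^*\to U^*$ by $\varphi^*(f)=f\circ \varphi$. Because $\varphi$ intertwines the left $B$-actions, meaning $\varphi(b\actleft u)=b\actleft\varphi(u)$ for every $b\in B$ and $u\in U$, the transpose $\varphi^*$ intertwines the corresponding right $B$-actions on $V^*$ and $U^*$; in particular, $\varphi^*$ is itself a $B$-module isomorphism (since $\varphi$ is bijective).

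Next, I would perform the key naturality computation. For any $u\in U$, $f\in V^*$, and $b\in B$,
\begin{equation*}
\beta_V\bigl(\varphi(u)\otimes f\bigr)(b)
=f\bigl(b\actleft \varphi(u)\bigr)
=f\bigl(\varphi(b\actleft u)\bigr)
=\varphi^*(f)(b\actleft u)
=\beta_U\bigl(u\otimes \varphi^*(f)\bigr)(b).
\end{equation*}
Thus $\beta_V(\varphi(u)\otimes f)=\beta_U(u\otimes \varphi^*(f))$ as elements of $B^*$. Since $\varphi$ and $\varphi^*$ are surjective, the elementary tensors $\varphi(u)\otimes f$ span $V\otimes V^*$, so their images span $\beta_V(V\otimes V^*)$, giving the inclusion $\beta_V(V\otimes V^*)\subseteq \beta_U(U\otimes U^*)$. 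Running the same argument with $\varphi^{-1}$ and $(\varphi^{-1})^*=(\varphi^*)^{-1}$ in the roles of $\varphi$ and $\varphi^*$ yields the reverse inclusion, so the two images coincide.

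There is no substantive obstacle here; the proof is a one-line naturality check once the transpose $\varphi^*$ is in hand. The only point requiring any care is to observe that the left and right actions fit together correctly so that $\beta_V(\varphi(u)\otimes f)$ and $\beta_U(u\otimes \varphi^*(f))$ agree as functionals on all of $B$, which is precisely what the display above verifies.
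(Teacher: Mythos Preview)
Your proof is correct and follows essentially the same approach as the paper: both verify the identity $\beta_V(\varphi(u)\otimes f)=\beta_U(u\otimes \varphi^*(f))$ via the intertwining property of $\varphi$, then use surjectivity of $\varphi$ and $\varphi^*$ to conclude equality of images. Your version is slightly more explicit about the two inclusions and the direction of the transpose, but the argument is the same.
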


\begin{proof}
Let $\phi\colon V\to U$ be an isomorphism of $B$-modules.  Then
\begin{equation*}
	\begin{split}
		\beta_U(\phi v\otimes \phi^*f)(b)
		&= \phi^*f(b\actleft \phi v)\\
		&= \phi^*f\big(\phi (b\actleft v)\big)\\
		&= f(b\actleft v)\\
		&= \beta_V(v\otimes f)
	\end{split}
\end{equation*}
for any $v\in V$ and $f\in V^*$.
\end{proof}

\begin{lemma}\label{lemma:the_map_beta}
The maps $\{\beta_V\mid V\in \text{$B$-mod}\}$ in \eqref{eqn:matrix_coefficients_2} are morphisms of $B$-bimodules, and $\beta_{U\oplus V}=\beta_U\oplus\beta_V$.
\end{lemma}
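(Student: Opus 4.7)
The plan is to verify both assertions by unpacking the definition \eqref{eqn:matrix_coefficients_2} of $\beta_V$; no new ideas are needed, only careful bookkeeping of which factor carries which action. The natural $B$-bimodule structure on $V \otimes V^*$ puts the left $B$-action on the first factor and the right $B$-action on the second, the latter defined through $(f \actright b)(v) = f(b \actleft v)$ as recorded in \eqref{eqn:matrix_coefficients_2}; the bimodule structure on $B^*$ is the one of Lemma \ref{lem:dual_bialgebra}.

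For the bimodule-morphism claim, I fix $b \in B$ and test both compatibility equations on an arbitrary $a \in B$. The left-action compatibility $\beta_V(b \actleft (v \otimes f)) = b \actleft \beta_V(v \otimes f)$ reduces to the identity $f(a \actleft b \actleft v) = f((ab) \actleft v)$, which is simply associativity of the action on $V$. The right-action compatibility reduces to the same underlying identity once one uses the defining relation $(f \actright b)(v) = f(b \actleft v)$ to move the action from $V^*$ back onto $V$. Both verifications are one-line manipulations; I would write them out side by side so that the correspondence between the source and target actions is transparent.

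For the direct-sum claim, I would use the canonical identification $(U \oplus V)^* \cong U^* \oplus V^*$ to decompose
\begin{equation*}
    (U \oplus V) \otimes (U \oplus V)^* \cong (U \otimes U^*) \oplus (U \otimes V^*) \oplus (V \otimes U^*) \oplus (V \otimes V^*),
\end{equation*}
and then argue that $\beta_{U \oplus V}$ annihilates the two cross summands. For example, given $u \in U$ and $g \in V^*$ (extended by zero to $U \oplus V$), one gets $\beta_{U \oplus V}(u \otimes g)(a) = g(a \actleft u) = 0$ since $U$ is a submodule and $g$ vanishes on it. On the two diagonal summands the formula for $\beta_{U \oplus V}$ coincides with that of $\beta_U$ and $\beta_V$ respectively, so the two maps agree summand by summand.

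There is no genuine obstacle here; the only thing to watch is the alignment of left/right conventions between $V \otimes V^*$ and $B^*$, so that the morphism property holds as stated rather than with a left/right swap. Once those conventions are fixed at the outset, both parts of the lemma follow by immediate computation.
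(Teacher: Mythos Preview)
Your proposal is correct and follows essentially the same route as the paper: both verify the bimodule-morphism property by evaluating at an arbitrary element of $B$ and invoking associativity of the action, and both handle the direct-sum statement by computing $\beta_{U\oplus V}$ on elementary tensors. The only cosmetic difference is that you explicitly isolate the two cross summands $U\otimes V^*$ and $V\otimes U^*$ and argue they are killed, whereas the paper computes $\beta_{U\oplus V}\big((u\oplus v)\otimes(f\oplus g)\big)$ directly on a generic pair and lets the cross terms vanish implicitly; neither version involves any additional idea.
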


\begin{proof}
Let $b,b'\in B$, $v\in V$, and $f\in V^*$.  We see that
\begin{equation*}
	\begin{split}
		\beta_V\big((b'\actleft v)\otimes f\big)(b)
		&= f\big(b\actleft(b'\actleft v)\big)\\
		&= f(bb'\actleft v)\\
		&= \big(f\actright(bb')\big)(v)\\
		&= \beta_V(v\otimes f)(bb')\\
		&= \big(b'\actleft \beta_V(v\otimes f)\big)(b)
	\end{split}
\end{equation*}
and
\begin{equation*}
	\begin{split}
		\beta_V\big(v\otimes(f\actright b')\big)(b)
		&= (f\actright b')(b\actleft v)\\
		&= f(b'b\actleft v)\\
		&= (f\actright b'b)(v)\\
		&= \beta_V(v\otimes f)(b'b)\\
		&= \big(\beta_V(v\otimes f)\actright b'\big)(b).
	\end{split}
\end{equation*}
Let $U$ and $V$ be $B$-modules, and let $u\in U$, $v\in V$, $f\in U^*$, and $g\in V^*$.  Then
\begin{equation*}
	\begin{split}
		\beta_{U\oplus V}\big((u\oplus v)\otimes(f\oplus g)\big)(b)
		&= (f\oplus g)\big(b\actleft(u\oplus v)\big)\\
		&= f(b\actleft u)+g(b\actleft v)\\
		&= \beta_U(u\otimes f)(b)+\beta_V(v\otimes g)(b)
	\end{split}
\end{equation*}
so $\beta_{U\oplus V}=\beta_U\oplus\beta_V$.
\end{proof}

This means that $\beta$'s effectively ignore multiplicity:

\begin{corollary}\label{cor:beta_kills_multiplicity}
The image of $\beta_{V\oplus V}$ is equal to the image of $\beta_V$.
\end{corollary}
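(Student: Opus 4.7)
The plan is to derive this as a direct consequence of Lemma \ref{lemma:the_map_beta}, which already contains almost all of the content. Writing $V_1$ and $V_2$ for the two summands of $V\oplus V$, I would first record the decomposition
\begin{equation*}
(V_1\oplus V_2)\otimes(V_1\oplus V_2)^*
= \bigoplus_{i,j\in\{1,2\}} V_i\otimes V_j^*.
\end{equation*}
Lemma \ref{lemma:the_map_beta} gives $\beta_{V\oplus V}=\beta_V\oplus\beta_V$, but strictly speaking that statement only addresses the two ``diagonal'' summands $V_1\otimes V_1^*$ and $V_2\otimes V_2^*$, so I would spend one line handling the off-diagonal pieces: if $v\in V_i$ and $f\in V_j^*$ with $i\neq j$, then $b\actleft v\in V_i$ lies entirely in the kernel of $f$, so $\beta_{V\oplus V}(v\otimes f)(b)=f(b\actleft v)=0$.

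With the cross terms out of the way, the image of $\beta_{V\oplus V}$ is the sum of the images of $\beta_V$ on $V_1\otimes V_1^*$ and on $V_2\otimes V_2^*$. Since $V_1\cong V_2\cong V$ as $B$-modules, Lemma \ref{lemma:isom_classes_collapse} identifies each of these images with $\beta_V(V\otimes V^*)$, so their sum inside $B^*$ is simply $\beta_V(V\otimes V^*)$ itself. Conversely, the inclusion $\beta_V(V\otimes V^*)\subseteq \beta_{V\oplus V}((V\oplus V)\otimes(V\oplus V)^*)$ is immediate from the fact that $V$ sits as a submodule of $V\oplus V$ (with dual functional extended by zero on the other copy).

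There is no real obstacle here; the only thing to be careful about is not to conflate $\beta_V\oplus\beta_V$ as written in Lemma \ref{lemma:the_map_beta} with the full tensor product source, so the off-diagonal vanishing deserves the one-line remark above. The corollary then serves its intended purpose in the paper: multiplicities in a direct sum are invisible to the matrix-coefficient map, which is exactly what one wants in order for $B^*_{\cC}$ in Theorem \ref{thm:PWdual} to carry no multiplicity data.
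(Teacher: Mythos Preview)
Your argument is correct and matches the paper's approach: the corollary is stated there without proof, as an immediate consequence of the identity $\beta_{U\oplus V}=\beta_U\oplus\beta_V$ from Lemma~\ref{lemma:the_map_beta}, whose proof already forces the off-diagonal pieces to vanish. Your appeal to Lemma~\ref{lemma:isom_classes_collapse} is harmless but unnecessary, since the two summands are literally the same module $V$, not merely isomorphic copies.
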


\begin{lemma}\label{lemma:beta_injective_on_simples}
If $V$ is a simple $B$-module, then $\beta_V$ is injective.
\end{lemma}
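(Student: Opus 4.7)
The plan is to exploit that $\beta_V$ is a $B$-bimodule morphism by Lemma \ref{lemma:the_map_beta}, so that $\ker\beta_V$ is automatically a sub-bimodule of $V \otimes V^*$, and then to rule out nonzero elements directly. First I note that $\beta_V$ is not identically zero: evaluating $\beta_V(v\otimes\zeta)$ at $1\in B$ recovers $\zeta(v)$, which is the non-degenerate canonical pairing. So any potential obstruction must live in a proper nonzero sub-bimodule.

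For the core argument, take $x\in\ker\beta_V$ and, after collecting terms, write $x=\sum_{i=1}^n v_i\otimes\zeta_i$ with $v_1,\dots,v_n\in V$ linearly independent. The vanishing condition $\beta_V(x)=0$ unpacks to
\[
\sum_{i=1}^n \zeta_i(b\actleft v_i)=0 \qquad \text{for every } b\in B.
\]
Because $V$ is a simple finite-dimensional $B$-module, the Jacobson density theorem supplies, for each index $j\in\{1,\dots,n\}$ and each $w\in V$, an element $b\in B$ satisfying $b\actleft v_j=w$ and $b\actleft v_i=0$ for $i\neq j$. Substituting this choice of $b$ into the displayed equation forces $\zeta_j(w)=0$ for every $w\in V$, hence $\zeta_j=0$; letting $j$ vary gives $x=0$.

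The only delicate point is the invocation of density, which rests on the identification $\End_B(V)=k$ for simple $V$ so that $k$-linear independence of the $v_i$ is the same as independence over the endomorphism ring (otherwise one must phrase the linear-independence reduction over $\End_B(V)$ and revert to a density argument there). This holds in every setting relevant to the rest of the paper—in particular for simple $\rU_q(\fsl_2)$-modules over $\CC(q)$, which are absolutely simple—so the argument proceeds uniformly.
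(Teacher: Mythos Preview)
Your argument is correct and in fact more complete than the paper's. The paper's proof only treats \emph{simple} tensors: it observes that $\beta_V(v\otimes f)(B)=f(B\actleft v)$, and since $B\actleft v=V$ for $v\neq 0$ by simplicity, this vanishes iff $v=0$ or $f=0$. That suffices for the later application (in the proof of Theorem~\ref{thm:PW_thm} one only needs to embed $V$ via $v\mapsto \beta_V(v\otimes f)$ for a fixed cyclic $f$), but as written it does not address general sums $\sum_i v_i\otimes\zeta_i$. Your Jacobson density argument handles the general element directly, and you are right to flag the hypothesis $\End_B(V)=k$: without it the lemma can genuinely fail (e.g.\ $k=\RR$, $B=\CC$, $V=\CC$, where $V\otimes_\RR V^*$ is four-dimensional but $B^*$ is only two-dimensional). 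The paper's terse proof does not surface this issue.
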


\begin{proof}
We have $\beta_V(v\otimes f)(B)=0$ if and only if $f(B\actleft v)=0$.  This is true if and only if $v=0$ or $f=0$, i.e.\ if and only if $v\otimes f = 0$.
\end{proof}

We now define a correspondence between additive subcategories of $B$-mod and sub-bimodules of $B^*$.
For any additive category $\cC$ of $B$-modules, we denote by $B_\cC^*$ the span of the images of $\{\beta_V\mid V\in\cC\}$.  On the other hand, given a sub-bimodule $D$ of $B^*$, we define $\cC(D)$ to be the full subcategory of objects $V\in\cC$ such that $\beta_V(V\otimes V^*)\subset D$.

\begin{remark}
Lemma \ref{lemma:the_map_beta} shows why when defining our correspondence we assume that $D$ is a bimodule and $\cC$ is additive.  However, it is still not clear whether the containments $\cC\subseteq\cC(B_\cC^*)$ and $B_{\cC(D)}^*\subseteq D$ are equalities.
\end{remark}

\begin{proposition}
If $B$ is a bialgebra and $\cC$ is monoidal, then $B_\cC^*$ is a subalgebra of $B^*$.  On the other hand, $D$ is a subalgebra of $B^*$ if and only if $\cC(D)$ is monoidal.
\end{proposition}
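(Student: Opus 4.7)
The proof plan is to reduce everything to a single computational identity. For finite-dimensional $B$-modules $U, V$ and vectors $u \in U$, $v \in V$, $f \in U^*$, $g \in V^*$, I will establish
\begin{equation*}
\beta_U(u \otimes f) \cdot \beta_V(v \otimes g) = \beta_{U \otimes V}\bigl((u \otimes v) \otimes (f \otimes g)\bigr),
\end{equation*}
where the product on the left is taken in the algebra $B^*$ (whose multiplication is dual to the comultiplication of $B$) and $(U \otimes V)^*$ is identified with $U^* \otimes V^*$. To verify this, evaluate both sides at an arbitrary $b \in B$: the left side unfolds to $f(b_{(1)} \actleft u)\,g(b_{(2)} \actleft v)$, while the right side is $(f \otimes g)\bigl(b \actleft (u \otimes v)\bigr) = (f \otimes g)\bigl((b_{(1)} \actleft u) \otimes (b_{(2)} \actleft v)\bigr)$ by the tensor product action of Lemma \ref{lem:tensor_mod}. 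The two expressions agree.

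Given this identity, the first claim is immediate: $B_\cC^*$ is spanned by matrix coefficients $\beta_V(V \otimes V^*)$ for $V \in \cC$, and the identity shows that a product of two such coefficients is a matrix coefficient of $U \otimes V$, which lies in $\cC$ by monoidality; hence the product stays in $B_\cC^*$. The unit $\counit$ of $B^*$ equals $\beta_k(1 \otimes 1)$ for the trivial module $k$, which is the unit object of any monoidal subcategory of $B$-mod.

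For the ($\Rightarrow$) direction of the second claim, suppose $D$ is a subalgebra of $B^*$. If $U, V \in \cC(D)$, then $\beta_U(u \otimes f)$ and $\beta_V(v \otimes g)$ both lie in $D$ for all choices of $u, v, f, g$, so their products lie in $D$; by the identity these products are arbitrary matrix coefficients of $U \otimes V$, so $\beta_{U \otimes V}((U \otimes V) \otimes (U \otimes V)^*) \subseteq D$, i.e.\ $U \otimes V \in \cC(D)$. Since $\counit = 1 \in D$ is a matrix coefficient of the trivial module, $k \in \cC(D)$, so $\cC(D)$ is monoidal.

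The delicate step is the reverse direction ($\Leftarrow$), and I expect this to be the main obstacle. Assuming $\cC(D)$ is monoidal, the identity only yields that products of matrix coefficients coming from modules in $\cC(D)$ stay in $D$; a priori, $D$ could contain elements of $B^*$ not of this form, and closure under multiplication for general pairs $\alpha, \beta \in D$ does not obviously follow. To conclude rigorously, one seems to need the implicit hypothesis that $D$ is generated as a bimodule by such matrix coefficients (equivalently, that $D = B_{\cC(D)}^*$), which parallels the caveat flagged in the preceding remark about whether the containments $\cC \subseteq \cC(B_\cC^*)$ and $B_{\cC(D)}^* \subseteq D$ are equalities. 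Making this precise is where the real work lies.
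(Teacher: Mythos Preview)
Your approach is essentially identical to the paper's: the core computation is the same identity $\beta_{U\otimes V}=\beta_U\beta_V$, verified by evaluating both sides at $b$ and using Lemma~\ref{lem:tensor_mod}. The paper's proof stops after that identity and does not separately address the second claim at all; your treatment is in fact more thorough.

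Your reservation about the $(\Leftarrow)$ direction of the second claim is well taken. The paper's proof is silent on it, and as you note, the identity $\beta_{U\otimes V}=\beta_U\beta_V$ only controls products of elements of $D$ that arise as matrix coefficients of objects in $\cC(D)$; without the hypothesis $D=B_{\cC(D)}^*$ there is no reason a general product in $D$ should remain in $D$. This is precisely the ambiguity flagged in the remark preceding the proposition, and the paper does not resolve it here either.
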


\begin{proof}
Let $U$ and $V$ be objects of $\cC$, and let $u\in U$, $v\in V$, $f\in U^*$, and $g\in V^*$.  Then
\begin{equation*}
	\begin{split}
		\beta_{U\otimes V}\big((u\otimes v)\otimes(g\otimes f)\big)(b)
		&= (f\otimes g)\big(b\actleft(u\otimes v)\big)\\
		&= f(b_{(1)}\actleft u)\cdot g(b_{(2)}\actleft v)\\
		&= \beta_U(u\otimes f)(b_{(1)})\cdot\beta_V(v\otimes g)(b_{(2)})
	\end{split}
\end{equation*}
so $\beta_{U\otimes V}=\beta_U\beta_V$.
\end{proof}

\begin{definition}
An element of $B^*$ is \emph{locally finite} if it generates a finite-dimensional bimodule.  If $D$ is a sub-bimodule of $B^*$, we denote by $D_f$ the sub-bimodule of locally finite elements of $D$.
\end{definition}

Now the product of two locally finite elements belongs to the tensor product of their respective finite-dimensional submodules, which proves the following lemma.

\begin{lemma}\label{lem:lf_subalgebra}
If a sub-bimodule $D$ of $B^*$ is a subalgebra, then the sub-bimodule $D_f$ of locally finite elements is in fact a subalgebra.
\end{lemma}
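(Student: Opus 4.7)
The plan is to show that $D_f$ is closed under multiplication. Since $D$ is already a subalgebra of $B^*$, the product of two elements of $D_f$ lies in $D$; the only thing left to check is that this product generates a finite-dimensional sub-bimodule.

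First, I would record explicitly how the bimodule structure on $B^*$ interacts with the convolution product. A direct computation from $(a\actleft f)(b)=f(ba)$, $(f\actright a)(b)=f(ab)$, $(fg)(b)=f(b_{(1)})g(b_{(2)})$, together with the fact that $\Delta$ is an algebra homomorphism, yields
\begin{equation*}
	a\actleft(fg)=(a_{(1)}\actleft f)(a_{(2)}\actleft g)
	\qquad\text{and}\qquad
	(fg)\actright a=(f\actright a_{(1)})(g\actright a_{(2)})
\end{equation*}
for all $a\in B$ and $f,g\in B^*$.

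Second, given $f,g\in D_f$, let $M_f$ and $M_g$ be the finite-dimensional sub-bimodules of $D$ that they generate, and set
\begin{equation*}
	M_f\cdot M_g=\Span\{xy\mid x\in M_f,\ y\in M_g\}\subseteq D.
\end{equation*}
The identities above show that $M_f\cdot M_g$ is itself a sub-bimodule of $B^*$: acting by $a$ sends $xy$ to $(a_{(1)}\actleft x)(a_{(2)}\actleft y)$, which still lies in $M_f\cdot M_g$ since $a_{(1)}\actleft x\in M_f$ and $a_{(2)}\actleft y\in M_g$, and similarly on the right. Because $fg\in M_f\cdot M_g$, the whole sub-bimodule generated by $fg$ is contained in $M_f\cdot M_g$.

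Third, $M_f\cdot M_g$ is the image of the linear multiplication map $M_f\otimes M_g\to B^*$, so $\dim(M_f\cdot M_g)\leq(\dim M_f)(\dim M_g)<\infty$. Hence the sub-bimodule generated by $fg$ is finite-dimensional, so $fg\in D_f$. The entire argument reduces to the one-line compatibility formula of the first step, which is an immediate consequence of $\Delta$ being an algebra homomorphism, so no substantial obstacle is anticipated.
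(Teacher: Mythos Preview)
Your argument is correct and is exactly the approach the paper takes: the paper's entire proof is the single sentence preceding the lemma, ``the product of two locally finite elements belongs to the tensor product of their respective finite-dimensional submodules,'' and you have simply unpacked that sentence by writing down the module-algebra identities $a\actleft(fg)=(a_{(1)}\actleft f)(a_{(2)}\actleft g)$ and $(fg)\actright a=(f\actright a_{(1)})(g\actright a_{(2)})$ and observing that $M_f\cdot M_g$ is therefore a finite-dimensional sub-bimodule containing $fg$. One small remark: your formula $(a\actleft f)(b)=f(ba)$ differs in ordering from the paper's stated convention in Lemma~\ref{lem:dual_bialgebra}, but the module-algebra identity you need holds under either convention, so this does not affect the argument.
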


\subsection{A Peter-Weyl-type theorem}\label{section:PW_theorem}

In this section we present a Peter-Weyl-type theorem relating semisimplicity of $\cC$ with a Peter-Weyl decomposition of $B_\cC^*$.  The author failed to find a complete reference for this theorem in the literature, although one direction of the implication is well known and for this part the author appreciated the proof given in a lecture by David Jordan \cite{jordan2011}.

\begin{definition}
Let $D\subset B^*$ be a sub-bimodule.  We say that $D$ has a Peter-Weyl decomposition if
\begin{equation*}
	D = \bigoplus \beta_V(V\otimes V^*)
\end{equation*}
as an internal direct sum over all isomorphism classes of simple objects $V\in \cC(D)$.  Lemma \ref{lemma:isom_classes_collapse} shows this is well defined.
\end{definition}

\begin{theorem}\label{thm:PW_thm}
Let $B$ be an algebra and let $\cC$ be an Abelian category of finite-dimensional $B$-modules.  Then $\cC$ is semisimple if and only if $B_\cC^*$ has a Peter-Weyl decomposition.
\end{theorem}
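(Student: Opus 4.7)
The plan is to prove each implication separately, using Jacobson density for the forward direction and the identification $\beta_V(V\otimes V^*)\cong E(V)^*$ for the reverse, where $E(V)=B/\mathrm{Ann}_B(V)$ is the image of $B$ in $\End_k(V)$.

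For the forward direction, assume $\cC$ is semisimple. Each $V\in\cC$ is a direct sum of simple objects, so combining Lemma~\ref{lemma:the_map_beta} with Corollary~\ref{cor:beta_kills_multiplicity} yields $\beta_V(V\otimes V^*)=\sum_i \beta_{V_i}(V_i\otimes V_i^*)$ over the distinct simple summands. Hence $B^*_\cC$ equals the sum of $\beta_W(W\otimes W^*)$ over isomorphism classes of simple $W\in\cC$. To verify directness I would fix finitely many pairwise non-isomorphic simples $W_1,\dots,W_n\in\cC$ and apply the Jacobson density theorem to the semisimple module $\bigoplus_i W_i$: the algebra map $B\to\prod_i \End_{D_i}(W_i)$, with $D_i=\End_B(W_i)$, is surjective, and dualizing produces the desired linear independence of the subspaces $\beta_{W_i}(W_i\otimes W_i^*)$ inside $B^*$.

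For the reverse direction, suppose $B^*_\cC$ has a Peter--Weyl decomposition. I would first argue that each summand $\beta_W(W\otimes W^*)$ is a simple $B$-bimodule: $\beta_W$ is injective by Lemma~\ref{lemma:beta_injective_on_simples}, and $W\otimes W^*$ is simple as a $B$-bimodule whenever $W$ is a simple $B$-module. Consequently $B^*_\cC$ is semisimple as a $B$-bimodule, and every sub-bimodule of it is again semisimple. Given $V\in\cC$, the image $\beta_V(V\otimes V^*)$ is such a sub-bimodule, hence semisimple. Using the canonical isomorphism $V\otimes V^*\cong\End_k(V)$, the map $\beta_V$ sends an endomorphism $T$ to the functional $b\mapsto\mathrm{tr}(\rho_V(b)T)$, where $\rho_V\colon B\to\End_k(V)$ is the action map, and this identifies $\beta_V(V\otimes V^*)$ with $E(V)^*$ as a $B$-bimodule. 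Since $V$ is finite-dimensional, classical Wedderburn theory says that $E(V)^*$ is semisimple as an $E(V)$-bimodule if and only if $E(V)$ is a semisimple algebra, and any module over a semisimple algebra is itself semisimple, forcing $V$ to be semisimple as a $B$-module.

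The step I expect to require the most care is the passage from bimodule semisimplicity of $E(V)^*$ to algebra semisimplicity of $E(V)$: one must verify that the Jacobson radical of a finite-dimensional algebra vanishes precisely when its regular (or dually, coregular) bimodule is semisimple, which is where the finite-dimensionality hypothesis on objects of $\cC$ is genuinely used.
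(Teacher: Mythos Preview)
Your forward direction (semisimple $\Rightarrow$ Peter--Weyl) matches the paper's, and your invocation of Jacobson density for directness is in fact more explicit than the paper, which simply cites injectivity of $\beta_V$ on simples.

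Your reverse direction is correct but takes a genuinely different route. The paper argues more elementarily: given an indecomposable length-two object $V$, it chooses a cyclic vector $f\in V^*$ and observes that $v\mapsto\beta_V(v\otimes f)$ embeds $V$ as a \emph{left} $B$-submodule of $B_\cC^*$; since each Peter--Weyl summand $\beta_W(W\otimes W^*)\cong W\otimes W^*$ is a direct sum of copies of the simple $W$ on the left, $B_\cC^*$ is a semisimple left module, so $V$ splits---contradiction. Your approach instead passes through bimodule structure and Wedderburn theory on $E(V)$, which is heavier machinery but yields the structural identification $\beta_V(V\otimes V^*)\cong E(V)^*$ as a byproduct. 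One caution: your claim that $W\otimes W^*$ is a \emph{simple} $B$-bimodule for simple $W$ requires $\End_B(W)=k$ (e.g.\ $k$ algebraically closed); over a general field you only get that it is \emph{semisimple}, since $E(W)\cong\End_{D}(W)$ is a simple algebra. That weaker statement is all your argument actually needs, so the fix is cosmetic.
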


Before proving the theorem, we note the following well known result which is proved by induction on the length of a Krull-Schmidt decomposition.

\begin{lemma}\label{lem:KrullSchmidt_and_Ext}
Let $B$ be an algebra and let $\cC$ be an Abelian category of finite-dimensional $B$-modules.  Then $\cC$ is semisimple if and only if $\Ext^1(U,V)=0$ for all simple $B$-modules $U$ and $V$.
\end{lemma}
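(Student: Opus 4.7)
The forward direction is immediate: if $\cC$ is semisimple then every short exact sequence in $\cC$ splits, so $\Ext^1(X,Y)=0$ for all $X,Y\in\cC$, and in particular for any pair of simple modules.

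For the reverse direction, assume $\Ext^1(U,V)=0$ for every pair $U,V$ of simple objects of $\cC$. Because $\cC$ consists of finite-dimensional $B$-modules it is a length category: every object $M$ has a finite composition series, and the length $\ell(M)$ is a well-defined invariant. My plan is to prove by induction on $\ell(M)$ that every $M\in\cC$ is a direct sum of simple objects, which is the definition of semisimplicity.

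The base case $\ell(M)=1$ is immediate since $M$ is simple. For the inductive step, assume the claim for all objects of length less than $n$, and let $M\in\cC$ have length $n\geq 2$. Since $M$ is of finite length it contains a simple submodule $S$, and $M/S$ has length $n-1$, so by the induction hypothesis $M/S\cong\bigoplus_{i=1}^r V_i$ for simple modules $V_i$. The short exact sequence
\begin{equation*}
    0 \longrightarrow S \longrightarrow M \longrightarrow \bigoplus_{i=1}^r V_i \longrightarrow 0
\end{equation*}
represents a class in $\Ext^1\big(\bigoplus_i V_i,\,S\big)$, and by additivity of $\Ext^1$ in its first argument this group is isomorphic to $\bigoplus_i \Ext^1(V_i,S)$, which vanishes by hypothesis since each $V_i$ and $S$ is simple. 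Hence the sequence splits and $M\cong S\oplus\bigoplus_i V_i$ is a direct sum of simples, completing the induction.

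There is no real obstacle; the one point worth flagging is the use of additivity of $\Ext^1$ in the first slot, which holds in any Abelian category and lets the vanishing assumption stated only for pairs of simples propagate to the semisimple quotient produced by the inductive hypothesis. (Equivalently, one can invoke Krull–Schmidt to write $M$ as a sum of indecomposables and show each indecomposable $N$ is simple: if $S\subsetneq N$ is a simple submodule, the inductive step applied to $N/S$ gives $N/S$ semisimple, and the same Ext vanishing argument splits $S$ off $N$, contradicting indecomposability unless $N=S$.)
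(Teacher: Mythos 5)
Your proof is correct and follows essentially the route the paper indicates: the paper gives no written proof, only remarking that the lemma is ``well known and proved by induction on the length of a Krull--Schmidt decomposition,'' and your induction on composition length, peeling off a simple submodule and using additivity of $\Ext^1$ in the first argument to split the extension, is exactly that standard argument (your parenthetical Krull--Schmidt variant matches the paper's phrasing even more closely). Nothing further is needed.
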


\begin{proof}[Proof of Theorem \ref{thm:PW_thm}]
Suppose that $B_\cC^*$ has a Peter-Weyl decomposition, and suppose by way of contradiction that $\cC$ has an indecomposable object $V$ and a short exact sequence
\[ 0 \to V_1 \to V \to V_2 \to 0 \]
where $V_1$ and $V_2$ are both simple.  Now $V$ must be cyclic; if not, then any $v\in V\setminus V_1$ would generate a complement to $V_1$, and we have assumed that $V$ is indecomposable.

Now the dual short exact sequence
$ 0 \to V_2^* \to V^* \to V_1^* \to 0 $
has the same properties.  Choose a cyclic vector $f\in V^*$.  We define $\iota_f\colon V\to V\otimes V^*$ by $ \iota_f(v) = v\otimes f $.  We claim that $ \beta_V\circ \iota_f $ is an injective morphism of $B$-modules.  Indeed, $\beta_V(v\otimes f)$ is the zero map if and only if $(f\actright b)(v)=0$ for all $b\in B$, and since $f$ generates $V^*$ this implies $v=0$.  Thus $\beta_V\circ\iota_f$ embeds $V$ into $B_\cC^*$, which is semisimple, so $V$ is semisimple, contradicting our assumption that $V$ was indecomposable.  Therefore $\cC$ is semisimple by Lemma \ref{lem:KrullSchmidt_and_Ext}.

Suppose now that $\cC$ is semisimple.  Let $V\cong\bigoplus_{i=1}^n\big(\bigoplus_{j=1}^{n_i} V_i\:\big)$ be a decomposition of a module $V\in\cC$ as a sum of simple modules $V_i$.  By Lemma \ref{lemma:the_map_beta} and Corollary \ref{cor:beta_kills_multiplicity}, the image of $\beta_V$ is equal to $\sum \beta_{V_i}(V_i \otimes V_i^*)$.  By Lemma \ref{lemma:beta_injective_on_simples}, the sum is direct.
\end{proof}

\subsection{Semisimplicity via a Casimir element}\label{section:casimir}

In this section we present a theorem proving the semisimplicity of certain representations of Hopf algebras when a Casimir element is available.  The proof is a straightforward generalization of proofs given elsewhere.  For example, see \cite{MR0323842}*{28} for semisimple Lie algebras and \cite{MR953821}*{587--589} for $\rU_q(\fsl_2)$.

Let $H$ be a Hopf algebra.  Recall that if $V$ is an irreducible left $H$-module and $c\in H$ belongs to the center of $H$, then $c$ acts on $V$ as multiplication by some scalar.

\begin{theorem}\label{thm:peter-weyl}
Let $H$ be a Hopf algebra, let $\cC$ be an Abelian category of finite-dimensional left $H$-modules which is closed under extension, and let $\one$ denote the trivial one-dimensional $H$-module.  Suppose that there exists an element $c$ from the center of $H$ with the following property: For any simple $V$ in $\cC$, $c\actleft V=0$ if and only if $V\cong\one$.  Suppose furthermore that $\Ext^1(\one,\one)=0$.  Then every $H$-module in $\cC$ is semisimple.
\end{theorem}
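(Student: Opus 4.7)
The plan is to adapt the classical Weyl argument for complete reducibility to this Hopf algebra setting. By Lemma~\ref{lem:KrullSchmidt_and_Ext} it suffices to show $\Ext^1_\cC(U,V)=0$ for all simple $U,V\in\cC$; equivalently, every short exact sequence $0\to V\to W\to U\to 0$ in $\cC$ with $V$ and $U$ simple will need to split as $H$-modules. Since $c$ is central, it acts on each simple $M$ by a scalar $\lambda_M$, and by the Casimir hypothesis $\lambda_M=0$ iff $M\cong\one$. The two easy cases follow immediately: if $\lambda_V\neq\lambda_U$, then the generalized eigenspaces of $c$ on $W$ are $H$-submodules (by centrality), and the generalized $\lambda_U$-eigenspace provides an $H$-module complement to $V$; if $\lambda_V=\lambda_U=0$, then $V\cong U\cong\one$ and the hypothesis $\Ext^1(\one,\one)=0$ gives the splitting at once.

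The heart of the argument is the remaining case $\lambda_V=\lambda_U=\lambda\neq 0$, where the Casimir fails to distinguish $V$ from $U$. I will use the classical Hom-trick to reduce to an extension by $\one$. Endow $\Hom_k(U,W)$ with the adjoint $H$-action $(h\cdot f)(u)=h_{(1)}\actleft f(S(h_{(2)})\actleft u)$, under which the original sequence induces an $H$-equivariant short exact sequence
\[
    0\to \Hom_k(U,V)\to \Hom_k(U,W)\to \Hom_k(U,U)\to 0.
\]
A direct computation using the antipode axiom shows that $k\cdot \id_U\subset\Hom_k(U,U)$ is an $H$-submodule isomorphic to $\one$, so pulling back yields
\[
    0\to \Hom_k(U,V)\to \tilde W\to \one\to 0.
\]
Any $H$-invariant element $\phi\in \tilde W$ mapping to $\id_U$ will then correspond to an $H$-equivariant section $\phi\colon U\to W$ of the original sequence.

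To split this reduced sequence, I will decompose $\tilde W=\tilde W_0\oplus \tilde W_{\neq 0}$ into the generalized $0$-eigenspace of $c$ and the sum of the remaining generalized eigenspaces; both are $H$-submodules by centrality. Because $c$ acts as $0$ on $\one$, the surjection $\tilde W\to \one$ factors through $\tilde W_0$ and remains surjective there. Each simple composition factor of $\tilde W_0$ has Casimir eigenvalue $0$ and is therefore isomorphic to $\one$ by hypothesis. A short auxiliary induction on dimension, invoking $\Ext^1(\one,\one)=0$ at each step, will show that any $H$-module whose composition factors are all trivial is itself a direct sum of copies of $\one$. Consequently $\tilde W_0\cong \one^m$, every element of $\tilde W_0$ is $H$-invariant, and any preimage of $\id_U$ in $\tilde W_0$ supplies the required splitting.

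The main obstacle is this final case: the auxiliary module $\tilde W$ built from $\Hom_k(U,W)$ typically does not lie in $\cC$, so the Ext-vanishing hypothesis must be interpreted in the ambient category of $H$-modules. This is not a serious problem in practice, since whenever $\cC$ contains a simple on which $c$ vanishes we have $\one\in\cC$, and closure of $\cC$ under extensions then identifies $\Ext^1_{H\text{-mod}}(\one,\one)$ with $\Ext^1_\cC(\one,\one)$; in the alternative situation where no simple of $\cC$ has vanishing Casimir, the delicate subcase involving $\one$ in the reduced sequence can be dealt with by observing that the inductive step of the auxiliary lemma only requires extensions of $\one$ by $\one$, which lie in an enlargement of $\cC$ inheriting the same Ext-vanishing.
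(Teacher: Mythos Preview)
Your argument is correct and rests on the same two ideas as the paper's: use the central element $c$ to split extensions whose quotient is $\one$, and use the adjoint $H$-action on a $\Hom$-space to reduce the general case to that one.  The organization differs.  The paper first proves, by induction on $\dim W$, that \emph{any} short exact sequence $0\to W\to V\to\one\to 0$ splits (Lemmas~\ref{lem:peter-weyl-1} and~\ref{lem:peter-weyl-2}), the base case being that for simple $W\ncong\one$ the subspace $\ker(c)\subset V$ is a one-dimensional complement to $W$; only afterwards does it invoke the Hom-trick, on $\Hom_k(V,W)$ for an arbitrary submodule $W\subset V$, singling out the subquotient $L/L'\cong\one$ of maps that restrict to a scalar on $W$.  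You instead reduce first to extensions of simples via Lemma~\ref{lem:KrullSchmidt_and_Ext}, dispatch the unequal-eigenvalue case by a direct eigenspace argument, and apply the Hom-trick on $\Hom_k(U,W)$ only in the residual equal-nonzero-eigenvalue case.  Your treatment of the reduced sequence $0\to\Hom_k(U,V)\to\tilde W\to\one\to 0$ via the generalized $0$-eigenspace of $c$ together with the auxiliary induction on modules with all-trivial composition factors is a bit more roundabout than the paper's one-line $\ker(c)$ argument, but arrives at the same place.  Both proofs share the subtlety you flag in your last paragraph---that the auxiliary $\Hom$-space and its subquotients need not lie in $\cC$, so that the Casimir hypothesis and the $\Ext^1(\one,\one)=0$ hypothesis are tacitly being used in ambient $H$-mod---and the paper simply leaves this implicit.
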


We call the element $c$ a \emph{Casimir} element of $H$.  Given a finite-dimensional left $H$-module $V$, the strategy for the proof is to show that for any submodule $W\subset V$ there is another submodule $W'\subset V$ such that $V=W\oplus W'$.  We first consider a couple of special cases.

\begin{lemma}\label{lem:peter-weyl-1}
Let $H$ be a Hopf algebra as described in Theorem \ref{thm:peter-weyl}, and let $V\in\cC$ be a finite-dimensional left $H$-module.  If $W\subset V$ is an irreducible submodule with $V/W\cong\one$, then there exists another submodule $W'\subset V$ such that $V=W\oplus W'$.
\end{lemma}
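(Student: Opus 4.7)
The plan is to split into two cases based on whether the submodule $W$ is trivial or not, using the Casimir element $c$ as the key tool in the nontrivial case and the $\Ext^1$ hypothesis in the trivial case.

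First I would dispose of the case $W \cong \one$. Here the short exact sequence $0 \to \one \to V \to \one \to 0$ represents an element of $\Ext^1(\one,\one)$, which is zero by hypothesis. Hence the sequence splits and we can take $W'$ to be any submodule mapping isomorphically onto the quotient.

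For the main case $W \not\cong \one$, I would study the action of the central element $c$ on $V$. Since $W$ is a simple $H$-module not isomorphic to $\one$, the hypothesis on $c$ gives $c \actleft W \neq 0$. Because $c$ is central, the kernel of $c|_W \colon W \to W$ is a submodule of $W$; by simplicity this kernel is $0$ or all of $W$, and we have just ruled out the latter, so $c|_W$ is injective. Finite-dimensionality of $W$ then makes $c|_W$ bijective. On the other hand, $V/W \cong \one$ means $c$ acts as $0$ on $V/W$, i.e.\ $c \actleft V \subset W$. The composite $c|_V \colon V \to V$ therefore has image contained in $W$ and, by the previous step, contains $c \actleft W = W$, so $c|_V$ is surjective onto $W$ with kernel $W' := \ker(c|_V)$ of dimension $\dim V - \dim W$, which equals $1$ since $V/W$ is one-dimensional. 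Centrality of $c$ makes $W'$ a submodule, and $W \cap W' = 0$ follows from injectivity of $c|_W$, so $V = W \oplus W'$.

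I expect no real obstacle in this lemma: the Casimir hypothesis was crafted precisely so that it separates the scalar by which $c$ acts on the simple $W \not\cong \one$ from the $0$ by which it acts on the quotient $\one$, and the $\Ext^1(\one,\one)=0$ hypothesis was inserted precisely to cover the degenerate case where this separation is unavailable. The only subtle point to be careful about is that Schur's lemma in the form \textquotedblleft $c$ acts by a scalar on $W$\textquotedblright{} is not needed; it suffices that $c|_W$ be a nonzero endomorphism of the simple module $W$, which is automatically an isomorphism. This makes the argument work over any ground field, not just an algebraically closed one.
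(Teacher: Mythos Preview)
Your proof is correct and follows essentially the same approach as the paper: split into the cases $W\cong\one$ (handled by $\Ext^1(\one,\one)=0$) and $W\not\cong\one$ (handled by taking $W'=\ker(c|_V)$), with the same key observations that $c$ annihilates $V/W$ and acts injectively on $W$. Your version is somewhat more explicit about why $\ker(c|_V)$ has the right dimension, and your remark that only the nonzero-endomorphism form of Schur's lemma is needed is a nice refinement.
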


\begin{proof}
The Casimir element $c$ satisfies $c\rhd\bar{v}=0$ for all $\bar{v}\in V/W$.  If $W\cong\one$, then $V\cong\one\oplus\one$ since $\Ext^1(\one,\one)=0$.  If $W\ncong\one$, then we know that $c\rhd W\neq 0$ by the hypothesis of Theorem \ref{thm:peter-weyl}.  Therefore the submodule $\ker(c)$ of $V$ satisfies $\ker(c)\cap W=0$, so $V=W\oplus\ker(c)$.
\end{proof}

\begin{lemma}\label{lem:peter-weyl-2}
Let $H$ be a Hopf algebra as described in Theorem \ref{thm:peter-weyl}, and let $V\in\cC$ be a finite-dimensional left $H$-module.  If $W\subset V$ is a submodule with $V/W\cong\one$, then there exists another submodule $W'\subset V$ such that $V=W\oplus W'$.
\end{lemma}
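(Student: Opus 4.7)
The plan is to induct on $\dim W$, using Lemma \ref{lem:peter-weyl-1} as the base case and reducing an arbitrary $W$ to the irreducible situation by quotienting out a proper submodule and applying the inductive hypothesis twice.

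If $W=0$, then $V\cong\one$ and the decomposition $V=0\oplus V$ is trivial. If $W$ is irreducible, Lemma \ref{lem:peter-weyl-1} applies directly. So assume $W$ is reducible and choose a proper nonzero submodule $W_1\subsetneq W$. Quotienting, the short exact sequence
\begin{equation*}
	0 \to W/W_1 \to V/W_1 \to V/W \to 0
\end{equation*}
still has $V/W\cong\one$ on the right, and $\dim(W/W_1)<\dim W$. Since $\cC$ is Abelian, $V/W_1$ and $W/W_1$ both lie in $\cC$, so the inductive hypothesis applied to $W/W_1\subset V/W_1$ yields a complement $\overline{W'}\subset V/W_1$ with $V/W_1 = W/W_1\oplus\overline{W'}$ and $\overline{W'}\cong\one$.

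Let $\pi\colon V\to V/W_1$ be the projection and set $W''=\pi^{-1}(\overline{W'})$. Then $W''\in\cC$, $W_1\subset W''$, $W''\cap W=W_1$, $W''+W=V$, and $W''/W_1\cong\overline{W'}\cong\one$. Now apply the inductive hypothesis a second time to the pair $W_1\subset W''$ (this is legitimate because $\dim W_1<\dim W$): there is a submodule $W'\subset W''$ with $W''=W_1\oplus W'$ and necessarily $W'\cong\one$. Finally one checks $V=W\oplus W'$: the intersection $W\cap W'$ is contained in $W\cap W''=W_1$ and also in $W'$, so it is zero; and for any $v\in V$, writing $v=w+w''$ with $w\in W$, $w''\in W''$, and then $w''=w_1+w'$ with $w_1\in W_1\subset W$, $w'\in W'$, gives $v=(w+w_1)+w'\in W+W'$.

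There is no real obstacle here, since the work is all done by Lemma \ref{lem:peter-weyl-1} and by the Abelian-category hypothesis on $\cC$ (ensuring quotients and preimages stay in $\cC$). The one point that requires a little care is confirming that after quotienting by $W_1$ the complement $\overline{W'}$ is indeed isomorphic to $\one$, which follows from the canonical isomorphism $(V/W_1)/(W/W_1)\cong V/W\cong\one$.
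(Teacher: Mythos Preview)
Your proof is correct and follows essentially the same approach as the paper's: induct on $\dim W$, pass to the quotient by a proper nonzero submodule to split off a one-dimensional piece, then apply the inductive hypothesis again to the resulting codimension-one extension. Your notation $W_1$ and $W''$ corresponds to the paper's $U$ and $U'$, and you supply a bit more detail in the final verification that $V=W\oplus W'$.
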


\begin{proof}
If $W$ is irreducible, then this follows from Lemma \ref{lem:peter-weyl-1}.  So, suppose that $W$ has a proper nonzero submodule $U\subset W$.  Then we may write the short exact sequence of $H$-modules
\begin{equation*}
	0 \to W/U \to V/U \to V/W \to 0.
\end{equation*}
Now $\dim(W/U)<\dim(W)$.  We use induction on the dimension of $W$, noting that in the base case $W$ is irreducible.  So by hypothesis the short exact sequence splits and there is a submodule $U'\subset V$ such that $V/U=W/U\oplus U'/U$.  Note that $U'/U\cong\one$ since $V/W\cong\one$.  We now write the short exact sequence of $H$-modules
\begin{equation*}
	0 \to U \to U' \to U'/U \to 0.
\end{equation*}
Now $\dim(U)<\dim(W)$, so by hypothesis the short exact sequence splits and there is a submodule $W'\subset U'$ such that $U'=U\oplus W'$.  It follows that $V/U=W/U\oplus W'$.  Thus $W\cap W'=0$, and we conclude that $V=W\oplus W'$.
\end{proof}

We are now ready to prove Theorem \ref{thm:peter-weyl}.

\begin{proof}[Proof of Theorem \ref{thm:peter-weyl}]
Let $V\in\cC$ be a finite-dimensional left $H$-module, and suppose $W\subset V$ is a proper non-zero submodule. We know that $\Hom_k(V,W)$ is a left $H$-module with action given by
\begin{equation*}
	(h\actleft \phi)(v)=h_{(1)}\actleft \phi(Sh_{(2)}\actleft v).
\end{equation*}
We define two subspaces $L$ and $L'$ of $\Hom_k(V,W)$ as follows:
\begin{align*}
	L	&=\{\phi\mid \text{$\exists\:f(\phi)\in k$ such that $\phi(w)=f(\phi) w$ for all $w\in W$}\},\\
	L'	&=\{\phi\mid \text{$\phi(w)=0$ for all $w\in W$}\}.
\end{align*}
We wish to show that $L$ and $L'$ are submodules of $\Hom_k(V,W)$.  Let $h\in H$, $\phi\in L$, and $w\in W$.  Then
\begin{equation*}
	\begin{split}
		(h\actleft \phi)(w)
		&=h_{(1)}\actleft \phi(Sh_{(2)}\actleft w)\\
		&=h_{(1)}\actleft f(\phi)(Sh_{(2)}\actleft w)\\
		&=f(\phi)(h_{(1)}Sh_{(2)}\rhd w)\\
		&=f(\phi)\varepsilon(h)w.
	\end{split}
\end{equation*}
Thus $(h\rhd \phi)\in L$, so $L$ is an $H$-module.  Similarly $L'$ is an $H$-module.  We note that $L/L'\cong\one$, so by Lemma \ref{lem:peter-weyl-2} there is an $H$-module $L''$ such that $L=L'\oplus L''$.  Let us choose some nonzero $\phi\in L''$, scaled as necessary so that $f(\phi)=1$.  Since $L''$ is an $H$-module we have $\big[\big(h-\counit (h)\big)\actleft\phi\big]\in L''$ for all $h\in H$.  But our calculation above shows that $\big[\big(h-\counit(h)\big)\actleft\phi\big](w)=0$ for all $w\in W$, so $\big[\big(h-\counit (h)\big)\actleft\phi\big]\in L'$.  Since $L'$ and $L''$ have trivial intersection, $\big[\big(h-\counit (h)\big)\actleft\phi\big]=0$.  That is,
\[ (h\actleft \phi)(v)=\counit(h)\phi(v) \]
for all $v\in V$.  Thus $\phi$ is not merely $k$-linear, but is a homomorphism of $H$-modules.  It is surjective since it belongs to $L$.  Therefore $V=W\oplus\ker(\phi)$.
\end{proof}

\subsection{Some semisimple categories of $\rD(\rU_q(\fg))$-modules}\label{section:Vplusminus}

In this section we demonstrate a method of constructing simple $\rD(H)$-modules, where $H$ is a bialgebra, which the author learned from Victor Ostrik.  We stated in Proposition \ref{prop:quasi-triangular_comodule} that if $H$ has quasi-triangular structure $R$, then we can construct coactions
\begin{equation*}
	\delta_+(v) = R^{(2)} \otimes (R^{(1)} \actleft v)\quad\text{and}\quad
	\delta_-(v) = (R^{-1})^{(1)} \otimes \big( (R^{-1})^{(2)} \actleft v \big)
\end{equation*}
for any $H$-module.  We refer to the resulting $\rD(H)$-modules (See Proposition \ref{prop:H-mod_to_DH-mod}) as $V^+$ and $V^-$, respectively.

\begin{lemma}\label{lem:v_plus_neq_v_minus}
Let $H$ be a bialgebra and let $U$ and $V$ be non-isomorphic simple $H$-modules.  Then $U^-\ncong V^+$, and furthermore $V^-\cong V^+$ if and only if $\delta_-=\delta_+$.
\end{lemma}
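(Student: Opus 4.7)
The plan is to exploit the fact that as $H$-modules, $V^+$ and $V^-$ share the same underlying structure (namely $V$ itself), so any isomorphism as $\rD(H)$-modules imposes strong constraints. For the first assertion, if $\psi\colon U^- \to V^+$ were a $\rD(H)$-module isomorphism, then since $H \subset \rD(H)$ is a subalgebra, $\psi$ would in particular be an isomorphism of the underlying $H$-modules $U$ and $V$, contradicting the hypothesis that $U \ncong V$ over $H$.

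For the equivalence, the backward direction is immediate: if $\delta_- = \delta_+$, then the $C$-actions obtained via $c \actleft v = \phi(c, v^{(-1)})v^{(0)}$ (cf.\ Proposition \ref{prop:H-mod_to_DH-mod}) coincide, so $V^+$ and $V^-$ are \emph{literally} the same $\rD(H)$-module. For the forward direction, any $\rD(H)$-isomorphism $\psi\colon V^+ \to V^-$ restricts to an $H$-module automorphism of the simple module $V$. Schur's lemma then forces $\psi = \lambda \cdot \id$ for some nonzero scalar $\lambda$, and the intertwining condition $\psi(c \actleft_+ v) = c \actleft_- \psi(v)$ collapses, after cancelling $\lambda$, to $c \actleft_+ v = c \actleft_- v$ for all $c \in C$ and $v \in V$.

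The main obstacle is then the final step: passing from equality of $C$-actions to equality of the coactions themselves. This requires non-degeneracy of the Hopf pairing $C \otimes H \to k$ in the sense that the map $H \to C^*$ given by $h \mapsto \phi(-, h)$ is injective. Writing $\delta_+(v) - \delta_-(v) = \sum_i h_i \otimes w_i$ with the $w_i$ linearly independent in $V$ and isolating coefficients of the $w_i$ in the identity $\sum_i \phi(c, h_i) w_i = 0$ (valid for every $c \in C$), non-degeneracy then gives each $h_i = 0$, hence $\delta_+(v) = \delta_-(v)$. This non-degeneracy is built into the quantum double construction, so I would invoke it as part of the standing setup rather than re-derive it.
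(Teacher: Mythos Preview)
Your argument is correct and follows the same core idea as the paper: restrict the isomorphism to an $H$-module map and invoke Schur's lemma. The one difference is that you work at the level of the $C$-action and then need non-degeneracy of the pairing to lift back to the coactions, whereas the paper works directly at the level of coactions. It simply asserts that an isomorphism $f\colon V\to V$ with $V^+\cong V^-$ satisfies $\delta_-=(\id_H\otimes f)^{-1}\circ\delta_+\circ f$, applies Schur to get $f=c\cdot\id$, and reads off $\delta_-=\delta_+$ immediately. In other words, the paper treats $V^\pm$ as Yetter--Drinfeld modules (module plus comodule with compatibility) rather than as $\rD(H)$-modules via the induced $C$-action, so the intertwining condition is on $\delta_\pm$ from the outset and your ``main obstacle'' never arises. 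Your route is slightly more explicit about the role of the pairing; the paper's is shorter.
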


\begin{proof}
That $U^-\ncong V^+$ is obvious since $U^-$ and $V^+$ retain the $H$-module structures of $U$ and $V$, respectively, and $H$ is a subalgebra of $D(H)$.

Now assume that $f\colon V\to V$ is an isomorphism such that
\begin{equation*}
	\delta_-=(\id_H\otimes f)^{-1}\circ \delta_+ \circ f.
\end{equation*}
Since $V$ is simple, Schur's Lemma implies that $f(v)=cv$ for all $v\in V$, where $c$ is a non-zero constant.  Therefore, $\delta_-=\delta_+$.
\end{proof}

\begin{lemma}\label{lem:two_simples_4}
Let $H$ be a quasi-triangular Hopf algebra.  Suppose that $V\otimes V^*$ is semisimple for any simple $H$-module $V$, that the category of finite-dimensional left $H$-modules is semisimple, and that $\delta_-\neq\delta_+$ except on the trivial $H$-module.  Let $U$ and $V$ be simple $H$-modules.  Then the $\rD(H)$-module $U^+\otimes V^-$ is simple.  Furthermore, the tensor category generated by all such $U^+\otimes V^-$ is semisimple.
\end{lemma}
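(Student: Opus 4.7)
The plan is to establish that $\End_{\rD(H)}(U^+ \otimes V^-) = k$ and that $U^+ \otimes V^-$ is semisimple as a $\rD(H)$-module; together these force simplicity. The critical technical input is the orthogonality
\[
	\Hom_{\rD(H)}(A^-, B^+) = 0
\]
for simple $H$-modules $A, B$ with at least one non-trivial. This follows because $A^-$ and $B^+$ are simple as $\rD(H)$-modules (any $\rD(H)$-subobject is an $H$-subobject), so Schur's lemma forces any nonzero morphism to be an isomorphism; Lemma \ref{lem:v_plus_neq_v_minus} then gives $A \cong B$ with $\delta_+ = \delta_-$ on this module, contradicting the third hypothesis.

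For the endomorphism calculation I use that $(-)^+, (-)^-\colon H\text{-mod} \to \rD(H)\text{-mod}$ are monoidal functors which commute with duals, so $(U^+)^* \cong (U^*)^+$, $(V^-)^* \cong (V^*)^-$, and $(W \otimes W')^\pm \cong W^\pm \otimes (W')^\pm$. Tensor-hom adjunction and these identifications give
\[
	\End_{\rD(H)}(U^+ \otimes V^-) \cong \Hom_{\rD(H)}\bigl(V^-, (U^* \otimes U)^+ \otimes V^-\bigr).
\]
Decomposing $U^* \otimes U = \one \oplus M$ via semisimplicity of $H$-mod (so $M$ has no trivial $H$-summand), the right-hand side becomes $k \oplus \Hom_{\rD(H)}(V^-, M^+ \otimes V^-)$. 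Another application of adjunction and monoidality identifies the second summand with $\Hom_{\rD(H)}((V \otimes V^*)^-, M^+)$, and decomposing $V \otimes V^* = \one \oplus N$ splits this into $\Hom_{\rD(H)}(\one, M^+) \oplus \Hom_{\rD(H)}(N^-, M^+)$: the first vanishes because $M$ has no trivial $H$-summand, and the second by the orthogonality applied to simple summands of $M$ and $N$. Hence $\End_{\rD(H)}(U^+ \otimes V^-) = k$.

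The semisimplicity of $U^+ \otimes V^-$ as a $\rD(H)$-module is the subtlest step. I plan to realize $U^+ \otimes V^-$ as a $\rD(H)$-direct summand of $W^+ \otimes (W^*)^-$ for $W = U \oplus V^*$, decompose $W$ into simple $H$-summands, and invoke the hypothesis that $V \otimes V^*$ is semisimple as a $\rD(H)$-module to deduce semisimplicity of the diagonal summands $W_i^+ \otimes (W_i^*)^-$; the off-diagonal summands are of the same form $A^+ \otimes B^-$, and an analogous embedding-into-a-diagonal argument reduces them to the hypothesis as well. A semisimple module with one-dimensional endomorphism algebra is then simple. The hard part of the proof will be nailing down this semisimplicity step in a form that uses the stated hypothesis correctly.

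For the \emph{furthermore} assertion, the $\rD(H)$-braiding yields $V^- \otimes U^+ \cong U^+ \otimes V^-$, so
\[
	(U_1^+ \otimes V_1^-) \otimes (U_2^+ \otimes V_2^-) \cong (U_1 \otimes U_2)^+ \otimes (V_1 \otimes V_2)^-,
\]
and decomposing $U_1 \otimes U_2$ and $V_1 \otimes V_2$ into simple $H$-summands (possible by semisimplicity of $H$-mod) expresses the tensor product as a direct sum of $\rD(H)$-modules of the form $X^+ \otimes Y^-$ with $X, Y$ simple $H$-modules. By the first assertion each such summand is simple in $\rD(H)$-mod, so the additive span of all $U^+ \otimes V^-$ is closed under tensor products and direct sums, giving a semisimple tensor subcategory of $\rD(H)$-mod.
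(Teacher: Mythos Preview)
Your endomorphism computation and your treatment of the ``furthermore'' clause match the paper's argument essentially line for line. The paper moves duals across in one step to obtain
\[
\End_{\rD(H)}(U^+\otimes V^-)\;\cong\;\Hom_{\rD(H)}\big((U\otimes U^*)^+,\,(V\otimes V^*)^-\big),
\]
decomposes each argument into simple $H$-summands via the additive functors $(-)^\pm$, and then invokes Lemma~\ref{lem:v_plus_neq_v_minus} to see that only the trivial summand on each side contributes. Your two-stage adjunction through $\Hom_{\rD(H)}\big(V^-,\,(U^*\otimes U)^+\otimes V^-\big)$ is the same computation with one intermediate stop. Likewise, your use of the braiding to rewrite $(U_1^+\otimes V_1^-)\otimes(U_2^+\otimes V_2^-)\cong (U_1\otimes U_2)^+\otimes(V_1\otimes V_2)^-$ and then decompose over $H$ is exactly the paper's argument for the second assertion.

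Where you diverge is in inserting an explicit semisimplicity step for $U^+\otimes V^-$ before concluding simplicity from $\End=k$. The paper does not do this: after computing the endomorphism algebra it simply writes ``Thus $U^+\otimes V^-$ is simple.'' Your instinct that $\End=k$ alone yields only indecomposability is sound, but your proposed argument does not close the gap. The hypothesis that $V\otimes V^*$ is semisimple refers to the $H$-module structure (and is in any case implied by semisimplicity of $H$-mod); it does not furnish $\rD(H)$-semisimplicity of a mixed object such as $W^+\otimes(W^*)^-$. When you decompose $W^+\otimes(W^*)^-$ for $W=U\oplus V^*$, every summand---diagonal or not---is of the mixed form $A^+\otimes B^-$ with $A,B$ simple; none is of the form $(X\otimes X^*)^\pm$, so the hypothesis does not apply to the diagonal pieces, and your ``analogous embedding'' for the off-diagonal pieces is precisely the statement you are trying to prove. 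In short, you have correctly located a point that the paper's own proof passes over without comment, but the patch you sketch is circular as written.
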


\begin{proof}
Let $U$ and $V$ be simple $H$-modules.  Lemma \ref{lem:v_plus_neq_v_minus} implies that $\Hom_{\rD(H)}(V^+,V^-)$ is nonzero if and only if $V$ is the trivial module.  We have
\begin{equation*}
	\begin{split}
		\End_{\rD(H)}(U^+\otimes V^-)
		&=\Hom_{\rD(H)}(U^+\otimes V^-,U^+\otimes V^-)\\
		&=\Hom_{\rD(H)}\big(U^+\otimes (U^+)^* , V^+\otimes (V^-)^*\big)\\
		&=\Hom_{\rD(H)}\big((U \otimes U^*)^+ , (V\otimes V^*)^-\big)
	\end{split}
\end{equation*}
which is $\CC(q)$ since the only contribution is from the trivial submodules of $(U\otimes U^*)^+$ and $(V\otimes V^*)^-$.  Thus $U^+\otimes V^-$ is simple.

Now let $U$, $V$, $W$, and $Y$ be left $H$-modules.  We have
\begin{equation*}
	(U^+\otimes V^-)\otimes(W^+\otimes Y^-)
	\cong U^+\otimes W^+\otimes V^-\otimes Y^-
	\cong (U\otimes W)^+\otimes(V\otimes Y)^-
\end{equation*}
so the lemma is proved.
\end{proof}

\begin{definition}\label{defn:V_lambda_mu}
Recall that $\rU_q(\fg)$ is quasi-triangular with simple modules $V_\lambda$.  We define $V_{\lambda,\mu}=V_\lambda^+\otimes V_\mu^-$.
\end{definition}

\begin{corollary}
The $\rD(\rU_q(\fsl_2))$-modules $V_{\lambda,\mu}$ are simple, and the category they generate is semisimple.
\end{corollary}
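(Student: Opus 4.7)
The plan is to apply Lemma~\ref{lem:two_simples_4} with $H = \rU_q(\fsl_2)$. I need to verify its four hypotheses: (i) $H$ is quasi-triangular, (ii) every finite-dimensional left $H$-module is semisimple, (iii) $V \otimes V^*$ is semisimple whenever $V$ is simple, and (iv) $\delta_+ \neq \delta_-$ on every non-trivial simple $H$-module.

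Hypothesis (i) is immediate from Example~\ref{ex:r_for_uqsl2}. For (ii) I would apply Theorem~\ref{thm:peter-weyl} to the central element $\Delta$ displayed in the remark after Definition~\ref{def:quantum_enveloping_algebra}. A short highest-weight computation shows that $\Delta$ acts on $V_\lambda$ as the scalar $(q^{\lambda+1}+q^{-\lambda-1})/(q-q^{-1})^2$; at generic $q$ this separates the trivial module from every non-trivial $V_\lambda$, so subtracting the value at $\lambda=0$ yields a central element of $H$ with precisely the property required by Theorem~\ref{thm:peter-weyl}. The condition $\Ext^1(\one,\one)=0$ follows from the relation $KE = q^2 EK$: in any self-extension of $\one$, the eigenvector equation for $K$ forces $E$, $F$, and $K-1$ to act by zero, so the extension splits. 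Hypothesis (iii) is then automatic, since $V\otimes V^*$ is a finite-dimensional $H$-module.

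The only step with real content is (iv). Let $\lambda \geq 1$ and let $v_\lambda \in V_\lambda$ be the highest-weight vector. Since $Ev_\lambda = 0$, only the $n=0$ term of the series for $R$ in Example~\ref{ex:r_for_uqsl2} survives when the first tensor factor is applied to $v_\lambda$, so $\delta_+(v_\lambda)$ lies in $\rU_q(\fsl_2) \otimes \CC v_\lambda$. To analyze $\delta_-$ I would use $R^{-1} = (S \otimes \id)(R)$, whose second tensor factor still contains $F^n$ for every $n \geq 0$; applying it to $v_\lambda$ produces a nonzero contribution in $\rU_q(\fsl_2) \otimes \CC(F v_\lambda)$ from the $n=1$ term, since $Fv_\lambda \neq 0$ when $\lambda \geq 1$. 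Hence $\delta_+(v_\lambda) \neq \delta_-(v_\lambda)$, and Lemma~\ref{lem:v_plus_neq_v_minus} then gives $V_\lambda^+ \ncong V_\lambda^-$ for every non-trivial $V_\lambda$.

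The main obstacle is this last hypothesis, because unlike the others it requires a direct computation with both $R$ and $R^{-1}$ rather than a citation of a standard result. Once (i)--(iv) are established, Lemma~\ref{lem:two_simples_4} immediately gives both the simplicity of each $V_{\lambda,\mu} = V_\lambda^+ \otimes V_\mu^-$ and the semisimplicity of the tensor subcategory they generate, which is exactly the corollary.
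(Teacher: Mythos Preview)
Your proposal is correct and follows essentially the same route as the paper: both arguments invoke Lemma~\ref{lem:two_simples_4} and isolate hypothesis~(iv) as the only nontrivial check, verifying it by observing that for a highest-weight vector $v_\lambda$ with $\lambda\geq 1$ the coaction $\delta_+$ keeps $v_\lambda$ in its own line (only the $n=0$ term of $R$ survives), whereas $\delta_-$ produces a nonzero component along $Fv_\lambda$. The paper simply takes hypotheses (ii) and (iii) as standard facts about $\rU_q(\fsl_2)$ rather than re-deriving them via the Casimir, but the substantive computation is the same as yours.
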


\begin{proof}
Let $V$ be a simple, non-trivial $\rU_q(\fsl_2)$-module.  Let $v\in V$ be a highest-weight vector.  Since $V$ is non-trivial, we have $F\actleft v\neq 0$.  Given the quasi-triangular structure from Example \ref{ex:r_for_uqsl2}, we find that $v$ generates a 1-dimensional $H$-comodule under $\delta_+$, but not under $\delta_-$.  Thus $\delta_-\neq\delta_+$, so the hypotheses of Lemma \ref{lem:two_simples_4} are satisfied.
\end{proof}

The following conjectures are due to the author's conversations with Victor Ostrik.
\begin{conjecture}
The category of finite-dimensional $\rD(\rU_q(\fg))$-modules is semi\-simple.
\end{conjecture}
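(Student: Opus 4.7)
The plan is to generalize the strategy carried out for $\fsl_2$ in this paper to arbitrary semisimple $\fg$. By Theorem \ref{thm:PWdual} applied to $B = \rD(\rU_q(\fg))$, it suffices to exhibit a Peter-Weyl decomposition of the matrix-coefficient bimodule $B_\cC^*$ for the full subcategory $\cC$ of finite-dimensional $\rD(\rU_q(\fg))$-modules, where the simple objects are conjecturally the $V_\lambda^+ \otimes V_\mu^- \otimes U_0$ of Conjecture \ref{conjecture:victor2}, and then to argue that every finite-dimensional $\rD(\rU_q(\fg))$-module actually lies in $\cC$.

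First I would set up the ambient subalgebra $H \otimes C \subset \rD(H)^*$, now with $H = \rU_q(\fg)$ and $C = \CC_q[G]$ for $G$ the simply connected group with Lie algebra $\fg$. The structural results of the $\fsl_2$ case -- that this is a subalgebra, a sub-bimodule, and sits inside the Sweedler dual upon passing to locally finite elements -- depend only on the Hopf pairing between $H$ and $C$, so they transfer directly. The first genuinely new step is to prove the analog of the locally-finite reduction, namely that an element of $H \otimes C$ is $\rD(H)$-locally-finite if and only if it is $H$-locally-finite. I expect this to follow from the cross-relations together with Joseph--Letzter-style local finiteness of the adjoint action of $\rU_q(\fg)$ on itself and the dual action on $\CC_q[G]$.

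The heart of the argument is the analysis of highest-weight bivectors in $H \otimes C$, by which I mean elements annihilated on both sides by every $E_i$. For each pair $(\lambda,\mu)$ of dominant integral weights of the appropriate parity I would construct a canonical highest-weight bivector generating the image $\cH_{\lambda,\mu}$ of $\beta_{V_\lambda^+ \otimes V_\mu^-}$, using explicit matrix coefficients of the fundamental modules, together with group-like elements accounting for the one-dimensional modules of Proposition \ref{prop:n_simple_1-dimensional_modules}. By Lemma \ref{lem:hw_subalgebra} the highest-weight bivectors form a subalgebra, and by Lemma \ref{lem:hw_generate_locally_finite_modules} this subalgebra generates $(H \otimes C)_f$ as a $\rD(\rU_q(\fg))$-bimodule, so once a finite generating set for the highest-weight subalgebra is identified and shown to be complete, the desired Peter-Weyl decomposition follows and Theorem \ref{thm:PWdual} delivers semisimplicity of $\cC$.

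The principal obstacle is showing that the finite list of proposed generators exhausts all highest-weight bivectors. For $\fsl_2$ this hinged on the observation from Example \ref{ex:sl2_double_relations} that $c$ quasi-commutes with $E$, so that right multiplication by $c$ preserves the left-highest-weight property; for general $\fg$ one must identify an analogous family of elements of $\CC_q[G]$ quasi-commuting with every $E_i$, which requires a nontrivial combinatorial statement about the cross-relations of Example \ref{ex:duqgln_cross_relations}. A second and more fundamental obstacle, which is what keeps the full conjecture open even for $\fg = \fsl_2$, is to ensure that the matrix coefficients of an arbitrary finite-dimensional $\rD(\rU_q(\fg))$-module lie inside $H \otimes C$ rather than in some larger part of $\rD(\rU_q(\fg))^\circ$. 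Since the containment $H \otimes C \subsetneq \rD(H)^*$ is strict in the infinite-dimensional case, ruling out additional simple modules outside the tensor category generated by the $V_\lambda^+ \otimes V_\mu^-$ appears to require fundamentally new input and is likely the main barrier to proving the conjecture by this route.
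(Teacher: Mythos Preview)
This statement is a \emph{conjecture}, not a theorem; the paper gives no proof of it and explicitly says ``These conjectures are extremely difficult even for $\fg=\fsl_2$.'' So there is nothing in the paper to compare your attempt against, and you yourself do not claim to have a proof---your final paragraph correctly identifies the two essential gaps that keep the problem open.

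What you have written is a reasonable research outline that faithfully extrapolates the paper's $\fsl_2$ strategy to general $\fg$, and you have put your finger on exactly the right obstructions. The second obstacle you name---that matrix coefficients of an arbitrary finite-dimensional $\rD(\rU_q(\fg))$-module need not land in $H\otimes C$---is the crux: even for $\fsl_2$, the Main Theorem only yields semisimplicity of $\cC(\cH)$, not of the full category of finite-dimensional modules, precisely because $(H\otimes C)_f$ may be strictly smaller than $\rD(H)^\circ$. Your first obstacle (finding enough quasi-commuting elements in $\CC_q[G]$ to control all highest-weight bivectors) is also genuine, and the paper records it as a separate conjecture rather than a result. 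So your proposal is an honest sketch of a program, not a proof, and should be labeled as such.
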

\begin{conjecture}
The simple $\rD(\rU_q(\fg))$-modules are, up to isomorphism, of the form $V_{\lambda,\mu}\otimes U_0$ where $\lambda$ and $\mu$ are dominant integral $\fg$-weights and $U_0$ belongs to the (finite) set of one-dimensional $\rD(\rU_q(\fg))$-modules.
\end{conjecture}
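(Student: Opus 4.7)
My plan is to combine the $\fsl_2$-strategy of Section \ref{section:main_results} with a tensor twist by the one-dimensional modules enumerated in Proposition \ref{prop:n_simple_1-dimensional_modules}. First I would seek a direct generalization of the Main Theorem: a Peter-Weyl decomposition
\[ \big(\rU_q(\fg)\otimes\CC_q[G]\big)_f \;=\; \bigoplus \cH_{\lambda,\mu} \]
indexed by pairs of dominant integral weights $(\lambda,\mu)$ with $\lambda-\mu$ in the root lattice $Q$. By Theorem \ref{thm:PWdual}, this yields semisimplicity of the subcategory $\cC_0$ whose simple objects are exactly the $V_{\lambda,\mu}$ with $\lambda-\mu\in Q$. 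The bivector technique used for $\fsl_2$ should carry over: identify the algebra of highest-weight bivectors inside $\rU_q(\fg)\otimes\CC_q[G]$, exhibit canonical generators (the $\fg$-analogues of $\bfv_1,\dots,\bfv_4$), and match its sub-bimodules under the $\rD$-action with $\cH_{\lambda,\mu}$.

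Second, complete the list by tensoring with one-dimensional modules. Generalizing Proposition \ref{prop:n_simple_1-dimensional_modules}, the set of one-dimensional $\rD(\rU_q(\fg))$-modules is a group isomorphic to $P/Q$, where $P$ is the weight lattice, and each such $U_0$ gives an autoequivalence of finite-dimensional modules via tensor product. Applying these twists to $\cC_0$ produces all modules $V_{\lambda,\mu}\otimes U_0$; because the twists shift the constraint $\lambda-\mu\in Q$ through every coset of $Q$ in $P$, the full list $\{V_{\lambda,\mu}\otimes U_0\}$ covers every pair of dominant integral weights.

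For exhaustion, take a simple finite-dimensional $\rD(\rU_q(\fg))$-module $M$ and pick a left-highest-weight vector $v\in M$ of weight $\nu$. Tensoring $M$ by an appropriate $U_0^{-1}$ puts its matrix coefficients into the coset of $(\rU_q(\fg)\otimes\CC_q[G])_f$ indexed by $\nu\bmod Q$, and by Remark \ref{remark:advantage_of_bimodule} the embedding $\beta_M$ lands in a single summand $\cH_{\lambda,\mu}$ with $\lambda+\mu=\nu$. Simplicity then forces $M\otimes U_0^{-1}\cong V_{\lambda,\mu}$, that is, $M\cong V_{\lambda,\mu}\otimes U_0$.

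The principal obstacle is verifying that after the twist $\beta_M$ actually lands in the locally-finite algebra $(\rU_q(\fg)\otimes\CC_q[G])_f$ rather than in some larger part of $\rD(\rU_q(\fg))^\circ$: this is essentially the content of Main Conjecture \ref{conjecture:victor}, and it cannot be settled by a Casimir argument via Theorem \ref{theorem:weyl_for_Hopf_algebras} because too little is known about the center of $\rD(\rU_q(\fg))$. Independently, the $\fg$-analogue of the Main Theorem itself will require a careful presentation of the highest-weight-bivector algebra of $\rU_q(\fg)\otimes\CC_q[G]$, which for general $\fg$ should be considerably more intricate than the four-generator algebra worked out for $\fsl_2$.
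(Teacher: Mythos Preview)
The paper does not prove this statement: it is stated explicitly as an open conjecture, described as ``extremely difficult even for $\fg=\fsl_2$,'' and no argument is offered beyond noting consistency with the $\fsl_2$ Main Theorem. So there is no proof in the paper to compare against.

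Your proposal is a strategy outline rather than a proof, and to your credit you identify the essential gap yourself. The exhaustion step is circular as written: given an arbitrary simple $\rD(\rU_q(\fg))$-module $M$, you need $\beta_M(M\otimes M^*)$ (after a twist) to land inside $(\rU_q(\fg)\otimes\CC_q[G])_f$, but nothing in the paper guarantees that the matrix coefficients of a general simple module lie in the subalgebra $H\otimes C\subset\rD(H)^*$ at all, let alone in its locally finite part. The paper only shows that $H\otimes C$ is a sub-bimodule of $\rD(H)^*$, not that it exhausts the finite dual $\rD(H)^\circ$. Knowing this containment of matrix coefficients would already imply, via Theorem~\ref{thm:PWdual} and the conjectured Peter-Weyl decomposition, both Conjecture~\ref{conjecture:victor} and the present conjecture simultaneously; so this step is not a technicality but the entire problem.

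Two smaller points: the generalized Peter-Weyl decomposition you invoke in your first step is itself only a conjecture in the paper (stated at the end of Section~\ref{section:main_results}), so your argument stacks one open conjecture on another; and the identification of the group of one-dimensional $\rD(\rU_q(\fg))$-modules with $P/Q$ is proved in the paper only for $\fg=\fsl_n$ (Proposition~\ref{prop:n_simple_1-dimensional_modules}), so for general $\fg$ this would also need justification.
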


These conjectures are extremely difficult even for $\fg=\fsl_2$.  However, we find that they are consistent with our main results in Section \ref{C:Calculations}.

\section{Main results}\label{C:Calculations}

Proofs for results presented in this section may be found in Section \ref{C:Proofs}.

\subsection{The actions of $\rD(H)$ on $H\otimes C$}\label{section:double_actions}

It follows from Lemma \ref{lem:dual_bialgebra} that $\rD(H)^*$ is a $\rD(H)$-bimodule, but the actions are very complicated when $H$ is infinite-dimensional.  However, there is a sub-bimodule that we are able to describe.
\begin{theorem}\label{thm:dual_is_a_bimodule}
The subalgebra $H\otimes C\subset \rD(H)^*$ of Proposition \ref{prop:h_otimes_c} is closed under both the left and right actions of $\rD(H)$ described in Section \ref{section:hopf_pairing}, and thus $H\otimes C$ is a bimodule algebra over $\rD(H)$.  The left and right actions on generators are given by
\begin{align*}
	\makebox[1.3ex][c]{$c$} \blactleft \makebox[1.3ex][l]{$\bar{h}$}
	&=(\bar{h}\actright c_{(2)})\cdot Sc_{(1)}c_{(3)}
		&\makebox[1.3ex][l]{$\bar{h}$} \blactright \makebox[1.3ex][l]{$c$} 
		&=c\actleft \bar{h}\\
	\makebox[1.3ex][c]{$c$} \blactleft \makebox[1.3ex][l]{$\bar{c}$}
	&=Sc_{(1)}\bar{c}c_{(2)}
		&\makebox[1.3ex][l]{$\bar{c}$} \blactright \makebox[1.3ex][l]{$c$} 
		&=\varepsilon(c)\bar{c}\\
	\makebox[1.3ex][c]{$h$} \blactleft \makebox[1.3ex][l]{$\bar{h}$}
	&=\varepsilon(h)\bar{h}
		&\makebox[1.3ex][l]{$\bar{h}$} \blactright \makebox[1.3ex][l]{$h$} 
		&=Sh_{(1)} \bar{h} h_{(2)}\\
	\makebox[1.3ex][c]{$h$} \blactleft \makebox[1.3ex][l]{$\bar{c}$}
	&=h\actleft \bar{c}
		&\makebox[1.3ex][l]{$\bar{c}$} \blactright \makebox[1.3ex][l]{$h$} 
		&=Sh_{(1)} h_{(3)}\cdot (\bar{c}\actright h_{(2)})
\end{align*}
\end{theorem}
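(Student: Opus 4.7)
The plan is to verify each of the eight formulas by direct computation, which simultaneously establishes that $H\otimes C$ is closed under the bimodule actions of $\rD(H)$ on $\rD(H)^*$ (from Lemma \ref{lem:dual_bialgebra}) and gives the advertised expressions in terms of the pairing-induced actions of Proposition \ref{prop:pairing_action}. Closure is automatic once each right-hand side is seen to lie in $H\otimes C$.

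First I would fix conventions. Under the inclusion $H\otimes C\hookrightarrow \rD(H)^*=(C\otimes H)^*$, a monomial $\bar h\otimes \bar c$ is the functional
\[ (c'\cdot h')\longmapsto \phi(c',\bar h)\,\phi(\bar c,h'); \]
in particular $\bar h\in H\subset H\otimes C$ evaluates by $\phi(-,\bar h)$ on the $C$-factor and by $\varepsilon$ on the $H$-factor, and dually for $\bar c\in C$. The standard bimodule structure is $(a\actleft f)(x)=f(ax)$ and $(f\actright a)(x)=f(xa)$.

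Each of the eight identities is then verified by the same recipe: test both sides as functionals on an arbitrary monomial $c'\cdot h'\in\rD(H)$; use the cross-relation
\[ h\cdot c = c_{(2)}\cdot h_{(2)}\,\phi(c_{(1)},Sh_{(1)})\,\phi(c_{(3)},h_{(3)}) \]
together with the opposite multiplication $c\cdot c'=c'c$ to rewrite the product inside $\rD(H)$ as a single monomial of the form $(\text{element of }C)\cdot(\text{element of }H)$; apply the generator functional by evaluating $\phi$ on one factor and $\varepsilon$ on the other; and finally recombine the scalar $\phi$-factors via $\phi(cc',h)=\phi(c,h_{(1)})\phi(c',h_{(2)})$, $\phi(c,hh')=\phi(c_{(1)},h)\phi(c_{(2)},h')$, and $\phi(Sc,h)=\phi(c,Sh)$, with the counit axiom collapsing any superfluous Sweedler index. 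For example, for $c\blactleft\bar h$ one computes $\bar h\bigl((c'\cdot h')\cdot c\bigr)$, pushes $c$ past $h'$ via the cross-relation, exploits $c'\cdot c_{(2)}=c_{(2)}c'$, evaluates $\bar h$, and the resulting sum of pairings coincides with the functional associated to $(\bar h\actright c_{(2)})\cdot Sc_{(1)}c_{(3)}$.

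The easy cases ($h\blactleft\bar c$ and its partner, $h\blactleft\bar h=\varepsilon(h)\bar h$, $\bar c\blactright c=\varepsilon(c)\bar c$) require at most one application of the cross-relation and follow from multiplicativity of $\phi$ together with the counit axiom; the two adjoint-looking formulas $c\blactleft\bar c=Sc_{(1)}\bar c c_{(2)}$ and $\bar h\blactright h=Sh_{(1)}\bar h h_{(2)}$ similarly fall out once one invokes the cross-relation on the single mixed pair. The main obstacle is bookkeeping: after the cross-relation one has $\Delta^2 h$ (or $\Delta^2 c$) simultaneously with $\Delta h'$ (or $\Delta c'$), so up to four or five Sweedler indices must be tracked in parallel, and the antipode switch $\phi(Sc_{(1)},h'_{(1)})=\phi(c_{(1)},Sh'_{(1)})$ must be invoked at exactly the right moment for the $\phi$-factors to regroup into the claimed pairing of $c'\cdot h'$ with an element of $H\otimes C$.
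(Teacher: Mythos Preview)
Your proposal is correct and is essentially the paper's own proof: define the pairing $\langle c\cdot h,\bar h\cdot\bar c\rangle=\phi(c,\bar h)\phi(\bar c,h)$, test each action on an arbitrary monomial $c'\cdot h'$, rewrite products in $\rD(H)$ via the cross-relation and the opposite multiplication in $C$, and regroup the resulting $\phi$-factors. Your worked example for $c\blactleft\bar h$ matches the paper's computation line by line.

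One small inconsistency to fix: you state the bimodule structure as $(a\actleft f)(x)=f(ax)$ and $(f\actright a)(x)=f(xa)$, but your example computes $(c\blactleft\bar h)(c'\cdot h')=\bar h\bigl((c'\cdot h')\cdot c\bigr)$, which is the convention $(a\blactleft f)(x)=f(xa)$. The latter is what the paper actually uses in its proof (and is what makes the formulas come out as stated), so correct the displayed convention to match your computation.
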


We use the solid triangles $\blactleft$ and $\blactright$ to distinguish these actions from the actions of $C$ and $H$ on each other, for which we use $\actleft$ and $\actright$.  
We remark that $\bar{h} \blactright c =c\actleft \bar{h}$ makes sense because the subalgebra $C^{\op}\subset D(H)$ has multiplication opposite to that of $C$.

Theorem \ref{thm:dual_is_a_bimodule} gives explicit formulas for the actions of $\rD(H)$ on $H\otimes C$, so we will seek to describe its locally finite part $(H\otimes C)_f$ and thus to describe all objects of the category $\cC\big((H\otimes C)_f\big)$.  Ideally we would be able to describe the finite dual $\rD(H)^\circ$ and thus all finite-dimensional $\rD(H)$-modules, but that is much more difficult.

For examples of these actions, see Sections \ref{section:duqsln_actions} and \ref{section:duqsl2_actions}.

\subsection{Semisimplicity of certain $\rD(\rU_q(\fsl_2))$-modules}\label{section:main_results}

If $H=\rU_q(\fg)$, we recall $V_{\lambda,\mu}=V_\lambda^+\otimes V_\mu^-$ from Definition \ref{defn:V_lambda_mu}.  We recall that $\beta_{V_{\lambda,\mu}}(V_{\lambda,\mu}\otimes V_{\lambda,\mu}^*)$ is a sub-bimodule of $\rD(H)^*_f$ as shown in Section \ref{section:PW_theorem}.

In the remainder of this section, we let $H=\rU_q(\fsl_2)$, $C=\CC_q[\rSL_2]$, and $\cH=(\dualdouble)_f$.  In this case, $\lambda$ and $\mu$ are nonnegative integers, and there are simple sub-bimodules $\cH_{\lambda,\mu}$ corresponding to $\beta_{V_{\lambda,\mu}}(V_{\lambda,\mu}\otimes V_{\lambda,\mu}^*)$.  We claim that $\cH$ has a Peter-Weyl decomposition, namely the following.
\begin{maintheorem}\label{thm:main_theorem}
As a $\rD(H)$-bimodule,
\begin{equation}\label{eq:pw_decomp}
	\cH = \bigoplus_{\genfrac{}{}{0pt}{}{\lambda,\mu\geq 0}{\lambda-\mu \in 2\ZZ}} \cH_{\lambda,\mu}
\end{equation}
and this is a Peter-Weyl decomposition of $\cH$.
\end{maintheorem}

We recall the notation $\cC(D)$ from Section \ref{section:correspondence}.
Then Theorems \ref{thm:PW_thm} and \ref{thm:main_theorem} have the following corollary:

\begin{corollary}
The category $\cC(\cH)$ is semisimple.
\end{corollary}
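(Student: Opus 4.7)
The plan is to deduce the Corollary directly from the Main Theorem and Theorem \ref{thm:PW_thm}. Taking $B = \rD(\rU_q(\fsl_2))$, I would choose as my ambient Abelian category the category of all finite-dimensional $B$-modules and let $\cC(\cH)$ denote the resulting full subcategory of modules whose matrix coefficients lie in $\cH$.

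First I would verify that $\cC(\cH)$ is itself Abelian. From the defining formula $\beta_V(v\otimes f)(b) = f(b\actleft v)$ one reads off that if $W \hookrightarrow V$ is a submodule and $f \in W^*$ is extended arbitrarily to $\tilde f \in V^*$, then $\beta_W(w \otimes f) = \beta_V(w \otimes \tilde f)$; dually, matrix coefficients of a quotient $V \twoheadrightarrow V/W$ coincide with matrix coefficients of $V$ evaluated against functionals in $(V/W)^* \subset V^*$. Hence $\cC(\cH)$ is closed under taking submodules and quotient modules. Combined with the identity $\beta_{U \oplus V} = \beta_U \oplus \beta_V$ from Lemma \ref{lemma:the_map_beta}, which gives closure under finite direct sums, and the presence of the zero object, this makes $\cC(\cH)$ Abelian.

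Next I would identify $B^*_{\cC(\cH)}$ with $\cH$. The inclusion $B^*_{\cC(\cH)} \subset \cH$ is immediate from the definition of $\cC(\cH)$. For the reverse inclusion, the Main Theorem decomposes $\cH = \bigoplus \cH_{\lambda,\mu}$ with $\cH_{\lambda,\mu} = \beta_{V_{\lambda,\mu}}(V_{\lambda,\mu} \otimes V_{\lambda,\mu}^*)$; since each $V_{\lambda,\mu}$ is finite-dimensional and its matrix coefficients are by construction contained in $\cH$, it belongs to $\cC(\cH)$, so every summand appears in $B^*_{\cC(\cH)}$, giving $\cH \subset B^*_{\cC(\cH)}$.

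With these identifications in hand, the Main Theorem asserts precisely that $B^*_{\cC(\cH)} = \cH$ admits a Peter-Weyl decomposition (note that with $D = \cH$ the ``indexing'' category $\cC(D)$ of the Peter-Weyl decomposition coincides with our chosen $\cC = \cC(\cH)$, so the two notions of simple object agree), and Theorem \ref{thm:PW_thm} applied to $\cC = \cC(\cH)$ then yields semisimplicity of $\cC(\cH)$ at once. There is no real obstacle at this stage; the substantive work is packed into the Main Theorem, and what remains is only the functorial bookkeeping about how $\beta$ behaves under submodules, quotients, and direct sums.
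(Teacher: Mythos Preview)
Your proposal is correct and follows the same route as the paper, which simply states that the corollary follows from Theorems \ref{thm:PW_thm} and \ref{thm:main_theorem}. You have merely filled in the bookkeeping the paper leaves implicit (that $\cC(\cH)$ is Abelian and that $B^*_{\cC(\cH)}=\cH$), so the approaches are essentially identical.
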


We now highlight some of the results leading to Main Theorem \ref{thm:main_theorem}.

\begin{definition}
If $v\in \cH$, we say that $v$ is homogeneous of weight $(\omega_1,\omega_2)$ if there exist scalars $\omega_1$ and $\omega_2$ such that $K\blactleft v=q^{\omega_1}v$ and $v\blactright K^{-1}=q^{\omega_2}v$.
\end{definition}

In Section \ref{section:DH_bimodule_examples} we will give a presentation of the 16-dimensional simple $D(H)$-bimodule $\cH_{1,1}$ and prove the following proposition about the subspace of highest-weight bivectors ${^+}\cH_{1,1}{^+}=\{v\in \cH_{1,1}\mid E\blactleft v=v\blactright E=0\}$ (Recall Lemma \ref{lem:hw_subalgebra}).

\begin{proposition}
The space ${^+}\cH_{1,1}{^+}$ is the linear span of four vectors $\bfv_1$, $\bfv_2$, $\bfv_3$, and $\bfv_4$ of weights $(2,2)$, $(2,0)$, $(0,2)$, and $(0,0)$, respectively.
\end{proposition}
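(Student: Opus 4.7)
The plan is to transport the computation of ${^+}\cH_{1,1}{^+}$ across the map $\beta_{V_{1,1}}$ and reduce it to the representation theory of $\rU_q(\fsl_2)$. Since $V_{1,1}=V_1^+\otimes V_1^-$ is a simple $\rD(H)$-module by Lemma \ref{lem:two_simples_4}, Lemma \ref{lemma:beta_injective_on_simples} ensures that $\beta_{V_{1,1}}$ is injective, hence on dimensional grounds an isomorphism of $\rD(H)$-bimodules onto the $16$-dimensional space $\cH_{1,1}$.

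First, by the calculation in the proof of Lemma \ref{lemma:the_map_beta}, the left action of $E$ on $\cH_{1,1}$ pulls back along $\beta_{V_{1,1}}$ to $v\otimes f\mapsto (E\actleft v)\otimes f$, and the right action of $E$ pulls back to $v\otimes f\mapsto v\otimes(f\actright E)$. Working with pure tensors and a basis of $V_{1,1}^*$ in which to separate components, one checks that the simultaneous kernel of these two operators is exactly ${^+}V_{1,1}\otimes V_{1,1}^+$, where $V_{1,1}^+:=\{f\in V_{1,1}^*\mid f\actright E=0\}$. Therefore
\[ {^+}\cH_{1,1}{^+}\;\cong\;{^+}V_{1,1}\otimes V_{1,1}^+, \]
and it remains only to analyze each factor.

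Next, since both $V_1^+$ and $V_1^-$ restrict to the standard module $V_1$ on the Hopf subalgebra $\rU_q(\fsl_2)\subset\rD(H)$, the $\rU_q(\fsl_2)$-module underlying $V_{1,1}$ is simply $V_1\otimes V_1\cong V_2\oplus V_0$ by the Clebsch-Gordan decomposition. Thus ${^+}V_{1,1}$ is two-dimensional, with left $K$-weights $q^2$ (from the $V_2$ summand) and $q^0$ (from the $V_0$ summand). A short dual computation shows that for each simple $V_\lambda$, the subspace of $V_\lambda^*$ killed by $\actright E$ is one-dimensional, spanned by the functional dual to the lowest-weight vector, on which $K$ acts on the right as $q^{-\lambda}$; applied to $V_{1,1}^*\cong V_2^*\oplus V_0^*$, this gives $V_{1,1}^+$ two-dimensional with right $K$-weights $q^{-2}$ and $q^0$. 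Since the weight $\omega_2$ is read off from $v\blactright K^{-1}=q^{\omega_2}v$, these translate to $\omega_2\in\{2,0\}$.

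Combining the two factors produces four linearly independent bivectors with weights $(2,2)$, $(2,0)$, $(0,2)$, and $(0,0)$, to be labelled $\bfv_1,\bfv_2,\bfv_3,\bfv_4$. The main obstacle I anticipate is the dualization bookkeeping: matching the convention that $\omega_2$ is the exponent under $\blactright K^{-1}$ rather than $\blactright K$, and verifying that under $\beta_{V_{1,1}}$ the right action on $\cH_{1,1}$ is indeed the one induced on $V_{1,1}^*$ from the left $\rD(H)$-action on $V_{1,1}$. Once this sign convention is confirmed the conclusion is forced by Clebsch-Gordan, and explicit formulas for $\bfv_1,\ldots,\bfv_4$ can be read off from the presentation of $\cH_{1,1}$ in Section \ref{section:DH_bimodule_examples}.
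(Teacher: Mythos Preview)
Your argument is correct and takes a genuinely different route from the paper. The paper establishes the proposition in Example~\ref{ex:h11} by brute force: it writes down an explicit sixteen-element basis $\{v_{ij}\}$ of $\cH_{1,1}$ inside $H\otimes C$, tabulates the left and right actions of $E,F,K,a,b,c,d$ as $4\times 4$ matrices, reads off that $\phi_j(E)$ and $\phi_i'(E)$ each have rank $2$, and intersects their null spaces to obtain $\{v_{11},v_{14},v_{21},v_{24}\}$ together with their weights. Your approach instead pulls the problem back along the bimodule isomorphism $\beta_{V_{1,1}}$ to $V_{1,1}\otimes V_{1,1}^*$, where the left and right $E$-actions decouple as $(E\actleft)\otimes\id$ and $\id\otimes(\actright E)$; the Clebsch--Gordan decomposition $V_1\otimes V_1\cong V_2\oplus V_0$ on the underlying $\rU_q(\fsl_2)$-module then forces each factor to be two-dimensional with the stated weights. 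The trade-off is clear: your argument is shorter, conceptual, and visibly generalizes to other $\cH_{\lambda,\mu}$, while the paper's computation produces the explicit expressions \eqref{eqn:v_1v_2v_3v_4} for $\bfv_1,\dots,\bfv_4$ as elements of $H\otimes C$, which are indispensable for the later arguments in Section~\ref{section:prove_main_thm}. Your closing remark that the explicit formulas can be recovered from Example~\ref{ex:h11} is the right way to reconcile the two.
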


The following result provides an upper bound for $\cH$ in $H\otimes C$.

\begin{theorem}\label{thm:v1v2v3v4generate_hw}
The algebra $\HCpp$ is generated by $\{\bfv_1,\bfv_2,\bfv_3,\bfv_4\}$. It has basis
\[ \{\bfv_3^\ell \bfv_1^m \bfv_5^p \bfv_2^s \}
	\cup\{\bfv_3^\ell \bfv_1^m \bfv_6^r \bfv_2^s \}
	\cup\{\bfv_3^\ell \bfv_4^n \bfv_5^p \bfv_2^s \}
	\cup\{\bfv_3^\ell \bfv_4^n \bfv_6^r \bfv_2^s \}
\]
where $\bfv_5$ and $\bfv_6$ are the highest-weight bivectors of $\cH_{2,0}$ and $\cH_{0,2}$, respectively.
\end{theorem}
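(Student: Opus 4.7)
The plan is to exploit the Peter-Weyl decomposition $\HCpp = \bigoplus_{\lambda,\mu} {^+}\cH_{\lambda,\mu}{^+}$ supplied by Main Theorem~\ref{thm:main_theorem}. Since $V_{\lambda,\mu}|_H \cong V_\lambda\otimes V_\mu$ decomposes as an $H$-module into $\bigoplus V_k$ over $k\in\{|\lambda-\mu|,|\lambda-\mu|+2,\dotsc,\lambda+\mu\}$, the summand ${^+}\cH_{\lambda,\mu}{^+}$ has a basis of $(\min(\lambda,\mu)+1)^2$ highest-weight bivectors, one of each weight $(a,b)$ with $a,b$ in that set. This weight-graded count is the target that the proposed basis must reproduce.

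First I would compute the weights of the generators: $\bfv_1,\dotsc,\bfv_4 \in {^+}\cH_{1,1}{^+}$ have weights $(2,2),(2,0),(0,2),(0,0)$, while $\bfv_5,\bfv_6$ both have weight $(2,2)$. To show that $\bfv_5$ and $\bfv_6$ lie in the subalgebra generated by $\{\bfv_1,\dotsc,\bfv_4\}$, I would use the $\rD(H)$-tensor decomposition
\[ V_{1,1}\otimes V_{1,1} \cong V_{2,2}\oplus V_{2,0}\oplus V_{0,2}\oplus V_{0,0}, \]
which (via $\beta_U\beta_V = \beta_{U\otimes V}$) gives $\cH_{1,1}\cdot \cH_{1,1}\subseteq \cH_{2,2}\oplus\cH_{2,0}\oplus\cH_{0,2}\oplus\cH_{0,0}$. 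The weight-$(2,2)$ highest-weight bivector subspace of this sum is three-dimensional (one contribution from each of $\cH_{2,2},\cH_{2,0},\cH_{0,2}$), so the four products $\bfv_1\bfv_4,\bfv_4\bfv_1,\bfv_2\bfv_3,\bfv_3\bfv_2$ already span it; projecting onto each summand and solving a small linear system expresses $\bfv_5$ and $\bfv_6$ as explicit polynomials in $\bfv_1,\dotsc,\bfv_4$.

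With that in hand, I would verify the basis claim by induction on $\lambda+\mu$. Each monomial in one of the four families has a well-defined weight determined by its exponents, and a leading $(\lambda,\mu)$-component predicted by the branching rules applied to its constituent factors. Because the four families correspond to the four extremal highest-weight bivectors of the factor $\cH_{1,1}$ (the pairs $\bfv_1,\bfv_4$ versus $\bfv_2,\bfv_3$) combined with the two top-bivectors $\bfv_5,\bfv_6$, the weight together with the leading $(\lambda,\mu)$-component recovers the exponents $\ell,m,n,p,r,s$, which gives linear independence. For spanning: given a highest-weight bivector $\bfw\in{^+}\cH_{\lambda,\mu}{^+}$ of weight $(a,b)$, I would construct a monomial whose leading term matches the top component of $\bfw$, subtract, and recurse on strictly smaller $(\lambda,\mu)$.

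The main obstacle is making the leading-term analysis precise. I need explicit multiplication data for the generators---the quasi-commutation relations of the form $\bfv_i\bfv_j = q^{\ast}\bfv_j\bfv_i + \text{lower}$ together with the specific formulas for $\bfv_5,\bfv_6$ inside $\bfv_1\bfv_4,\bfv_4\bfv_1,\bfv_2\bfv_3,\bfv_3\bfv_2$---and I must verify that products of highest-weight bivectors project nontrivially onto the appropriate maximal summand of the branched tensor product. These computations, carried out using Theorem~\ref{thm:dual_is_a_bimodule} and the $\rD(H)$-branching of $V_{1,1}^{\otimes n}\otimes V_{2,0}^{\otimes p}\otimes V_{0,2}^{\otimes r}$, form the delicate core of the proof.
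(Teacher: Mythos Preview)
Your proposal has a fatal circularity: you open by invoking Main Theorem~\ref{thm:main_theorem} (the Peter--Weyl decomposition of $\cH$) to prove Theorem~\ref{thm:v1v2v3v4generate_hw}, but in the paper the logical dependence runs the other way. The proof of Main Theorem~\ref{thm:main_theorem} in Section~6.4 explicitly uses Theorem~\ref{thm:v1v2v3v4generate_hw} (and its Corollaries~\ref{cor:HCpp_is_locally_finite} and~\ref{cor:Hpp_equals_HCpp}) to show that $\HCpp\subset\sum_n(\cH_{1,1})^n$ and hence that $\cH$ is spanned by the bimodules $\cH_{\lambda,\mu}$. Without Theorem~\ref{thm:v1v2v3v4generate_hw} you do not yet know that every highest-weight bivector in $H\otimes C$ is locally finite, so you cannot assume the decomposition $\HCpp=\bigoplus{^+}\cH_{\lambda,\mu}{^+}$ from the outset.

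The paper's own argument is entirely different and self-contained: it embeds $\HCpp$ into a localization $\cA[\bfv_3^{-1},c^{-1}]$ built from the subalgebra $\cA\subset H\otimes C$ with basis $\{\bfv_3^m\bfv_4^n\bfv_5^pK^ka^\ell c^\epsilon\}$. After checking that $b,d$ never appear in highest-weight bivectors (Lemma~\ref{lem:cplus}) and that $F$ can be rewritten once $\bfv_3$ is inverted (Lemma~\ref{lem:noF}), a direct computation shows ${^+}\cA[\bfv_3^{-1},c^{-1}]{^+}=\langle\bfv_1,\bfv_3^{\pm1},\bfv_4,\bfv_5^{\pm1}\rangle$. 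One then descends in two steps: removing $c^{-1}$ forces $\bfv_6=\bfv_1^2\bfv_5^{-1}$ to appear, and removing $\bfv_3^{-1}$ is handled by a projection $\lambda$ killing positive powers of $E$, whose injectivity on the span of the candidate basis (Proposition~\ref{prop:v_injective}) pins down exactly which expressions survive in $\HCpp$. None of this uses the Peter--Weyl decomposition; rather, it is what makes that decomposition accessible.
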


Now Lemma \ref{lem:hw_subalgebra}, Lemma \ref{lem:lf_subalgebra}, and Theorem \ref{thm:v1v2v3v4generate_hw} have the following corollary.

\begin{corollary}\label{cor:HCpp_is_locally_finite}
The algebra $\HCpp$ is locally finite.
\end{corollary}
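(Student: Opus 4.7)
The plan is to show that every element of $\HCpp$ generates a finite-dimensional $\rD(H)$-bimodule, i.e.\ that $\HCpp\subseteq (H\otimes C)_f$, by verifying this on a finite generating set and then invoking closure of locally finite elements under multiplication.

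First, I would record that $\HCpp$ is a subalgebra of $H\otimes C\subseteq \rD(H)^*$. By Lemma \ref{lem:hw_subalgebra}, applied to both the left and right $\rU_q(\fsl_2)$-module structures on $H\otimes C$, the intersection $\HCpp = {^+}(H\otimes C)\cap (H\otimes C)^+$ is closed under the multiplication inherited from Proposition \ref{prop:h_otimes_c}. In parallel, Lemma \ref{lem:lf_subalgebra} guarantees that the sub-bimodule $(H\otimes C)_f$ of locally finite elements is itself a subalgebra of $H\otimes C$.

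Next, I would verify that each of the four canonical highest-weight bivectors is locally finite. Since $\bfv_1,\bfv_2,\bfv_3,\bfv_4$ all lie in the $16$-dimensional simple $\rD(H)$-bimodule $\cH_{1,1}$, the sub-bimodule of $H\otimes C$ generated by any $\bfv_i$ is contained in $\cH_{1,1}$ and is therefore finite-dimensional; hence $\bfv_i \in (H\otimes C)_f$ for $i=1,2,3,4$.

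Finally, Theorem \ref{thm:v1v2v3v4generate_hw} tells me that $\HCpp$ is generated as an algebra by $\{\bfv_1,\bfv_2,\bfv_3,\bfv_4\}$. Because $(H\otimes C)_f$ is a subalgebra containing each generator, it contains every polynomial in them, and in particular all of $\HCpp$. This yields $\HCpp\subseteq (H\otimes C)_f$, which is precisely the desired local finiteness. There is no genuinely new obstacle specific to this corollary: all the difficulty has been absorbed into Theorem \ref{thm:v1v2v3v4generate_hw}, whose proof must exhibit the explicit generators inside the finite-dimensional bimodule $\cH_{1,1}$ and supply the straightening basis showing they generate the entire highest-weight bivector algebra.
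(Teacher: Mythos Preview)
Your argument is correct and matches the paper's own reasoning exactly: the corollary is stated as an immediate consequence of Lemma~\ref{lem:hw_subalgebra}, Lemma~\ref{lem:lf_subalgebra}, and Theorem~\ref{thm:v1v2v3v4generate_hw}, using that the generators $\bfv_1,\bfv_2,\bfv_3,\bfv_4$ lie in the finite-dimensional bimodule $\cH_{1,1}$.
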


\begin{corollary}\label{cor:Hpp_equals_HCpp}
$\Hpp=\HCpp$.
\end{corollary}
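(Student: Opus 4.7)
The plan is very short because Corollary \ref{cor:HCpp_is_locally_finite} has done essentially all the work. The containment $\Hpp \subseteq \HCpp$ is immediate: since $\cH = (H \otimes C)_f$ is a $\rD(H)$-sub-bimodule of $H \otimes C$, the left and right actions of $E \in \rD(H)$ on an element of $\cH$ agree whether we view that element inside $\cH$ or inside $H \otimes C$, so any bivector killed on both sides by $E$ in $\cH$ is killed on both sides by $E$ in $H \otimes C$.

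For the reverse inclusion $\HCpp \subseteq \Hpp$, I would take an arbitrary $v \in \HCpp$ and argue in two steps. First, Corollary \ref{cor:HCpp_is_locally_finite} guarantees that $v$ generates a finite-dimensional $\rD(H)$-bimodule sitting inside $H \otimes C$; by the definition of $(H \otimes C)_f$ this places $v$ in $\cH$. Second, once $v \in \cH$, the conditions $E \blactleft v = 0$ and $v \blactright E = 0$ — which hold by hypothesis in $H \otimes C$ — are exactly the conditions defining $\Hpp$ inside $\cH$, so $v \in \Hpp$.

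There is really no main obstacle: the corollary is a clean reformulation of the statement that highest-weight bivectors in the ambient algebra are automatically locally finite, and that fact is precisely Corollary \ref{cor:HCpp_is_locally_finite}. What might merit a single sentence of care in the write-up is the observation that $\Hpp = \cH \cap \HCpp$, so establishing $\HCpp \subseteq \cH$ is tantamount to proving the equality. Everything else is a book-keeping check that the weight conditions are intrinsic to the element and do not depend on which sub-bimodule of $\rD(H)^*$ one regards $v$ as belonging to.
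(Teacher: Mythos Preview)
Your argument is correct and matches the paper's intended reasoning: the paper does not spell out a separate proof for this corollary but simply lists it alongside Corollary~\ref{cor:HCpp_is_locally_finite} as following from Lemma~\ref{lem:hw_subalgebra}, Lemma~\ref{lem:lf_subalgebra}, and Theorem~\ref{thm:v1v2v3v4generate_hw}. Your unpacking---that $\Hpp = \cH \cap \HCpp$ trivially, and that Corollary~\ref{cor:HCpp_is_locally_finite} gives $\HCpp \subseteq \cH$---is exactly the content the paper leaves implicit.
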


Theorem \ref{thm:v1v2v3v4generate_hw}, Corollary \ref{cor:Hpp_equals_HCpp}, and Lemma \ref{lem:hw_generate_locally_finite_modules} imply the following.

\begin{corollary}\label{cor:bimod_gen_by_hw}
As a $\rD(H)$-bimodule, $\cH$ is generated by the algebra $\lr{ \bfv_1,\bfv_2,\bfv_3,\bfv_4 }$.
\end{corollary}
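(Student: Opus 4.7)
The strategy is to chain the three cited inputs, applying Lemma \ref{lem:hw_generate_locally_finite_modules} twice---once for each side---to promote the four bivectors $\bfv_1,\dots,\bfv_4$ out to all of $\cH$ via bimodule translates.

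First I would show that the bivector space $\Hpp$ generates $\cH$ as an $H$-bimodule. By definition $\cH = (H\otimes C)_f$ consists of elements generating finite-dimensional $\rD(H)$-bimodules; since $H$ is a subalgebra of $\rD(H)$, each such bimodule is in particular finite-dimensional as a left $H$-module, so $\cH$ is locally finite under the left $H$-action. Lemma \ref{lem:hw_generate_locally_finite_modules} then yields that ${^+}\cH$ generates $\cH$ as a left $H$-module. Next, ${^+}\cH$ is itself a right $H$-submodule: if $E\blactleft v=0$ then the bimodule axiom gives $E\blactleft(v\blactright h)=(E\blactleft v)\blactright h=0$ for every $h\in H$. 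Since ${^+}\cH$ inherits local finiteness on the right, a second application of Lemma \ref{lem:hw_generate_locally_finite_modules} shows that $\Hpp=({^+}\cH)^+$ generates ${^+}\cH$ as a right $H$-module. Composing, $\Hpp$ generates $\cH$ as an $H$-bimodule, and \emph{a fortiori} as a $\rD(H)$-bimodule.

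Then Corollary \ref{cor:Hpp_equals_HCpp} identifies $\Hpp$ with $\HCpp$, and Theorem \ref{thm:v1v2v3v4generate_hw} asserts precisely that $\HCpp$ is generated as an algebra by $\{\bfv_1,\bfv_2,\bfv_3,\bfv_4\}$. Combining with the previous paragraph, $\cH$ is generated as a $\rD(H)$-bimodule by $\lr{\bfv_1,\bfv_2,\bfv_3,\bfv_4}$, which is exactly the claim.

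There is essentially no obstacle here: the substantive work lives entirely in the three cited results. The only subtlety worth flagging is verifying that the left-highest-weight condition is preserved by the right $H$-action (immediate from the bimodule axiom), so that the two applications of Lemma \ref{lem:hw_generate_locally_finite_modules} chain together to give bimodule generation from bivector generation.
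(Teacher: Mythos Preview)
Your proposal is correct and matches the paper's approach: the paper simply states that the corollary follows from Theorem \ref{thm:v1v2v3v4generate_hw}, Corollary \ref{cor:Hpp_equals_HCpp}, and Lemma \ref{lem:hw_generate_locally_finite_modules}, and you have faithfully unpacked this by applying Lemma \ref{lem:hw_generate_locally_finite_modules} once on each side, using the bimodule axiom to ensure ${^+}\cH$ is stable under the right $H$-action. The only detail you make explicit that the paper leaves implicit is the two-step chaining through ${^+}\cH$, which is exactly the intended argument.
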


The above results are all we need to prove the main theorem.  However, the algebra $\Hpp = \lr{ \bfv_1,\bfv_2,\bfv_3,\bfv_4 }$ is very interesting in its own right.  The vectors $\bfv_1$ and $\bfv_4$ are central in this algebra, but $\bfv_2$ and $\bfv_3$ have homogeneous relations in degree 4.  Namely, they have the following Serre and Verma relations:
\begin{align*}
	\bfv_2^{\,3}\bfv_3
	 - (q^2+1+q^{-2}) \bfv_2^{\,2}\bfv_3\bfv_2
	 + (q^2+1+q^{-2}) \bfv_2\bfv_3\bfv_2^{\,2}
	 - \bfv_3\bfv_2^{\,3}=0\\
	\bfv_2\bfv_3^{\,3}
	 - (q^2+1+q^{-2}) \bfv_3\bfv_2\bfv_3^{\,2}
	 + (q^2+1+q^{-2}) \bfv_3^{\,2}\bfv_2\bfv_3
	 - \bfv_3^{\,3}\bfv_2=0\\
	\bfv_3\bfv_2^{\,2}\bfv_3 - \bfv_2\bfv_3^{\,2}\bfv_2=0
\end{align*}
In other words, $\Hpp$ is isomorphic to the quotient of the positive part of $\rU_q(\fg)$ by the Verma relation $\lr{\bfv_3\bfv_2^{\,2}\bfv_3-\bfv_2\bfv_3^{\,2}\bfv_2}$, where $\fg$ is the Kac-Moody algebra with Cartan matrix
\[ \begin{pmatrix}2 & 0 & 0 & 0 \\0 & 2 & -2 & 0 \\0 & -2 & 2 & 0 \\0 & 0 & 0 & 2\end{pmatrix} \]
Theorem \ref{thm:v1v2v3v4generate_hw} gives the following result about the algebra $\Hpp$ (See Section \ref{C:Proofs}).
\begin{corollary}\label{cor:hilbert_series}
The algebra $\Hpp$ has polynomial growth with Hilbert series
\begin{align*}
	h(t)=\frac{2}{(1-t)^4}-\frac{2}{(1-t)^3}+\frac{1}{(1-t)^2}.
\end{align*}
\end{corollary}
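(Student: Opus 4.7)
The plan is to compute the Hilbert series directly from the basis provided by Theorem~\ref{thm:v1v2v3v4generate_hw} via inclusion-exclusion on the four sets in the union, and then to simplify to the stated closed form.

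First I would fix the grading. The generators $\bfv_1,\bfv_2,\bfv_3,\bfv_4$ each have degree $1$, and I claim that $\bfv_5\in\cH_{2,0}$ and $\bfv_6\in\cH_{0,2}$ have degree $2$. Since the Peter-Weyl summands $\cH_{\lambda,\mu}$ are disjoint and $\bfv_5,\bfv_6$ do not lie in $\cH_{1,1}$, they cannot be degree-$1$ elements; but the $\rD(H)$-module decomposition $V_{1,1}\otimes V_{1,1}\cong V_{2,2}\oplus V_{2,0}\oplus V_{0,2}\oplus V_{0,0}$ places $\cH_{2,0}$ and $\cH_{0,2}$ inside $\cH_{1,1}\cdot\cH_{1,1}$, exhibiting $\bfv_5$ and $\bfv_6$ as degree-$2$ polynomials in the generators.

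Label the four subsets of the basis $A,B,C,D$ in the order displayed in Theorem~\ref{thm:v1v2v3v4generate_hw}. With these degree conventions, each set has generating function $\frac{1}{(1-t)^3(1-t^2)}$. The pairwise intersections come from forcing the mismatched middle factors to $1$: $|A\cap B|=|C\cap D|=\frac{1}{(1-t)^3}$ (killing $\bfv_5^p$ and $\bfv_6^r$), $|A\cap C|=|B\cap D|=\frac{1}{(1-t)^2(1-t^2)}$ (killing $\bfv_1^m$ and $\bfv_4^n$), and $|A\cap D|=|B\cap C|=\frac{1}{(1-t)^2}$ (killing all four middle factors). Every triple and quadruple intersection collapses to $\{\bfv_3^\ell \bfv_2^s\}$ with series $\frac{1}{(1-t)^2}$. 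Inclusion-exclusion therefore yields
\[
h(t)=\frac{4}{(1-t)^3(1-t^2)}-\frac{2}{(1-t)^3}-\frac{2}{(1-t)^2(1-t^2)}+\frac{1}{(1-t)^2}.
\]

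Bringing everything over the common denominator $(1-t)^4(1+t)$, the numerator collapses to $1+t+t^2+t^3=(1-t^4)/(1-t)$, and the factorization $1-t^4=(1-t)(1+t)(1+t^2)$ gives $h(t)=\frac{1+t^2}{(1-t)^4}$. Rewriting $1+t^2=2-2(1-t)+(1-t)^2$ puts this in the form $\frac{2}{(1-t)^4}-\frac{2}{(1-t)^3}+\frac{1}{(1-t)^2}$, the claimed closed form. Polynomial growth follows from the leading factor $\frac{1}{(1-t)^4}$, which gives a cubic-in-$n$ bound on the dimension of the degree-$n$ component.

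The main subtlety is the degree bookkeeping: one must carefully verify that $\bfv_5$ and $\bfv_6$ enter with degree $2$ rather than degree $1$, since counting all six letters as degree $1$ produces the wrong Hilbert series. Once this point is settled, the remainder is a routine inclusion-exclusion computation and algebraic simplification.
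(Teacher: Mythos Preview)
Your argument is correct and rests on the same two inputs as the paper's proof: the basis of Theorem~\ref{thm:v1v2v3v4generate_hw} and the assignment $\deg\bfv_5=\deg\bfv_6=2$. The organization, however, is genuinely different. The paper counts the degree-$n$ basis vectors explicitly: first it finds $(n+1)^2$ monomials of degree $n$ in $\{\bfv_3^\ell\bfv_1^m\bfv_2^s\}\cup\{\bfv_3^\ell\bfv_4^n\bfv_2^s\}$, then sums the contributions from $\bfv_5^p$ and $\bfv_6^r$ to obtain $(n+1)^2+2(n-1)^2+2(n-3)^2+\cdots$, simplifies this to the closed form $\tfrac{1}{3}(n^2+2n+3)(n+1)$, and finally converts the coefficient sequence into the stated rational function by differentiating $\sum t^n$. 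You instead apply inclusion--exclusion directly at the level of generating functions, bypassing the coefficient formula entirely and arriving at $\frac{1+t^2}{(1-t)^4}$ in one stroke. Your route is shorter and avoids the case split on the parity of $n$; the paper's route has the minor advantage of producing the explicit dimension $\tfrac{1}{3}(n^2+2n+3)(n+1)$ of each graded piece along the way.
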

The algebra $\Hpp$ appeared in another context in \cite{MR2852300}, and it would be very interesting to continue this direction of research.

\begin{problem}
Find a presentation for the algebra $\cH$.
\end{problem}

\begin{problem}
Find a presentation for the algebras ${^+}(\rU_q(\fsl_n)\otimes\CC_q[\rSL_n]){^+}$ and $(\rU_q(\fsl_n)\otimes\CC_q[\rSL_n])_f$ for $n>2$.
\end{problem}

We conclude this section with some conjectures for other semisimple Lie algebras.  Let $\fg$ be a semisimple Lie algebra and $G$ be the corresponding simply-connected algebraic group.

\begin{conjecture}
The highest-weight bivectors in the $\rD(\rU_q(\fg))$-bimodule $\rU_q(\fg)\otimes \CC_q[G]$ are locally-finite.
\end{conjecture}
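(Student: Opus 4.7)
The plan is to imitate the $\fsl_2$ argument that culminated in Corollary \ref{cor:HCpp_is_locally_finite}, namely to reduce local-finiteness of the whole algebra ${^+}(\rU_q(\fg)\otimes\CC_q[G]){^+}$ to local-finiteness of a finite list of explicit algebra generators. The formal infrastructure is already in place: Lemma \ref{lem:hw_subalgebra} tells us that ${^+}(\rU_q(\fg)\otimes\CC_q[G]){^+}$ is a subalgebra of $\rU_q(\fg)\otimes\CC_q[G]$, and Lemma \ref{lem:lf_subalgebra} says that inside any subalgebra of a bimodule the locally-finite part is again a subalgebra. So it suffices to exhibit a set of generators, each of which generates a finite-dimensional $\rD(\rU_q(\fg))$-sub-bimodule.

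The natural candidates for generators are furnished by the ``matrix coefficient'' construction of Section \ref{section:correspondence}. For each fundamental weight $\varpi_i$ of $\fg$ and each pair of signs, one obtains a highest-weight bivector
\[ \bfv_i^{++} = \beta_{V_{\varpi_i,\varpi_i}}\!\left(v^+_{\varpi_i}\otimes v^-_{\varpi_i}\otimes (v^+_{\varpi_i})^{*}\otimes(v^-_{\varpi_i})^{*}\right) \]
inside the finite-dimensional bimodule $\cH_{\varpi_i,\varpi_i}$, together with one-sided analogues living in $\cH_{\varpi_i,0}$ and $\cH_{0,\varpi_i}$ (these are the higher-rank counterparts of $\bfv_1,\bfv_2,\bfv_3$ in $\fsl_2$), as well as the finitely many highest-weight bivectors coming from the one-dimensional $\rD(\rU_q(\fg))$-modules counted in Proposition \ref{prop:n_simple_1-dimensional_modules} (the counterparts of $\bfv_4$). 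Each such generator sits in a single $\cH_{\lambda,\mu}$, which is finite-dimensional of dimension $(\dim V_{\lambda,\mu})^2$, and is therefore automatically locally-finite.

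The first real step, then, is to verify that these candidates lie in the highest-weight bivector algebra and to compute their bi-weights and cross-products using the actions from Theorem \ref{thm:dual_is_a_bimodule}. This is a straightforward but lengthy generalization of the $\fsl_2$ computation in Section \ref{section:main_results}. The second, essential step is the generation statement: every element of ${^+}(\rU_q(\fg)\otimes\CC_q[G]){^+}$ should be expressible as a polynomial in the generators above. My plan for this is to induct on the partial order $(\lambda,\mu)\preceq(\lambda',\mu')$ of dominant bi-weights, using the fact that $\beta_{U\otimes V}=\beta_U\beta_V$ (proved in Section \ref{section:correspondence}) to reduce a highest-weight bivector of bi-weight $(\lambda,\mu)$ to lower bi-weights, exactly as tensor products of fundamentals exhaust all $V_\lambda$ up to direct summands. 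At the level of matrix coefficients, the resulting highest-weight bivector of bi-weight $(\lambda,\mu)$ is a product of generators modulo contributions from the decomposition of $V_{\varpi_i}\otimes V_{\varpi_j}$ into lower-weight summands; these corrections are handled inductively.

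I expect the main obstacle to be precisely this generation step. In $\fsl_2$ it required the delicate explicit basis of Theorem \ref{thm:v1v2v3v4generate_hw}, whose combinatorics visibly depend on the classification of $\rU_q(\fsl_2)$-representations. For higher rank one does not have such a clean basis, and the interference between the left and right actions of the coordinate algebra generators makes direct manipulation hard. A more conceptual route is to express the whole algebra ${^+}(\rU_q(\fg)\otimes\CC_q[G]){^+}$ as the invariants of a two-sided ``Borel'' action on the matrix-coefficient algebra and to invoke a Peter-Weyl-style decomposition of the latter; but proving that invariants are generated by fundamental matrix coefficients is essentially a restatement of the conjecture, and will likely require both the explicit form of the universal $R$-matrix (generalizing Example \ref{ex:r_for_uqsl2}) and the Yetter-Drinfeld compatibility between the two sides. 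Once generation is established, local-finiteness is immediate from Lemma \ref{lem:lf_subalgebra} applied to the finite-dimensional host $\cH_{\lambda,\mu}$ of each generator.
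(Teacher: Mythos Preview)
This statement is listed in the paper as an open \emph{conjecture}; the paper offers no proof beyond the $\fsl_2$ case treated in Corollary~\ref{cor:HCpp_is_locally_finite}. So there is no ``paper's own proof'' to compare against, and your proposal should be read as an outline of how one might attack the conjecture rather than as something to be checked against an existing argument.

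Your overall architecture is the right one and matches what the paper does for $\fsl_2$: reduce via Lemmas~\ref{lem:hw_subalgebra} and~\ref{lem:lf_subalgebra} to exhibiting a finite generating set of locally-finite elements. But two points deserve correction. First, your bookkeeping of the $\fsl_2$ generators is off: all four of $\bfv_1,\bfv_2,\bfv_3,\bfv_4$ live in ${^+}\cH_{1,1}{^+}$ (with bi-weights $(2,2),(2,0),(0,2),(0,0)$), not in $\cH_{1,0}$, $\cH_{0,1}$, or in the images of one-dimensional modules; in particular $\bfv_4=\Delta$ is the Casimir sitting inside $\cH_{1,1}$, not a matrix coefficient of a one-dimensional module. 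The correct higher-rank analogue of the generating set is the content of the paper's own Conjecture about $\bigoplus_i\cH_{\omega_i,\omega_i}$ generating $(\rU_q(\fg)\otimes\CC_q[G])_f$ as an algebra. Second, you implicitly assume that the images $\beta_{V_{\lambda,\mu}}(V_{\lambda,\mu}\otimes V_{\lambda,\mu}^*)$ land inside $H\otimes C$ rather than merely in $\rD(H)^\circ$; for $\fsl_2$ the paper establishes this by explicit formula, but in higher rank it is not automatic and must be argued separately.

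More seriously, you correctly identify the generation step as ``the main obstacle'' and then concede that the conceptual route you sketch ``is essentially a restatement of the conjecture.'' That is an honest assessment, but it means the proposal is not a proof: the inductive reduction via $\beta_{U\otimes V}=\beta_U\beta_V$ controls products of matrix coefficients, not arbitrary highest-weight bivectors of $H\otimes C$, and bridging that gap is exactly where the $\fsl_2$ argument required the hard explicit work of Theorem~\ref{thm:v1v2v3v4generate_hw} (localization at Ore elements, the projection $\lambda$, and a delicate linear-independence argument). No analogue of that machinery is supplied here for general $\fg$, so the proposal remains a plausible strategy rather than a proof.
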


\begin{conjecture}
As a $\rD(\rU_q(\fg))$-bimodule, there is a Peter-Weyl decomposition
\begin{equation}
	\big(\rU_q(\fg)\otimes\CC_q[G]\big)_f \cong \bigoplus \cH_{\lambda,\mu}
\end{equation}
where the sum is over all dominant weights $\lambda$ and $\mu$ such that $\lambda-\mu$ belongs to the root lattice of $\fg$.
\end{conjecture}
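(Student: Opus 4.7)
The plan is to mimic the strategy that succeeded for $\fg=\fsl_2$ in Main Theorem \ref{thm:main_theorem}, working inside the bimodule $\rU_q(\fg)\otimes\CC_q[G]\subset \rD(\rU_q(\fg))^*$ whose explicit action formulas are provided by Theorem \ref{thm:dual_is_a_bimodule}. The ``$\supseteq$'' direction is the routine half: each $V_{\lambda,\mu}=V_\lambda^+\otimes V_\mu^-$ is a simple $\rD(\rU_q(\fg))$-module by Lemma \ref{lemma:DH_simples} (using semisimplicity of $\rU_q(\fg)$-mod, semisimplicity of $V\otimes V^*$, and $\delta_+\neq\delta_-$ on non-trivial modules, which follows from the universal $R$-matrix having a non-trivial unipotent part exactly as in Example \ref{ex:r_for_uqsl2}). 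The condition $\lambda-\mu\in$ root lattice is precisely the requirement that the matrix coefficient $\beta_{V_{\lambda,\mu}}$ factor through the pairing with $\CC_q[G]$ for the simply-connected $G$ rather than through a larger torus; concretely, it reflects that the group-like central element of $\CC_q[G]$ must act on $V_{\lambda,\mu}$ via the counit. Directness of the sum follows from Lemma \ref{lemma:beta_injective_on_simples} together with pairwise non-isomorphism of the $V_{\lambda,\mu}$ as $H$-bimodules.

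The bulk of the work is the reverse containment, and here I would target the generalization of Theorem \ref{thm:v1v2v3v4generate_hw}. First, identify the space ${^+}(\rU_q(\fg)\otimes\CC_q[G]){^+}$ of highest-weight bivectors and verify it is a subalgebra (this generalizes Lemma \ref{lem:hw_subalgebra} to the bimodule setting by applying it separately to both actions). Second, produce explicit canonical highest-weight bivectors from the fundamental bimodules: for each ordered pair of fundamental weights $(\omega_i,\omega_j)$ one should find a basis of highest-weight bivectors in $\cH_{\omega_i,\omega_j}$ by examining the $H\otimes H^{\op}$-action on $V_{\omega_i}^+\otimes V_{\omega_j}^-$. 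Together with the bivectors coming from $\cH_{\omega_i,0}$ and $\cH_{0,\omega_j}$ (which play the role of $\bfv_1,\bfv_2,\bfv_3,\bfv_4$ in the $\fsl_2$ case), these form a candidate generating family. Third, prove that this family generates ${^+}(\rU_q(\fg)\otimes\CC_q[G]){^+}$ as an algebra by induction on bi-weight $(\lambda,\mu)$: any highest-weight bivector of bi-weight $(\lambda,\mu)$ sits in $\cH_{\lambda,\mu}$, which by the usual highest-weight theory is a sub-bimodule of a tensor product of fundamental bimodules, so up to lower-weight corrections it is a monomial in the chosen generators.

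Once the generation statement holds, Lemma \ref{lem:lf_subalgebra} upgrades it to show that ${^+}(\rU_q(\fg)\otimes\CC_q[G]){^+}$ is locally finite, each generator being so by construction as a matrix coefficient of the finite-dimensional module $V_{\omega_i,\omega_j}$. This gives the analog of Corollary \ref{cor:Hpp_equals_HCpp}: the highest-weight bivectors of $\cH:=(\rU_q(\fg)\otimes\CC_q[G])_f$ are exactly ${^+}(\rU_q(\fg)\otimes\CC_q[G]){^+}$. Applying Lemma \ref{lem:hw_generate_locally_finite_modules} on both the left and the right, $\cH$ is generated as a $\rD(\rU_q(\fg))$-bimodule by these highest-weight bivectors. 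Combining with the simplicity and direct-sum results established first, and invoking Theorem \ref{thm:PW_thm}, yields the desired Peter-Weyl decomposition.

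The principal obstacle is the second step of the reverse containment: finding explicit canonical generators and proving they generate all highest-weight bivectors. Even for $\fsl_2$ this required a detailed computation producing an unexpected quantum Serre/Verma-type presentation related to a Kac-Moody algebra whose Cartan matrix is not of finite type. For higher rank, one expects an intricate presentation whose shape depends on the mutual braiding of $V_{\omega_i}^+$ against $V_{\omega_j}^-$, and the combinatorics of PBW-type bases that interlock the two copies of $\rU_q(\fg)$. A uniform proof would likely require Lusztig's canonical bases, or a case-by-case analysis by Dynkin type. A secondary obstacle is making the root-lattice condition precise in step one: it should follow from a careful analysis of how the group-like part of $\CC_q[G]$ acts on $V_\lambda^+\otimes V_\mu^-$, since matrix coefficients that pair non-trivially with that group-like element cannot live in $H\otimes C$ but rather in a strictly larger extension.
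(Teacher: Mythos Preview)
The statement you are attempting to prove is not a theorem in the paper but an explicit \emph{conjecture}; the paper offers no proof of it. The only case established is $\fg=\fsl_2$ (Main Theorem \ref{thm:main_theorem}), and the general statement is recorded precisely as a conjecture in Section \ref{section:main_results}, alongside the related conjectures that the highest-weight bivectors of $\rU_q(\fg)\otimes\CC_q[G]$ are locally finite and that $\bigoplus\cH_{\omega_i,\omega_i}$ generates the locally finite part as an algebra. There is therefore nothing in the paper to compare your argument against.

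That said, your outline is a faithful abstraction of the $\fsl_2$ strategy, and you have correctly isolated the genuine difficulty: the step corresponding to Theorem \ref{thm:v1v2v3v4generate_hw}. In the $\fsl_2$ case that theorem was not proved by the soft ``induction on bi-weight'' argument you sketch, but by an explicit localization trick (inverting $\bfv_3=EK^{-1}$ and $c$ to obtain a quasi-commutative algebra, Proposition \ref{prop:hw_bivectors_in_fully_localized_algebra}) followed by a delicate de-localization controlled by the projection $\lambda$ killing $E$ (Propositions \ref{thm:hw_localized_v3} and \ref{prop:v_injective}). Your proposed inductive argument does not supply a substitute for this mechanism, and in particular gives no way to rule out highest-weight bivectors outside the span of monomials in the fundamental generators. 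Until an analog of the localization/de-localization argument (or a genuinely different method) is found for higher rank, the statement remains open, which is exactly why the paper records it as a conjecture.
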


\begin{conjecture}
The sum $\bigoplus \cH_{\omega_i,\omega_i}$ over all fundamental weights $\omega_i$ generates $\big(\rU_q(\fg)\otimes \CC_q[G]\big)_f$ as an algebra.
\end{conjecture}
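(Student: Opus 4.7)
The plan is to argue conditional on the preceding Peter--Weyl-type conjecture, which asserts
\begin{equation*}
	\big(\rU_q(\fg)\otimes\CC_q[G]\big)_f = \bigoplus_{\lambda,\mu}\cH_{\lambda,\mu}
\end{equation*}
as $(\lambda,\mu)$ ranges over dominant weight pairs with $\lambda-\mu$ in the root lattice of $\fg$. Let $A$ denote the subalgebra generated by $\bigoplus_i\cH_{\omega_i,\omega_i}$. Each $\cH_{\omega_i,\omega_i}$ is a sub-bimodule, and because the product on $\rD(H)^*$ comes from the coproduct of $\rD(H)$ one has $b\blactleft(fg)=(b_{(1)}\blactleft f)(b_{(2)}\blactleft g)$, with a symmetric formula for the right action. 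So $A$ is itself a sub-bimodule, and under the assumed Peter--Weyl decomposition $A=\bigoplus_{(\lambda,\mu)\in T}\cH_{\lambda,\mu}$ for some set $T$ of admissible pairs; the task is to show $T$ exhausts all such pairs.

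The key input is multiplicativity of matrix coefficients, $\beta_{V\otimes W}=\beta_V\cdot\beta_W$ (the proposition of Section~\ref{section:correspondence}), combined with additivity $\beta_{U\oplus V}=\beta_U\oplus\beta_V$ (Lemma~\ref{lemma:the_map_beta}). These together imply that $A$ contains $\cH_{\lambda,\mu}$ for every simple $\rD(H)$-module summand $V_{\lambda,\mu}$ of a tensor product $V_{\omega_{i_1},\omega_{i_1}}\otimes\cdots\otimes V_{\omega_{i_k},\omega_{i_k}}$. Iterating the isomorphism from Lemma~\ref{lem:two_simples_4}, this tensor product is isomorphic to $(V_{\omega_{i_1}}\otimes\cdots\otimes V_{\omega_{i_k}})^+\otimes(V_{\omega_{i_1}}\otimes\cdots\otimes V_{\omega_{i_k}})^-$, so the problem reduces to the following classical representation-theoretic statement: \emph{for every pair of dominant weights $\lambda,\mu$ with $\lambda-\mu$ in the root lattice, there exist nonnegative integers $c_1,\ldots,c_r$ such that both $V_\lambda$ and $V_\mu$ appear as summands of $\bigotimes_i V_{\omega_i}^{\otimes c_i}$.}

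The main obstacle is establishing this combinatorial fact uniformly in $\fg$. The natural strategy is: $V_\lambda$ appears as the Cartan component of $\bigotimes_i V_{\omega_i}^{\otimes a_i}$ when $\lambda=\sum a_i\omega_i$, and tensoring with additional factors $V_{\omega_j}\otimes V_{\omega_{j^*}}$, where $V_{\omega_{j^*}}\cong V_{\omega_j}^*$, preserves $V_\lambda$ as a summand because each such pair contains the trivial module. By inductively padding the exponent vectors from both the $\lambda$-side and the $\mu$-side, one aims to arrive at a common tensor product in which both $V_\lambda$ and $V_\mu$ appear. The congruence $\lambda-\mu\in$ root lattice is clearly necessary, since every simple summand of $\bigotimes_i V_{\omega_i}^{\otimes c_i}$ lies in $\sum c_i\omega_i+$ root lattice; sufficiency is a saturation-type assertion which should follow from the PRV theorem or Littlewood--Richardson combinatorics, but needs careful execution. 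The case $\fg=\fsl_2$ is essentially handled by Main Theorem~\ref{thm:main_theorem} and serves as a prototype. Finally, the entire argument is contingent on the preceding (still open) Peter--Weyl conjecture, which is itself a substantial obstacle.
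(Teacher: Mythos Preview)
This statement is presented in the paper as an open \emph{conjecture}; the paper offers no proof or proof sketch for it. There is therefore nothing to compare your proposal against.

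As a plan of attack, your reduction is sound: granting the preceding Peter--Weyl conjecture, the multiplicativity $\beta_{U\otimes V}=\beta_U\beta_V$ together with Lemma~\ref{lem:two_simples_4} does reduce the question to the purely classical statement that for every dominant pair $(\lambda,\mu)$ with $\lambda-\mu$ in the root lattice there is a single tensor monomial $\bigotimes_i V_{\omega_i}^{\otimes c_i}$ containing both $V_\lambda$ and $V_\mu$ as summands. Your padding idea (tensoring with copies of $V_{\omega_j}\otimes V_{\omega_{j^*}}$) is also the natural approach to that combinatorial problem.

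That said, you correctly identify the two genuine obstacles yourself: the argument is conditional on the preceding Peter--Weyl conjecture, which is open, and the combinatorial saturation statement is asserted but not established. Neither is resolved here, so what you have is an outline rather than a proof. One minor remark: the detour through ``$A=\bigoplus_{(\lambda,\mu)\in T}\cH_{\lambda,\mu}$ for some $T$'' in your first paragraph is unnecessary, since your multiplicativity argument shows $\cH_{\lambda,\mu}\subset A$ directly; this avoids any appeal to pairwise non-isomorphism of the bimodules $\cH_{\lambda,\mu}$.
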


\section{Examples}\label{C:Examples}

\subsection{The actions of $\rD(\rU_q(\fsl_n))$ on $\rU_q(\fsl_n)\otimes\CC_q[\rSL_n]$}\label{section:duqsln_actions}

Using Example \ref{ex:pairing} and Theorem \ref{thm:dual_is_a_bimodule} we calculate the left and right actions of $\rD(\rU_q(\fsl_n))$ on the generators of $\rU_q(\fsl_n)\otimes\CC_q[\rSL_n]$.

From $h\blactleft\bar{h}=\counit(h)\bar{h}$, we have
\begin{equation*}
	E_i\blactleft \bar{h} = 0,\quad 
	F_i\blactleft \bar{h} = 0,\quad\text{and}\quad 
	K_i\blactleft \bar{h} = 1.
\end{equation*}

From $h\blactleft\bar{c}=h\actleft\bar{c}$, we have
\begin{gather*}
	E_i\blactleft X_{k\ell} = \delta_{i+1,\ell}X_{k,\ell-1},\quad
	F_i\blactleft X_{k\ell} = \delta_{i,\ell}X_{k,\ell+1},\\
	\text{and}\quad K_i\blactleft X_{k\ell} = (q\delta_{i,\ell}+q^{-1}\delta_{i+1,\ell})X_{k\ell}.
\end{gather*}

Let us denote $SX_{k\ell}=Y_{k\ell}$. From $c\blactleft\bar{h}=(\bar{h}\actright c_{(2)})\cdot Sc_{(1)}c_{(3)}$, we have
\begin{align*}
	X_{k\ell}\blactleft E_i &= K_iY_{ki}X_{i+1,\ell} + {\textstyle\sum_j} E_i Y_{kj}X_{j\ell},\\
	X_{k\ell}\blactleft F_i &= Y_{k,i+1}X_{i\ell} + q^{-1}F_iY_{ki}X_{i\ell} + qF_iY_{k,i+1}X_{i+1,\ell},\\
	\text{and}\quad X_{k\ell}\blactleft K_i &= qK_iY_{ki}X_{i\ell} + q^{-1}K_iY_{k,i+1}X_{i+1,\ell}.
\end{align*}

From $c\blactleft\bar{c}=Sc_{(1)}\bar{c}c_{(2)}$, we have
\begin{equation*}
	X_{k\ell}\blactleft X_{mn} = {\textstyle\sum_j}Y_{kj}X_{mn}X_{j\ell}.
\end{equation*}

From $\bar{h}\blactright h=Sh_{(1)} \bar{h} h_{(2)}$, we have
\begin{gather*}
	\bar{h}\blactright E_i = -E_iK_i^{-1}\bar{h}K_i+\bar{h}E_i,\quad
	\bar{h}\blactright F_i = -K_iF_i\bar{h}+K_i\bar{h}F_i,\\
	\text{and}\quad \bar{h}\blactright K_i = K_i^{-1}\bar{h}K_i.
\end{gather*}

From $\bar{c}\blactright h=Sh_{(1)} h_{(3)}\cdot (\bar{c}\actright h_{(2)})$, we have
\begin{align*}
	X_{k\ell}\blactright E_i &= -(q^{-1}\delta_{i+1,k}+q\delta_{i,k})E_iX_{k\ell} + \delta_{i,k}K_iX_{k+1,\ell} + E_iX_{k\ell},\\
	X_{k\ell}\blactright F_i &= -K_iF_iX_{k\ell} + \delta_{i,k-1}K_iX_{k-1,\ell} + (q\delta_{i+1,k}+q^{-1}\delta_{i,k})K_iF_iX_{k\ell},\\
	\text{and}\quad X_{k\ell}\blactright K_i &= (q^{-1}\delta_{i+1,k}+q\delta_{i,k})X_{k\ell}.
\end{align*}

From $\bar{h}\blactright c=c\actleft \bar{h}$, we have
\begin{align*}
	E_i \blactright X_{k\ell} &= (q\delta_{i,k,\ell}+q^{-1}\delta_{i-1,k,\ell})E_i+\delta_{i,k,\ell-1},\\
	F_i \blactright X_{k\ell} &= \delta_{k,\ell}F_i+\delta_{i,k-1,\ell}K_i^{-1},\\
	\text{and}\quad K_i \blactright X_{k\ell} &= (q\delta_{i,k,\ell}+q^{-1}\delta_{i+1,k,\ell})K_i.
\end{align*}

From $\bar{c}\blactright c=\counit(c)\bar{c}$, we have
\begin{equation*}
	X_{mn} \blactright X_{k\ell} = \delta_{k,\ell} X_{mn}.
\end{equation*}

In the next section we specialize to the case $H=\rU_q(\fsl_2)$.

\subsection{The actions of $\rD(\rU_q(\fsl_2))$ on $\rU_q(\fsl_2)\otimes\CC_q[\rSL_2]$}\label{section:duqsl2_actions}
Here we specialize our example from Section \ref{section:duqsln_actions} to $\rD(\rU_q(\fsl_2))$ acting on the algebra $\rU_q(\fsl_2)\otimes\CC_q[\rSL_2]$.

From $h\blactleft\bar{h}=\counit(h)\bar{h}$, we have
\begin{align*}
E\blactleft E &= 0 \qquad& E\blactleft F &= 0 \qquad& E\blactleft K &= 0 \qquad& E\blactleft K^{-1} &= 0\\
F\blactleft E &= 0 \qquad& F\blactleft F &= 0 \qquad& F\blactleft K &= 0 \qquad& F\blactleft K^{-1} &= 0\\
K\blactleft E &= E \quad& K\blactleft F &= F \quad& K\blactleft K &= K \quad& K\blactleft K^{-1} &= K^{-1}\\
K^{-1}\blactleft E &= E \quad& K^{-1}\blactleft F &= F \quad& K^{-1}\blactleft K &= K \quad& K^{-1}\blactleft K^{-1} &= K^{-1}
\end{align*}

From $h\blactleft\bar{c}=h\actleft \bar{c}$, we have
\begin{align*}
E\blactleft a &= 0 \qquad& E\blactleft b &= a \qquad& E\blactleft c &= 0 \qquad& E\blactleft d &= c\\
F\blactleft a &= b \qquad& F\blactleft b &= 0 \qquad& F\blactleft c &= d \qquad& F\blactleft d &= 0\\
K\blactleft a &= qa \quad& K\blactleft b &= q^{-1}b \quad& K\blactleft c &= qc \quad& K\blactleft d &= q^{-1}d\\
K^{-1}\blactleft a &= q^{-1}a \quad& K^{-1}\blactleft b &= qb \quad& K^{-1}\blactleft c &= q^{-1}c \quad& K^{-1}\blactleft d &= qd
\end{align*}

From $c\blactleft\bar{h}=(\bar{h}\actright c_{(2)})\cdot Sc_{(1)}c_{(3)}$, we have
\begin{alignat*}{4}
a&\blactleft E &&= E+qKcd			\qquad& a&\blactleft F &&= q^{-1}F-qba+(1-q^2)Fbc\\
a&\blactleft K &&= qK+(q^2-1)Kbc	\qquad& a&\blactleft K^{-1} &&= q^{-1}K^{-1}+(1-q^2)K^{-1}bc\\
b&\blactleft E &&= Kd^2				\qquad& b&\blactleft F &&= -qb^2+(1-q^2)Fbd\\
b&\blactleft K &&= (q^2-1)Kbd		\qquad& b&\blactleft K^{-1} &&= (1-q^2)K^{-1}bd\\
c&\blactleft E &&= -q^{-1}Kc^2		\qquad& c&\blactleft F &&= a^2+(q-q^{-1})Fac\\
c&\blactleft K &&= (q^{-1}-q)Kac	\qquad& c&\blactleft K^{-1} &&= (q-q^{-1})K^{-1}ac\\
d&\blactleft E &&= E-q^{-1}Kcd		\qquad& d&\blactleft F &&= qF+ab+(1-q^{-2})Fbc\\
d&\blactleft K &&= q^{-1}K+(q^{-2}-1)Kbc	\qquad& d&\blactleft K^{-1} &&= qK^{-1}+(1-q^{-2})K^{-1}bc
\end{alignat*}

From $c\blactleft\bar{c}=Sc_{(1)}\bar{c}c_{(2)}$, we have
\begin{alignat*}{4}
a&\blactleft a &&= a+(q-1)bca		\qquad& a&\blactleft b &&= qb+(q^2-q)b^2c\\
a&\blactleft c &&= qc+(q^2-q)bc^2	\qquad& a&\blactleft d &&= d+(q-1)dbc\\
b&\blactleft a &&= (1-q)b+(q-1)b^2c	\qquad& b&\blactleft b &&= (1-q^{-1})db^2\\
b&\blactleft c &&= (1-q^{-1})dcb	\qquad& b&\blactleft d &&= (1-q^{-1})d^2b\\
c&\blactleft a &&= (1-q)a^2c		\qquad& c&\blactleft b &&= (1-q)abc\\
c&\blactleft c &&= (1-q)ac^2		\qquad& c&\blactleft d &&= (1-q^{-1})c+(q^{-1}-1)bc^2\\
d&\blactleft a &&= a+(q^{-1}-1)abc	\qquad& d&\blactleft b &&= q^{-1}b+(q^{-2}-q^{-1})b^2c\\
d&\blactleft c &&= q^{-1}c+(q^{-2}-q^{-1})bc^2	\qquad& d&\blactleft d &&= d+(q^{-1}-1)bcd
\end{alignat*}

From $\bar{h}\blactright h=Sh_{(1)} \bar{h} h_{(2)}$, we have
\begin{alignat*}{4}
(1-q^{-2})E^2 &= E&&\blactright E
	\quad& (1-q^2)EF-\tfrac{K-K^{-1}}{q-q^{-1}} &= F&&\blactright E\\
(q^2-1)EK &= K&&\blactright E
	\quad& (q^{-2}-1)EK^{-1} &= K^{-1}&&\blactright E\\
K\cdot\tfrac{K-K^{-1}}{q-q^{-1}} &= E&&\blactright F
	\quad& 0 &= F&&\blactright F\\
(1-q^2)K^2F &= K&&\blactright F
	\quad& (1-q^{-2})F &= K^{-1}&&\blactright F\\
q^{-2}E &= E&&\blactright K
	\quad& q^2F &= F&&\blactright K\\
K &= K&&\blactright K
	\quad& K^{-1} &= K^{-1}&&\blactright K\\
q^2E &= E&&\blactright K^{-1}
	\quad& q^{-2}F &= F&&\blactright K^{-1}\\
K &= K&&\blactright K^{-1}
	\quad& K^{-1} &= K^{-1}&&\blactright K^{-1}
\end{alignat*}

From $\bar{c}\blactright h=Sh_{(1)} h_{(3)}\cdot (\bar{c}\actright h_{(2)})$, we have
\begin{alignat*}{4}
(1-q)Ea+Kc &= a&&\blactright E
	\quad& (1-q)Eb+Kd &= b&&\blactright E\\
(1-q^{-1})Ec &= c&&\blactright E
	\quad& (1-q^{-1})Ed &= c&&\blactright E\\
(q^{-1}-1)KFa &= a&&\blactright F
	\quad& (q^{-1}-1)KFb &= b&&\blactright F\\
(q-1)KFc+Ka &= c&&\blactright F
	\quad& (q-1)KFd+Kb &= c&&\blactright F\\
qa &= a&&\blactright K
	\quad& qb &= b&&\blactright K\\
q^{-1}c &= c&&\blactright K
	\quad& q^{-1}d &= c&&\blactright K\\
q^{-1}a &= a&&\blactright K^{-1}
	\quad& q^{-1}b &= b&&\blactright K^{-1}\\
qc &= c&&\blactright K^{-1}
	\quad& qd &= c&&\blactright K^{-1}
\end{alignat*}

From $\bar{h}\blactright c=c\actleft \bar{h}$, we have
\begin{alignat*}{8}
qE &= E&&\blactright a
	\qquad& F &= F&&\blactright a\qquad&
qK &= K&&\blactright a
	\qquad& q^{-1}K^{-1} &= K^{-1}&&\blactright a\\
1 &= E&&\blactright b
	\qquad& 0 &= F&&\blactright b\qquad&
0 &= K&&\blactright b
	\qquad& 0 &= K^{-1}&&\blactright b\\
0 &= E&&\blactright c
	\qquad& K^{-1} &= F&&\blactright c\qquad&
0 &= K&&\blactright c
	\qquad& 0 &= K^{-1}&&\blactright c\\
q^{-1}E &= E&&\blactright d
	\qquad& F &= F&&\blactright d\qquad&
q^{-1}K &= K&&\blactright d
	\qquad& qK^{-1} &= K^{-1}&&\blactright d
\end{alignat*}

From $\bar{c}\blactright c=\counit(c)\bar{c}$, we have
\begin{alignat*}{8}
a &= a&&\blactright a	\qquad& b &= b&&\blactright a\qquad&
c &= c&&\blactright a	\qquad& d &= d&&\blactright a\\
0 &= a&&\blactright b	\qquad& 0 &= b&&\blactright b\qquad&
0 &= c&&\blactright b	\qquad& 0 &= d&&\blactright b\\
0 &= a&&\blactright c	\qquad& 0 &= b&&\blactright c\qquad&
0 &= c&&\blactright c	\qquad& 0 &= d&&\blactright c\\
a &= a&&\blactright d	\qquad& b &= b&&\blactright d\qquad&
c &= c&&\blactright d	\qquad& c &= d&&\blactright d
\end{alignat*}

\subsection{Some simple sub-bimodules of $(\rU_q(\fsl_2)\otimes \CC_q[\rSL_2])_f$}\label{section:DH_bimodule_examples}

In this section we exhibit three simple sub-bimodules of $(\dualdouble)_f$.  In each example, we examine the subspace of highest-weight bivectors---those vectors annihilated by both the left and right actions of $E$.

The first example, $\cH_{1,1}$, is 16-dimensional and has a 4-dimensional subspace of highest-weight bivectors.  Its tensor square is 100-dimensional and is the internal direct sum of four non-isomorphic simple sub-bimodules of dimensions 81, 9, 9, and 1, respectively:
\[ \cH_{1,1}\otimes \cH_{1,1} \cong \cH_{2,2}\oplus \cH_{2,0}\oplus \cH_{0,2}\oplus \cH_{0,0} \]
We will exhibit the 9-dimensional bimodules $\cH_{2,0}$ and $\cH_{0,2}$, each of which has a one-dimensional subspace of highest-weight bivectors.  Of course, $\cH_{0,0}=\lr{1}$ is the trivial bimodule.

\begin{example}\label{ex:h11}
Let $H=\rU_q(\fsl_2)$ and $C=\CC_q[\rSL_2]$.  
Then $\cH_{1,1}$ is the 16-dimensional $\rD(H)$-sub-bimodule of 
$(H\otimes C)_f$ with basis
\begin{align*}
v_{11}&=EK^{-1}\\
v_{12}&=K^{-1}\\
v_{13}&=F\\
v_{14}&=\Delta\\
v_{21}&=(q-q^{-1})EK^{-1}ac-c^2\\
v_{22}&=(q-q^{-1})K^{-1}ac\\
v_{23}&=(q-q^{-1})Fac+a^2\\
v_{24}&=(q-q^{-1})\Delta ac-\textstyle\frac{q+q^{-1}}{q-q^{-1}}Kac-q^{-2}FKc^2+Ea^2\\
v_{31}&=(q^{-1}-q)EK^{-1}db+qd^2\\
v_{32}&=(q^{-1}-q)K^{-1}db\\
v_{33}&=(q^{-1}-q)Fdb-qb^2\\
v_{34}&=(q^{-1}-q)\Delta db+\textstyle\frac{q+q^{-1}}{q-q^{-1}}Kdb+q^{-1}FKd^2-qEb^2\\
v_{41}&=(q-q^{-1})EK^{-1}bc-dc\\
v_{42}&=(q-q^{-1})K^{-1}bc\\
v_{43}&=(q-q^{-1})Fbc+qab\\
v_{44}&=(q-q^{-1})\Delta bc-\textstyle\frac{q+q^{-1}}{q-q^{-1}}Kbc - q^{-2}FKdc + qEab - \textstyle \frac{1}{q-q^{-1}}K
\end{align*}

As a left $\rD(H)$-module, $\cH_{1,1}=V_1\oplus V_2\oplus V_3\oplus V_4$, where each $V_j$ is the left $\rD(H)$-module with basis $\{v_{1j},v_{2j},v_{3j},v_{4j}\}$.  In this basis, the left action of $x\in \rD(H)$ on $v\in V_j$ is given by $x\blactleft v=\phi_j(x)v$, where
\begin{align*}
\phi_j(E)&=\begin{matrixA}0 & 0 & \makebox[0em][c]{$q^{-1}-q$} & 0 \\0 & 0 & 0 & q \\0 & 0 & 0 & 0 \\0 & 0 & \makebox[0em][c]{$-q^{-1}-q$} & 0\end{matrixA}&
\phi_j(F)&=\begin{matrixB}0 & \makebox[0em][c]{$1-q^{-2}$} & 0 & 0 \\0 & 0 & 0 & 0 \\0 & 0 & 0 & \makebox[0em][c]{$-1$} \\0 & \makebox[0em][c]{$1+q^{-2}$} & 0 & 0\end{matrixB}\\
\phi_j(K)&=\begin{matrixA}1 & 0 & 0 & 0 \\0 & \makebox[0em][c]{$q^2$} & 0 & 0 \\0 & 0 & q^{-2} & 0 \\0 & 0 & 0 & 1\end{matrixA}&
\phi_j(K^{-1})&=\begin{matrixB}1 & 0 & 0 & 0 \\0 & q^{-2} & 0 & 0 \\0 & 0 & q^2 & 0 \\0 & 0 & 0 & 1\end{matrixB}\\
\phi_j(a)&=\begin{matrixA}\makebox[0em][c]{$q^{-1}$} & 0 & 0 & 0 \\0 & 1 & 0 & 0 \\0 & 0 & 1 & 0 \\\makebox[0em][c]{$-q$} & 0 & 0 & q\end{matrixA}&
\phi_j(b)&=\begin{matrixB}0 & 0 & 0 & 0 \\0 & 0 & 0 & 0 \\1 & 0 & 0 & 0 \\0 & \makebox[0em][c]{$q^{-1}-q$} & 0 & 0\end{matrixB}\\
\phi_j(c)&=\begin{matrixA}0 & 0 & 0 & 0 \\1 & 0 & 0 & 0 \\0 & 0 & 0 & 0 \\0 & 0 & q^{-1}-q & 0\end{matrixA}&
\phi_j(d)&=\begin{matrixB}q & 0 & 0 & 0 \\0 & 1 & 0 & 0 \\0 & 0 & 1 & 0 \\\makebox[0em][c]{$q^{-1}$} & 0 & 0 & \makebox[0em][c]{$q^{-1}$}\end{matrixB}
\end{align*}
For example, $d\blactleft v_{1j}=qv_{1j}+q^{-1}v_{4j}$ for $j=1,2,3,4$.

As a right $\rD(H)$-module, $\cH_{1,1}=V_1'\oplus V_2'\oplus V_3'\oplus V_4'$, where each $V_i'$ is the right $\rD(H)$-module with basis $\{v_{i1},v_{i2},v_{i3},v_{i4}\}$.  In this basis, the right action of $x\in \rD(H)$ on $v\in V_i'$ is given by $v\blactright x=\phi_i'(x)v$, where
\begin{align*}
\phi_i'(E)&=\begin{matrixC}0 & \makebox[0em][c]{$q^{-2}-1$} & 0 & 0 \\0 & 0 & \makebox[0em][c]{$\frac{q^2+1}{q-q^{-1}}$} & 0 \\0 & 0 & 0 & 0 \\0 & 0 & \makebox[0em][c]{$1-q^2$} & 0\end{matrixC}&
\phi_i'(F)&=\begin{matrixD}0 & 0 & 0 & 0 \\\makebox[0em][c]{$\frac{q^2+1}{q^{-1}-q}$} & 0 & 0 & 0 \\0 & \makebox[0em][c]{$1-q^{-2}$} & 0 & 0 \\\makebox[0em][c]{$q^2-1$} & 0 & 0 & 0\end{matrixD}\\
\phi_i'(K)&=\begin{matrixC}\makebox[0em][c]{$q^{-2}$} & 0 & 0 & 0 \\0 & 1 & 0 & 0 \\0 & 0 & q^2 & 0 \\0 & 0 & 0 & 1\end{matrixC}&
\phi_i'(K^{-1})&=\begin{matrixD}q^2 & 0 & 0 & 0 \\0 & 1 & 0 & 0 \\0 & 0 & \makebox[0em][c]{$q^{-2}$} & 0 \\0 & 0 & 0 & 1\end{matrixD}\\
\phi_i'(a)&=\begin{matrixC}1 & 0 & 0 & 0 \\0 & q^{-1} & 0 & \makebox[0em][c]{$\frac{q^{-1}}{q^{-1}-q}$} \\0 & 0 & 1 & 0 \\0 & 0 & 0 & q\end{matrixC}&
\phi_i'(b)&=\begin{matrixD}0 & 0 & 0 & 0 \\q & 0 & 0 & 0 \\0 & 0 & 0 & 1 \\0 & 0 & 0 & 0\end{matrixD}\\
\phi_i'(c)&=\begin{matrixC}0 & 0 & 0 & \makebox[0em][c]{$q^{-1}$} \\0 & 0 & 1 & 0 \\0 & 0 & 0 & 0 \\0 & 0 & 0 & 0\end{matrixC}&
\phi_i'(d)&=\begin{matrixD}1 & 0 & 0 & 0 \\0 & q & 0 & \makebox[0em][c]{$\frac{q}{q-q^{-1}}$} \\0 & 0 & 1 & 0 \\0 & 0 & 0 & \makebox[0em][c]{$q^{-1}$}\end{matrixD}
\end{align*}
For example, $v_{i4}\blactright c = q^{-1}v_{i1}$ for $i=1,2,3,4$.

We see that $\phi_j(E)$ is a rank-2 matrix with null space spanned by $\{v_{1j},v_{2j}\}$ and $\phi_i'(E)$ is a rank-2 matrix with null space spanned by $\{v_{i1},v_{i4}\}$.  It follows that the subspace $\{v\in \cH_{1,1}\mid E\blactleft v=v\blactright E=0\}$ of highest-weight bivectors is spanned by $\{v_{11},v_{14},v_{21},v_{24}\}$.
We note that $v_{21}$ has weight $(2,2)$, $v_{24}$ has weight $(2,0)$, $v_{11}$ has weight $(0,2)$, and $v_{14}$ has weight $(0,0)$.  Therefore, the basis $\{v_{21},v_{24},v_{11},v_{14}\}$ is canonical up to scaling.  Elsewhere in this paper we refer to these four highest-weight bivectors as
\begin{subequations}\label{eqn:v_1v_2v_3v_4}
\begin{align}
	\bfv_1&=v_{21}=(q-q^{-1})EK^{-1}ac-c^2\\
	\bfv_2&=v_{24}=(q-q^{-1})\Delta ac-\tfrac{q+q^{-1}}{q-q^{-1}}Kac-q^{-2}FKc^2+Ea^2\\
	\bfv_3&=v_{11}=EK^{-1}\\
	\bfv_4&=v_{14}=\Delta
\end{align}
\end{subequations}
\end{example}

\begin{example}\label{ex:h20}
Let $H=\rU_q(\fsl_2)$ and $C=\CC_q[\rSL_2]$.
Then $\cH_{2,0}\subset \cH_{1,1}\otimes \cH_{1,1}$ is the 9-dimensional $\rD(H)$-sub-bimodule of $(H\otimes C)_f$ with basis
\begin{align*}
\vd_{11}&=K^{-1}c^2\\
\vd_{12}&=(q-q^{-1})Fc^{2} + qac\\
\vd_{13}&=q^{-3}(q-q^{-1})^2F^{2}Kc^{2} + q^{-1}(q^2-q^{-2})FKac + Ka^{2}\\
\vd_{21}&=q^{-1}K^{-1}dc\\
\vd_{22}&=q^{-1}(q-q^{-1})Fdc + bc + \tfrac{1}{q + q^{-1}}\\
\vd_{23}&=q^{-4}(q-q^{-1})^2F^{2}Kdc + q^{-2}(q^2-q^{-2})FKbc + q^{-2}(q-q^{-1})FK + Kab\\
\vd_{31}&=K^{-1}d^2\\
\vd_{32}&=(q-q^{-1})Fd^{2} + db\\
\vd_{33}&=q^{-3}(q-q^{-1})^2F^{2}Kd^{2} + q^{-2}(q^2-q^{-2})FKdb + Kb^{2}
\end{align*}

As a left $\rD(H)$-module, $\cH_{2,0}=V_1\oplus V_2\oplus V_3$, where each $V_j$ is the left $\rD(H)$-module with basis $\{\vd_{1j},\vd_{2j},\vd_{3j}\}$.  In this basis, the left action of $x\in \rD(H)$ on $v\in V_j$ is given by $x\blactleft v=\phi_j(x)v$, where
\begin{align*}
\phi_j(E)&=\begin{matrixE}0 & 1 & 0 \\0 & 0 & \makebox[0em][c]{$q+q^{-1}$} \\0 & 0 & 0\end{matrixE}&
\phi_j(F)&=\begin{matrixF}0 & 0 & 0 \\\makebox[0em][c]{$q+q^{-1}$} & 0 & 0 \\0 & 1 & 0\end{matrixF}\\
\phi_j(K)&=\begin{matrixE}\makebox[0em][c]{$q^2$} & 0 & 0\\0 & 1 & 0\\0 & 0 & q^{-2}\end{matrixE}&
\phi_j(K^{-1})&=\begin{matrixF}\makebox[0em][c]{$q^{-2}$} & 0 & 0 \\0 & 1 & 0 \\0 & 0 & q^2\end{matrixF}\\
\phi_j(a)&=\begin{matrixE}q & 0 & 0 \\0 & 1 & 0 \\0 & 0 & q^{-1}\end{matrixE}&
\phi_j(b)&=\begin{matrixF}0 & 0 & 0 \\0 & 0 & 0 \\0 & 0 & 0\end{matrixF}\\
\phi_j(c)&=\begin{matrixE}0 & \makebox[0em][c]{$1-q^{-2}$} & 0 \\0 & 0 & \makebox[0em][c]{$q^2-q^{-2}$} \\0 & 0 & 0\end{matrixE}&
\phi_j(d)&=\begin{matrixF}q^{-1} & 0 & 0 \\0 & 1 & 0 \\0 & 0 & q\end{matrixF}
\end{align*}

As a right $\rD(H)$-module, $\cH_{2,0}=V_1'\oplus V_2'\oplus V_3'$, where each $V_i'$ is the right $\rD(H)$-module with basis $\{\vd_{i1},\vd_{i2},\vd_{i3}\}$.  In this basis, the right action of $x\in \rD(H)$ on $v\in V_i'$ is given by $v\blactright x=\phi_i'(x)v$, where
\begin{align*}
\phi_i'(E)&=\begin{matrixE}0 & 1 & 0 \\0 & 0 & \makebox[0em][c]{$q+q^{-1}$} \\0 & 0 & 0\end{matrixE}&
\phi_i'(F)&=\begin{matrixF}0 & 0 & 0 \\\makebox[0em][c]{$q+q^{-1}$} & 0 & 0 \\0 & 1 & 0\end{matrixF}\\
\phi_i'(K)&=\begin{matrixE}\makebox[0em][c]{$q^{-2}$} & 0 & 0 \\0 & 1 & 0 \\0 & 0 & q^2\end{matrixE}&
\phi_i'(K^{-1})&=\begin{matrixF}q^2 & 0 & 0 \\0 & 1 & 0 \\0 & 0 & \makebox[0em][c]{$q^{-2}$}\end{matrixF}\\
\phi_i'(a)&=\begin{matrixE}\makebox[0em][c]{$q^{-1}$} & 0 & 0 \\0 & 1 & 0 \\0 & 0 & q\end{matrixE}&
\phi_i'(b)&=\begin{matrixF}0 & 0 & 0 \\0 & 0 & 0 \\0 & 0 & 0\end{matrixF}\\
\phi_i'(c)&=\begin{matrixE}0 & \makebox[0em][c]{$q-q^{-1}$} & 0 \\0 & 0 & \makebox[0em][c]{$q-q^{-3}$} \\0 & 0 & 0\end{matrixE}&
\phi_i'(d)&=\begin{matrixF}q & 0 & 0 \\0 & 1 & 0 \\0 & 0 & \makebox[0em][c]{$q^{-1}$}\end{matrixF}
\end{align*}

We see that $\phi_j(E)$ is a rank-2 matrix with null space spanned by $\{\vd_{1j}\}$ and $\phi_i'(E)$ is a rank-2 matrix with null space spanned by $\{\vd_{i1}\}$.  It follows that the subspace $\{v\in \cH_{2,0}\mid E\blactleft v=v\blactright E=0\}$ of highest-weight bivectors is spanned by $\vd_{11}$.  Elsewhere in this paper we refer to this bivector as
\begin{align}\label{eqn:v_5}
	\bfv_5&=\vd_{11}=K^{-1}c^2,
\end{align}
and we note that $\frac{q^2+1}{q-q^{-1}}\,\bfv_5 = (1-q^2)\bfv_1\bfv_4-[\bfv_2,\bfv_3]_{q^2}$.
\end{example}

\begin{example}\label{ex:h02}
Let $H=\rU_q(\fsl_2)$ and $C=\CC_q[\rSL_2]$.
Then $\cH_{0,2}\subset \cH_{1,1}\otimes \cH_{1,1}$ is the $\rD(H)$-sub-bimodule of $(H\otimes C)_f$ with basis
\begin{align*}
	\vdd_{11}&=q^{-1}(q-q^{-1})^2E^2K^{-1}a^2-q(q^2-q^{-2})Eac+Kc^2\\
	\vdd_{12}&=(q^{-1}-q)EK^{-1}a^2+qac\\
	\vdd_{13}&=K^{-1}a^2\\
	\vdd_{21}&=q^{-1}(q-q^{-1})^2E^2K^{-1}ab-(q^2-q^{-2})Ebc-(q-q^{-1})E+q^{-1}Kdc\\
	\vdd_{22}&=(q^{-1}-q)EK^{-1}ab+bc+\tfrac{1}{q+q^{-1}}\\
	\vdd_{23}&=K^{-1}ab\\
	\vdd_{31}&=q^{-1}(q-q^{-1})^2E^2K^{-1}b^2-(q^2-q^{-2})Edb+Kd^2\\
	\vdd_{32}&=(q^{-1}-q)EK^{-1}b^2+db\\
	\vdd_{33}&=K^{-1}b^2
\end{align*}

As a left $\rD(H)$-module, $\cH_{0,2}=V_1\oplus V_2\oplus V_3$, where each $V_j$ is the left $\rD(H)$-module with basis $\{\vdd_{1j},\vdd_{2j},\vdd_{3j}\}$.  In this basis, the left action of $x\in \rD(H)$ on $v\in V_j$ is given by $x\blactleft v=\phi_j(x)v$, where
\begin{align*}
\phi_j(E)&=\begin{matrixG}0 & 1 & 0 \\0 & 0 & \makebox[0em][c]{$q+q^{-1}$} \\0 & 0 & 0\end{matrixG}&
\phi_j(F)&=\begin{matrixH}0 & 0 & 0 \\\makebox[0em][c]{$q+q^{-1}$} & 0 & 0 \\0 & 1 & 0\end{matrixH}\\
\phi_j(K)&=\begin{matrixG}\makebox[0em][c]{$q^2$} & 0 & 0\\0 & 1 & 0\\0 & 0 & q^{-2}\end{matrixG}&
\phi_j(K^{-1})&=\begin{matrixH}\makebox[0em][c]{$q^{-2}$} & 0 & 0 \\0 & 1 & 0 \\0 & 0 & \makebox[0em][c]{$q^2$}\end{matrixH}\\
\phi_j(a)&=\begin{matrixG}\makebox[0em][c]{$q^{-1}$} & 0 & 0 \\0 & 1 & 0 \\0 & 0 & q\end{matrixG}&
\phi_j(b)&=\begin{matrixH}0 & 0 & 0 \\\makebox[0em][c]{$q^{-1}-q^3$} & 0 & 0 \\0 & \makebox[0em][c]{$q^{-1}-q$} & 0\end{matrixH}\\
\phi_j(c)&=\begin{matrixG}0 & 0 & 0 \\0 & 0 & 0 \\0 & 0 & 0\end{matrixG}&
\phi_j(d)&=\begin{matrixH}q & 0 & 0 \\0 & 1 & 0 \\0 & 0 & \makebox[0em][c]{$q^{-1}$}\end{matrixH}
\end{align*}

As a right $\rD(H)$-module, $\cH_{0,2}=V_1'\oplus V_2'\oplus V_3'$, where each $V_i'$ is the right $\rD(H)$-module with basis $\{\vdd_{i1},\vdd_{i2},\vdd_{i3}\}$.  In this basis, the right action of $x\in \rD(H)$ on $v\in V_i'$ is given by $v\blactright x=\phi_i'(x)v$, where
\begin{align*}
\phi_i'(E)&=\begin{matrixG}0 & 1 & 0 \\0 & 0 & \makebox[0em][c]{$q+q^{-1}$} \\0 & 0 & 0\end{matrixG}&
\phi_i'(F)&=\begin{matrixH}0 & 0 & 0 \\\makebox[0em][c]{$q+q^{-1}$} & 0 & 0 \\0 & 1 & 0\end{matrixH}\\
\phi_i'(K)&=\begin{matrixG}\makebox[0em][c]{$q^{-2}$} & 0 & 0 \\0 & 1 & 0 \\0 & 0 & \makebox[0em][c]{$q^2$}\end{matrixG}&
\phi_i'(K^{-1})&=\begin{matrixH}q^2 & 0 & 0 \\0 & 1 & 0 \\0 & 0 & \makebox[0em][c]{$q^{-2}$}\end{matrixH}\\
\phi_i'(a)&=\begin{matrixG}q & 0 & 0 \\0 & 1 & 0 \\0 & 0 & \makebox[0em][c]{$q^{-1}$}\end{matrixG}&
\phi_i'(b)&=\begin{matrixH}0 & 0 & 0 \\\makebox[0em][c]{$q^{-2}-q^{2}$} & 0 & 0 \\0 & \makebox[0em][c]{$1-q^{2}$} & 0\end{matrixH}\\
\phi_i'(c)&=\begin{matrixG}0 & 0 & 0 \\0 & 0 & 0 \\0 & 0 & 0\end{matrixG}&
\phi_i'(d)&=\begin{matrixH}\makebox[0em][c]{$q^{-1}$} & 0 & 0 \\0 & 1 & 0 \\0 & 0 & q\end{matrixH}
\end{align*}

We see that $\phi_j(E)$ is a rank-2 matrix with null space spanned by $\{\vdd_{1j}\}$ and $\phi_i'(E)$ is a rank-2 matrix with null space spanned by $\{\vdd_{i1}\}$.  It follows that the subspace $\{v\in \cH_{0,2}\mid E\blactleft v=v\blactright E=0\}$ of highest-weight bivectors is spanned by $\vdd_{11}$.  Elsewhere in this paper we refer to this bivector as
\begin{align}\label{eqn:v_6}
	\bfv_6&=\vdd_{11}=q^{-1}(q-q^{-1})^2E^2K^{-1}a^2-q(q^2-q^{-2})Eac+Kc^2,
\end{align}
and we note that $\frac{q}{q^2-q^{-2}}\,\bfv_6 = (1-q^2)\bfv_1\bfv_4-[\bfv_3,\bfv_2]_{q^2}$.
\end{example}

\begin{remark}
We see that $\cH_{2,0}$ is not isomorphic to $\cH_{0,2}$ because $b$ annihilates $\cH_{2,0}$ but does not annihilate $\cH_{0,2}$.
\end{remark}

\begin{conjecture}
Based on these examples, we conjecture that $\cH_{\lambda,\mu}$ is generated as a $\rD(H)$-bimodule by $h_{\lambda,\mu}$ where
\begin{equation*}
	h_{\lambda,\mu} =
	\begin{cases}
		K^{-(\lambda+\mu)/2}c^{\lambda-\mu} &\text{if $\lambda\geq\mu$,}\\
		K^{-(\lambda+\mu)/2}b^{\mu-\lambda} &\text{if $\lambda<\mu$.}
	\end{cases}
\end{equation*}
\end{conjecture}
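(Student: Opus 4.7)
The plan is to reduce the conjecture to a simple containment claim by exploiting simplicity. Main Theorem \ref{thm:main_theorem} establishes that $\cH_{\lambda,\mu}$ is a simple $\rD(H)$-bimodule (by Lemma \ref{lemma:beta_injective_on_simples} combined with simplicity of $V_{\lambda,\mu}$), so any nonzero element generates the whole bimodule. Since $h_{\lambda,\mu}$ is manifestly a nonzero monomial in $H\otimes C$, the entire conjecture reduces to proving the inclusion $h_{\lambda,\mu}\in\cH_{\lambda,\mu}$.

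I would prove this by induction on $\lambda+\mu$. By the symmetry of the statement it suffices to treat $\lambda\geq\mu$, the case $\mu>\lambda$ being analogous. The base cases $h_{1,1}\in\cH_{1,1}$, $h_{2,0}\in\cH_{2,0}$, $h_{0,2}\in\cH_{0,2}$ are verified in Examples \ref{ex:h11}, \ref{ex:h20}, \ref{ex:h02}, where one reads off $h_{1,1}=v_{12}$, $h_{2,0}=\bfv_5$, $h_{0,2}=\bfv_6$. For the inductive step, because $K$ commutes with $c$ across tensor factors in $H\otimes C$, direct computation yields the multiplicative identity
\begin{equation*}
	h_{\lambda,\mu}=h_{\lambda-a,\mu-b}\cdot h_{a,b}
\end{equation*}
whenever $a\leq\lambda$, $b\leq\mu$, $(a-b)$ and $(\lambda-a)-(\mu-b)$ are both nonnegative, and both $a+b$ and $(\lambda-a)+(\mu-b)$ are even. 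Picking $(a,b)=(1,1)$ if $\mu\geq 1$ or $(a,b)=(2,0)$ if $\mu=0$, the right side factors into known elements in $\cH$-summands. By the identity $\beta_{U\otimes V}=\beta_U\cdot\beta_V$ from Section \ref{section:correspondence}, this product lies in the image under $\beta$ of $V_{\lambda-a,\mu-b}\otimes V_{a,b}$, which by Clebsch--Gordan splits as a sum $\bigoplus\cH_{\lambda',\mu'}$ with $\lambda'\in\{|\lambda-2a|,\ldots,\lambda\}$ and $\mu'\in\{|\mu-2b|,\ldots,\mu\}$.

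It remains to show that $h_{\lambda-a,\mu-b}\cdot h_{a,b}$ projects \emph{exclusively} to the top summand $\cH_{\lambda,\mu}$ of this decomposition. I would combine two invariants of $h_{\lambda,\mu}$. First, an induction on the power of $c$ using $\Delta E=E\otimes K+1\otimes E$ together with $E\blactleft\bar h=0$ and $E\blactleft c=0$ (Section \ref{section:duqsl2_actions}) gives $E\blactleft h_{\lambda,\mu}=0$, so $h_{\lambda,\mu}$ is left highest-weight. Second, as $K$ is grouplike and $H\otimes C$ is a bimodule algebra, both $K$-actions are computed multiplicatively, giving $h_{\lambda,\mu}$ the bi-weight $(\lambda-\mu,\lambda-\mu)$, matching the computations in Examples \ref{ex:h11}, \ref{ex:h20}, \ref{ex:h02}. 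Under the bimodule isomorphisms $\cH_{\lambda-a,\mu-b}\cong V_{\lambda-a,\mu-b}\otimes V_{\lambda-a,\mu-b}^*$ and analogously for $\cH_{a,b}$, these invariants pin down (up to scalar) the preimages $v_i\otimes f_i$ of the two factors; the left highest-weight property of $v_1,v_2$ ensures that $v_1\otimes v_2$ is highest-weight in the top Clebsch--Gordan summand, forcing the left factor of the product to be in the top $\cH_{\lambda,\mu}$.

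The main obstacle is the analogous projection on the \emph{right} side. A direct computation with the formulas of Section \ref{section:duqsl2_actions} gives
\begin{equation*}
	h_{\lambda,\mu}\blactright E=(1-q^{2\mu})K^{-(\lambda+\mu)/2}E\,c^{\lambda-\mu},
\end{equation*}
which is nonzero whenever $\mu>0$, so $h_{\lambda,\mu}$ is not right highest-weight and the functional $f$ in its preimage is not the dual of a lowest-weight vector. Identifying $f$ explicitly, and verifying that the tensor $f_1\otimes f_2$ projects nontrivially onto the dual of the top Clebsch--Gordan summand, will require a careful analysis of the quasi-triangular coactions $\delta_\pm$ from Proposition \ref{prop:quasi-triangular_comodule}, and is the technical heart of the argument.
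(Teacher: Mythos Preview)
The paper does not prove this statement: it is stated as a \emph{conjecture} at the end of Section~\ref{section:DH_bimodule_examples}, motivated by the three worked examples, and no argument is offered. So there is nothing to compare your proposal against.

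That said, your reduction is sound: since Main Theorem~\ref{thm:main_theorem} establishes that each $\cH_{\lambda,\mu}$ is a simple bimodule, the conjecture is indeed equivalent to the containment $h_{\lambda,\mu}\in\cH_{\lambda,\mu}$. Two issues remain with your proposed proof of that containment. First, a small correction to a base case: $h_{0,2}=K^{-1}b^2=\vdd_{33}$, not $\bfv_6=\vdd_{11}$; this is harmless since $\vdd_{33}\in\cH_{0,2}$ anyway. Second, and more seriously, the ``left side'' of your Clebsch--Gordan argument does not do what you claim. You observe that $h_{\lambda,\mu}$ is left $E$-annihilated with left $K$-weight $\lambda-\mu$, and conclude that its preimage $v_1\otimes v_2$ must sit in the top summand $V_{\lambda,\mu}$. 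But the left action of $E\in H\subset\rD(H)$ sees only the underlying $H$-module structure, and as an $H$-module each $V_{\lambda',\mu'}\cong V_{\lambda'}\otimes V_{\mu'}$ contains $E$-highest-weight vectors of every weight from $|\lambda'-\mu'|$ up to $\lambda'+\mu'$. So left $E$-highest-weight of weight $\lambda-\mu$ is compatible with \emph{every} summand $\cH_{\lambda',\mu'}$ appearing in the Clebsch--Gordan decomposition of $\cH_{\lambda-a,\mu-b}\cdot\cH_{a,b}$, not just the top one. The left side is therefore no easier than the right side you flag as the ``technical heart''; to separate the summands you would need to bring in the action of $C\subset\rD(H)$ (for instance the actions of $a,b,c,d$ computed in Section~\ref{section:duqsl2_actions}), not just that of $H$.
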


\section{Proofs}\label{C:Proofs}

\subsection{Proof of Theorem \ref{thm:dual_is_a_bimodule}}

We define a pairing $\lr{\;,\,}\colon \rD(H)\otimes(H\otimes C)\to k$ by
\begin{equation}\label{eqn:double_pairing}
	\lr{ c\cdot h,\bar{h}\cdot \bar{c} } = \phi(c,\bar{h})\phi(\bar{c},h).
\end{equation}
As described in Section \ref{section:hopf_pairing}, we can use this pairing to define left and right actions of $\rD(H)$ on $H\otimes C$.  The rest of the proof is a direct calculation.

We define $\blactright\colon (H\otimes C)\otimes \rD(H)\to H\otimes C$ so that
\begin{equation*}
	\begin{split}
		\langle c'\cdot h'&,(\bar{h}\cdot \bar{c}) \blactright (c\cdot h)\rangle\\
		&=\langle (c\cdot h)\cdot(c'\cdot h'),\bar{h}\cdot \bar{c}\rangle\\
		&=\langle c\cdot c'_{(2)}\cdot h_{(2)}\cdot h',\bar{h}\cdot \bar{c}\rangle
			\phi(c'_{(1)},Sh_{(1)})\phi(c'_{(3)},h_{(3)})\\
		&=\phi(c\cdot c'_{(2)},\bar{h})\phi(\bar{c},h_{(2)}\cdot h')
			\phi(c'_{(1)},Sh_{(1)})\phi(c'_{(3)},h_{(3)})\\
		&=\phi(c'_{(2)}c,\bar{h})\phi(\bar{c},h_{(2)} h')
			\phi(c'_{(1)},Sh_{(1)})\phi(c'_{(3)},h_{(3)})\\
		&=\phi(c'_{(2)},\bar{h}_{(1)})\phi(c,\bar{h}_{(2)})\phi(\bar{c}_{(1)},h_{(2)})\phi(\bar{c}_{(2)},h')
			\phi(c'_{(1)},Sh_{(1)})\phi(c'_{(3)},h_{(3)})\\
		&=\phi(c',Sh_{(1)}\bar{h}_{(1)}h_{(3)})\phi(c,\bar{h}_{(2)})\phi(\bar{c}_{(1)},h_{(2)})\phi(\bar{c}_{(2)},h')\\
		&=\langle c'\cdot h',Sh_{(1)}\bar{h}_{(1)}h_{(3)}\cdot \bar{c}_{(2)}\rangle\phi(c,\bar{h}_{(2)})\phi(\bar{c}_{(1)},h_{(2)})\\
		&=\langle c'\cdot h',Sh_{(1)}(c\actleft \bar{h})h_{(3)}\cdot(\bar{c}\actright h_{(2)})\rangle
	\end{split}
\end{equation*}
and thus
\begin{align*}
	(\bar{h}\cdot \bar{c})\blactright c&=(c\actleft \bar{h})\cdot \bar{c}\qquad\text{and}\qquad
	(\bar{h}\cdot \bar{c})\blactright h=Sh_{(1)} \bar{h} h_{(3)}\cdot (\bar{c}\actright h_{(2)}).
\end{align*}
We define $\blactleft\colon \rD(H)\otimes(H\otimes C)\to H\otimes C$ so that
\begin{equation*}
	\begin{split}
		\langle c'\cdot h',h \blactleft (\bar{h}\cdot \bar{c})\rangle
		&=\langle (c'\cdot h')(h),\bar{h}\cdot \bar{c}\rangle\\
		&=\langle c'\cdot h'h,\bar{h}\cdot \bar{c}\rangle\\
		&=\phi(c',\bar{h})\phi(\bar{c},h'h)\\
		&=\phi(c',\bar{h})\phi(\bar{c}_{(1)},h')\phi(\bar{c}_{(2)},h)\\
		&=\langle c'\cdot h',\bar{h}\cdot \bar{c}_{(1)}\rangle\phi(\bar{c}_{(2)},h)\\
		&=\langle c'\cdot h',\bar{h}\cdot (h\actleft \bar{c})\rangle
	\end{split}
\end{equation*}
and
\begin{equation*}
	\begin{split}
		\langle c'\cdot h',c \blactleft (\bar{h}\cdot \bar{c})\rangle
		&=\langle (c'\cdot h')(c),\bar{h}\cdot \bar{c}\rangle\\
		&=\langle c'\cdot c_{(2)}\cdot h'_{(2)},\bar{h}\cdot \bar{c}\rangle
			\phi(c_{(1)},Sh'_{(1)})\phi(c_{(3)},h'_{(3)})\\
		&=\phi(c_{(2)}c',\bar{h})\phi(\bar{c},h'_{(2)})
			\phi(c_{(1)},Sh'_{(1)})\phi(c_{(3)},h'_{(3)})\\
		&=\phi(c_{(2)},\bar{h}_{(1)})\phi(c',\bar{h}_{(2)})\phi(\bar{c},h'_{(2)})
			\phi(c_{(1)},Sh'_{(1)})\phi(c_{(3)},h'_{(3)})\\
		&=\phi(c_{(2)},\bar{h}_{(1)})\phi(c',\bar{h}_{(2)})\phi(\bar{c},h'_{(2)})
			\phi(Sc_{(1)},h'_{(1)})\phi(c_{(3)},h'_{(3)})\\
		&=\phi(c_{(2)},\bar{h}_{(1)})\phi(c',\bar{h}_{(2)})\phi(Sc_{(1)}\bar{c}c_{(3)},h')\\
		&=\phi(c_{(2)},\bar{h}_{(1)})\langle(c'\cdot h',\bar{h}_{(2)}\cdot(Sc_{(1)}\bar{c}c_{(3)})\rangle\\
		&=\langle(c'\cdot h',(\bar{h}\actright c_{(2)})\cdot Sc_{(1)}\bar{c}c_{(3)}\rangle.
	\end{split}
\end{equation*}
Thus
\begin{align*}
	c \blactleft (\bar{h}\cdot \bar{c})
		&=(\bar{h}\actright c_{(2)})\cdot Sc_{(1)}\bar{c}c_{(3)}\qquad\text{and}\qquad
	h \blactleft (\bar{h}\cdot \bar{c})
		=\bar{h}\cdot(h\actleft \bar{c})
\end{align*}
and the theorem is proved.\qed

\subsection{Proof of Theorem \ref{thm:v1v2v3v4generate_hw}}\label{section:prove_main_thm}

We recall the vectors from (\ref{eqn:v_1v_2v_3v_4}), (\ref{eqn:v_5}), and (\ref{eqn:v_6}):
\begin{align*}
	\bfv_1&=(q-q^{-1})EK^{-1}ac-c^2\\
	\bfv_2&=(q-q^{-1})\Delta ac-\tfrac{q+q^{-1}}{q-q^{-1}}Kac-q^{-2}FKc^2+Ea^2\\
	\bfv_3&=EK^{-1}\\
	\bfv_4&=\Delta\\
	\bfv_5&=K^{-1}c^2\\
	\bfv_6&=q^{-1}(q-q^{-1})^2E^2K^{-1}a^2-q(q^2-q^{-2})Eac+Kc^2
\end{align*}

Our goal is to show that $\HCpp$ has basis
\begin{align*}
	\{\bfv_3^\ell \bfv_1^m \bfv_5^p \bfv_2^s \}
	\cup\{\bfv_3^\ell \bfv_1^m \bfv_6^r \bfv_2^s \}
	\cup\{\bfv_3^\ell \bfv_4^n \bfv_5^p \bfv_2^s \}
	\cup\{\bfv_3^\ell \bfv_4^n \bfv_6^r \bfv_2^s \}.
\end{align*}
The biggest challenge is that $E$ and $F$ do not have a convenient commuting relation.  By localizing at $\bfv_3=EK^{-1}$, we will be able to write $F$ in terms of other vectors with much better commuting relations (See Lemma \ref{lem:noF}).  We will do the same for $a$ by localizing at $c$, and we will also show that neither $b$ nor $d$ appears in any highest-weight bivector. 
In summary, we will use the (not yet justified) embeddings
\begin{align}\label{eqn:localizations}
	\HCpp\subset{^+\cA[\bfv_3^{-1}]^+}\subset{^+\cA[\bfv_3^{-1},c^{-1}]^+}
\end{align} 
where $\cA\subset H\otimes C$ is the subalgebra with basis
\[
	\big\{ \bfv_3^m\bfv_4^n\bfv_5^pK^ka^\ell c^\epsilon \mid
	\text{$m,n,p,\ell\in\ZZ_{\geq 0}$, $k\in\ZZ$, $\epsilon\in\{0,1\}$}\big\}.
\]
It will be relatively easy to find all highest-weight bivectors in $\cA[\bfv_3^{-1},c^{-1}]$ (See Proposition \ref{prop:hw_bivectors_in_fully_localized_algebra}). We will then carefully determine which of these highest-weight bivectors can be written without $c^{-1}$ or $\bfv_3^{-1}$, and this will give us our basis for $\HCpp$.

First we must confirm that $\bfv_3$ and $c$ are Ore elements in $\cA$. Using Proposition \ref{prop:h_otimes_c} (Recall also Definition \ref{def:quantum_enveloping_algebra} and Example \ref{ex:coordinate_algebra}) and notation $[x,y]_q=xy-qyx$, we find the following relations: 
\begin{lemma}\label{lem:commutators}
The vector $\bfv_4$ is central in $H\otimes C$, and
\begin{align*}
	0
	=[\bfv_3,\bfv_5]_{q^2}
	=[\bfv_3,K]_{q^{-2}}
	=[\bfv_3,a]
	=[\bfv_3,c]
	=[\bfv_5,K]
	=[\bfv_5,a]_{q^2}
	=[\bfv_5,c].
\end{align*}
\end{lemma}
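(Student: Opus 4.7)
The proof is a short exercise in the structure of $H\otimes C$: by Proposition \ref{prop:h_otimes_c}, this subalgebra of $\rD(H)^*$ has the \emph{tensor algebra} structure, so every element of $H$ commutes with every element of $C$ inside $H\otimes C$. All seven identities then reduce to elementary manipulations using the defining relations of $\rU_q(\fsl_2)$ and $\CC_q[\rSL_2]$ from Definition \ref{def:quantum_enveloping_algebra} and Example \ref{ex:coordinate_algebra}. The plan is to dispatch them one by one, grouped by the ingredients needed.

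The centrality of $\bfv_4=\Delta$ is immediate: the remark following Definition \ref{def:quantum_enveloping_algebra} says $\Delta$ lies in the center of $H$, and since $\Delta\in H$ commutes with every element of $C$ by the tensor structure, it is central in $H\otimes C$. Next, the two identities $[\bfv_3,a]=0$ and $[\bfv_3,c]=0$ are immediate from the tensor structure alone, since $\bfv_3=EK^{-1}\in H$ and $a,c\in C$. Similarly, $[\bfv_5,K]=0$ holds because $K^{-1}$ commutes with itself and $K$ commutes with $c$ by the tensor structure, while $[\bfv_5,c]=0$ follows because $c$ commutes with both $K^{-1}$ and $c^2$.

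The remaining three identities use the $q$-commutation $KE=q^2EK$ (equivalently $K^{-1}E=q^{-2}EK^{-1}$) and the relation $ca=qac$ in $C$. For $[\bfv_3,K]_{q^{-2}}$, compute $\bfv_3 K = EK^{-1}K=E$ and $K\bfv_3 = KEK^{-1}=q^2E$, whence $\bfv_3K-q^{-2}K\bfv_3=E-E=0$. For $[\bfv_3,\bfv_5]_{q^2}$, one uses $K^{-1}EK^{-1}=q^{-2}EK^{-2}$ (commuting $E$ past $K^{-1}$), and that $c$ commutes with $E$ and $K$, to get $\bfv_3\bfv_5=EK^{-2}c^2$ and $q^2\bfv_5\bfv_3 = q^2 K^{-1}EK^{-1}c^2 = EK^{-2}c^2$. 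For $[\bfv_5,a]_{q^2}$, iterate $ca=qac$ to obtain $c^2a=q^2 ac^2$, then $\bfv_5 a = K^{-1}c^2 a = q^2 K^{-1} a c^2 = q^2 a\bfv_5$.

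There is no real obstacle here; the only thing worth being careful about is consistent bookkeeping of which side of a product each $q$-power appears on when commuting $E$ past $K^{\pm 1}$ and $c$ past $a$. Once those two primitive relations are applied cleanly, every identity collapses in one or two lines.
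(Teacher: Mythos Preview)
Your proof is correct and follows exactly the approach the paper indicates: the paper simply cites Proposition \ref{prop:h_otimes_c} (the tensor algebra structure of $H\otimes C$), Definition \ref{def:quantum_enveloping_algebra}, and Example \ref{ex:coordinate_algebra}, and states the relations without further detail. Your explicit verification of each identity from $KE=q^2EK$ and $ca=qac$ together with the commutativity of $H$ with $C$ is precisely what is intended.
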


\begin{corollary}
Both $\bfv_3$ and $c$ are Ore elements in $\cA$.
\end{corollary}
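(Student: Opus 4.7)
The strategy is to establish the stronger property of \emph{normalization}: $\bfv_3\cA=\cA\bfv_3$ and $c\cA=\cA c$. Combined with the fact that $\bfv_3$ and $c$ are non-zero-divisors, this immediately yields the Ore condition, since for any $a\in\cA$ and $n\geq 0$ we have $a\bfv_3^n\in\cA\bfv_3^n=\bfv_3^n\cA$, so $a\bfv_3^n=\bfv_3^n b$ for some $b\in\cA$ (and similarly for $c$ and for the left-hand Ore condition).

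For $\bfv_3$ I will show it quasi-commutes with every factor appearing in a basis monomial of $\cA$. Using Lemma \ref{lem:commutators} together with $Ka=aK$, which holds because $H$ and $C$ commute factor-wise in the tensor algebra $H\otimes C$, I can push $\bfv_3$ through a monomial $w=\bfv_3^m\bfv_4^n\bfv_5^pK^ka^\ell c^\epsilon$ to obtain an identity of the form
\[ \bfv_3\cdot w \;=\; q^{2p-2k}\,w\cdot\bfv_3. \]
This gives both inclusions $\bfv_3\cA\subseteq\cA\bfv_3$ and $\cA\bfv_3\subseteq\bfv_3\cA$. The non-zero-divisor property is immediate, since left multiplication by $\bfv_3$ sends $w$ to $\bfv_3^{m+1}\bfv_4^n\bfv_5^pK^ka^\ell c^\epsilon$, which is again a basis element.

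For $c$, the tensor product structure together with $ca=qac$ from Example \ref{ex:coordinate_algebra} gives that $c$ commutes with $\bfv_3,\bfv_4,\bfv_5,K$ and satisfies $ca^\ell=q^\ell a^\ell c$. When $\epsilon=0$ one immediately obtains $c\cdot w=q^\ell w\cdot c$ in normal form. The only subtlety occurs when $\epsilon=1$: the product now contains $c^2$, which must be rewritten using $c^2=K\bfv_5$, followed by the further commutations $Ka=aK$ and (from Lemma \ref{lem:commutators}) $\bfv_5 a=q^2 a\bfv_5$, in order to return to the claimed normal form. A brief case analysis then yields $c\cdot w=q^\ell w\cdot c$ in both cases, establishing normality of $c$. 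For the non-zero-divisor property, right multiplication by $c$ is a bijection of basis monomials up to nonzero scalar: it swaps the two values of $\epsilon$, fixing $(p,k)$ in the direction $\epsilon{=}0\mapsto\epsilon{=}1$ and shifting both $p$ and $k$ by one in the reverse direction, so distinct basis monomials have distinct images.

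The only genuine bookkeeping challenge is reducing $c^2$ when $\epsilon=1$; this is fully determined by the identity $c^2=K\bfv_5$ together with the commutation relations in Lemma \ref{lem:commutators}, so no deeper input is needed beyond them.
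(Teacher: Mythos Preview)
Your proof is correct and follows essentially the same approach as the paper, which simply records the corollary as an immediate consequence of the quasi-commutation relations in Lemma~\ref{lem:commutators}. You have spelled out the details the paper leaves implicit: normality of $\bfv_3$ and $c$ via explicit $q$-commutation with basis monomials, together with the non-zero-divisor check.

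One small remark on presentation: in the $\epsilon=1$ case the identity $c\cdot w=q^{\ell}\,w\cdot c$ already holds on the nose in $H\otimes C$ from $ca=qac$ and the tensor commutativity, without any rewriting of $c^2$. The reduction $c^2=K\bfv_5$ and the subsequent reordering are only needed for your injectivity argument (to land back in the stated basis), not for establishing normality itself. Separating these two points would make the logic a bit cleaner, but nothing is wrong.
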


Now we must also confirm that $\HCpp\subset{^+\cA[\bfv_3^{-1}]^+}$.  This is not obvious, because $\cA[\bfv_3^{-1}]$ appears to be missing $b$, $d$, and $F$.  Certainly $H\otimes C\not\subset \cA[\bfv_3^{-1}]$.  We will resolve this by determining that $\HCpp$ doesn't have $b$ or $d$ either (Lemma~\ref{lem:cplus}), and that $F\in\cA[\bfv_3^{-1}]$ can be written in terms of other vectors (Lemma~\ref{lem:noF}).

\begin{lemma}\label{lem:cplus}
The subspace $\Cplus=\{v\in C \mid E\blactleft v=0\}$ is spanned by $\{a^ic^j\mid i,j\geq 0\}$.
Furthermore, if $v\in \cH$, then $E\blactleft v=0$ if and only if $v\in H\otimes\Cplus$. That is, $^+(H\otimes C)\subset H\otimes{^+C}$.
\end{lemma}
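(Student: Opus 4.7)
The plan is to prove the two assertions of the lemma separately. For the description of $\Cplus$, the inclusion $\Span\{a^ic^j\} \subseteq \Cplus$ is immediate: $E \actleft a = E \actleft c = 0$, and ${}^+C$ is a subalgebra by Lemma \ref{lem:hw_subalgebra}. The reverse inclusion is the substantive content. I would employ the PBW basis $\{a^\alpha b^\beta c^\gamma\}_{\alpha,\beta,\gamma\geq 0} \cup \{b^\beta c^\gamma d^\delta\}_{\beta,\gamma\geq 0,\,\delta\geq 1}$ of $C$ derived from the relations in Example \ref{ex:coordinate_algebra}, and filter $C$ by total $(b+d)$-degree. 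Since $\Delta E = E \otimes K + 1 \otimes E$, the action of $E$ is a twisted derivation $E \actleft(xy) = (E \actleft x)(K \actleft y) + x(E \actleft y)$; since it annihilates $a, c$ and sends $b \mapsto a$, $d \mapsto c$, it strictly decreases the $(b+d)$-degree.

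The key computation is, for each PBW basis element $m$ of $(b+d)$-degree $n \geq 1$, to determine the leading $(b+d)$-degree $n-1$ part of $E \actleft m$ after reducing to PBW form. I would check that this leading part is a nonzero scalar multiple of a specific basis element (for generic $q$), and that the resulting basis elements are pairwise distinct as $m$ varies. Explicitly, $E \actleft (a^\alpha b^\beta c^\gamma) = [\beta]_{q^{-2}}\, q^{\beta+\gamma-1}\, a^{\alpha+1} b^{\beta-1} c^\gamma$, which is already in PBW form. For $b^\beta c^\gamma d^\delta$ with $\delta \geq 1$, the twisted derivation produces a summand involving the forbidden product $ad^\delta$; rewriting this via $ad = 1 + q^{-1}bc$ (iterated to $a d^\delta = d^{\delta-1} + q^{-1}bc\, d^{\delta-1}$) contributes at both the current filtration level and one level lower. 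Combining the on-level contributions yields $q^{\delta-1}[\beta+\delta]_{q^{-2}}\, b^\beta c^{\gamma+1} d^{\delta-1}$ after a telescoping identity in $q$-integers, nonzero for generic $q$. Given this, suppose $v \in \Cplus$ has top $(b+d)$-degree $N \geq 1$ with leading part $v_N \neq 0$: the leading part of $E \actleft v$ equals that of $E \actleft v_N$, which is a nontrivial combination of distinct basis elements, contradicting $E \actleft v = 0$. Hence $v$ has $(b+d)$-degree zero, placing it in $\Span\{a^ic^j\}$.

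The ``furthermore'' part is a quick consequence of the action formulas in Theorem \ref{thm:dual_is_a_bimodule}: using $\Delta E = E \otimes K + 1 \otimes E$ together with $E \blactleft \bar h = \counit(E)\bar h = 0$ and $E \blactleft \bar c = E \actleft \bar c$, one obtains $E \blactleft (\bar h \cdot \bar c) = \bar h \cdot (E \actleft \bar c)$. Writing any $v \in H \otimes C$ as $v = \sum_i \bar{h}_i \otimes \bar{c}_i$ with $\{\bar{h}_i\}$ linearly independent, the equation $E \blactleft v = \sum_i \bar{h}_i \otimes (E \actleft \bar{c}_i) = 0$ forces each $\bar{c}_i \in \Cplus$, yielding ${}^+(H \otimes C) \subseteq H \otimes {}^+C$ without invoking the assumption $v \in \cH$. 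The main obstacle in the whole proof is the telescoping identity at the heart of the key computation: one must verify that the direct derivation term and the correction from reducing $a d^\delta$ combine into a nonzero coefficient at the current filtration level rather than cancelling. Once this identity is in hand, the remainder is a straightforward induction on the $(b+d)$-degree.
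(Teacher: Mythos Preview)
Your argument is correct and follows the same overall strategy as the paper: reduce the $H\otimes C$ statement to a statement about $C$ via $E\blactleft(\bar h\cdot\bar c)=\bar h\cdot(E\actleft\bar c)$, then filter $C$ and show that the action of $E$ is injective on the associated graded. The paper carries out both steps simultaneously, working in $H\otimes C$ with the PBW basis $\{E^iF^j\Delta^kK^{\pm\ell}a^pc^rb^md^n\mid ij=pn=0\}$, whereas you separate them; this is a cosmetic difference.

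Where you genuinely diverge is in the choice of filtration. The paper grades by the $b$-exponent $m$ alone and examines the $m$-preserving component of $E\blactleft b^md^n$, namely $\frac{q^{2n}-q^{-2m}}{q-q^{-1}}q^{-n}cb^md^{n-1}$. You instead filter by the total $(b+d)$-degree $\beta+\delta$, and here $E$ \emph{strictly} lowers the degree, so the top graded piece of $E\actleft v$ is literally the image of $v_N$ under a map you verify is injective. This is a cleaner packaging: with the paper's $b$-grading, $E$ has both a degree-preserving part (on terms with $n\ge 1$) and a degree-lowering part (on terms with $n=0$), so the equation $\pi_M(E\blactleft v)=0$ directly constrains only the $n\ge 1$ terms at $b$-degree $M$, and one must argue further to eliminate the $n=0$ terms. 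Your $(b+d)$-filtration sidesteps this subtlety entirely. The telescoping identity you flag as the main obstacle is exactly the computation $[\beta]_q q^{-\beta-\delta}+[\delta]_q=q^{-\beta}[\beta+\delta]_q$, which the paper also performs (in the equivalent form $\frac{q^{2n}-q^{-2m}}{q-q^{-1}}q^{-n}$); your observation that this vanishes only when $\beta=\delta=0$ is precisely what is needed.
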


\begin{proof}
Recall that $H\otimes C$ has basis $\{E^i F^j \Delta^k K^{\pm\ell} a^p c^r b^m d^n\mid ij=pn=0\}$.  Because
\[ E\blactleft E^i F^j \Delta^k K^\ell a^p c^r b^m d^n = E^i F^j \Delta^k K^\ell a^p c^r (E\blactleft b^m d^n), \]
we must consider $E\blactleft b^m d^n$. 
Recall the notation $[m]_q=\frac{q^m-q^{-m}}{q-q^{-1}}$. 
Now $E\blactleft b^m=[m]_qab^{m-1}$, $E\blactleft d^n=[n]_qcd^{n-1}$, 
and $K\blactleft d^n=q^{-n}d^n$, so if $n>0$ then
\begin{equation*}
	\begin{split}
	E\blactleft b^md^n&=[m]_qq^{-n}ab^{m-1}d^n+[n]_qb^mcd^{n-1}\\
	&=[m]_qq^{1-m-n}b^{m-1}(1+q^{-1}bc)d^{n-1}+[n]_qcb^md^{n-1}\\
	&=[m]_qq^{1-m-n}b^{m-1}d^{n-1}+\tfrac{q^{2n}-q^{-2m}}{q-q^{-1}}q^{-n}cb^md^{n-1}.
	\end{split}
\end{equation*}
We observe that there is a grading \( H\otimes C = \bigoplus_{m=0}^\infty A_m \) where \[ A_m = \Span\{E^i F^j \Delta^k K^{\pm\ell} a^p c^r b^m d^n\mid \text{$i,j,k,\ell,n,p,r\geq 0$ and $ij=pn=0$}\} \] and we let $\pi_m\colon H\otimes C\to A_m$ denote the canonical projection.  We also note that $E\blactleft A_m \subset A_m$ for all $m\geq 0$.

Let $v=\textstyle\sum \alpha_{i,j,k,\ell,m,n,p,r}E^i F^j \Delta^k K^{\ell} a^p c^r b^m d^n$ be a nonzero vector such that $E\blactleft v=0$, and let $M=\max\{m\in\ZZ \mid \pi_m(v)\neq 0\}$.  Then
\begin{align*}
	\pi_M(E\blactleft v) = \sum_{\genfrac{}{}{0pt}{}{i,j,k,n,p,r\geq 0}{\ell\in\ZZ,\,ij=pn=0}}\frac{q^{2n}-q^{-2M}}{q-q^{-1}}q^{-n}\alpha_{i,j,k,\ell,M,n,p,r}E^iF^j\Delta^kK^{\ell}a^pc^{r+1}b^Md^{n-1}.
\end{align*}
For this to be zero, we can have $\alpha_{i,j,k,\ell,M,n,p,r}\neq0$ only if $M=n=0$.  Thus $v\in \Span\{E^i F^j \Delta^k K^{\ell} a^p c^r\}$.
\end{proof}

\begin{lemma}\label{lem:noF}
$H\otimes\Cplus$ is a subalgebra of $\cA[\bfv_3^{-1}]$.
\end{lemma}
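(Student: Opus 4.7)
The plan is to verify the two claims packaged into the statement: that $H\otimes\Cplus$ is closed under multiplication, and that it sits inside the localization $\cA[\bfv_3^{-1}]$. The first of these is essentially formal: by Lemma~\ref{lem:hw_subalgebra} the space $\Cplus={^+}C$ is a subalgebra of $C$, and since $H\otimes C$ is the tensor algebra of Proposition~\ref{prop:h_otimes_c}, the image $H\otimes\Cplus$ is automatically a subalgebra of $H\otimes C$. So the substantive content is the containment $H\otimes\Cplus\subset\cA[\bfv_3^{-1}]$.

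For the containment, I would list generators of $H\otimes\Cplus$ and check each one lies in $\cA[\bfv_3^{-1}]$. Over $\CC(q)$, the algebra $H=\rU_q(\fsl_2)$ is generated by $E,F,K^{\pm 1}$, and by Lemma~\ref{lem:cplus} the algebra $\Cplus$ is generated by $a$ and $c$. The easy generators are immediate from the definition of $\cA$: $K^{\pm 1}$, $a$, $c$, and $\bfv_4=\Delta$ are already basis ingredients, and $E=\bfv_3 K\in\cA$.

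The one generator that does \emph{not} obviously belong to $\cA$ is $F$, and this is exactly why we must invert $\bfv_3$. Inside $\cA[\bfv_3^{-1}]$ the element $E=\bfv_3 K$ becomes invertible with $E^{-1}=K^{-1}\bfv_3^{-1}$, so the Casimir identity
\[
	\bfv_4 \;=\; FE+\frac{qK+q^{-1}K^{-1}}{(q-q^{-1})^2}
\]
can be solved for $F$:
\[
	F \;=\; \Bigl(\bfv_4-\frac{qK+q^{-1}K^{-1}}{(q-q^{-1})^2}\Bigr)K^{-1}\bfv_3^{-1}.
\]
Every factor on the right lies in $\cA[\bfv_3^{-1}]$, and Lemma~\ref{lem:commutators} (centrality of $\bfv_4$ together with $[\bfv_3,K]_{q^{-2}}=0$) lets us reorder the result into the spanning monomials defining $\cA[\bfv_3^{-1}]$. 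With every generator of $H\otimes\Cplus$ realized inside $\cA[\bfv_3^{-1}]$, the subalgebra they generate — namely $H\otimes\Cplus$ — is contained in $\cA[\bfv_3^{-1}]$, completing the proof.

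The only real obstacle is the step that forces the localization: there is no expression for $F$ inside $\cA$ itself, and inverting $\bfv_3$ is precisely what is needed to exploit the Casimir relation. Everything else is bookkeeping against the relations collected in Lemma~\ref{lem:commutators}. This lemma is what legitimizes the sequence of embeddings \eqref{eqn:localizations} promised before the statement and, together with Lemma~\ref{lem:cplus}, reduces the study of $\HCpp$ to the more tractable localized algebra.
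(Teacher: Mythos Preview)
Your proof is correct and follows essentially the same approach as the paper: both arguments identify $F$ as the only problematic generator and use the Casimir identity $\bfv_4=FE+\frac{qK+q^{-1}K^{-1}}{(q-q^{-1})^2}$ together with $E=\bfv_3K$ to express $F$ inside $\cA[\bfv_3^{-1}]$. The paper additionally records the rewriting $c^{2p+\epsilon}=\bfv_5^pK^pc^\epsilon$ to fit higher powers of $c$ into the stated basis of $\cA$, but since you argue via generators and $\cA$ is already declared a subalgebra, this step is implicit in your version.
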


\begin{proof}
We write $E=\bfv_3K$,
\begin{equation*}
	\begin{split}
		F
		&= (FE)(\bfv_3K)^{-1}\\
		&= \bigg(\bfv_4-\frac{qK+q^{-1}K^{-1}}{(q-q^{-1})^{2}}\bigg)q^2\bfv_3^{-1}K^{-1}\\
		&= q^2\bfv_3^{-1}\bfv_4K^{-1}-\frac{q}{(q-q^{-1})^{2}}\bfv_3^{-1}-\frac{q^{3}}{(q-q^{-1})^{2}}\bfv_3^{-1}K^{-2}
	\end{split}
\end{equation*}
and $c^{2p+\epsilon}=\bfv_5^pK^pc^\epsilon$.
\end{proof}

We have verified the statement \eqref{eqn:localizations}. We will now find a basis for $^+\cA[\bfv_3^{-1},c^{-1}]^+$ which we can use later to find a basis for $^+(H\otimes C)^+$.

\begin{proposition}\label{prop:hw_bivectors_in_fully_localized_algebra}
The algebra $^+\cA[\bfv_3^{-1},c^{-1}]^+$ is generated by $\{\bfv_1,\bfv_3^{\pm 1},\bfv_4,\bfv_5^{\pm 1}\}$.
\end{proposition}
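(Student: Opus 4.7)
The strategy is to pass to cleaner generators and then read off a recurrence. First I would introduce the auxiliary element $u := \bfv_1 + c^2 = (q-q^{-1})\bfv_3 ac$, which satisfies the clean quasi-commutation $cu = quc$ (coming from $ca = qac$ together with Lemma~\ref{lem:commutators}). Setting $\cB := \cA[\bfv_3^{-1}, c^{-1}]$, one can eliminate $K = \bfv_5^{-1}c^2$ and $a = (q-q^{-1})^{-1}\bfv_3^{-1}c^{-1}u$, so $\cB$ is generated by $\{\bfv_3^{\pm 1}, \bfv_4, \bfv_5^{\pm 1}, u, c^{\pm 1}\}$, and the only relations are the clean ones: $\bfv_4$ is central, $\bfv_3\bfv_5 = q^2\bfv_5\bfv_3$, both $\bfv_3$ and $\bfv_5$ commute with $u$ and with $c$, and $cu = quc$. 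Since each is a pure quasi-commutation, the diamond lemma delivers the PBW basis $\{\bfv_3^m \bfv_4^n \bfv_5^p u^\ell c^\epsilon : m, p, \epsilon \in \ZZ,\; n, \ell \geq 0\}$.

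Next I would verify that $^+\cB = \cB$, which reduces the problem to computing $\cB^+$. By Lemma~\ref{lem:cplus}, $\cA \subseteq H \otimes {}^+C$ (its $C$-part involves only $a, c$). The Leibniz rule applied to $0 = E \blactleft 1 = E \blactleft (cc^{-1})$ shows $c^{-1} \in {}^+C[c^{-1}]$, so $\cB \subset H \otimes {}^+C[c^{-1}]$ is killed by $E \blactleft$. The four generators $\bfv_1, \bfv_3, \bfv_4, \bfv_5$ are right-HW, being the canonical highest-weight bivectors of $\cH_{1,1}, \cH_{2,0}, \cH_{0,2}$ exhibited in Examples~\ref{ex:h11}--\ref{ex:h02}; only $u$ and $c$ can contribute nontrivially under $\blactright E$. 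A short induction using $c \blactright E = (1-q^{-1})Ec$, the product rule $(xy)\blactright E = (x\blactright E)(y\blactright K) + x(y\blactright E)$, and $Ec = cE$, gives
\[
u^\ell \blactright E = q^{-2}(q^{2\ell}-1)\,u^{\ell-1}c^2 E \qquad (\ell \geq 1),
\]
and hence
\[
(u^\ell c^\epsilon)\blactright E = \bigl[q^{-\epsilon-2}(q^{2\ell}-1)u^{\ell-1}c^{\epsilon+2} + (1-q^{-\epsilon})u^\ell c^\epsilon\bigr]E.
\]
Substituting $E = \bfv_3 \bfv_5^{-1} c^2$ and using $\bfv_5 \bfv_3 = q^{-2} \bfv_3 \bfv_5$ expresses $(u^\ell c^\epsilon) \blactright E$ in the PBW basis.

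Finally, writing a general $f \in \cB$ as $f = \sum \alpha_{m,n,p,\ell,\epsilon}\,\bfv_3^m \bfv_4^n \bfv_5^p u^\ell c^\epsilon$ and reading off the coefficient of each basis monomial in $f \blactright E = 0$ yields, with $(m,n,p)$ fixed, the two-term recurrence
\[
\alpha_{\ell,\epsilon} = -\,\frac{q^{\epsilon+2}-1}{q^{2\ell}-1}\,\alpha_{\ell-1,\epsilon+2} \qquad (\ell \geq 1).
\]
Each \emph{trajectory} $\{\alpha_{\ell,\epsilon_0-2\ell}\}_{\ell \geq 0}$ is determined by its seed $\alpha_{0,\epsilon_0}$, and inspection of the numerator product $\prod_{j=1}^\ell(q^{\epsilon_0-2(j-1)}-1)$ shows that, for generic $q$, the trajectory terminates if and only if $\epsilon_0 \in 2\ZZ_{\geq 0}$. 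Since $f$ must be a finite sum, all other trajectories have seed zero. For $\epsilon_0 = 2r$, the $q$-binomial expansion
\[
\bfv_1^r = (u - c^2)^r = \sum_{k=0}^r (-1)^k \qbinom[q^2]{r}{k} u^{r-k} c^{2k},
\]
valid because $uc^2 = q^{-2}c^2 u$, matches the surviving trajectory exactly, so the HW bivector with seed $\alpha_{0,2r}$ is a scalar multiple of $\bfv_1^r$. Together with the prefactors $\bfv_3^m \bfv_4^n \bfv_5^p$, this yields $\cB^+ = \lr{\bfv_1, \bfv_3^{\pm 1}, \bfv_4, \bfv_5^{\pm 1}}$.

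The main obstacle is the combinatorial endgame: matching the recurrence's finite trajectories to the $q$-binomial expansion of $\bfv_1^r$ and ruling out exotic finitely-supported solutions not coming from such seeds. A secondary technical point is justifying the PBW basis for $\cB$ via the diamond lemma, which is routine since every defining relation is a pure quasi-commutation.
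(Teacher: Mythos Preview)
Your argument is correct, but it takes a longer route than the paper's. The key difference is your choice of auxiliary generator. You introduce $u = \bfv_1 + c^2$, which quasi-commutes cleanly with $c$ but is \emph{not} itself a highest-weight bivector; this forces the two-term recurrence and the subsequent matching with the $q$-binomial expansion of $\bfv_1^r$. The paper instead keeps $\bfv_1$ as a generator and eliminates $a$ via $(q-q^{-1})a = \bfv_1\bfv_3^{-1}c^{-1} + \bfv_3^{-1}\bfv_5 K c^{-1}$, obtaining the basis $\{\bfv_1^s\bfv_3^m\bfv_4^n\bfv_5^p K^k c^\epsilon : s,n\geq 0,\ \epsilon\in\{0,1\}\}$. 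Since $\bfv_1,\bfv_3,\bfv_4,\bfv_5$ are already killed by $\blactright E$, only the factor $K^k c^\epsilon$ contributes, and a two-line computation shows $K^k\blactright E = (q^{2k}-1)\bfv_3 K^{k+1}$ and $K^k c\blactright E = (q^{2k}-q^{-1})\bfv_3 K^{k+1}c$. The action is therefore \emph{diagonal} on this basis, and the vanishing forces $k=0$, $\epsilon=0$ immediately---no recurrence, no trajectory analysis, no $q$-binomial identity.

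What your approach buys is a self-contained PBW picture with uniform quasi-commutation relations, which could be useful if one wanted to push further into the structure of $\cB$ itself. What the paper's approach buys is that by choosing generators that are already highest-weight bivectors wherever possible, the computation of $\cB^+$ collapses to a triviality. The moral: when the target is the $E$-invariants, prefer generators already lying in the answer.
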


\begin{proof}
Because $(q-q^{-1})a=\bfv_1\bfv_3^{-1}c^{-1}+\bfv_3^{-1}\bfv_5Kc^{-1}$, we know that
\[
	\big\{ \bfv_1^s\bfv_3^m\bfv_4^n\bfv_5^pK^kc^\epsilon \mid\text{$s,n\geq 0$ and $\epsilon\in\{0,1\}$}\big\}
\]
is a basis of $\cA[\bfv_3^{-1},c^{-1}]$.

We already know that $E\blactleft v=0$ for all $v\in \cA[\bfv_3^{-1},c^{-1}]$.  We therefore must show that the solutions to $v\blactright E=0$ are those vectors $v$ belonging to the subspace spanned by $\{\bfv_1^s\bfv_3^m\bfv_4^n\bfv_5^p\mid \text{$s,n\geq 0$}\}$.  Suppose that
\[
	0=\big(\textstyle\sum \bfv_1^s\bfv_3^m\bfv_4^n\bfv_5^pK^k
		(\alpha_{s,m,n,p,k}
		+c\beta_{s,m,n,p,k})
		\big)\blactright E
\]
where the $\alpha$'s and $\beta$'s are coefficients. 
We will show that $\beta_{s,m,n,p,k}=0$ for all indices and $\alpha_{s,m,n,p,k}=0$ whenever $k\neq 0$.  Now
\[
	K^k \blactright E
	= (q^{2k}-1)EK^k
	= (q^{2k}-1)\bfv_3K^{k+1}
\]
and
\begin{equation*}
	\begin{split}
		K^kc \blactright E
		&=(K^k\blactright E)(c\blactright K)+K^k(c\blactright E)\\
		&=(q^{2k}-1)EK^kq^{-1}c
			+K^k(1-q^{-1})Ec\\
		&= (q^{2k-1}-q^{-1})\bfv_3K^{k+1}c
			+(1-q^{-1})K^k\bfv_3Kc\\
		&= (q^{2k-1}-q^{-1})\bfv_3K^{k+1}c
			+(q^{2k}-q^{2k-1})\bfv_3K^{k+1}c\\
		&= (q^{2k}-q^{-1})\bfv_3K^{k+1}c
	\end{split}
\end{equation*}
so
\begin{equation*}
	\begin{split}
		0
		&=\big(\textstyle\sum \bfv_1^s\bfv_3^m\bfv_4^n\bfv_5^pK^k
		(\alpha_{s,m,n,p,k}
		+c\beta_{s,m,n,p,k})
		\big)\blactright E\\
		&=\textstyle\sum \bfv_1^s\bfv_3^m\bfv_4^n\bfv_5^p\big(K^k
		(\alpha_{s,m,n,p,k}
		+c\beta_{s,m,n,p,k})
		\blactright E\big)\\
		&=\textstyle\sum \bfv_1^s\bfv_3^m\bfv_4^n\bfv_5^p\bfv_3K^{k+1}\big((q^{2k}-1)\alpha_{s,m,n,p,k}
		+c(q^{2k}-q^{-1})\beta_{s,m,n,p,k}\big)
	\end{split}
\end{equation*}
Thus if $\alpha_{s,m,n,p,k}\neq 0$ then $q^{2k}=1$, so $k=0$ since $q$ is not a root of unity. Also if $\beta_{s,m,n,p,k}\neq 0$ then $2k=-1$ which is impossible for $k\in\ZZ$.
\end{proof}

Now that we have a generating set for $^+\cA[\bfv_3^{-1},c^{-1}]^+$, we will step back to $^+\cA[\bfv_3^{-1}]^+$ by determining which vectors in $^+\cA[\bfv_3^{-1},c^{-1}]^+$ can be expressed without $c^{-1}$.

\begin{proposition}\label{thm:hw_localized_v3}
The algebra $^+\cA[\bfv_3^{-1}]^+$ is generated by $\{ \bfv_1,\bfv_3^{\pm 1},\bfv_4,\bfv_5,\bfv_6 \}$.
\end{proposition}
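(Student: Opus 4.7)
The forward inclusion $\lr{\bfv_1,\bfv_3^{\pm 1},\bfv_4,\bfv_5,\bfv_6}\subseteq{^+\cA[\bfv_3^{-1}]^+}$ is immediate: Examples \ref{ex:h11}, \ref{ex:h20}, \ref{ex:h02} exhibit each generator as a highest-weight bivector in $\cA[\bfv_3^{-1}]$, and products of highest-weight bivectors remain highest-weight by Lemma \ref{lem:hw_subalgebra}.

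The key computational observation, for the reverse inclusion, is the identity
\begin{equation*}
    \bfv_6 = \bfv_1^{\,2}\bfv_5^{-1}
\end{equation*}
in $\cA[\bfv_3^{-1},c^{-1}]$. I would verify this directly: substituting $E=\bfv_3 K$ into the definition of $\bfv_6$ and using $KE=q^2EK$ (so that $E^2K^{-1}=q^2\bfv_3^{\,2}K$), together with $c^2=K\bfv_5$, yields
\begin{equation*}
    \bfv_6 = q(q-q^{-1})^2\bfv_3^{\,2}a^2K-(q^2+1)(q-q^{-1})\bfv_3\,acK+K^2\bfv_5;
\end{equation*}
separately, expanding $\bfv_1^{\,2}$ using $(ac)^2=qa^2c^2$ and $c^2a=q^2ac^2$ and then right-multiplying by $\bfv_5^{-1}=Kc^{-2}$ produces the same expression. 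Because $\bfv_1$ and $\bfv_5$ commute (a short computation from Lemma \ref{lem:commutators} and the defining expression of $\bfv_1$), it follows that $\bfv_1^{\,s}\bfv_5^{\,p}=\bfv_1^{\,s-2|p|}\bfv_6^{\,|p|}$ whenever $p<0$ and $s\geq 2|p|$.

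Now take $v\in{^+\cA[\bfv_3^{-1}]^+}\subseteq{^+\cA[\bfv_3^{-1},c^{-1}]^+}$. Proposition \ref{prop:hw_bivectors_in_fully_localized_algebra} gives a unique expansion
\begin{equation*}
    v = \sum\alpha_{s,m,n,p}\,\bfv_1^{\,s}\bfv_3^{\,m}\bfv_4^{\,n}\bfv_5^{\,p},\qquad s,n\geq 0,\ m,p\in\ZZ.
\end{equation*}
I claim $\alpha_{s,m,n,p}=0$ whenever $s+2p<0$: expanding each basis monomial in the $\cA$-basis of $\cA[\bfv_3^{-1},c^{-1}]$, the term of smallest power of $c$ is a nonzero scalar multiple of $\bfv_3^{\,m+s}\bfv_4^{\,n}K^{-p}a^sc^{s+2p}$, coming from the $(q-q^{-1})^s(\bfv_3\,ac)^s$ piece of $\bfv_1^{\,s}$ and the $c^{2p}$ piece of $\bfv_5^{\,p}=K^{-p}c^{2p}$. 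These leading monomials are distinct across distinct $(s,m,n,p)$---the $a$- and $\bfv_3$-exponents in the leading term recover $s$ and $m+s$ respectively---and so cannot cancel in a sum. Hence $v\in\cA[\bfv_3^{-1}]$ (no negative $c$-powers) forces $s+2p\geq 0$ on the support of $v$. Since for $p\geq 0$ we have $\bfv_1^{\,s}\bfv_5^{\,p}\in\lr{\bfv_1,\bfv_5}$, and for $p<0$ with $s\geq 2|p|$ we have $\bfv_1^{\,s}\bfv_5^{\,p}=\bfv_1^{\,s-2|p|}\bfv_6^{\,|p|}$, every surviving basis monomial, hence $v$ itself, lies in $\lr{\bfv_1,\bfv_3^{\pm 1},\bfv_4,\bfv_5,\bfv_6}$.

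The main obstacle is the identity $\bfv_6=\bfv_1^{\,2}\bfv_5^{-1}$: it is not at all obvious from the expression for $\bfv_6$ in Example \ref{ex:h02} (which writes $\bfv_6$ via $\bfv_2$ and $\bfv_4$), and requires careful tracking of the quasi-commutation $KE=q^2EK$ to obtain $E^2K^{-1}=q^2\bfv_3^{\,2}K$ rather than the tempting $q^{-2}\bfv_3^{\,2}K$. A secondary point of care is checking that the minimum-$c$-power monomial of each basis element is unique to its quadruple $(s,m,n,p)$; once that is verified, the leading-term argument is essentially routine.
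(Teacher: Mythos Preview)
Your proof is correct and follows the same route as the paper: both reduce to Proposition~\ref{prop:hw_bivectors_in_fully_localized_algebra}, argue via $c$-degree that any monomial $\bfv_1^{\,s}\bfv_3^{\,m}\bfv_4^{\,n}\bfv_5^{\,p}$ appearing in an element of $\cA[\bfv_3^{-1}]$ must satisfy $s+2p\geq 0$, and then invoke the identity $\bfv_1^{\,2}\bfv_5^{-1}=\bfv_6$ (stated in the paper as $\bfv_5\bfv_6=\bfv_1^{\,2}$ in the proof of Lemma~\ref{lem:divisibility}). Your leading-term argument in the basis $\{\bfv_3^{\,m}\bfv_4^{\,n}K^k a^\ell c^r\}$ of $\cA[\bfv_3^{-1},c^{-1}]$ is in fact more careful than the paper's terse treatment, since it rules out cancellation among distinct monomials---a point the paper passes over silently.
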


\begin{proof}
We wish to find $\big\langle \bfv_1,\bfv_3^{\pm 1},\bfv_4,\bfv_5^{\pm 1} \big\rangle\cap A[\bfv_3^{-1}]$.  We notice that $c$ does not occur in $\bfv_3$ or $\bfv_4$, but only in $\bfv_1$ and $\bfv_5$.  Therefore, if $\bfv_1^s\bfv_3^m\bfv_4^n\bfv_5^{-1} \in A[\bfv_3^{-1}]$ then $s>0$.  In fact, it must be that $s\geq 2$ so that $c^2$ is a factor in $\bfv_1^s\bfv_3^m\bfv_4^n$.  Since $s\geq 2$, we may substitute $\bfv_1^2\bfv_5^{-1}=\bfv_6$.
\end{proof}

Our final task is to step back to $\HCpp$ by determining which vectors in $^+\cA[\bfv_3^{-1}]^+$ can be expressed without $\bfv_3^{-1}$.  This will be much more challenging than removing $c^{-1}$ (Proposition \ref{thm:hw_localized_v3}), because $\bfv_3^{-1}$ is what gave the algebra quasi-commuting generators.

\begin{definition}
Recall that $\{E^mF^nK^{\pm p}a^\ell c^k\}$ is a basis for $H\otimes \Cplus$ (See Lemma~\ref{lem:cplus}).
Define $\lambda$ to be the projection of $H\otimes\Cplus$ onto the subalgebra $\lr{F^nK^{\pm p}a^\ell c^k}$ given by 
\begin{equation*}
	\lambda\big(E^mF^nK^{\pm p}a^\ell c^k\big)=
	\begin{cases}
		F^nK^{\pm p}a^\ell c^k & \text{if $m=0$,}\\
		0 & \text{if $m>0$.}
	\end{cases}
\end{equation*}
\end{definition}

\begin{remark}
One may view $\lambda$ as the quotient by the right ideal generated by $E$.
\end{remark}

\begin{remark}
If $B$ is the algebra $\lr{F,K^{\pm 1},a,c}$, 
then $\lambda\colon H\otimes\Cplus\to B$ is a morphism of right $B$-modules.
\end{remark}

In particular,
\begin{subequations}\label{residues}
\begin{align}
	\lambda(\bfv_1) &= -c^2\\
	\lambda(\bfv_3) &= 0\\
	\lambda(\bfv_4) &= \tfrac{q^{-1}}{(q-q^{-1})^2}K+\tfrac{q}{(q-q^{-1})^2}K^{-1}\\
	\lambda(\bfv_5) &= K^{-1}c^2\\
	\lambda(\bfv_6) &= Kc^2
\end{align}
\end{subequations}
Now $\lambda(\bfv_2^2)\neq \lambda(\bfv_2)\lambda(\bfv_2)$ because of the complicated relations between $E$ and $F$, so $\lambda$ is not a morphism of algebras. But if we avoid $\bfv_2$ then we see some useful structure:

\begin{proposition}\label{prop:v_homomorphism}
Let $R$ be the algebra generated by $\{ \bfv_1,\bfv_3,\bfv_4,\bfv_5,\bfv_6 \}$.  Let $M$ be the left $R$-module with $R$-basis $\{\bfv_2^n\}$ where the action is $r\actleft r'\bfv_2^n=(rr')\bfv_2^n$.  Then the restriction of $\lambda$ to $M$ is an $R$-module homomorphism, where the action of $R$ on $\lambda(M)$ is $r\actleft v=\lambda(r)v$.
\end{proposition}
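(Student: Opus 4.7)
The plan is to prove $\lambda(rx)=\lambda(r)\lambda(x)$ for every $r\in R$ and $x\in M$ by induction on the length of $r$ as a monomial in the generators $\bfv_1,\bfv_3,\bfv_4,\bfv_5,\bfv_6$, then extending by linearity. Throughout, I work with the PBW basis $\{E^mF^nK^{\pm p}a^\ell c^k\}$ of $H\otimes\Cplus$ in which $E$ is written leftmost, so that $\lambda$ is the projection onto the subspace where $m=0$. Two elementary facts drive the argument. First, $\lambda(Ey)=0$ for every $y\in H\otimes\Cplus$, because left-multiplication by $E$ sends each PBW monomial $E^mF^nK^{\pm p}a^\ell c^k$ to $E^{m+1}F^nK^{\pm p}a^\ell c^k$, which has $E$-degree at least $1$. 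Second, $\lambda(zy)=z\lambda(y)$ for every $y\in H\otimes\Cplus$ and every monomial $z$ in $\{K^{\pm 1},a,c\}$, because $K$ quasi-commutes with $E$ and $F$ without creating new $E$'s, while $a,c\in C$ commute with $H$ and merely reshuffle the $C$-tail of each PBW monomial; in every case the $E$-degree of each term is preserved.

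For the base case (length $1$), use \eqref{eqn:v_1v_2v_3v_4}, \eqref{eqn:v_5}, \eqref{eqn:v_6} and the expansion $\Delta=EF+\tfrac{q^{-1}K+qK^{-1}}{(q-q^{-1})^2}$ to decompose each generator as $\bfv_i=\bfv_i^++\lambda(\bfv_i)$, where $\bfv_i^+\in E\cdot(H\otimes\Cplus)$ and the residues $\lambda(\bfv_1)=-c^2$, $\lambda(\bfv_3)=0$, $\lambda(\bfv_4)=\tfrac{q^{-1}K+qK^{-1}}{(q-q^{-1})^2}$, $\lambda(\bfv_5)=K^{-1}c^2$, $\lambda(\bfv_6)=Kc^2$ involve only $K^{\pm 1}$, $a$, and $c$. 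The first fact yields $\lambda(\bfv_i^+y)=0$ and the second yields $\lambda(\lambda(\bfv_i)y)=\lambda(\bfv_i)\lambda(y)$, so $\lambda(\bfv_iy)=\lambda(\bfv_i)\lambda(y)$ for every $y\in H\otimes\Cplus$, in particular for every $y\in M$.

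For the induction step, write $r=\bfv_ir'$ with $r'$ of shorter length; since $r'\in R$ and $M$ is an $R$-module, $r'x\in M\subset H\otimes\Cplus$, so the base case applied with $y=r'x$ gives $\lambda(rx)=\lambda(\bfv_i)\lambda(r'x)$. The induction hypothesis simplifies $\lambda(r'x)=\lambda(r')\lambda(x)$, and one more application of the base case with $y=r'\in R\subset M$ gives $\lambda(\bfv_ir')=\lambda(\bfv_i)\lambda(r')=\lambda(r)$, so $\lambda(rx)=\lambda(r)\lambda(x)$. The delicate point — and the reason $\bfv_2$ is excluded from the generating list — is the $F$-freeness of every $\lambda(\bfv_i)$: if some $\lambda(\bfv_i)$ contained an $F$, the second elementary fact would fail, since $F\cdot E=EF-\tfrac{K-K^{-1}}{q-q^{-1}}$ introduces an $E$-free correction term that would destroy the compatibility of $\lambda$ with multiplication.
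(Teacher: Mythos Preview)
Your proof is correct and follows essentially the same approach as the paper's. The paper's argument is a terse one-liner: since $K^{\pm 1}$, $a$, and $c$ each quasi-commute with $E$, and since $F$ does not appear in $\bfv_i$ for $i\in\{1,3,4,5,6\}$, one has $\lambda(\bfv_i\bfv_j)=\lambda(\bfv_i)\lambda(\bfv_j)$ for $i,j\in\{1,3,4,5,6\}$ and $\lambda(\bfv_i\bfv_2^n)=\lambda(\bfv_i)\lambda(\bfv_2^n)$ for $i\neq 2$. Your decomposition $\bfv_i=\bfv_i^{+}+\lambda(\bfv_i)$ together with the two elementary facts is exactly what makes that sentence rigorous, and your induction simply extends the observation from generators to all of $R$.
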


\begin{proof}
Because $K^{\pm 1}$, $a$, and $c$ each quasi-commute with $E$, we conclude that $\lambda(\bfv_i\bfv_j)=\lambda(\bfv_i)\lambda(\bfv_j)$ for $i,j\in\{1,3,4,5,6\}$ since $F$ does not appear in any of those vectors.  Thus $\lambda$, when restricted to $R$, is a ring homomorphism.  By the same argument, $\lambda(\bfv_i\bfv_2^n)=\lambda(\bfv_i)\lambda(\bfv_2^n)$ when $i\neq 2$.
\end{proof}

\begin{lemma}\label{lem:divisibility}
Let $u\in \lr{ \bfv_1,\bfv_2,\bfv_4,\bfv_5,\bfv_6 }\subset H\otimes\Cplus$.  Then $u$ can be written as a finite sum $u=\sum \bfv_3^iu_i$ where each $u_i$ belongs to the span of
\[
	S=\{\bfv_1^m\bfv_5^p\bfv_2^s\}
	\cup
	\{\bfv_1^m\bfv_6^r\bfv_2^s\}
	\cup
	\{\bfv_4^n\bfv_5^p\bfv_2^s\}
	\cup
	\{\bfv_4^n\bfv_6^p\bfv_2^s\}.
\]
\end{lemma}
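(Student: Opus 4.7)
The plan is to show that the $\CC$-subspace $V := \sum_{i\geq 0}\bfv_3^{\,i}\cdot\Span(S) \subset \Hpp$ is closed under right multiplication by each of $\bfv_1, \bfv_2, \bfv_4, \bfv_5, \bfv_6$; since each generator already lies in $S\subset V$, an induction on word length in these generators then forces $A:=\lr{\bfv_1,\bfv_2,\bfv_4,\bfv_5,\bfv_6}$ into $V$, which is exactly the desired conclusion.

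For the closure step, it suffices to treat $s\cdot\bfv_j$ for $s\in S$ and $j\in\{1,2,4,5,6\}$. Two easy cases: $\bfv_j=\bfv_2$ simply increments the tail $\bfv_2^{\,s}$ to $\bfv_2^{\,s+1}$ and keeps the product in $S$; and $\bfv_j=\bfv_4$ is central by Lemma \ref{lem:commutators}, so the product is already in $S$ unless $s$ carries a factor $\bfv_1^m$, in which case the identity
\[
	\bfv_1\bfv_4 \;=\; \tfrac{q^2+1}{(q-q^{-1})(1-q^2)}\,\bfv_5 \;+\; \tfrac{1}{1-q^2}\,[\bfv_2,\bfv_3]_{q^2}
\]
(the first relation from Examples \ref{ex:h20}--\ref{ex:h02}) collapses the mixed $\bfv_1\bfv_4$ pattern into a single $\bfv_5$ plus a correction visibly of the form $\bfv_3\cdot(\cdots)$, absorbed into a higher power of $\bfv_3$. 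The case $\bfv_j=\bfv_1$ is symmetric. For $\bfv_j\in\{\bfv_5,\bfv_6\}$ one first quasi-commutes $\bfv_j$ past the $\bfv_2^{\,s}$-tail of $s$, and then collapses any resulting $\bfv_5\bfv_6$ pattern using
\[
	\bfv_5 \;\equiv\; \tfrac{q(q^2+1)}{(q-q^{-1})(q^2-q^{-2})}\,\bfv_6 \pmod{\bfv_3\cdot A},
\]
obtained by subtracting the two Examples \ref{ex:h20}--\ref{ex:h02} identities.

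The main obstacle is establishing the approximate quasi-commutation identities $[\bfv_i,\bfv_j]\in \bfv_3\cdot A$ for $i,j\in\{1,2,5,6\}$, and in particular the explicit relations of the shape $\bfv_j\bfv_2 = q^{\pm 2}\bfv_2\bfv_j + (\text{term in }\bfv_3\cdot A)$ for $j\in\{1,5,6\}$ needed to sweep the $\bfv_2$'s to the right. These are cleanest to verify by passing to the localized algebra $\cA[\bfv_3^{-1}]$, where Lemma \ref{lem:commutators} supplies clean $q$-commutations among $\bfv_3,\bfv_4,\bfv_5,K,a,c$ and Lemma \ref{lem:noF} replaces $F$ by a polynomial in $\bfv_3^{-1}$: one expands $\bfv_1,\bfv_2,\bfv_6$ in these coordinates, computes each pairwise commutator directly, and clears a power of $\bfv_3$ to descend to an identity in $\Hpp$. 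With those quasi-commutations secured, the closure verification and the outer induction complete the proof.
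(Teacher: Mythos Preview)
Your outline follows the paper's strategy---show that the span of $\{\bfv_3^{\,i}s:s\in S\}$ is closed under right multiplication by the generators---but the two key identities you invoke are not the ones that make this closure go through, and as written the argument has a real gap.

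First, from the single relation in Example~\ref{ex:h20} you extract
\[
\bfv_1\bfv_4=\tfrac{q^2+1}{(q-q^{-1})(1-q^2)}\,\bfv_5+\tfrac{1}{1-q^2}\,[\bfv_2,\bfv_3]_{q^2},
\]
and assert that the commutator term is ``visibly of the form $\bfv_3\cdot(\cdots)$.''  It is not: $[\bfv_2,\bfv_3]_{q^2}=\bfv_2\bfv_3-q^2\bfv_3\bfv_2$, and the summand $\bfv_2\bfv_3$ has $\bfv_3$ on the \emph{right}.  Since $\bfv_2$ and $\bfv_3$ do not quasi-commute, you cannot move that $\bfv_3$ to the left.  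The paper avoids this by combining \emph{both} relations from Examples~\ref{ex:h20} and~\ref{ex:h02} to eliminate $\bfv_2\bfv_3$ entirely, obtaining
\[
\bfv_1\bfv_4=\bfv_3\bfv_2-\tfrac{q}{(q-q^{-1})^2}\bfv_5-\tfrac{q^{-1}}{(q-q^{-1})^2}\bfv_6,
\]
whose correction term $\bfv_3\bfv_2$ \emph{is} of the form $\bfv_3\cdot(\text{element of }S)$.

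Second, the same problem afflicts your treatment of $\bfv_5\bfv_6$: subtracting the two Example relations gives a formula for $\bfv_5$ in terms of $\bfv_6$ plus a multiple of $[\bfv_3,\bfv_2]=\bfv_3\bfv_2-\bfv_2\bfv_3$, again carrying the bad $\bfv_2\bfv_3$ term.  The paper instead uses the exact identity $\bfv_5\bfv_6=\bfv_1^{\,2}$ (a direct computation from the formulas $\bfv_5=K^{-1}c^2$, $\bfv_6=q^{-1}(q-q^{-1})^2E^2K^{-1}a^2-q(q^2-q^{-2})Eac+Kc^2$, and $\bfv_1=(q-q^{-1})EK^{-1}ac-c^2$), which needs no $\bfv_3$-correction at all.

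Finally, you describe the commutation of $\bfv_2$ with $\bfv_1,\bfv_5,\bfv_6$ as only approximate modulo $\bfv_3\cdot A$; in fact these pairs quasi-commute exactly (as do $\bfv_3$ with $\bfv_1,\bfv_4,\bfv_5,\bfv_6$, and $\bfv_1,\bfv_4,\bfv_5,\bfv_6$ pairwise), which is what allows the $\bfv_2$'s to be swept cleanly to the right and the $\bfv_3$'s to the left.  With the two corrected identities above and exact quasi-commutation, your induction-on-word-length framework then matches the paper's argument.
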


\begin{proof}
Recall that $\lr{\bfv_1,\bfv_4,\bfv_5,\bfv_6}$ is commutative, and recall that $\bfv_2$ and $\bfv_3$ each quasi-commute with $\bfv_1$, $\bfv_4$, $\bfv_5$, and $\bfv_6$, but not with each other. 
We simply replace all occurrences of $\bfv_5\bfv_6$ and $\bfv_1\bfv_4$ using the identities
\[
	\bfv_5\bfv_6=\bfv_1^2\qquad\text{and}\qquad 
	\bfv_1\bfv_4=\bfv_3\bfv_2-\tfrac{q}{(q-q^{-1})^2}\bfv_5-\tfrac{q^{-1}}{(q-q^{-1})^2}\bfv_6.
\]
This introduces $\bfv_3$'s, but only such that they appear to the left of the $\bfv_2$'s.  For example,
\begin{equation*}
	\begin{split}
		\bfv_1^2\bfv_4^2= \bfv_1(\bfv_1\bfv_4)\bfv_4
		&=\bfv_1\big(\bfv_3\bfv_2-\tfrac{q}{(q-q^{-1})^2}\bfv_5-\tfrac{q^{-1}}{(q-q^{-1})^2}\bfv_6\big)\bfv_4\\
		&=\bfv_3(\bfv_1\bfv_4)\bfv_2-\tfrac{q}{(q-q^{-1})^2}(\bfv_1\bfv_4)\bfv_5-\tfrac{q^{-1}}{(q-q^{-1})^2}(\bfv_1\bfv_4)\bfv_6
	\end{split}
\end{equation*} and so on. One must be careful not to group this as $\bfv_1^2\bfv_4^2=(\bfv_1\bfv_4)(\bfv_1\bfv_4)$ and replace both at once.
\end{proof}

\begin{proposition}\label{prop:v_injective}
The set $S$ from Lemma \ref{lem:divisibility} is linearly independent, and the restriction of $\lambda$ to the span of $S$ is injective.
\end{proposition}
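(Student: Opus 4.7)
The plan is to prove both claims at once by showing that $\lambda|_{\mathrm{span}(S)}$ is injective, since the linear independence of $S$ will follow automatically. My strategy is to filter the codomain $B$ by $F$-degree in its PBW basis $\{F^jK^\ell a^pc^\rho\}$, to exhibit for each $v\in S$ a nonzero canonical ``leading $F$-coefficient,'' and to check that the collection of these leading coefficients is linearly independent. The main obstacle I anticipate is in the first step below: because the $E$-free part of $\bfv_2$ has two non-commuting summands, the expansion of $\bfv_2^s$ is not a clean binomial, and one must argue carefully about which summands survive the projection~$\lambda$.

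First I would compute the top $F$-term of $\lambda(\bfv_2^s)$. Decompose $\bfv_2 = A + B_1 + B_2$, where $A = (q-q^{-1})EFac + Ea^2$ carries all the $E$'s, $B_1 = -\tfrac{q(K-K^{-1})}{q-q^{-1}}ac$ has no $E$ or $F$, and $B_2 = -q^{-2}FKc^2$ is the unique $F$-bearing $E$-free summand. Expanding $\bfv_2^s$ as an ordered product, each factor of $A$ introduces an extra $E$ that, to survive $\lambda$, must be annihilated against some later $F$ via $FE = EF - \tfrac{K-K^{-1}}{q-q^{-1}}$, and each such contraction reduces the net $F$-degree by one. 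Hence the only way to produce an $F^s$-term in $\lambda(\bfv_2^s)$ is to pick $B_2$ at every position, giving $(-q^{-2}FKc^2)^s = (-1)^sq^{-s^2-s}F^sK^sc^{2s}$ after applying $KF = q^{-2}FK$ repeatedly, so
\[
\lambda(\bfv_2^s) = (-1)^sq^{-s^2-s}F^sK^sc^{2s} + (\text{terms of $F$-degree $<s$}).
\]

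Next, for $r\in R:=\lr{\bfv_1,\bfv_4,\bfv_5,\bfv_6}$, Proposition~\ref{prop:v_homomorphism} gives $\lambda(r\bfv_2^s) = \lambda(r)\lambda(\bfv_2^s)$, and by~\eqref{residues} each $\lambda(\bfv_i)$ with $i\in\{1,4,5,6\}$ lies in the commutative subalgebra $k[K^{\pm 1},c]\subset B$ and contains no $F$ or $a$. Pushing $F^s$ past $\lambda(r)$ via $K^jF = q^{-2j}FK^j$, the coefficient of $F^s$ in $\lambda(r\bfv_2^s)$ equals $(-1)^sq^{-s^2-s}\phi^s(\lambda(r))K^sc^{2s}$, where $\phi$ is the algebra automorphism of $k[K^{\pm 1},c]$ sending $K\mapsto q^{-2}K$ and fixing $c$. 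Given a finite vanishing combination $\sum_\alpha\gamma_\alpha r_\alpha\bfv_2^{s_\alpha} = 0$ with $r_\alpha\bfv_2^{s_\alpha}\in S$ and $\gamma_\alpha\in k$, taking $s_{\max} = \max\{s_\alpha : \gamma_\alpha\neq 0\}$ and extracting the $F^{s_{\max}}$-coefficient --- using that $\phi^{s_{\max}}$ is an automorphism and $K^{s_{\max}}c^{2s_{\max}}$ is a nonzerodivisor --- reduces the problem to $\sum_{\alpha : s_\alpha = s_{\max}}\gamma_\alpha\lambda(r_\alpha) = 0$; iterating downward on $s$ reduces everything to proving injectivity of $\lambda$ on the span of $\{\bfv_1^m\bfv_5^p,\bfv_1^m\bfv_6^r,\bfv_4^n\bfv_5^p,\bfv_4^n\bfv_6^r\}$.

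For this last step I would work separately at each $c$-degree. By~\eqref{residues}, $\lambda(\bfv_1^m\bfv_5^p) = (-1)^mK^{-p}c^{2(m+p)}$ and $\lambda(\bfv_1^m\bfv_6^r) = (-1)^mK^rc^{2(m+r)}$ are single Laurent monomials, whereas $\lambda(\bfv_4^n\bfv_5^p)$ is a Laurent polynomial with $K$-exponents in $\{-p-n,-p-n+2,\dots,-p+n\}$ times $c^{2p}$, and $\lambda(\bfv_4^n\bfv_6^r)$ has $K$-exponents $\{r-n,r-n+2,\dots,r+n\}$ times $c^{2r}$. At $c$-degree $2d$, Families 1 and 2 together contribute the $2d+1$ Laurent monomials $K^{-d},\dots,K^d$ (the overlap at $K^0$ coming from the single element $\bfv_1^d$); for each $n\geq 1$, Family 3 contributes a Laurent polynomial whose lowest $K$-exponent $-d-n$ is strictly smaller than any previously encountered, and Family 4 contributes one whose highest $K$-exponent $d+n$ is strictly larger. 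An induction on $n$ then yields linear independence of this whole collection in $k[K^{\pm 1}]$, completing the proof.
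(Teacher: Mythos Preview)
Your argument is correct and follows essentially the same route as the paper: both proofs filter by $F$-degree to isolate the contribution from the maximal power of $\bfv_2$, use Proposition~\ref{prop:v_homomorphism} to factor $\lambda(r\,\bfv_2^{s})=\lambda(r)\lambda(\bfv_2^{s})$, and then reduce to showing that the images $\lambda(u)$ for $u\in S':=\{\bfv_1^m\bfv_5^p,\bfv_1^m\bfv_6^r,\bfv_4^n\bfv_5^p,\bfv_4^n\bfv_6^r\}$ are linearly independent, which is settled by an extremal $K$-exponent argument at each fixed $c$-degree. Two small points to tidy up: (i) since your stated goal is injectivity of $\lambda$, your ``vanishing combination'' should read $\lambda\!\big(\sum_\alpha\gamma_\alpha r_\alpha\bfv_2^{s_\alpha}\big)=0$ rather than $\sum_\alpha\gamma_\alpha r_\alpha\bfv_2^{s_\alpha}=0$; and (ii) in the last step, at $c$-degree $0$ the elements $\bfv_4^n\bfv_5^0$ and $\bfv_4^n\bfv_6^0$ coincide, so your ``strictly larger'' claim for Family~4 versus the just-added Family~3 element fails literally but is moot --- you should note this overlap just as you did for $\bfv_1^d$ in Families~1 and~2.
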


Before we prove this proposition, let us consider how it will be used to prove Theorem \ref{thm:v1v2v3v4generate_hw}. By Proposition \ref{thm:hw_localized_v3} and the quasi-commutativity of the vectors $\bfv_1$, $\bfv_3$, $\bfv_4$, $\bfv_5$, and $\bfv_6$, we may write any highest-weight bivector $w\in{^+}A[\bfv_3^{-1}]^+\cap (H\otimes C)$ as
\[
	w=u_0+\sum_{j=1}^m\bfv_3^{-i}u_j,\quad\text{$u_0\in\lr{ \bfv_1,\bfv_3,\bfv_4,\bfv_5,\bfv_6 }$,\quad$u_1,\dotsc,u_m\in\lr{ \bfv_1,\bfv_4,\bfv_5,\bfv_6 }$.}
\]
Now we apply Lemma \ref{lem:divisibility} to $u_1,\dotsc,u_m$ and cancel $\bfv_3^{-1}\bfv_3$ in the above sum wherever possible.  Regrouping terms, and recalling $\bfv_5,\bfv_6\in\lr{\bfv_1,\bfv_2,\bfv_3,\bfv_4}$, we get a new sum
\[
	w=y_0+\sum_{i=1}^n\bfv_3^{-i}y_i,\quad\text{$y_0\in\lr{ \bfv_1,\bfv_2,\bfv_3,\bfv_4 }$,\quad$y_1,\dotsc,y_n\in\Span(S)$.}
\]
Now $\bfv_3^{n-1}w=z+\bfv_3^{-1}y_n$ where $z\in\lr{ \bfv_1,\bfv_2,\bfv_3,\bfv_4 }$, so $y_n=\bfv_3^nw-\bfv_3z$.  Then $\lambda(y_n)=\lambda(\bfv_3^nw-\bfv_3z)=0$.  Since $y_n\in\Span(S)$, it follows from Proposition \ref{prop:v_injective} that $y_n=0$.  Similarly $y_{n-1}=y_{n-2}=\dotsb=y_1=0$.  Thus $w=y_0$, so $w\in\lr{ \bfv_1,\bfv_2,\bfv_3,\bfv_4 }$.

Not only does this show that $\HCpp=\lr{ \bfv_1,\bfv_2,\bfv_3,\bfv_4 }$, but Proposition \ref{prop:v_injective} also implies that
\begin{align*}
	\{\bfv_3^\ell \bfv_1^m \bfv_5^p \bfv_2^s \}
	\cup\{\bfv_3^\ell \bfv_1^m \bfv_6^r \bfv_2^s \}
	\cup\{\bfv_3^\ell \bfv_4^n \bfv_5^p \bfv_2^s \}
	\cup\{\bfv_3^\ell \bfv_4^n \bfv_6^r \bfv_2^s \}
\end{align*}
is linearly independent, proving Theorem \ref{thm:v1v2v3v4generate_hw}.  It remains to prove Proposition \ref{prop:v_injective}.

\begin{proof}
Let $u$ belong to the span of $S$, and suppose that $\lambda(u)=0$.  We may write $u=\sum_{i\in\Omega} \alpha_iu_i\bfv_2^{s_i}$ where each coefficient $\alpha_i$ is nonzero, each $u_i$ is a vector belonging to the set
\[
	S'=\{\bfv_1^m\bfv_5^p\}
	\cup
	\{\bfv_1^m\bfv_6^r\}
	\cup
	\{\bfv_4^n\bfv_5^p\mid n>0\}
	\cup
	\{\bfv_4^n\bfv_6^r\mid n>0\},
\]
and $u_i\bfv_2^{s_i}= u_j\bfv_2^{s_j}$ only if $i=j$. We will show that the index set $\Omega$ is empty, which will prove both parts of the proposition at once.

Suppose $\Omega\neq\emptyset$.  Let $s=\max\{s_i\}_{i\in\Omega}$ and $\Psi=\{i\in\Omega\mid s_i=s\}$.  (We will actually show that $\Psi$ is empty, which is a contradiction, implying $\Omega=\emptyset$.)  Recall that 
\begin{align*}
	\bfv_2&=(q-q^{-1})\Delta ac-\tfrac{q+q^{-1}}{q-q^{-1}}Kac-q^{-2}FKc^2+Ea^2.
\end{align*}
Recall also that $\{E^i F^j \Delta^k K^{\pm\ell} a^p c^r b^m d^n\mid ij=pn=0\}$ is a basis for $H\otimes C$, and let $\pi_s$ be the projection onto the subspace where $j=s$.  Because $F$ does not appear in any $u_i$ and because $s_i<s$ for all $i\in\Omega\setminus\Psi$, we have 
\begin{equation*}
	\pi_s(u)
	=\pi_s\big(\textstyle\sum_{i\in\Omega} \alpha_iu_i\bfv_2^{s_i}\big)
	=\pi_s\big(\textstyle\sum_{i\in\Psi}\alpha_i u_i \bfv_2^s\big)
	=\textstyle\sum_{i\in\Psi}\alpha_i u_i (-q^{-2}FKc^2)^s.
\end{equation*}
By Proposition \ref{prop:v_homomorphism}, 
\begin{align*}
		\pi_s\big(\lambda(u)\big)
		&=\pi_s\big(\textstyle\sum_{i\in\Omega}\alpha_i\lambda(u_i)\lambda(\bfv_2^{s_i})\big)
		=\textstyle\sum_{i\in\Psi}\alpha_i\lambda(u_i)(-q^{-2}FKc^2)^s.
\end{align*}
Because $\lambda(u)=0$, it follows that
\begin{align}\label{eqn:simplified_linear_combination}\textstyle\sum_{i\in\Psi}\alpha_i\lambda(u_i)=0.\end{align}
We will show that the vectors $\lambda(u_i)$ are linearly independent, so $\Psi=\emptyset$.

Because $\{u_i\}\subset S'$, we apply \eqref{residues} to get
\begin{subequations}
\begin{align}
	\lambda(\bfv_1^m\bfv_5^p) &= (-1)^mK^{-p}c^{2(m+p)}\label{proj_v1v5}\\
	\lambda(\bfv_1^m\bfv_6^r) &= (-1)^mK^{r}c^{2(m+r)}\label{proj_v1v6}\\
	\lambda(\bfv_4^n\bfv_5^p) &=
		\sum_{j=0}^n\binom{n}{j}\frac{q^{n-2j}}{(q-q^{-1})^{2n}}K^{-(n-2j)-p}c^{2p}\label{proj_v4v5}\\
	\lambda(\bfv_4^n\bfv_6^r) &=
		\sum_{j=0}^n\binom{n}{j}\frac{q^{n-2j}}{(q-q^{-1})^{2n}}K^{-(n-2j)+r}c^{2r}\label{proj_v4v6}
\end{align}
\end{subequations}
Define \(N=\max\big\{n\;\big\vert\;\text{$\bfv_4^n\bfv_5^p\in\{u_i\}_{i\in\Psi}$ for some $p$ or $\bfv_4^n\bfv_6^r\in\{u_i\}_{i\in\Psi}$ for some $r$}\big\}\) and suppose that $N>0$.

If $u_i=\bfv_4^N\bfv_5^p$ for some $i\in\Psi$, then \eqref{proj_v4v5} shows that one of the summands ($j=0$) of $\lambda(u_i)$ is $q^N(q-q^{-1})^{-2n}K^{-(N+p)}c^{2p}$.  Now \eqref{eqn:simplified_linear_combination} shows that $\lambda(u_i)$ has linear dependence with the other terms, so some $\lambda(u_{i'})$ must include a nonzero multiple of $K^{-(N+p)}c^{2p}$.  We note that $2p<2(N+p)$, and neither \eqref{proj_v1v5} nor \eqref{proj_v1v6} can produce a term whose powers of $c$ and $K$ have this property, and \eqref{proj_v4v5} cannot either because $n\leq N$.  Now \eqref{proj_v4v6} shows that $\lambda(\bfv_4^n\bfv_6^r)$ can produce such a term, but only if $r=p$ and $n=N+2j+2p$ for some $j\geq 0$.  Because $N$ was chosen to be maximal, we would need $j=p=0$. But then $r=p=0$ and $n=N$, so $\bfv_4^N\bfv_6^0=\bfv_4^N\bfv_5^0$ is just the original $u_i$.  Therefore $\lambda(u_i)$ is linearly independent of the other possible summands, so \eqref{eqn:simplified_linear_combination} implies we cannot have $u_i=\bfv_4^N\bfv_5^p$ for any $i\in\Psi$.

Similarly, if $u_i=\bfv_4^N\bfv_6^r$ for some $i\in\Psi$, then \eqref{proj_v4v6} shows that one of the summands ($j=N$) of $\lambda(u_i)$ is $q^{-n}(q-q^{-1})^{-2n}K^{n+r}c^{2r}$.  Such a term could be produced only by \eqref{proj_v4v5} when $p=r$ and $2j-n=N+2r$ for some $j\leq n$, but because $N$ is maximal we would need $j=n=N$ and $r=p=0$, in which case $\bfv_4^N\bfv_5^0=\bfv_4^N\bfv_6^0$ is just the original $u_i$.  Therefore for \eqref{eqn:simplified_linear_combination} to hold, we cannot have $u_i=\bfv_4^N\bfv_6^r$ for any $i\in\Psi$.

We have shown that $N$ must be zero, so $\{u_i\}_{i\in\Psi}\subset\{\bfv_1^m\bfv_5^p\}\cup\{\bfv_1^{m'}\bfv_6^r\}$. But the powers of $K$ and $c$ in \eqref{proj_v1v5} and \eqref{proj_v1v6} show that $\lambda(\bfv_1^m\bfv_5^p)$ and $\lambda(\bfv_1^{m'}\bfv_6^r)$ are linearly dependent only when $p=r=0$ and $m=m'$.  Therefore, \eqref{eqn:simplified_linear_combination} implies $\Psi=\emptyset$.  We conclude that $\Omega=\emptyset$.
\end{proof}

\subsection{Proof of Corollary \ref{cor:hilbert_series}} We count the vectors of degree $n$ in the basis given in Theorem \ref{thm:v1v2v3v4generate_hw} where the vectors $\bfv_5$ and $\bfv_6$ have degree 2.  There are $\binom{n+2}{2}$ vectors of degree $n$ in $\{\bfv_3^\ell \bfv_1^m \bfv_2^s\}$, and $\binom{n+2}{2}$ vectors of degree $n$ in $\{\bfv_3^\ell \bfv_4^m \bfv_2^s\}$, and $\binom{n+1}{1}$ vectors of degree $n$ in the intersection $\{\bfv_3^\ell \bfv_2^s\}$.  This gives
\[ \tbinom{n+2}{2}+\tbinom{n+2}{2}-\tbinom{n+1}{1}=(n+1)^2 \]
vectors of degree $n$ in $\{\bfv_3^\ell \bfv_1^m \bfv_2^s\}\cup\{\bfv_3^\ell \bfv_4^m \bfv_2^s\}$.

Now we consider $\{\bfv_3^\ell \bfv_1^m \bfv_5^p \bfv_2^s\}\cup\{\bfv_3^\ell \bfv_4^m \bfv_5^p \bfv_2^s\}$.  There are $((n-2)+1)^2=(n-1)^2$ vectors of degree $n$ with $p=1$ because $\deg(\bfv_5)=2$, and there are $(n-3)^2$ vectors of degree $n$ with $p=2$, and so on.

The same happens when we introduce $\bfv_6$.  Therefore the number of vectors of degree $n$ in the full basis is
\begin{align}\label{eqn:counting_hw_bivectors}
	(n+1)^2+2(n-1)^2+2(n-3)^2+2(n-5)^2+\dotsb+
	\begin{cases}
		2(1)^2&\text{if $n$ is even,}\\
		2(2)^2&\text{if $n$ is odd.}
	\end{cases}
\end{align}
For even $n$, this is
\begin{equation*}
	\begin{split}
	(n+1)^2+2\sum_{i=1}^{\frac{n}{2}}(2i-1)^2
	&= (n+1)^2
		+8\sum_{i=1}^{\frac{n}{2}}i^2
		-8\sum_{i=1}^{\frac{n}{2}}i
		+2\sum_{i=1}^{\frac{n}{2}}1\\
	&= (n+1)^2
		+8\bigg(\frac{n^3}{24}+\frac{n^2}{8}+\frac{n}{12}\bigg)
		-8\bigg(\frac{n^2}{8}+\frac{n}{4}\bigg)
		+2\bigg(\frac{n}{2}\bigg)\\
	&= \tfrac{1}{3}(n^2+2n+3)(n+1).
	\end{split}
\end{equation*}
For odd $n$, the sum $(n+1)^2+2\sum_{i=1}^{(n-1)/2}(2i)^2$ is also $\tfrac{1}{3}(n^2+2n+3)(n+1)$.  Therefore, the Hilbert series is
\begin{equation*}
	\begin{split}
		h(t)
		&= \frac{1}{3}\sum_{n=0}^\infty (n^2+2n+3)(n+1)t^n\\
		&= \frac{1}{3}\sum_{n=0}^\infty \big[(n+3)(n+2)(n+1)-3(n+2)(n+1)+3(n+1)\big]t^n\\
		&= \frac{1}{3}\cdot\frac{d^3}{dt^3}\bigg(\sum_{n=0}^\infty t^n\bigg)
			-\frac{d^2}{dt^2}\bigg(\sum_{n=0}^\infty t^n\bigg)
			+\frac{d}{dt}\bigg(\sum_{n=0}^\infty t^n\bigg)\\
		&= \frac{1}{3}\cdot\frac{d^3}{dt^3}\bigg(\frac{1}{1-t}\bigg)
			-\frac{d^2}{dt^2}\bigg(\frac{1}{1-t}\bigg)
			+\frac{d}{dt}\bigg(\frac{1}{1-t}\bigg)\\
		&= \frac{2}{(1-t)^4}-\frac{2}{(1-t)^3}+\frac{1}{(1-t)^2}.
	\end{split}
\end{equation*}

\subsection{Proof of Main Theorem \ref{thm:main_theorem}}

For each pair of nonnegative integers $\lambda$ and $\mu$ satisfying $\lambda-\mu\in 2\ZZ$, we define $\cH_{\lambda,\mu}$ to be the $\rD(\rU_q(\fsl_2))$-bimodule generated as follows:
\begin{align*}
\cH_{\lambda,\mu}=
	\begin{cases}
		\big\langle\bfv_1^{\mu}\bfv_5^{(\lambda-\mu)/2}\big\rangle&\text{if $\lambda\geq \mu$,}\\
		\big\langle\bfv_1^{\lambda}\bfv_6^{(\mu-\lambda)/2}\big\rangle&\text{if $\lambda<\mu$.}
	\end{cases}
\end{align*}

By Lemma \ref{lem:hw_generate_locally_finite_modules} and Corollary \ref{cor:Hpp_equals_HCpp}, each vector in $\cH$ belongs to a sub-bimodule generated by vectors in $\HCpp$. By Theorem \ref{thm:v1v2v3v4generate_hw}, $\HCpp\subset\sum_{n=0}^\infty (\cH_{1,1})^n$.

At this point it should not be surprising that $\sum_{n=0}^\infty (\cH_{1,1})^n$ can be written as an internal direct sum $\bigoplus \cH_{\lambda,\mu}$ for the following reason. We believe that $\cH_{\lambda,\mu}=\beta_{V_{\lambda,\mu}}(V_{\lambda,\mu}\otimes V_{\lambda,\mu}^*)$, and we know that products $V_{\lambda_1,\mu_1}\otimes V_{\lambda_2,\mu_2}$ obey the Pierri Rule, implying that tensor powers of $V_{1,1}$ span the subalgebra $\bigoplus V_{\lambda,\mu}$ where $\lambda-\mu$ is even.  Therefore powers of $\beta_{V_{1,1}}(V_{1,1}\otimes V_{1,1}^*)$ decompose as sums of $\beta_{V_{\lambda,\mu}}(V_{\lambda,\mu}\otimes V_{\lambda,\mu}^*)$ where $\lambda-\mu\in 2\ZZ$.

We will verify that $\bfv_1^{\mu}\bfv_5^{s}$ and $\bfv_1^{\lambda}\bfv_6^{s}$ generate simple left $\rD(\rU_q(\fsl_2))$-modules with $\mu+1$ and $\lambda+1$ highest-weight vectors, respectively, and \eqref{eqn:counting_hw_bivectors} shows that this accounts for all highest-weight bivectors of appropriate degree. 

The relations $Ec=qcE$ and $Kc=q^2cK$ (See Example \ref{ex:sl2_double_relations}) prove the following lemma.

\begin{lemma}\label{lem:c_action_preserves_hw}
Let $v$ be a highest-weight vector of weight $m$ in a left $\rD(\rU_q(\fsl_2))$-module.  If $c\blactleft v$ is nonzero, then it is a highest-weight vector of weight $m+2$.  (A similar result holds for right modules.)
\end{lemma}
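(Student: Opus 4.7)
The lemma reduces to a one-line application of the cross-relations $Ec = qcE$ and $Kc = q^2 cK$ in $\rD(\rU_q(\fsl_2))$ from Example \ref{ex:sl2_double_relations}, combined with associativity of the left action. The plan is: apply each of these two algebra identities to $v$ and read off the result from $E \blactleft v = 0$ and $K \blactleft v = q^m v$.

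Concretely, from $Ec = qcE$ and associativity one has $E \blactleft (c \blactleft v) = (Ec) \blactleft v = q(cE) \blactleft v = qc \blactleft (E \blactleft v) = 0$, so $c \blactleft v$ is again annihilated by $E$. From $Kc = q^2 cK$ one similarly gets $K \blactleft (c \blactleft v) = (Kc) \blactleft v = q^2 c \blactleft (K \blactleft v) = q^{m+2}(c \blactleft v)$, so $c \blactleft v$ has weight $m+2$. Together these show that a nonzero $c \blactleft v$ is a highest-weight vector of weight $m+2$.

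The parenthetical statement for right modules uses the same two relations interpreted via right-associativity: $(v \blactright E) \blactright c = q(v \blactright c) \blactright E$ and $(v \blactright K) \blactright c = q^2 (v \blactright c) \blactright K$. The first forces $(v \blactright c) \blactright E = 0$ whenever $v \blactright E = 0$; for the weight, one recalls from Section \ref{section:main_results} that right weight is measured by $v \blactright K^{-1}$, so rearranging the $K$-identity shows the right weight also shifts up by $2$. There is no substantive obstacle here; the argument is entirely mechanical, and the only point requiring care is the asymmetric convention between the left and right definitions of weight.
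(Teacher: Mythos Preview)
Your proof is correct and is exactly the argument the paper has in mind: the paper's entire proof of this lemma is the single sentence ``The relations $Ec=qcE$ and $Kc=q^2cK$ (See Example \ref{ex:sl2_double_relations}) prove the following lemma,'' and you have simply spelled out the associativity step explicitly. Your treatment of the right-module case, including the care about the $K^{-1}$ convention for right weight, is a correct elaboration of the parenthetical remark.
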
 

First we consider the case $\lambda=\mu+2s\geq\mu$.  We will show that $\big\langle\bfv_1^{\mu}\bfv_5^{(\lambda-\mu)/2}\big\rangle$ has highest-weight vectors $\{\bfv_3^{\mu-n}\bfv_1^{n}\bfv_5^s\mid n=0,1,\dotsc,\mu\}$ and that this module is simple. 
The following lemmas are proved by straightforward but tedious calculations from the actions of $\rD(\rU_q(\fsl_2))$ on various products of vectors.  Those actions are included at the end of the section.

\begin{lemma}\label{lem:rect1byc}
For any collection of nonnegative integers $\ell$, $n$, and $s$,
\begin{align*}
	c\blactleft (\bfv_3^\ell \bfv_1^n \bfv_5^s) 
	&= q^s[\ell]_q \bfv_3^{\ell-1}\bfv_1^{n+1}\bfv_5^s.
\end{align*}
\end{lemma}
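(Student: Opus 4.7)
The strategy is induction on $\ell$, exploiting that $H \otimes C$ is a $D(H)$-module algebra by Theorem \ref{thm:dual_is_a_bimodule} and that the coproduct of $c$ in $D(H) = C \otimes H$ is $\Delta c = c \otimes a + d \otimes c$. This gives the Leibniz-type identity
\begin{equation*}
c \blactleft (xy) = (c \blactleft x)(a \blactleft y) + (d \blactleft x)(c \blactleft y),
\end{equation*}
with an analogous rule for $a \blactleft$ coming from $\Delta a = a \otimes a + b \otimes c$.

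First I would carry out direct computations from the tables in Section \ref{section:duqsl2_actions} to establish the base identities $c \blactleft \bfv_3 = \bfv_1$, $c \blactleft \bfv_1 = 0$, $c \blactleft \bfv_5 = 0$, $a \blactleft \bfv_1 = \bfv_1$, and $a \blactleft \bfv_5 = q \bfv_5$. Iterating the Leibniz rules then extends these to $c \blactleft \bfv_1^n = 0$, $c \blactleft \bfv_5^s = 0$, $a \blactleft \bfv_1^n = \bfv_1^n$, and $a \blactleft \bfv_5^s = q^s \bfv_5^s$.

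Next I decompose $\bfv_3^\ell \bfv_1^n \bfv_5^s = \bfv_3^\ell \cdot \bfv_1^n \cdot \bfv_5^s$ and apply the Leibniz rule twice: peeling off $\bfv_5^s$ yields $q^s\,(c \blactleft \bfv_3^\ell \bfv_1^n) \cdot \bfv_5^s$, and peeling off $\bfv_1^n$ further reduces it to $(c \blactleft \bfv_3^\ell) \cdot \bfv_1^n \cdot \bfv_5^s \cdot q^s$. The claim thus reduces to showing $c \blactleft \bfv_3^\ell = [\ell]_q \bfv_3^{\ell-1} \bfv_1$. A crucial (and easily verified) point is that $\bfv_1$ and $\bfv_3$ commute inside $H \otimes C$, since $\bfv_3 = EK^{-1} \in H$ while the $H$-part of $\bfv_1$ is a scalar multiple of $\bfv_3$ itself, and $H$ commutes with $C$ in the tensor algebra. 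I would then induct on $\ell$ with base case $c \blactleft \bfv_3 = \bfv_1$. The inductive step applies Leibniz to $\bfv_3^{\ell+1} = \bfv_3 \cdot \bfv_3^\ell$, combining $c \blactleft \bfv_3 = \bfv_1$ with the direct computations $a \blactleft \bfv_3 = q^{-1} \bfv_3 + (\text{correction})$ and $d \blactleft \bfv_3 = q \bfv_3 + (\text{correction})$. The leading contributions then combine, via the $q$-number identity $[\ell+1]_q = q[\ell]_q + q^{-\ell}$, to produce the desired $[\ell+1]_q \bfv_3^\ell \bfv_1$.

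The main obstacle is verifying that the correction terms in $a \blactleft \bfv_3^\ell$ and $d \blactleft \bfv_3$ (which expand as products containing monomials in $bc$, $cd$, $b^2cd$, etc.) cancel in the full Leibniz sum rather than contributing genuinely new terms. This is the ``tedious but straightforward'' part of the argument: it relies on the $\CC_q[\rSL_2]$-relations from Example \ref{ex:coordinate_algebra}, the quasi-commutations of Lemma \ref{lem:commutators}, and the specific form $\bfv_1 = (q - q^{-1}) \bfv_3 ac - c^2$ to see that the $b$- and $d$-containing monomials either cancel in pairs or collapse into further scalar multiples of $\bfv_3^\ell \bfv_1$, leaving only the claimed leading expression.
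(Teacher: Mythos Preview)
Your approach is essentially the same as the paper's: both use the module-algebra structure and the Leibniz rule coming from $\Delta c = c\otimes a + d\otimes c$, reduce to the building blocks $c\blactleft \bfv_3^\ell$, $c\blactleft \bfv_1^n$, $c\blactleft \bfv_5^s$, $a\blactleft \bfv_1^n$, $a\blactleft \bfv_5^s$, and handle $c\blactleft \bfv_3^\ell$ by induction on~$\ell$.

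One point where the paper is sharper: you describe the correction terms in $a\blactleft \bfv_3$ and $d\blactleft \bfv_3$ as expanding into assorted monomials in $b,c,d$, and anticipate a messy cancellation. In fact the correction is a single, clean element: from Example~\ref{ex:h11} one has $a\blactleft \bfv_3 = q^{-1}\bfv_3 - q\,v_{41}$ and $d\blactleft \bfv_3 = q\,\bfv_3 + q^{-1}v_{41}$, where $v_{41}\in\cH_{1,1}$. The paper records, alongside $c\blactleft \bfv_3^n = [n]_q\bfv_3^{n-1}\bfv_1$, the companion formula $a\blactleft \bfv_3^n = q^{-n}\bfv_3^n - q[n]_q\bfv_3^{n-1}v_{41}$ (Lemma~\ref{lem:v11leftca}), and then the induction closes cleanly because $v_{41}\bfv_3 = \bfv_3 v_{41}$ and $v_{41}\bfv_1 = q^2\bfv_1 v_{41}$ make the two $v_{41}$-terms cancel on the nose. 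Recognizing the correction as $v_{41}$ rather than a soup of monomials is what keeps the computation short.
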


\begin{lemma}\label{lem:rect1leftbyFbad}
For any collection of nonnegative integers $\ell$, $m$, $n$, and $s$ we have the following four left actions:
\begin{align*}
F \blactleft (\bfv_3^\ell \bfv_1^n \bfv_5^s) &= 
	(1-q^{-2n-4s})\bfv_3^{\ell+1}\bfv_1^{n-1}\bfv_5^s
	+q^{1-2n-2s}[2n+2s]_q
	\bfv_3^\ell v_{41} \bfv_1^{n-1} \bfv_5^s\\
b \blactleft (\bfv_3^\ell \bfv_1^n \bfv_5^s) &= 
	(q^{-2n-\ell}-q^{\ell})q^{1-s}\bfv_3^\ell v_{41}\bfv_1^{n-1}\bfv_5^s
	-q^{2-2n-s}[\ell]_q \bfv_3^{\ell-1}v_{41}^2\bfv_1^{n-1}\bfv_5^s\\
a\blactleft (\bfv_3^\ell v_{41}^m \bfv_1^n \bfv_5^s) &= 
	q^{m-\ell+s}\bfv_3^\ell v_{41}^m\bfv_1^n\bfv_5^s
	-q^{m+s+1}[\ell]_q \bfv_3^{\ell-1}v_{41}^{m+1}\bfv_1^n\bfv_5^s\\
d\blactleft (\bfv_3^\ell v_{41}^m \bfv_1^n \bfv_5^s) &= 
	q^{\ell-m-s}\bfv_3^\ell v_{41}^m \bfv_1^n\bfv_5^s
	+q^{-m-2n-s-1}[\ell]_q \bfv_3^{\ell-1}v_{41}^{m+1}\bfv_1^n\bfv_5^s.
\end{align*}
\end{lemma}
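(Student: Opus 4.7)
The plan is to apply the bimodule algebra structure of $H\otimes C$ over $\rD(H)$ established in Theorem \ref{thm:dual_is_a_bimodule}: for any $x \in \rD(H)$ and $u, v \in H\otimes C$,
$$ x \blactleft (uv) = (x_{(1)} \blactleft u)\,(x_{(2)} \blactleft v), $$
where the relevant coproducts in $\rD(H) = C \otimes H$ are $\Delta F = F\otimes 1 + K^{-1}\otimes F$, $\Delta a = a\otimes a + b\otimes c$, $\Delta b = a\otimes b + b\otimes d$, and $\Delta d = c\otimes b + d\otimes d$. Combined with Lemma \ref{lem:commutators} and Lemma \ref{lem:rect1byc}, this reduces every claim to a finite induction.

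First I would compute the base actions of $F$, $a$, $b$, $d$ on each of the four vectors $\bfv_3 = EK^{-1}$, $\bfv_5 = K^{-1}c^2$, $\bfv_1 = (q-q^{-1})EK^{-1}ac - c^2$, and $v_{41} = (q-q^{-1})EK^{-1}bc - dc$. Each is already written as a polynomial in the generators, so this is a mechanical application of the formulas of Section \ref{section:duqsl2_actions} term-by-term. The qualitative outcome is that $a$ and $d$ act on each generator by a weight-like scalar plus a $v_{41}$-type correction (arising from the off-diagonal tensorands $b\otimes c$ and $c\otimes b$ in their coproducts), while $F$ and $b$ shift $\bfv_1$ into a sum of a $\bfv_3$-term and a $v_{41}$-correction.

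Next I would induct on the exponents $\ell, m, n, s$ one at a time. For the $a$ and $d$ formulas the induction is short because only two tensor summands appear in each coproduct; matching the $q$-powers uses only the quasi-commutation relations of Lemma \ref{lem:commutators} and the $c\blactleft$ action from Lemma \ref{lem:rect1byc} to restore the canonical order $\bfv_3^\ell v_{41}^m \bfv_1^n \bfv_5^s$. For the $F$ and $b$ formulas, because $F\blactleft \bfv_1$ and $b\blactleft \bfv_1$ already split into two pieces, the inductive step produces several contributions that must be collected using the $q$-Pascal identity $[k+1]_q = q^{-1}[k]_q + q^k$ in order to recover the stated prefactors $(1-q^{-2n-4s})$ and $q^{1-2n-2s}[2n+2s]_q$.

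The main obstacle is purely combinatorial bookkeeping: tracking $q$-factors through repeated applications of the quasi-commutation relations between $v_{41}$ and $\bfv_3, \bfv_1, \bfv_5$ (derived directly from the definitions of those vectors) and collecting cross-terms after each inductive step. No new structural ingredient beyond Theorem \ref{thm:dual_is_a_bimodule}, Lemma \ref{lem:commutators}, and Lemma \ref{lem:rect1byc} is needed; this is precisely the \emph{tedious but straightforward} computation referred to in the text preceding the lemma.
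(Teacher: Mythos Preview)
Your approach is the paper's approach: use the module-algebra law from Theorem~\ref{thm:dual_is_a_bimodule} and the coproducts of $F,a,b,d$ to reduce to actions on powers of single generators (this is exactly what Lemmas~\ref{lem:v11leftca}--\ref{lem:vd11leftbyf} supply), then reorder into the monomial form $\bfv_3^\ell v_{41}^m \bfv_1^n \bfv_5^s$.

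One point is understated. You claim that only Lemma~\ref{lem:commutators} and Lemma~\ref{lem:rect1byc} are needed to do the reordering, but that is not quite true. Computing $F\blactleft \bfv_5^s$ produces the vector $\vd_{21}=q^{-1}K^{-1}dc$ (Lemma~\ref{lem:vd11leftbyf}), and computing $b\blactleft \bfv_3^\ell$ produces $v_{31}$ (Lemma~\ref{lem:v11leftca}); neither of these lies in the subalgebra generated by $\bfv_3,\bfv_1,\bfv_5,v_{41}$. To return to the stated monomial form you must invoke the non-quasi-commutation identities
\[
\bfv_1\,\vd_{21}=(q-q^{-1})q^{-2}\bfv_3\bfv_5+q^{-3}v_{41}\bfv_5,
\qquad
v_{31}\bfv_1=-(q-q^{-1})q\,\bfv_3 v_{41}-q^{2}v_{41}^{2},
\]
the first of which the paper singles out explicitly. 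These are genuine product relations (verified directly from the definitions in Examples~\ref{ex:h11} and~\ref{ex:h20}), not consequences of Lemma~\ref{lem:commutators}, and without them the $F$- and $b$-computations cannot be closed up. Once you add these to your toolkit, the rest is indeed the bookkeeping you describe.
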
\noindent
The proof uses the relation
\(
\bfv_1 \vd_{21} = (q-q^{-1})q^{-2}\bfv_3\bfv_5+q^{-3}v_{41}\bfv_5
\) 
to calculate the action of $F$ (The bivectors $v_{41}$ and $\vd_{21}$ are from Examples \ref{ex:h11} and \ref{ex:h20}).  This lemma implies that
\begin{multline}\label{eqn:special_left_action_1}
\Big(q^{2+s}\big(q^{-\ell+s-1}d-q^{\ell-s+1}a\big)F
	+[2n+2s]_q\big(q^{\ell+4}+q^{-\ell-2n}\big)b\Big)
\blactleft \bfv_3^\ell \bfv_1^n \bfv_5^s\\
	= q^{1-2\ell-5n-3s}(1+q^{2\ell+2n+2}-q^{2\ell+4n+4s+2}-q^{4\ell+6n+4s+4})[n]_q
	\bfv_3^\ell v_{41} \bfv_1^{n-1} \bfv_5^s.
\end{multline}
Because $q$ is generic and $\ell,n,s\geq 0$, \eqref{eqn:special_left_action_1} is zero if and only if $n=0$.  Therefore the vector $\bfv_3^{\ell+1}\bfv_1^{n-1}\bfv_5^s$ is in the span of \eqref{eqn:special_left_action_1} and $F \blactleft (\bfv_3^\ell \bfv_1^n \bfv_5^s)$, proving the following.

\begin{corollary}\label{cor:down_weight_1}
If a left submodule of $\cH_{\mu+2s,\mu}$ contains the vector $\bfv_3^\ell \bfv_1^n \bfv_5^s$ where $n\geq 1$, then this submodule also contains the vector $\bfv_3^{\ell+1}\bfv_1^{n-1}\bfv_5^s$.
\end{corollary}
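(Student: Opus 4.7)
The plan is to exploit the two left actions computed in Lemma \ref{lem:rect1leftbyFbad} and in the display \eqref{eqn:special_left_action_1}, each of which produces a vector in the submodule. From these two vectors I will extract $\bfv_3^{\ell+1}\bfv_1^{n-1}\bfv_5^s$ by elimination.

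First I would apply to $\bfv_3^\ell \bfv_1^n \bfv_5^s$ the element of $\rD(\rU_q(\fsl_2))$ appearing on the left of \eqref{eqn:special_left_action_1}. The output is a scalar multiple of $\bfv_3^\ell v_{41}\bfv_1^{n-1}\bfv_5^s$ with coefficient
\begin{equation*}
q^{1-2\ell-5n-3s}\bigl(1+q^{2\ell+2n+2}-q^{2\ell+4n+4s+2}-q^{4\ell+6n+4s+4}\bigr)[n]_q.
\end{equation*}
Because the submodule is closed under the left action, this vector lies in the submodule. As the paragraph preceding the corollary already observes, this scalar is nonzero for generic $q$ precisely when $n\geq 1$: the factor $[n]_q$ is then nonzero, and the four exponents $0$, $2\ell+2n+2$, $2\ell+4n+4s+2$, $4\ell+6n+4s+4$ in the Laurent-polynomial factor are pairwise distinct under this hypothesis, so no cancellation can occur. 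Hence the raw monomial $\bfv_3^\ell v_{41}\bfv_1^{n-1}\bfv_5^s$ itself lies in the submodule.

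Next I would apply $F$ to $\bfv_3^\ell \bfv_1^n \bfv_5^s$, which by Lemma \ref{lem:rect1leftbyFbad} yields
\begin{equation*}
(1-q^{-2n-4s})\,\bfv_3^{\ell+1}\bfv_1^{n-1}\bfv_5^s + q^{1-2n-2s}[2n+2s]_q\, \bfv_3^\ell v_{41}\bfv_1^{n-1}\bfv_5^s,
\end{equation*}
which also lies in the submodule. Subtracting the appropriate scalar multiple of the vector produced in the previous step cancels the $v_{41}$-term and leaves $(1-q^{-2n-4s})\,\bfv_3^{\ell+1}\bfv_1^{n-1}\bfv_5^s$. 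Since $n\geq 1$ and $q$ is generic, $1-q^{-2n-4s}\neq 0$, and dividing through gives the desired vector.

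There is no substantive obstacle: the corollary is essentially a linear-algebraic consequence of the two scalar computations already established. The only point that requires real attention is the verification that both scalar coefficients are generically nonzero exactly under the hypothesis $n\geq 1$, since it is precisely this hypothesis that keeps $[n]_q$ and $1-q^{-2n-4s}$ nonzero and keeps the four exponents in the polynomial factor of \eqref{eqn:special_left_action_1} pairwise distinct.
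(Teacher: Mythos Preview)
Your proof is correct and follows exactly the approach of the paper: use \eqref{eqn:special_left_action_1} to obtain $\bfv_3^\ell v_{41}\bfv_1^{n-1}\bfv_5^s$ (with nonzero coefficient when $n\geq 1$), then eliminate the $v_{41}$-term from the $F$-action in Lemma~\ref{lem:rect1leftbyFbad} using the nonvanishing of $1-q^{-2n-4s}$. You have simply made explicit the two nonvanishing checks that the paper leaves to the reader.
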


This shows that $\{\bfv_3^{\mu-n}\bfv_1^{n}\bfv_5^s\mid n=0,1,\dotsc,\mu\}$ is the set of highest-weight vectors in $\big\langle\bfv_1^{\mu}\bfv_5^{(\lambda-\mu)/2}\big\rangle$ and that this module is simple. 
We use similar reasoning for the case $\mu=\lambda+2s\geq\lambda$, but $\bfv_6$ takes the place of $\bfv_5$.

\begin{lemma}\label{lem:rect2byc}
For any collection of nonnegative integers $\ell$, $n$, and $s$,
\begin{align*}
	c\blactleft (\bfv_3^\ell \bfv_1^n \bfv_6^s) 
	&= q^{-s}[\ell]_q \bfv_3^{\ell-1}\bfv_1^{n+1}\bfv_6^s.
\end{align*}
\end{lemma}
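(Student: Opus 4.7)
My plan is to mirror the proof of Lemma \ref{lem:rect1byc} with $\bfv_5$ replaced by $\bfv_6$; the sign-flip in the exponent of $q$ will come from a dual behavior of $a$ on $\bfv_6$. The main tool is the Leibniz-type identity that follows from $H\otimes C$ being a $\rD(H)$-bimodule algebra (Theorem \ref{thm:dual_is_a_bimodule}). Since $c\in C^{\mathrm{op}}\subset \rD(H)$ has the same coproduct in $\rD(H)$ as it has in $C$, namely $\Delta c=c\otimes a+d\otimes c$, for any $x,y\in H\otimes C$ we get
\[
	c\blactleft(xy)=(c\blactleft x)(a\blactleft y)+(d\blactleft x)(c\blactleft y).
\]

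First I would record three auxiliary identities read directly from the matrices $\phi_j(c),\phi_j(a),\phi_j(d)$ at $\vdd_{11}=\bfv_6$ in Example \ref{ex:h02}, namely
\[
	c\blactleft\bfv_6=0,\qquad a\blactleft\bfv_6=q^{-1}\bfv_6,\qquad d\blactleft\bfv_6=q\bfv_6.
\]
Then, using the Leibniz identities for $a$ (from $\Delta a=a\otimes a+b\otimes c$) and for $c$, a short induction on $s$ shows that
\[
	c\blactleft\bfv_6^{\,s}=0\qquad\text{and}\qquad a\blactleft\bfv_6^{\,s}=q^{-s}\bfv_6^{\,s}
\]
for every $s\geq 0$; the crucial cancellation is that the $c\blactleft\bfv_6=0$ kills the off-diagonal term in the Leibniz expansion of $a\blactleft(\bfv_6^{\,s-1}\cdot\bfv_6)$.

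Next I apply the Leibniz identity for $c$ with $x=\bfv_3^\ell\bfv_1^n$ and $y=\bfv_6^{\,s}$. The second summand vanishes because $c\blactleft\bfv_6^{\,s}=0$, leaving
\[
	c\blactleft(\bfv_3^\ell\bfv_1^n\bfv_6^{\,s})=(c\blactleft\bfv_3^\ell\bfv_1^n)(a\blactleft\bfv_6^{\,s})=q^{-s}(c\blactleft\bfv_3^\ell\bfv_1^n)\,\bfv_6^{\,s}.
\]
Finally, Lemma \ref{lem:rect1byc} specialized to $s=0$ gives $c\blactleft\bfv_3^\ell\bfv_1^n=[\ell]_q\bfv_3^{\ell-1}\bfv_1^{n+1}$, and substituting yields the claim. (Note that this same computation, applied to the $\bfv_5$-case, also recovers Lemma \ref{lem:rect1byc}: there $a\blactleft\bfv_5^{\,s}=q^{+s}\bfv_5^{\,s}$ by the analogous reading of $\phi_j(a)$ in Example \ref{ex:h20}, which is exactly where the sign in the exponent changes.)

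I do not expect any genuine obstacle. The one point that deserves care is the Leibniz identity inside $\rD(H)$: it rests on the fact that the coproduct of $c$ in $\rD(H)=C\otimes H$ coincides with $\Delta_C c$ (built into the tensor-coalgebra construction of $\rD(H)$), combined with the module-algebra compatibility supplied by Theorem \ref{thm:dual_is_a_bimodule}. Once this is in hand, the computation is essentially a one-line application of the highest-weight behavior of $\bfv_6$.
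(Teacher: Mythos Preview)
Your proof is correct and follows essentially the same route as the paper: the paper records the needed ingredients as Lemmas \ref{lem:v11leftca}, \ref{lem:v21leftabcd}, and \ref{lem:vdd11leftabcd} (giving $c\blactleft\bfv_3^\ell=[\ell]_q\bfv_3^{\ell-1}\bfv_1$, $c\blactleft\bfv_1^n=0$, $a\blactleft\bfv_1^n=\bfv_1^n$, $c\blactleft\bfv_6^s=0$, $a\blactleft\bfv_6^s=q^{-s}\bfv_6^s$) and then combines them via the module-algebra Leibniz rule, exactly as you do. Your citing Lemma \ref{lem:rect1byc} at $s=0$ in place of the first three of these is only a cosmetic difference.
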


\begin{lemma}\label{lem:rect2leftbyFbad}
For any collection of nonnegative integers $\ell$, $m$, $n$, and $s$ we have the following four left actions:
\begin{align*}
F\blactleft(\bfv_3^\ell \bfv_1^n \bfv_6^s) &= 
	(1-q^{-2n})\bfv_3^{\ell+1}\bfv_1^{n-1}\bfv_6^s
	+q^{1-2n-2s}[2n+2s]_q
	\bfv_3^\ell v_{41}\bfv_1^{n-1}\bfv_6^s\\
b\blactleft(\bfv_3^\ell \bfv_1^n \bfv_6^s) &= (q^{-\ell-2n-3s}-q^{\ell+s})q\bfv_3^{\ell}v_{41}\bfv_1^{n-1}\bfv_6^s-q^{2-2n-3s}[\ell]_q
\bfv_3^{\ell-1}v_{41}^2\bfv_1^{n-1}\bfv_6^s\\
a\blactleft(\bfv_3^\ell v_{41}^m \bfv_1^n \bfv_6^s) &= q^{m-\ell-s}\bfv_3^\ell v_{41}^m \bfv_1^n \bfv_6^s - q^{m-s+1}[\ell]_q
\bfv_3^{\ell-1}v_{41}^{m+1}\bfv_1^n \bfv_6^s\\
d\blactleft(\bfv_3^\ell v_{41}^m \bfv_1^n \bfv_6^s) &= q^{\ell-m+s}\bfv_3^\ell v_{41}^m \bfv_1^n \bfv_6^s
+q^{-m-2n-3s-1}[\ell]_q
\bfv_3^{\ell-1} v_{41}^{m+1} \bfv_1^n \bfv_6^s.
\end{align*}
\end{lemma}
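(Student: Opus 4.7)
The plan is to establish all four formulas by induction on the exponents $\ell$, $n$, $s$ (and $m$ for the $a$ and $d$ identities), mirroring the proof of Lemma \ref{lem:rect1leftbyFbad} with $\bfv_5$ replaced by $\bfv_6$. The central tool is Theorem \ref{thm:dual_is_a_bimodule}: since $H\otimes C$ is a bimodule algebra over $\rD(H)$, for any $h\in\rD(H)$ and $x,y\in H\otimes C$,
\begin{equation*}
h\blactleft(xy) = (h_{(1)}\blactleft x)(h_{(2)}\blactleft y).
\end{equation*}
Combined with the coproducts $\Delta F = F\otimes 1 + K^{-1}\otimes F$, $\Delta b = a\otimes b + b\otimes d$, $\Delta a = a\otimes a + b\otimes c$, and $\Delta d = c\otimes b + d\otimes d$, this reduces each action on a product monomial to an iterated action on its factors.

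As a preliminary step, I would tabulate the actions of $F$, $b$, $a$, $d$ on each of $\bfv_1$, $\bfv_3$, $\bfv_6$, and $v_{41}$, read directly from Section \ref{section:duqsl2_actions} and the defining expressions in (\ref{eqn:v_1v_2v_3v_4}), (\ref{eqn:v_6}), and Example \ref{ex:h11}, together with the $K^{\pm 1}$-eigenvalues of those generators (needed whenever a coproduct introduces a $K^{\pm 1}$ twist). The crucial base fact is $F\blactleft\bfv_3 = 0$, which collapses the $\ell$-induction in every formula: once the identity is proved for $\ell=0$, the quasi-commutation relations from Lemma \ref{lem:commutators} (and their $\bfv_6$- and $v_{41}$-analogues, which I would record alongside) produce the general $\ell$.

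The one genuinely new ingredient beyond Lemma \ref{lem:rect1leftbyFbad} is an auxiliary rewrite analogous to $\bfv_1\vd_{21} = (q-q^{-1})q^{-2}\bfv_3\bfv_5 + q^{-3}v_{41}\bfv_5$, now in the $\bfv_6$-setting: a weight-preserving identity of the shape
\begin{equation*}
\bfv_1\vdd_{21} = \alpha\,\bfv_3\bfv_6 + \beta\,v_{41}\bfv_6
\end{equation*}
for explicit $\alpha,\beta\in\CC(q)$. I would derive it by substituting the expressions for $\bfv_1$, $\vdd_{21}$, $\bfv_3$, $\bfv_6$, $v_{41}$ from Examples \ref{ex:h11} and \ref{ex:h02} and straightening both sides in $H\otimes C$ using the relations of $\rU_q(\fsl_2)$ and $\CC_q[\rSL_2]$. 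This identity is what lets the contribution of $F\blactleft\bfv_6$ be absorbed into the $\bfv_3^\ell v_{41}\bfv_1^{n-1}\bfv_6^s$ basis and forces the coefficient $q^{1-2n-2s}[2n+2s]_q$ to assemble as a $q$-telescoping sum over the $\bfv_1$-factors and the $\bfv_6^s$-factor.

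The formulas for $b$, $a$, $d$ then follow by the same template with the appropriate coproduct substituted; for $a$ and $d$ an additional induction on $m$ is needed, built on the base actions $a\blactleft v_{41}$ and $d\blactleft v_{41}$ together with the quasi-commutation of $v_{41}$ with $K^{\pm 1}$. The main obstacle is bookkeeping: each coproduct splits into two summands, intermediate expressions mix $E$- and $F$-monomials that must be straightened via $[E,F]=(K-K^{-1})/(q-q^{-1})$, and the stated $q$-integer coefficients—most delicately $[2n+2s]_q$ in the $F$-formula and the $\ell$-dependent difference $q^{-\ell-2n-3s}-q^{\ell+s}$ in the $b$-formula—emerge only after a careful combination of these contributions.
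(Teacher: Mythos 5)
Your proposal follows essentially the same route as the paper: the paper also splits the actions on monomials via the bimodule-algebra property and the coproducts, reduces to tabulated base actions on powers of $\bfv_1$, $\bfv_3$, $\bfv_6$, $v_{41}$ (its Lemmas \ref{lem:v11leftca}--\ref{lem:vdd11leftbyf}), and invokes exactly the auxiliary rewrite you anticipate, which turns out to be $\bfv_1\vdd_{21}=q^{-3}v_{41}\bfv_6$ (so your $\alpha=0$, $\beta=q^{-3}$), together with the quasi-commutation relations such as $v_{41}\bfv_1=q^2\bfv_1 v_{41}$ and $\vdd_{21}\bfv_6=q^2\bfv_6\vdd_{21}$. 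The plan is sound and the coefficients do telescope as you describe.
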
\noindent
The proof uses the relation
\(
\bfv_1\vdd_{21} = q^{-3}v_{41}\bfv_6
\) 
to calculate the action of $F$ (The bivector $\vdd_{21}$ is from Example \ref{ex:h02}). This lemma implies that
\begin{multline}\label{eqn:special_left_action_2}
\Big(q^{2-s}\big(q^{-\ell-s-1}d-q^{\ell+s+1}a\big)F
	+[2n+2s]_q\big(q^{\ell+4}+q^{-\ell-2n-4s}\big)b\Big)
	\blactleft \bfv_3^\ell \bfv_1^n \bfv_6^s\\
= q^{1-2\ell-5n-7s}\big(1+(1-q^{2n})q^{2\ell+2n+4s}-q^{4\ell+6n+8s+4}\big)[n+2s]_q\bfv_3^{\ell}v_{41}\bfv_1^{n-1}\bfv_6^s.
\end{multline}
Because $q$ is generic and $\ell,n,s\geq 0$, \eqref{eqn:special_left_action_2} is zero if and only if $n=s=0$.  Therefore the vector $\bfv_3^{\ell+1}\bfv_1^{n-1}\bfv_6^s$ is in the span of \eqref{eqn:special_left_action_2} and $F \blactleft (\bfv_3^\ell \bfv_1^n \bfv_6^s)$, proving the following.

\begin{corollary}\label{cor:down_weight_2}
If a left submodule of $\cH_{\lambda,\lambda+2s}$ contains the vector $\bfv_3^\ell \bfv_1^n \bfv_6^s$ where $n\geq 1$, then this submodule also contains the vector $\bfv_3^{\ell+1}\bfv_1^{n-1}\bfv_6^s$.
\end{corollary}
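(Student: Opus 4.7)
The plan is to mirror the proof of Corollary \ref{cor:down_weight_1}, swapping in the $\bfv_6$-analogues from Lemma \ref{lem:rect2leftbyFbad} and the identity \eqref{eqn:special_left_action_2}. Let $M$ be a left submodule of $\cH_{\lambda,\lambda+2s}$ containing $w=\bfv_3^\ell\bfv_1^n\bfv_6^s$ with $n\geq 1$. Since $M$ is closed under the left action of $\rD(\rU_q(\fsl_2))$, every element produced by acting on $w$ automatically lies in $M$, and the task reduces to isolating $\bfv_3^{\ell+1}\bfv_1^{n-1}\bfv_6^s$ inside the two-dimensional subspace spanned by the vectors that appear when we act by $F$.

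First I would invoke Lemma \ref{lem:rect2leftbyFbad} to record
\begin{equation*}
F\blactleft w
= (1-q^{-2n})\,\bfv_3^{\ell+1}\bfv_1^{n-1}\bfv_6^s
+ q^{1-2n-2s}[2n+2s]_q\,\bfv_3^\ell v_{41}\bfv_1^{n-1}\bfv_6^s,
\end{equation*}
so that $M$ contains a specific linear combination of the two distinct basis vectors $\bfv_3^{\ell+1}\bfv_1^{n-1}\bfv_6^s$ and $\bfv_3^\ell v_{41}\bfv_1^{n-1}\bfv_6^s$. Next I would apply the identity \eqref{eqn:special_left_action_2}, whose left-hand side is an explicit element of $\rD(\rU_q(\fsl_2))$ acting on $w$ (hence lies in $M$) and whose right-hand side is a scalar multiple of $\bfv_3^\ell v_{41}\bfv_1^{n-1}\bfv_6^s$. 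The discussion just before the corollary shows that this scalar vanishes only when $n=s=0$; since $n\geq 1$, it is nonzero, so $\bfv_3^\ell v_{41}\bfv_1^{n-1}\bfv_6^s\in M$.

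Finally, since $n\geq 1$ and $q$ is generic we have $1-q^{-2n}\neq 0$, so subtracting the appropriate scalar multiple of $\bfv_3^\ell v_{41}\bfv_1^{n-1}\bfv_6^s$ from $F\blactleft w$ produces a nonzero scalar multiple of $\bfv_3^{\ell+1}\bfv_1^{n-1}\bfv_6^s$, which therefore belongs to $M$. No step here requires new ideas: the entire argument is a two-dimensional elimination once the two formulas are in hand, and the genuine computational obstacle was already absorbed into Lemma \ref{lem:rect2leftbyFbad} and \eqref{eqn:special_left_action_2}. The only point that needs brief verification is that the coefficient $1-q^{-2n}$ and the coefficient in \eqref{eqn:special_left_action_2} are simultaneously nonzero under the hypothesis $n\geq 1$, which follows from genericity of $q$.
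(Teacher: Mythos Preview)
Your proposal is correct and follows essentially the same route as the paper: use the $F$-action formula from Lemma~\ref{lem:rect2leftbyFbad} together with \eqref{eqn:special_left_action_2} to perform a two-vector elimination, noting that both relevant coefficients are nonzero when $n\geq 1$ and $q$ is generic. The paper phrases it as ``$\bfv_3^{\ell+1}\bfv_1^{n-1}\bfv_6^s$ is in the span of \eqref{eqn:special_left_action_2} and $F\blactleft(\bfv_3^\ell\bfv_1^n\bfv_6^s)$,'' which is exactly your elimination.
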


This shows that $\{\bfv_3^{\lambda-n}\bfv_1^n\bfv_6^s\mid n=0,1,\dotsc,\lambda\}$ is the set of highest-weight vectors in $\big\langle\bfv_1^{\lambda}\bfv_6^{(\mu-\lambda)/2}\big\rangle$ and that this module is simple.

We conclude by listing the actions needed to verify Lemmas \ref{lem:rect1byc}, \ref{lem:rect1leftbyFbad}, \ref{lem:rect2byc}, and \ref{lem:rect2leftbyFbad}. The following lemmas are all proved by induction using the actions given in Examples \ref{ex:h11}, \ref{ex:h20}, and \ref{ex:h02}.

\begin{lemma}\label{lem:v11leftca}
For any nonnegative integer $n$,
\begin{align*}
a\blactleft \bfv_3^n &= q^{-n}\bfv_3^n - q[n]_q\bfv_3^{n-1}v_{41}
&c\blactleft \bfv_3^n &= [n]_q\bfv_3^{n-1}\bfv_1\\
b\blactleft \bfv_3^n &= [n]_q\bfv_3^{n-1}v_{31}
&d\blactleft \bfv_3^n &= q^n\bfv_3^n+q^{-1}[n]_q\bfv_3^{n-1}v_{41}.
\end{align*}
\end{lemma}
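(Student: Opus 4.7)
The proof proceeds by simultaneous induction on $n$ for all four formulas. The case $n=0$ is immediate from $\counit(a) = \counit(d) = 1$, $\counit(b) = \counit(c) = 0$, and $[0]_q = 0$. The case $n=1$ is read directly from Example~\ref{ex:h11}: since $\bfv_3 = v_{11}$, the first columns of the matrices $\phi_j(a), \phi_j(b), \phi_j(c), \phi_j(d)$ yield $a\blactleft\bfv_3 = q^{-1}\bfv_3 - qv_{41}$, $b\blactleft\bfv_3 = v_{31}$, $c\blactleft\bfv_3 = v_{21} = \bfv_1$, and $d\blactleft\bfv_3 = q\bfv_3 + q^{-1}v_{41}$, consistent with the formulas since $[1]_q = 1$.

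For the inductive step, write $\bfv_3^{n+1} = \bfv_3^n\cdot\bfv_3$ and apply the $\rD(H)$-module-algebra identity $y\blactleft(xz) = (y_{(1)}\blactleft x)(y_{(2)}\blactleft z)$ from Theorem~\ref{thm:dual_is_a_bimodule}, using the matrix coproducts $\Delta a = a\otimes a + b\otimes c$, $\Delta b = a\otimes b + b\otimes d$, $\Delta c = c\otimes a + d\otimes c$, and $\Delta d = c\otimes b + d\otimes d$ from \eqref{eqn:matrix_commult_and_counit}. Substituting the inductive hypothesis on $\bfv_3^n$ and the base case on $\bfv_3$ expresses each $y\blactleft\bfv_3^{n+1}$ as a combination of products among $\bfv_3, v_{41}, v_{31}, \bfv_1$.

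The main technical work is reducing these products to the required form. The relevant auxiliary identities in $\rU_q(\fsl_2)\otimes\CC_q[\rSL_2]$ are: first, that $\bfv_3 = EK^{-1}\otimes 1$ commutes with each of $v_{41}, v_{31}, \bfv_1$ (straightforward, since their $H$-factors are proportional to $EK^{-1}$ or $1$ and the tensor multiplication performs no $H$--$C$ swaps); second, the quasi-commutation $v_{41}\bfv_1 = q^2\,\bfv_1 v_{41}$, which closes the $b$- and $c$-inductions; and third, an identity of the shape $v_{31}\bfv_1 + q^2 v_{41}^2 = \gamma\,\bfv_3 v_{41}$ for an explicit $\gamma\in\CC(q)$, needed for $a$ and $d$. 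Each identity is verified by expanding the definitions of $\bfv_1, v_{31}, v_{41}$ from Example~\ref{ex:h11} and reducing in the tensor algebra via the standard relations of $\rU_q(\fsl_2)$ (notably $KE = q^2 EK$) and $\CC_q[\rSL_2]$ (in particular $ba = qab$, $ca = qac$, $dc = qcd$, $db = qbd$, $cb = bc$, $ad - q^{-1}bc = 1$). Once these identities are in hand, the four inductive cases collapse via the standard $q$-arithmetic $q^{\pm 1}[n]_q + q^{\mp n} = [n+1]_q$.

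The principal difficulty is the verification of the third auxiliary identity: it is a bare-hands calculation in the tensor algebra with no obvious conceptual shortcut. One can, however, shorten the search by observing that $v_{41}^2$ and $v_{31}\bfv_1$ both lie in $\cH_{1,1}\otimes\cH_{1,1}$ and, by their weights, must land in the $9$-dimensional sub-bimodule $\cH_{2,0}$ described explicitly in Example~\ref{ex:h20}; this dramatically restricts the possible form of the identity and makes verification tractable.
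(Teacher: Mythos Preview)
Your approach is the same as the paper's: induction via the module-algebra property and the matrix coproduct, reduced by a short list of commutation identities among $\bfv_3,\bfv_1,v_{31},v_{41}$. The overall strategy is correct.

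However, you have misidentified which auxiliary identities close which cases. Expanding the $b$-step,
\[
b\blactleft\bfv_3^{n+1}=(a\blactleft\bfv_3^n)(b\blactleft\bfv_3)+(b\blactleft\bfv_3^n)(d\blactleft\bfv_3),
\]
the products that appear are $v_{41}v_{31}$ and $v_{31}v_{41}$, not $v_{41}\bfv_1$; what is needed here is $v_{31}v_{41}=q^2v_{41}v_{31}$, which you do not list. Similarly, the $d$-step produces $\bfv_1 v_{31}$ (from $(c\blactleft\bfv_3^n)(b\blactleft\bfv_3)$), not $v_{31}\bfv_1$; the two orderings yield genuinely different expressions, and the paper records both: $v_{31}\bfv_1=-(q-q^{-1})q\,\bfv_3 v_{41}-q^2v_{41}^2$ and $\bfv_1 v_{31}=-(q-q^{-1})q^{-1}\bfv_3 v_{41}-q^{-2}v_{41}^2$. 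With these corrections your induction goes through exactly as in the paper.

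Finally, your heuristic that $v_{41}^2$ and $v_{31}\bfv_1$ must lie in $\cH_{2,0}$ is not right: both have right $K$-weight~$4$, which does not occur in the $9$-dimensional bimodule $\cH_{2,0}$. This does not affect the proof, since the identities are in any case verified by direct expansion, but the shortcut as stated is unavailable.
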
\noindent
The proof uses the relations $\bfv_1\bfv_3=\bfv_3\bfv_1$, $v_{31}\bfv_3=\bfv_3v_{31}$, $v_{41}\bfv_3=\bfv_3v_{41}$, $v_{31}\bfv_1=-(q-q^{-1})q\bfv_3v_{41}-q^2v_{41}^2$, $\bfv_1v_{31}=-(q-q^{-1})q^{-1}\bfv_3v_{41}-q^{-2}v_{41}^2$, $v_{41}\bfv_1=q^2\bfv_1v_{41}$, and $v_{31}v_{41}=q^2v_{41}v_{31}$.

\begin{lemma}\label{lem:v21leftabcd}
For any nonnegative integer $n$,
\begin{align*}
a\blactleft \bfv_1^n &= \bfv_1^n
&b\blactleft \bfv_1^n &= (q^{-2n}-1)qv_{41}\bfv_1^{n-1}
&c\blactleft \bfv_1^n &= 0
&d\blactleft \bfv_1^n &= \bfv_1^n.
\end{align*}
\end{lemma}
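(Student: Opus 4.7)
The plan is straightforward induction on $n$, using the module-algebra structure of $H\otimes C$ over $\rD(H)$, together with the coproducts on the generators of $\CC_q[\rSL_2]$. The base case $n=1$ is read directly off the matrices $\phi_j(a), \phi_j(b), \phi_j(c), \phi_j(d)$ from Example \ref{ex:h11}: taking the second column (corresponding to $\bfv_1=v_{21}\in V_1$) one finds $a\blactleft\bfv_1=\bfv_1$, $c\blactleft\bfv_1=0$, $d\blactleft\bfv_1=\bfv_1$, and $b\blactleft\bfv_1=(q^{-1}-q)v_{41}=(q^{-2}-1)q\,v_{41}$, which matches the stated formula at $n=1$.

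For the inductive step, I would write $\bfv_1^n=\bfv_1\cdot\bfv_1^{n-1}$ and apply the module-algebra identity $x\blactleft(uv)=(x_{(1)}\blactleft u)(x_{(2)}\blactleft v)$ (which holds by Theorem \ref{thm:dual_is_a_bimodule}). The relevant coproducts on $\CC_q[\rSL_2]\subset\rD(H)$ are
\[
\Delta a=a\otimes a+b\otimes c,\quad \Delta b=a\otimes b+b\otimes d,\quad \Delta c=c\otimes a+d\otimes c,\quad \Delta d=c\otimes b+d\otimes d.
\]
The identities for $a\blactleft\bfv_1^n$, $c\blactleft\bfv_1^n$, and $d\blactleft\bfv_1^n$ then fall out almost immediately, because $c\blactleft\bfv_1=0$ by the base case and $c\blactleft\bfv_1^{n-1}=0$ by the inductive hypothesis, so every cross term involving a factor of $c\blactleft(-)$ vanishes.

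The only case requiring a short calculation is $b\blactleft\bfv_1^n$. Expanding via $\Delta b$ gives
\[
b\blactleft\bfv_1^n=(a\blactleft\bfv_1)(b\blactleft\bfv_1^{n-1})+(b\blactleft\bfv_1)(d\blactleft\bfv_1^{n-1})
=\bfv_1\cdot(q^{-2(n-1)}-1)q\,v_{41}\bfv_1^{n-2}+(q^{-2}-1)q\,v_{41}\bfv_1^{n-1}.
\]
To collect this into the desired form $(q^{-2n}-1)q\,v_{41}\bfv_1^{n-1}$, I would invoke the commutation relation $v_{41}\bfv_1=q^2\bfv_1 v_{41}$ used in the proof of Lemma \ref{lem:v11leftca}, which gives $\bfv_1 v_{41}\bfv_1^{n-2}=q^{-2}v_{41}\bfv_1^{n-1}$. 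Substituting and simplifying,
\[
b\blactleft\bfv_1^n=\bigl(q^{-2}(q^{-2(n-1)}-1)+(q^{-2}-1)\bigr)q\,v_{41}\bfv_1^{n-1}=(q^{-2n}-1)q\,v_{41}\bfv_1^{n-1},
\]
completing the induction. The only step that is not purely mechanical is the single $q$-power bookkeeping above, so I do not expect any real obstacle.
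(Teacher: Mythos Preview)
Your proposal is correct and is exactly the argument the paper has in mind: induction on $n$ using the module--algebra identity together with the actions from Example~\ref{ex:h11}, with the single commutation relation $v_{41}\bfv_1=q^2\bfv_1 v_{41}$ handling the $b$-case. The paper's own proof is just the one-line hint ``The proof uses $v_{41}\bfv_1=q^2\bfv_1v_{41}$,'' which your write-up fleshes out faithfully.
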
\noindent
The proof uses $v_{41}\bfv_1=q^2\bfv_1v_{41}$.

\begin{lemma}\label{lem:v21leftf}
For any nonnegative integer $n$,
\begin{equation*}
	F\blactleft \bfv_1^n = (1-q^{-2n})\bfv_3\bfv_1^{n-1}+q^{1-2n}[2n]_qv_{41}\bfv_1^{n-1}.
\end{equation*}
\end{lemma}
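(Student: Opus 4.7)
The plan is to proceed by induction on $n$, exploiting the fact that by Theorem \ref{thm:dual_is_a_bimodule}, $H\otimes C$ is a bimodule algebra over $\rD(H)$. Since $\Delta F = F\otimes 1 + K^{-1}\otimes F$ in $\rU_q(\fsl_2)\subset\rD(H)$, the action of $F$ is a twisted derivation:
\[
	F \blactleft (uv) = (F\blactleft u)\,v + (K^{-1}\blactleft u)(F\blactleft v)
\]
for all $u,v\in H\otimes C$. Specializing $u=\bfv_1$ and $v=\bfv_1^{n-1}$ reduces the inductive step to knowledge of $F\blactleft \bfv_1$ and $K^{-1}\blactleft \bfv_1$.

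For the base case $n=1$, I read $F\blactleft \bfv_1 = F\blactleft v_{21}$ directly from the matrix $\phi_j(F)$ displayed in Example \ref{ex:h11}: the second column gives $F\blactleft v_{2j} = (1-q^{-2})v_{1j} + (1+q^{-2})v_{4j}$, so at $j=1$ we obtain $F\blactleft \bfv_1 = (1-q^{-2})\bfv_3 + (1+q^{-2})v_{41}$, which matches the claimed formula since $1+q^{-2} = q^{-1}[2]_q$. Similarly, the $(2,2)$-entry of $\phi_j(K^{-1})$ gives $K^{-1}\blactleft \bfv_1 = q^{-2}\bfv_1$.

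For the inductive step, I substitute the induction hypothesis into the derivation formula, then move $\bfv_1$ to the right of $\bfv_3$ and $v_{41}$ using the commutation relations $\bfv_3\bfv_1=\bfv_1\bfv_3$ and $v_{41}\bfv_1 = q^2\bfv_1 v_{41}$ (the latter is used in the proof of Lemma \ref{lem:v21leftabcd}, hence $\bfv_1 v_{41} = q^{-2}v_{41}\bfv_1$). Collecting coefficients of $\bfv_3\bfv_1^{n-1}$ and $v_{41}\bfv_1^{n-1}$ reduces the claim to two scalar identities: $(1-q^{-2}) + q^{-2}(1-q^{2-2n}) = 1-q^{-2n}$, which is immediate, and $(1+q^{-2}) + q^{-1-2n}[2n-2]_q = q^{1-2n}[2n]_q$, which follows by clearing denominators. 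The proof is essentially mechanical; the only place where care is needed is in transcribing the correct entries and normalizations from the tables of Example \ref{ex:h11}, since a sign slip there would propagate through the induction.
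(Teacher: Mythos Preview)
Your proof is correct and follows essentially the same approach as the paper: the paper states that the lemma is proved by induction using the actions from Example \ref{ex:h11} together with the relations $\bfv_1\bfv_3=\bfv_3\bfv_1$ and $v_{41}\bfv_1=q^2\bfv_1v_{41}$, which is exactly what you do via the twisted-derivation formula for $F$. Your write-up simply makes the inductive step and the two scalar identities explicit.
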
\noindent
The proof uses $\bfv_1\bfv_3=\bfv_3\bfv_1$ and $v_{41}\bfv_1=q^2\bfv_1v_{41}$.

\begin{lemma}\label{lem:v41leftabcd}
For any nonnegative integer $n$,
\begin{align*}
a\blactleft v_{41}^n &= q^nv_{41}^n
&b\blactleft v_{41}^n &= 0
&c\blactleft v_{41}^n &= 0
&d\blactleft v_{41}^n &= q^{-n}v_{41}^n.
\end{align*}
\end{lemma}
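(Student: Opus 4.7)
The plan is to proceed by induction on $n$, leveraging the fact from Theorem~\ref{thm:dual_is_a_bimodule} that $H\otimes C$ is a \emph{bimodule algebra} over $\rD(H)$. Concretely, this means the left action satisfies the Hopf-algebra Leibniz rule
\[
x\blactleft (uv) \;=\; (x_{(1)}\blactleft u)\,(x_{(2)}\blactleft v)
\]
for all $x\in \rD(H)$ and $u,v\in H\otimes C$. Since the coalgebra structure of $\rD(H)$ is the tensor one (Lemma~\ref{lem:tensor_coalgebra}), the generators $a,b,c,d$ retain their standard $\CC_q[\rSL_2]$ coproducts $\Delta a = a\otimes a + b\otimes c$, $\Delta b = a\otimes b + b\otimes d$, $\Delta c = c\otimes a + d\otimes c$, $\Delta d = c\otimes b + d\otimes d$.

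For the base case $n=1$, the bivector $v_{41}$ is the fourth basis vector of each left summand $V_j$ in the decomposition of $\cH_{1,1}$ displayed in Example~\ref{ex:h11}. Reading the fourth column of each of the matrices $\phi_j(a)$, $\phi_j(b)$, $\phi_j(c)$, $\phi_j(d)$ listed there yields $a\blactleft v_{41}=qv_{41}$, $b\blactleft v_{41}=0$, $c\blactleft v_{41}=0$, and $d\blactleft v_{41}=q^{-1}v_{41}$, as required.

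For the inductive step, assume the four identities hold for $v_{41}^{n-1}$. Writing $v_{41}^n = v_{41}\cdot v_{41}^{n-1}$ and applying the Leibniz rule with the coproducts above, the computation for $a$ reads
\[
a\blactleft v_{41}^n = (a\blactleft v_{41})(a\blactleft v_{41}^{n-1}) + (b\blactleft v_{41})(c\blactleft v_{41}^{n-1}) = (qv_{41})(q^{n-1}v_{41}^{n-1}) + 0 = q^n v_{41}^n,
\]
with the second summand vanishing by the inductive hypothesis $c\blactleft v_{41}^{n-1}=0$. The identity for $d$ is mirror-symmetric, with the cross term killed by $b\blactleft v_{41}^{n-1}=0$. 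For $b\blactleft v_{41}^n$ and $c\blactleft v_{41}^n$, every term in the Leibniz expansion contains a factor of the form $b\blactleft v_{41}^{n-1}$, $c\blactleft v_{41}^{n-1}$, $b\blactleft v_{41}$, or $c\blactleft v_{41}$, each of which vanishes, giving $0$.

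There is essentially no serious obstacle here: the lemma follows directly from the Leibniz rule once the base case is extracted from Example~\ref{ex:h11}. The only subtlety worth flagging is that one must invoke the module-algebra compatibility from Theorem~\ref{thm:dual_is_a_bimodule}, rather than attempting to apply the explicit action formulas for $c\blactleft \bar h$ to the power $v_{41}^n=\Delta^n\in H$ directly, which would be far more cumbersome.
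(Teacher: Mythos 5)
Your proof is correct and takes essentially the same route as the paper, which proves this lemma by induction on $n$ using the explicit matrices of Example \ref{ex:h11} for the base case and the module-algebra (Leibniz) property of the left action from Theorem \ref{thm:dual_is_a_bimodule} for the inductive step; no commutation relations are needed here since only powers of the single element $v_{41}$ occur. One small slip in your closing remark: $v_{41}=(q-q^{-1})EK^{-1}bc-dc$, not $\Delta$ (that is $v_{14}=\bfv_4$) — this does not affect your argument.
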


\begin{lemma}\label{lem:vd11leftabcd}
For any nonnegative integer $n$,
\begin{align*}
a\blactleft \bfv_5^n &= q^n \bfv_5^n
&b\blactleft \bfv_5^n &= 0
&c\blactleft \bfv_5^n &= 0
&d\blactleft \bfv_5^n &= q^{-n} \bfv_5^n.
\end{align*}
\end{lemma}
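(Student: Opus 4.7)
The plan is to prove all four identities simultaneously by induction on $n$, using the fact that $H \otimes C$ is a $\rD(H)$-bimodule algebra (Theorem \ref{thm:dual_is_a_bimodule}). The left module-algebra axiom gives
\[ x \blactleft (uv) = (x_{(1)} \blactleft u)(x_{(2)} \blactleft v) \]
for all $x \in \rD(H)$ and $u,v \in H \otimes C$. Since $\rD(H) = C \otimes H$ as a coalgebra by Lemma \ref{lem:tensor_coalgebra}, the coproduct applied to $a,b,c,d \in C \subset \rD(H)$ is exactly the coproduct of $\CC_q[\rSL_2]$ from Example \ref{ex:coordinate_algebra}, namely $\Delta a = a \otimes a + b \otimes c$, $\Delta b = a \otimes b + b \otimes d$, $\Delta c = c \otimes a + d \otimes c$, and $\Delta d = c \otimes b + d \otimes d$.

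The base case $n = 1$ is read off directly from the matrices $\phi_j(a)$, $\phi_j(b)$, $\phi_j(c)$, $\phi_j(d)$ of Example \ref{ex:h20} applied to $\bfv_5 = \vd_{11}$, giving $a \blactleft \bfv_5 = q\bfv_5$, $d \blactleft \bfv_5 = q^{-1}\bfv_5$, and $b \blactleft \bfv_5 = c \blactleft \bfv_5 = 0$. For the inductive step, write $\bfv_5^n = \bfv_5 \cdot \bfv_5^{n-1}$ and apply the module-algebra identity. For instance,
\[ a \blactleft \bfv_5^n = (a \blactleft \bfv_5)(a \blactleft \bfv_5^{n-1}) + (b \blactleft \bfv_5)(c \blactleft \bfv_5^{n-1}) = q \cdot q^{n-1}\bfv_5^n + 0 = q^n \bfv_5^n, \]
where the second summand vanishes because $b \blactleft \bfv_5 = 0$ from the base case. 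The calculation for $d$ is analogous, using that $c \blactleft \bfv_5 = 0$ kills the $c \otimes b$ term of $\Delta d$. The vanishing of $b \blactleft \bfv_5^n$ and $c \blactleft \bfv_5^n$ is immediate from the corresponding coproducts: in each of the two summands at least one factor annihilates either $\bfv_5$ (by the base case) or $\bfv_5^{n-1}$ (by the inductive hypothesis).

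There is no substantive obstacle here. The content of the lemma is just that $\bfv_5$ is a joint eigenvector of $a$ and $d$ and is annihilated by $b$ and $c$, and the module-algebra axiom promotes this one-step fact to the eigenvalue behavior on arbitrary powers. The only thing to be careful about is that one uses the coproduct in $\rD(H)$ (which agrees with the coproduct in $C$ on these four generators) rather than any twisted version.
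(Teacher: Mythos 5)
Your proof is correct and is exactly the argument the paper intends: the paper simply states that this lemma is ``proved by induction using the actions given in Examples \ref{ex:h11}, \ref{ex:h20}, and \ref{ex:h02},'' and your induction via the module-algebra identity $x\blactleft(uv)=(x_{(1)}\blactleft u)(x_{(2)}\blactleft v)$ with the $\CC_q[\rSL_2]$ coproduct and the base case read off from $\phi_j(a),\phi_j(b),\phi_j(c),\phi_j(d)$ in Example \ref{ex:h20} is that induction. You also correctly note why, unlike the neighboring lemmas, no auxiliary quasi-commutation relation is needed here: the vanishing of $b\blactleft\bfv_5$ and $c\blactleft\bfv_5$ kills every cross term before any reordering of factors is required.
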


\begin{lemma}\label{lem:vd11leftbyf}
For any nonnegative integer $n$, $F\blactleft \bfv_5^n = q^{2-2n}[2n]_q \vd_{21} \bfv_5^{n-1}$.
\end{lemma}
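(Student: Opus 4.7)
The plan is induction on $n$, with the base case $n=0$ trivial and $n=1$ handled by direct expansion of $\bfv_5=K^{-1}c^2$. The inductive engine is the derivation rule
\begin{equation*}
F\blactleft(uv)=(F\blactleft u)\,v+(K^{-1}\blactleft u)\,(F\blactleft v),
\end{equation*}
which follows from $\Delta F=F\otimes 1+K^{-1}\otimes F$ in $\rD(H)$ (inherited from $\rU_q(\fsl_2)$) combined with the fact that $H\otimes C$ is a $\rD(H)$-bimodule algebra (Theorem \ref{thm:dual_is_a_bimodule}). Applying this rule to $\bfv_5^n=\bfv_5\cdot\bfv_5^{n-1}$ reduces the computation to three auxiliary facts.

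First, I would compute the base case $F\blactleft\bfv_5=F\blactleft(K^{-1}c^2)$. Since $F\blactleft K^{-1}=0$ and $K^{-1}\blactleft K^{-1}=K^{-1}$, this reduces to $K^{-1}(F\blactleft c^2)$. Expanding $F\blactleft c^2$ by the same derivation rule, using $F\blactleft c=d$, $K^{-1}\blactleft c=q^{-1}c$, and the relation $cd=q^{-1}dc$ in $C$, yields $F\blactleft c^2=(1+q^{-2})dc=q^{-1}[2]_q\,dc$. Recalling $\vd_{21}=q^{-1}K^{-1}dc$, this gives $F\blactleft\bfv_5=[2]_q\vd_{21}$, which agrees with the claimed formula at $n=1$.

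Second, I would record two structural facts about $\bfv_5^n$ and $\vd_{21}$. Since $\Delta K^{-1}=K^{-1}\otimes K^{-1}$, the action of $K^{-1}$ is an algebra homomorphism, so $K^{-1}\blactleft\bfv_5^n=q^{-2n}\bfv_5^n$, using $K^{-1}\blactleft\bfv_5=q^{-2}\bfv_5$ read off from Example \ref{ex:h20}. Next, in the tensor algebra $H\otimes C$, the factors $H$ and $C$ commute, so $\bfv_5\vd_{21}=q^{-1}K^{-2}c^2 dc$ and $\vd_{21}\bfv_5=q^{-1}K^{-2}dc^3$; the relation $dc=qcd$ in $C$ (Example \ref{ex:coordinate_algebra}) gives $c^2 d=q^{-2}dc^2$, hence the quasi-commutation
\begin{equation*}
\bfv_5\vd_{21}=q^{-2}\vd_{21}\bfv_5.
\end{equation*}

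Assuming the lemma at $n-1$, the derivation rule gives
\begin{equation*}
F\blactleft\bfv_5^n=[2]_q\vd_{21}\bfv_5^{n-1}+q^{-2}\bfv_5\cdot q^{4-2n}[2n-2]_q\vd_{21}\bfv_5^{n-2},
\end{equation*}
and applying $\bfv_5\vd_{21}=q^{-2}\vd_{21}\bfv_5$ collapses this to $\big([2]_q+q^{-2n}[2n-2]_q\big)\vd_{21}\bfv_5^{n-1}$. The only remaining task is the $q$-identity $[2]_q+q^{-2n}[2n-2]_q=q^{2-2n}[2n]_q$, which is immediate from $(q^2-q^{-2})+(q^{-2}-q^{2-4n})=q^2-q^{2-4n}$ after multiplying through by $q-q^{-1}$. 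This is the only place where a small computation must be verified, and it is the ``main obstacle'' only in a very mild sense; no conceptual difficulty arises.
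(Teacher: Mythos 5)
Your proof is correct, and it is essentially the paper's own argument: the paper proves this lemma by induction from the actions in Example \ref{ex:h20} using exactly the quasi-commutation $\vd_{21}\bfv_5=q^2\bfv_5\vd_{21}$ that you derive. Your write-up simply fills in the details (the twisted Leibniz rule from $\Delta F$, the base case, and the $q$-integer identity), all of which check out.
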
\noindent
The proof uses $\vd_{21}\bfv_5=q^2\bfv_5\vd_{21}$.

\begin{lemma}\label{lem:vdd11leftabcd}
For any nonnegative integer $n$,
\begin{align*}
a\blactleft \bfv_6^n &= q^{-n}\bfv_6^n,
&b\blactleft \bfv_6^n &= q^{n+2}(q^{-4n}-1)\vdd_{21}\bfv_6^{n-1},
&c\blactleft \bfv_6^n &= 0,
&d\blactleft \bfv_6^n &= q^n \bfv_6^n.
\end{align*}
\end{lemma}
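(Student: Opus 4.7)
The plan is to prove all four formulas by simultaneous induction on $n$, using the fact that the left action of $\rD(H)$ on $\rD(H)^*$ makes $H\otimes C$ a module-algebra: for $x\in\rD(H)$ and $f,g\in H\otimes C$ we have $x\blactleft(fg)=(x_{(1)}\blactleft f)(x_{(2)}\blactleft g)$, where $x_{(1)}\otimes x_{(2)}$ is the coproduct computed in $\rD(H)$, which for $x\in\{a,b,c,d\}$ is the coproduct in $\CC_q[\rSL_2]$. The base case $n=0$ is the counit condition, while $n=1$ is read off as the first column of each action matrix $\phi_j$ in Example \ref{ex:h02}.

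For the inductive step I would write $\bfv_6^n=\bfv_6\cdot\bfv_6^{n-1}$ and apply the Leibniz rule above with $\Delta a=a\otimes a+b\otimes c$, $\Delta c=c\otimes a+d\otimes c$, $\Delta d=c\otimes b+d\otimes d$, and $\Delta b=a\otimes b+b\otimes d$. Because the inductive hypothesis gives $c\blactleft\bfv_6^{n-1}=0$, the formulas for $a$, $c$, and $d$ collapse to a single surviving term and are immediate: for example,
\begin{equation*}
	a\blactleft\bfv_6^n
	=(a\blactleft\bfv_6)(a\blactleft\bfv_6^{n-1})
	=q^{-1}\bfv_6\cdot q^{-(n-1)}\bfv_6^{n-1}
	=q^{-n}\bfv_6^n,
\end{equation*}
and the $c$ and $d$ cases are analogous.

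The only nontrivial case is $b$. Expanding gives
\begin{equation*}
	b\blactleft\bfv_6^n
	=(a\blactleft\bfv_6)(b\blactleft\bfv_6^{n-1})+(b\blactleft\bfv_6)(d\blactleft\bfv_6^{n-1}),
\end{equation*}
and substituting $a\blactleft\bfv_6=q^{-1}\bfv_6$, $d\blactleft\bfv_6=q\bfv_6$, $b\blactleft\bfv_6=(q^{-1}-q^3)\vdd_{21}$ together with the inductive formula for $b\blactleft\bfv_6^{n-1}$ produces an expression of the form $\alpha\,\bfv_6\vdd_{21}\bfv_6^{n-2}+\beta\,\vdd_{21}\bfv_6^{n-1}$. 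To recombine this into the single term $q^{n+2}(q^{-4n}-1)\vdd_{21}\bfv_6^{n-1}$, I would invoke the quasi-commutation $\vdd_{21}\bfv_6=q^2\bfv_6\vdd_{21}$, which rewrites $\bfv_6\vdd_{21}\bfv_6^{n-2}=q^{-2}\vdd_{21}\bfv_6^{n-1}$; a short arithmetic check then reduces $\alpha q^{-2}+\beta$ to $q^{n+2}(q^{-4n}-1)$.

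The real work therefore concentrates in verifying the quasi-commutation $\vdd_{21}\bfv_6=q^2\bfv_6\vdd_{21}$, and this is the step I expect to be the main obstacle. The direct route is to plug in the explicit presentations of $\bfv_6$ and $\vdd_{21}$ from Example \ref{ex:h02} and simplify in $H\otimes C$ using the defining relations of $\rU_q(\fsl_2)$ and $\CC_q[\rSL_2]$; this is analogous to (and parallel with) the relation $\vd_{21}\bfv_5=q^2\bfv_5\vd_{21}$ already used in the proof of Lemma \ref{lem:vd11leftbyf}. A conceptually cleaner alternative is to argue by bi-weight homogeneity inside $\cH_{0,2}$: both products have the same left and right $K$-weights, so they must differ by a scalar, and the scalar is pinned down by comparing any single monomial in the expansions.
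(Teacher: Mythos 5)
Your proposal is correct and follows essentially the same route as the paper: the paper proves this lemma by induction on $n$ using the explicit actions from Example \ref{ex:h02} together with the quasi-commutation $\vdd_{21}\bfv_6=q^2\bfv_6\vdd_{21}$, exactly as you do, and your arithmetic in the $b$ case (collapsing $\alpha q^{-2}+\beta$ to $q^{n+2}(q^{-4n}-1)$) checks out. The only difference is cosmetic: the paper simply asserts the quasi-commutation relation without proof, whereas you flag its verification as the main remaining work.
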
\noindent
The proof uses $\vdd_{21}\bfv_6=q^2\bfv_6\vdd_{21}$.

\begin{lemma}\label{lem:vdd11leftbyf}
For any nonnegative integer $n$, $F \blactleft \bfv_6^n = q^{2-2n}[2n]_q\vdd_{21}\bfv_6^{n-1}$.
\end{lemma}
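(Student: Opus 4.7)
The plan is to induct on $n$. For the base case $n=1$, the matrix presentation of the left action in Example \ref{ex:h02} gives $F \blactleft \bfv_6 = F \blactleft \vdd_{11} = (q+q^{-1})\vdd_{21} = [2]_q \vdd_{21}$, which matches the asserted formula $q^{2-2}[2]_q \vdd_{21}\bfv_6^{0}$. The base case $n=0$ is trivial since both sides vanish ($[0]_q = 0$).

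For the inductive step, I would exploit Theorem \ref{thm:dual_is_a_bimodule}, which asserts that $H \otimes C$ is a bimodule algebra over $\rD(H)$. This means the left action obeys the Leibniz rule dictated by the coproduct $\Delta F = F \otimes 1 + K^{-1} \otimes F$, namely
\[
	F \blactleft (\bfv_6 \cdot \bfv_6^{n}) = (F \blactleft \bfv_6)\,\bfv_6^{n} + (K^{-1} \blactleft \bfv_6)(F \blactleft \bfv_6^{n}).
\]
From Example \ref{ex:h02} we read off $F \blactleft \bfv_6 = [2]_q \vdd_{21}$ and $K^{-1} \blactleft \bfv_6 = q^{-2}\bfv_6$. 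Applying the inductive hypothesis to the second factor and then invoking the quasi-commutation relation $\vdd_{21}\bfv_6 = q^{2}\bfv_6\vdd_{21}$ (equivalently $\bfv_6\vdd_{21}=q^{-2}\vdd_{21}\bfv_6$) to migrate $\vdd_{21}$ to the left, the second summand becomes
\[
	q^{-2}\bfv_6 \cdot q^{2-2n}[2n]_q \vdd_{21}\bfv_6^{n-1} \;=\; q^{-2n-2}[2n]_q \vdd_{21}\bfv_6^{n}.
\]
Combining, I get $F \blactleft \bfv_6^{n+1} = \bigl([2]_q + q^{-2n-2}[2n]_q\bigr)\vdd_{21}\bfv_6^{n}$.

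The last step is the $q$-number identity $[2]_q + q^{-2n-2}[2n]_q = q^{-2n}[2n+2]_q$, which follows at once by clearing denominators in the definition $[m]_q = (q^m - q^{-m})/(q-q^{-1})$ since the left side equals $\bigl(q^2 - q^{-2} + q^{-2} - q^{-4n-2}\bigr)/(q-q^{-1}) = (q^2 - q^{-4n-2})/(q-q^{-1})$. This yields $F \blactleft \bfv_6^{n+1} = q^{-2n}[2n+2]_q \vdd_{21}\bfv_6^{n} = q^{2-2(n+1)}[2(n+1)]_q \vdd_{21}\bfv_6^{(n+1)-1}$, completing the induction. The only nontrivial input outside the Leibniz formalism is the quasi-commutation $\vdd_{21}\bfv_6 = q^{2}\bfv_6\vdd_{21}$; I would verify this directly by computing both sides as elements of $H \otimes C$ using Example \ref{ex:h02} (since $\bfv_6, \vdd_{21}$ are explicit degree-$2$ expressions in $E$, $K^{\pm 1}$, $a$, $c$), and this homogeneous-degree calculation is the only place any genuine work happens — it is tedious but routine, not a real obstacle.
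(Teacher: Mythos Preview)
Your proof is correct and follows essentially the same approach as the paper: induction on $n$ via the Leibniz rule for the left $\rD(H)$-action on the bimodule algebra $H\otimes C$, together with the quasi-commutation $\vdd_{21}\bfv_6 = q^{2}\bfv_6\vdd_{21}$ (which is exactly the ingredient the paper singles out). Your exposition simply makes explicit the $q$-identity and the base cases that the paper leaves to the reader.
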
\noindent
The proof uses $\vdd_{21}\bfv_6=q^2\bfv_6\vdd_{21}$.

\bibliography{Cite}

\end{document}